\definecolor{myred}{rgb}{0.75,0,0}
\definecolor{mygreen}{rgb}{0,0.5,0}
\definecolor{myblue}{rgb}{0,0,0.65}
\newcommand{\nc}{\newcommand} \newcommand{\renc}{\renewcommand}
  \def\bg{{\mathfrak b}}  
    \def\CM{{\mathbb{C}}}
    \def\DM{{\mathbb{D}}}
    \def\FM{{\mathbb{F}}}
  \def\hg{{\mathfrak h}}
  \def\lg{{\mathfrak l}}  
  \def\mg{{\mathfrak m}}  
    \def\OM{{\mathbb{O}}}
    \def\PM{{\mathbb{P}}}
    \def\QM{{\mathbb{Q}}}
    \def\RM{{\mathbb{R}}}
    \def\ZM{{\mathbb{Z}}}
    \def\AC{{\mathcal{A}}}
    \def\CC{{\mathcal{C}}}
    \def\EC{{\mathcal{E}}}
    \def\HC{{\mathcal{H}}}
    \def\IC{{\mathcal{I}}}
    \def\MC{{\mathcal{M}}}
    \def\OC{{\mathcal{O}}}
    \def\QC{{\mathcal{Q}}}
    \def\RC{{\mathcal{R}}}
    \def\TC{{\mathcal{T}}}
\def\AS{{\EuScript A}}
\def\BS{{\EuScript B}}
\def\ES{{\EuScript E}}
\def\FS{{\EuScript F}}
\def\GS{{\EuScript G}}
\def\LS{{\EuScript L}}
\def\PS{{\EuScript P}}
\def\TS{{\EuScript T}}
\def\a{\alpha}
\def\d{\delta}
\def\e{\varepsilon}
\def\z{\zeta}
\def\FSS{{\mathscr F}}
\def\GSS{{\mathscr G}}
\def\XSS{{\mathscr{X}}}
\nc{\todo}[1]{ {\color{red}XXX #1 XXX}}
\def\un{\underline}
\def\to{\rightarrow}
\def\longto{\longrightarrow}
\def\onto{\twoheadrightarrow}
\nc{\triright}{\stackrel{[1]}{\to}}
\nc{\longtriright}{\stackrel{[1]}{\longto}}
\nc{\Hb}{H^\bullet}
\nc{\Br}{\mathcal{B}}
\nc{\HotRR}{{}_R\mathcal{K}_R}
\nc{\HotR}{\mathcal{K}_R}
\nc{\excise}[1]{}
\nc{\defect}{\text{df}}
\nc{\h}[1]{\underline{H}_{#1}}
\nc{\Ga}{\mathbb{G}_a} 
\nc{\Gm}{\mathbb{G}_m} 
\nc{\Perv}{{\mathbf{P}}}
\nc{\IH}{{\mathrm{IH}}}
\nc{\ic}{\mathbf{IC}}
\nc{\gl}{{\mathfrak{gl}}}
\renc{\sl}{{\mathfrak{sl}}}
\renc{\sp}{{\mathfrak{sp}}}
\renc{\Im}{\textrm{Im}}
\nc{\HBM}{H^{BM}}
 \DeclareMathOperator{\Hom}{Hom}
 \DeclareMathOperator{\ch}{ch}
\DeclareMathOperator{\End}{End} 
\DeclareMathOperator{\Rep}{Rep}
\DeclareMathOperator{\id}{id}
\DeclareMathOperator{\Ext}{Ext}
\DeclareMathOperator{\Spec}{Spec}
\DeclareMathOperator{\Sp}{Sp}
\DeclareMathOperator{\SL}{SL}
\DeclareMathOperator{\GL}{GL}
\DeclareMathOperator{\Mat}{Mat}
\nc{\St}{\mathrm{St}}
\nc{\rot}{\mathrm{rot}}
\nc{\ext}{\mathrm{ext}}
\nc{\Tilt}{\mathrm{Tilt}}
\nc{\gen}{\mathrm{gen}}
\nc{\Graph}{\mathrm{Graph}}
\newcommand{\into}{\hookrightarrow}
\def\Fl{{\EuScript Fl}}
\def\Iw{{\textrm{Iw}}}
\nc{\simto}{\stackrel{\sim}{\to}}
\nc{\simfrom}{\stackrel{\sim}{\leftarrow}}
\DeclareMathOperator{\Fr}{Fr}
\DeclareMathOperator{\Perf}{Perf}
\DeclareMathOperator{\Sm}{Sm}
\def\acts{\hspace{1mm}\lefttorightarrow\hspace{1mm}}
\def\racts{\hspace{1mm}\righttoleftarrow\hspace{1mm}}
\nc{\gbmod}{\mathrm{-gmod-}}
\renewcommand{\bmod}{\textrm{-mod-}}
\nc{\gmod}{\mathrm{-gmod}}
\nc{\Parity}{\mathrm{Parity}}
\nc{\mult}{\mathrm{mult}}
\nc{\Hecke}{\textrm{H}}
\nc{\geom}{\mathrm{geom}}
\nc{\Soe}{\mathrm{Soe}}
\nc{\Abe}{\mathrm{Abe}}
\nc{\diag}{\mathrm{diag}}
\nc{\fin}{\textrm{finite}}
\nc{\reflect}{\RC}
\nc{\Chi}{\XSS}
\nc{\pt}{\mathrm{pt}}
\nc{\odd}{\textrm{odd}}
\nc{\even}{\textrm{even}}
\newtheorem{thm}{Theorem}[section]
\newtheorem{lem}[thm]{Lemma}
\newtheorem{prop}[thm]{Proposition}
\theoremstyle{definition}
\newtheorem{ex}[thm]{Example}
\theoremstyle{remark}
\newtheorem{remark}[thm]{Remark}
\newtheorem{question}[thm]{Question}
\title[]{Modular representations and reflection subgroups}
\author[]{Geordie Williamson}
\address{University of Sydney,Australia.}
\email{g.williamson@sydney.edu.au}
\begin{document}

\begin{abstract}
The Hecke category is at the heart of several fundamental questions in
modular representation theory.  We emphasise the role of the
  ``philosophy of deformations'' both as a conceptual and
  computational tool, and suggest possible connections to Lusztig's
  ``philosophy of generations''. On the geometric side one 
  can understand deformations in terms of
  localisation in equivariant
  cohomology. Recently Treumann and Leslie-Lonergan have added Smith
  theory, which provides a useful tool when considering mod $p$
  coefficients.  In this context, we make contact with some remarkable
  work of Hazi. Using recent work of Abe on Soergel bimodules, we are
  able to reprove and generalise some of Hazi's results. Our aim is to convince
the reader that the work of Hazi and Leslie-Lonergan 
can usefully be viewed as some kind of localisation to ``good''
reflection subgroups.
These are notes for my lectures at the 2019
Current Developments in Mathematics at Harvard.
\end{abstract}

\maketitle

\tableofcontents


\section{Introduction} \label{sec:intro}

The following notes revolve around modular representations of
algebraic groups, symmetric groups and the Hecke category. There are
already a number of surveys on this topic \cite{JW-pcan,WTakagi,ARSurvey,WICM} and
there is even a book on the way \cite{soergelbook}. It would be silly to repeat  
this content. Instead I have tried to take a more speculative
perspective, and to dream a little. The main inspiration is recent
work of Treumann \cite{Treumann}, Hazi \cite{Hazi}, Abe \cite{Abe} and Leslie-Lonergan
\cite{LL}. As I was finishing these notes I became aware of the work of McDonnell
\cite{McDonnell}, which comes to similar conclusions to these
notes. Our hope is that our account complements McDonnell's.

Let us point out that these notes are rather inhomogeneous in their
demands on the reader. \S \ref{sec:intro} is introductory, and intended
for a general audience. \S \ref{sec:deformations} assumes quite a lot
of background from Lie theory. The material of \S \ref{sec:deformations} served as a principal motivation
for the writing of these notes, and so we thought it would be unfortunate not
to include it. \S \ref{sec:affine} and \S \ref{sec:localisation} are more
accessible. The road gets steeper in \S \ref{sec:Hecke} 
and steeper still in \S \ref{sec:heckeloc}, where I am writing for
experts.

\begin{figure}
  \centering
  \includegraphics[width=12cm]{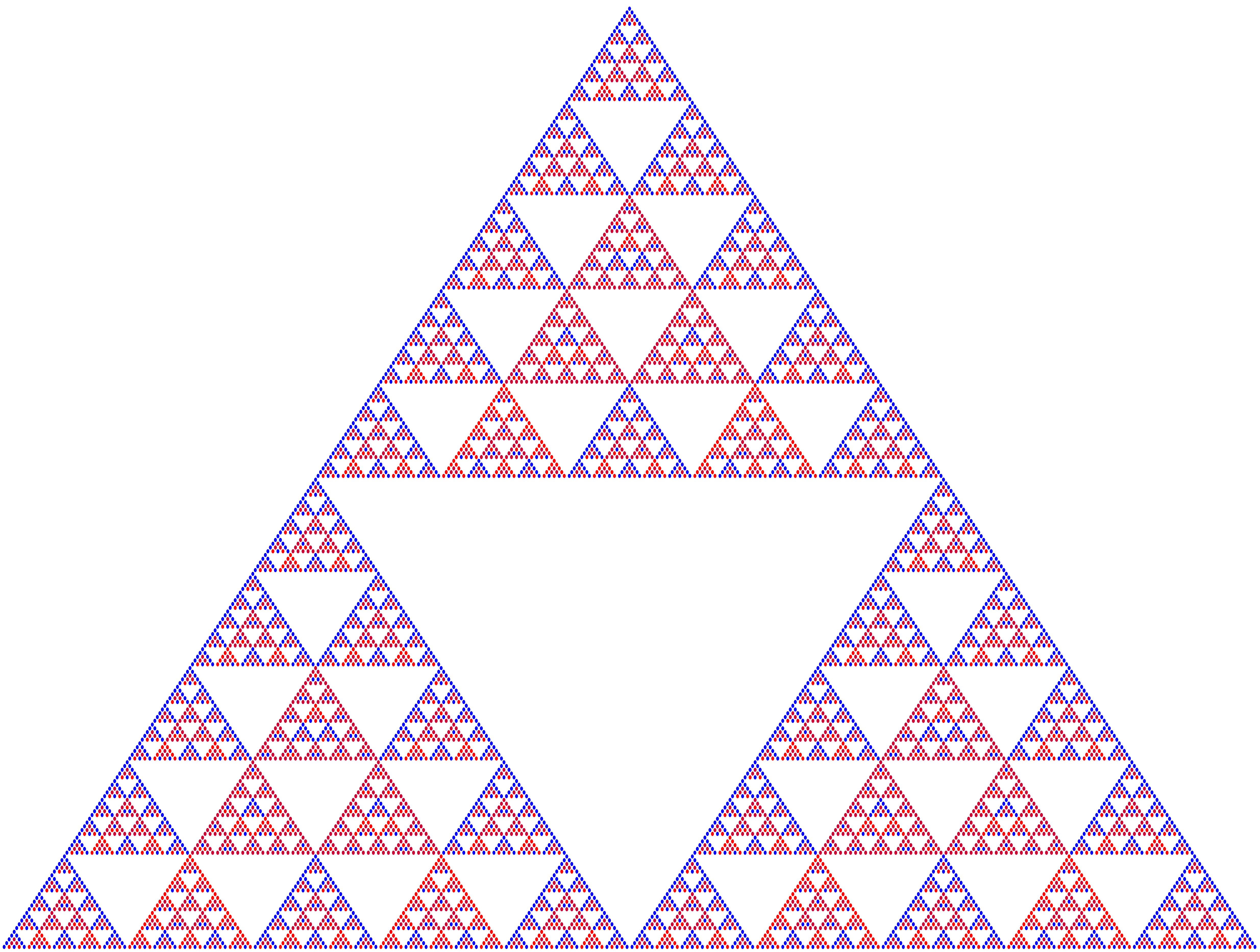}
\caption{A little mystery.}
\label{fig:p5}
\end{figure}

\subsection{A mystery}

The reader is asked to contemplate the picture in Figure
\ref{fig:p5}. What could it be? It is appears to be some Sierpi{\'n}ski like
figure, but we promise the reader that there is a little more going on. For example, the colours have a meaning. In Figure
\ref{fig:p5zoom}  one has a close up, as well as a close up of a
related picture.

\begin{figure}
  \centering
  \includegraphics[width=12cm]{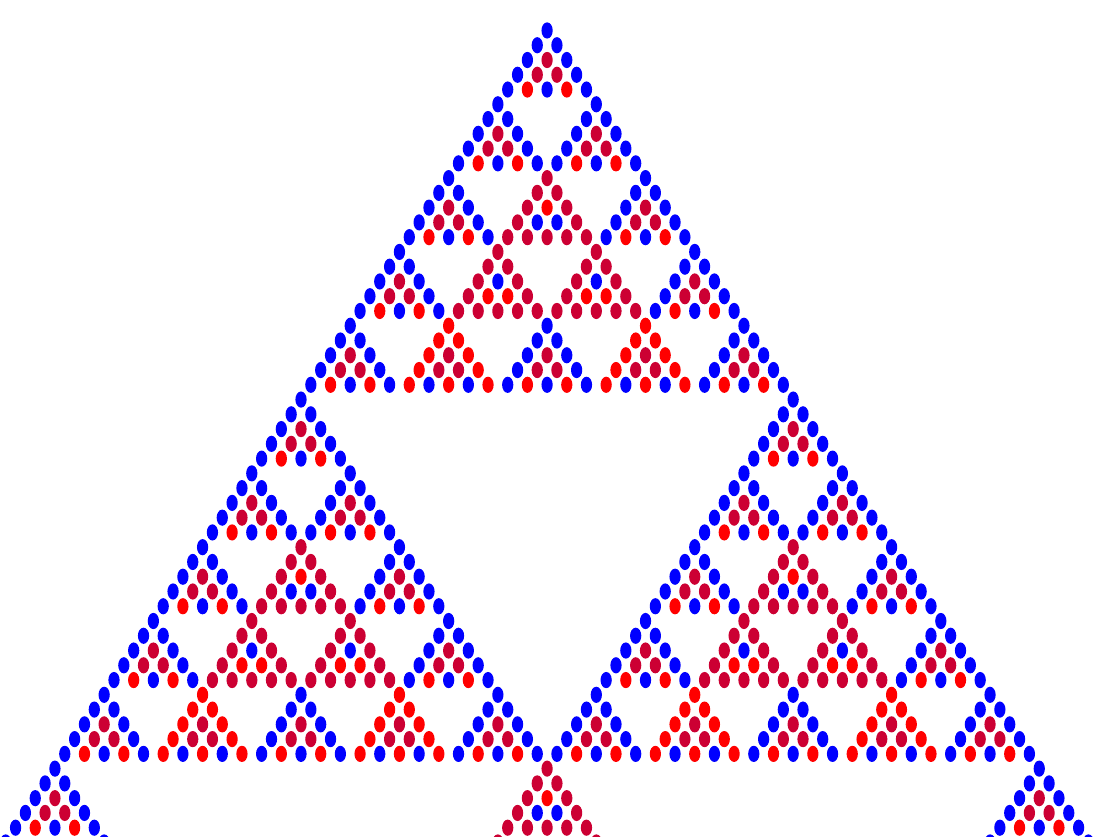} \\
\vspace{2cm}
  \includegraphics[width=12cm]{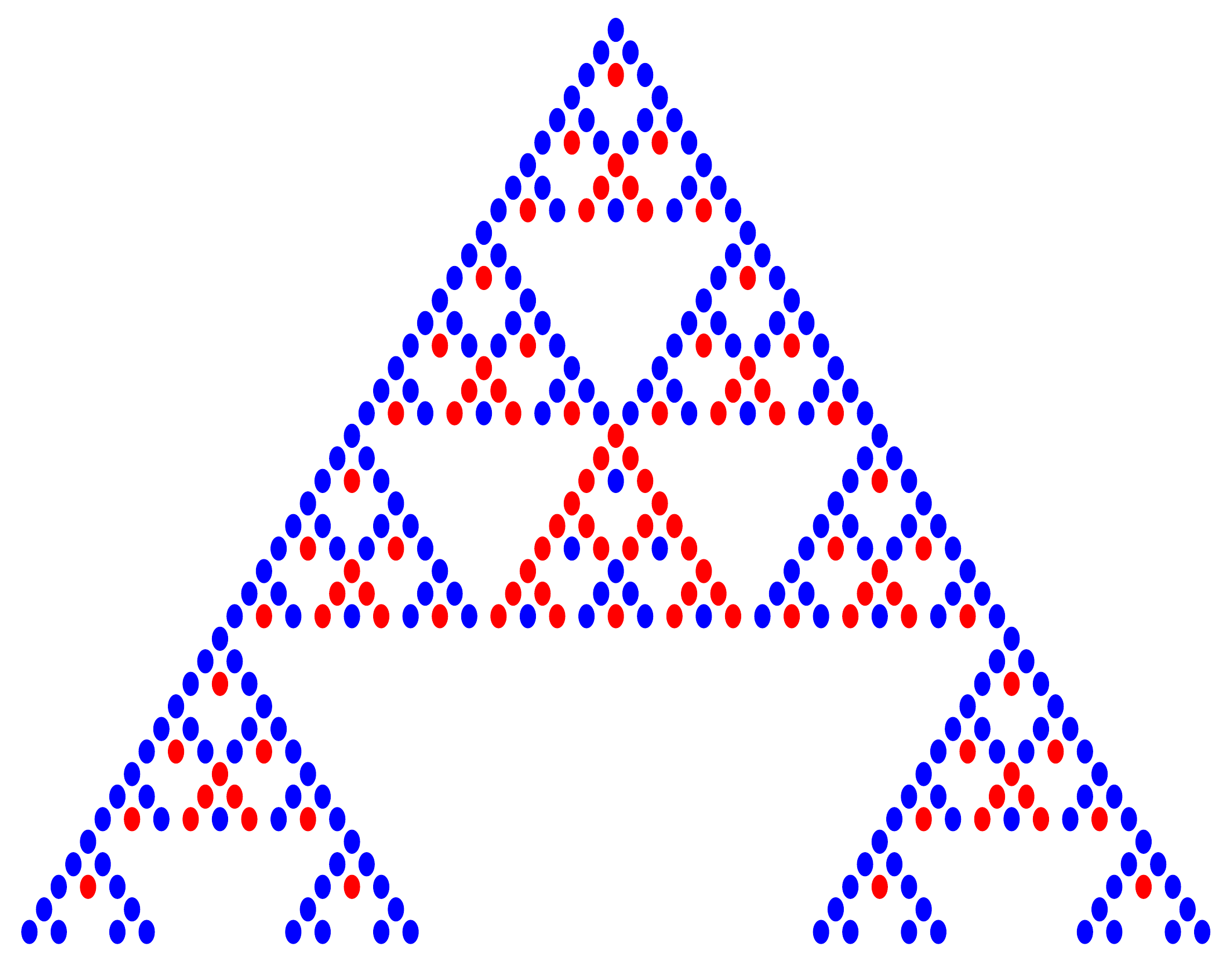}
\caption{Close ups, for $p = 5$ (top) and $p = 3$ (bottom).}
\label{fig:p5zoom}
\end{figure}

\begin{remark} \label{rem:fractal termination}
From the zoomed in pictures it is clear that the apparent fractal nature of Figure \ref{fig:p5}
actually terminates as we zoom in.  
\end{remark}

The answer is that we are staring at Pascal's triangle. More precisely,
we are staring at Pascal's triangle modulo $p$, for $p = 3$ and
$5$. The (non-trivial) colours encode the (non-trivial) residue classes modulo $p$:
\[
  \begin{array}{c} 
      \begin{tikzpicture}[xscale=0.25,yscale=0.4]
    \node at (0,0) {$1$};
    \node at (-1,-1) {$1$};
    \node at (1,-1) {$1$};
\node at (-2,-2) {$1$};
\node at (0,-2) {$2$};
\node at (2,-2) {$1$};
\node at (-3,-3) {$1$};
\node at (-1,-3) {$3$};
\node at (1,-3) {$3$};
\node at (3,-3) {$1$};
\node at (-4,-4) {$1$};
\node at (-2,-4) {$4$};
\node at (0,-4) {$6$};
\node at (2,-4) {$4$};
\node at (4,-4) {$1$};
\node at (-5,-5) {$1$};
\node at (-3,-5) {$5$};
\node at (-1,-5) {$10$};
\node at (1,-5) {$10$};
\node at (3,-5) {$5$};
\node at (5,-5) {$1$};
  \end{tikzpicture}
  \end{array}
\stackrel{\text{mod $3$}}{\longto}
  \begin{array}{c}
      \begin{tikzpicture}[xscale=0.25,yscale=0.4]
    \node at (0,0) {$1$};
    \node at (-1,-1) {$1$};
    \node at (1,-1) {$1$};
\node at (-2,-2) {$1$};
\node at (0,-2) {$2$};
\node at (2,-2) {$1$};
\node at (-3,-3) {$1$};
\node at (-1,-3) {$0$};
\node at (1,-3) {$0$};
\node at (3,-3) {$1$};
\node at (-4,-4) {$1$};
\node at (-2,-4) {$1$};
\node at (0,-4) {$0$};
\node at (2,-4) {$1$};
\node at (4,-4) {$1$};
\node at (-5,-5) {$1$};
\node at (-3,-5) {$2$};
\node at (-1,-5) {$1$};
\node at (1,-5) {$1$};
\node at (3,-5) {$2$};
\node at (5,-5) {$1$};
  \end{tikzpicture}
  \end{array}
=
\begin{array}{c}
\includegraphics[width=3cm]{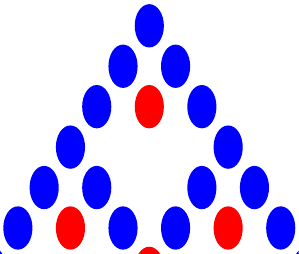}
\end{array}
\]

\subsection{Modular representations}
What on earth does this have to do with modular representations and
reflection subgroups, the title of these notes? To begin explaining the
connection, we start with one of the most fundamental of all groups
\[
G = \SL_2
\]
which we view as an algebraic group over $\Bbbk = \overline{\FM}_p$,
an algebraic closure of the finite field with $p$ elements. The reader
can think of $G$ as being the collection of $2 \times 2$-matrices\footnote{Quickly the language of group schemes becomes
invaluable; we will ignore this here.}
\[
\left \{ \left ( \begin{matrix} a & b \\ c & d \end{matrix} \right ) \in
    \Mat_2(\Bbbk) \;
  \middle | \;  ad-bc = 1 \right \}.
\]

A key feature in characteristic $p$ is the \emph{Frobenius endomorphism:}
\begin{align*}
  \Fr : \SL_2 &\to \SL_2\\
  \left ( \begin{matrix} a & b \\ c &
    d \end{matrix} \right ) &\mapsto \left ( \begin{matrix} a^p & b^p \\ c^p & d^p \end{matrix} \right )
\end{align*}
Nothing like this exists in characteristic $0$.\footnote{At least, not
  until one meets the quantum group!} It can be thought of
as a kind of ``everywhere contracting mapping''.

The questions in these notes are motivated by the study of
\emph{algebraic representations} of groups
like $\SL_2$. This means we are studying homomorphisms
\[
\rho : \SL_2 \to \GL(V)
\]
for some $\Bbbk$-vector space $V$, which are defined by polynomials in
$a, b, c$ and $d$. One of the first results in the theory is that all
representations are direct limits of finite-dimensional
representations, so we usually assume that $V$ is finite-dimensional. In
other words, we are studying polynomial homomorphisms into $GL_n$.

\begin{remark} If the reader is like the author, they might initially find the
  study of algebraic representations somewhat artificial. Why not
  stick to finite groups like $\SL_2(\FM_q)$? In fact there is a close
  connection between representations of finite groups of Lie type like
  $\SL_2(\FM_q)$ in ``their own'' characteristic, and the theory of
  algebraic representations. It turns out that the extra
  structures arising from the theory of algebraic groups are
  tremendously useful. One can think of the finite groups
  $\SL_2(\FM_q)$ as somewhat like lattices in the ``Lie group'' $\SL_2$. Thus the passage from $\SL_2(\FM_q)$ to $\SL_2$ is
  akin to the more familiar fact that the representation theory of
  connected Lie groups is easier than that of finite groups.
\end{remark}

\subsection{Some examples of algebraic representations}
Here are a few examples of representations (of dimensions $1$, $2$ and
$3$):
\begin{align*}
\left ( \begin{matrix} a & b \\ c & d \end{matrix} \right ) &
                                                                    \mapsto
                                                                    (1)
  \\
\left ( \begin{matrix} a & b \\ c & d \end{matrix} \right ) &
                                                                    \mapsto
\left ( \begin{matrix} a & b \\ c & d \end{matrix} \right )
  \\
\left ( \begin{matrix} a & b \\ c & d \end{matrix} \right ) &
                                                                    \mapsto
\left ( \begin{matrix} a^2 & ab & b^2  \\ 2ac & ad + bc & 2bd \\ c^2
    & cd & d^2 \end{matrix} \right )
\end{align*}
The first is the trivial representation. The second is the
``natural representation'', so called because it arises from the definition of
$\SL_2$ as a matrix group. The third arises as follows. The natural
representation gives us a linear action of $\SL_2$ on
\[
\SL_2 \acts V = \Bbbk X \oplus \Bbbk Y.
\]
For example,
\[
\left ( \begin{matrix} a & b \\ c & d \end{matrix} \right )  \cdot X
= aX + cY.
\]
Thus $\SL_2$ also acts on the second symmetric power
\[
\SL_2 \acts  S^2(V) = \Bbbk X^2 \oplus \Bbbk XY \oplus \Bbbk Y^2.
\]
Writing out this action in the basis of monomials produces the above
matrices. For example
\[
\left ( \begin{matrix} a & b \\ c & d \end{matrix} \right )  \cdot X^2
= (aX + cY)^2 = a^2 X + 2ac XY + c^2 Y^2
\]
which is the first column of our matrix. (This also makes it clear why
our third representation above is actually a representation, which is
not clear when presented with a formula.)

\subsection{Chevalley's theorem} Of course there was no reason to stop
at the second symmetric power above. We can consider the module
\[
\SL_2 \acts \nabla_n := S^nV = \Bbbk X^n \oplus \Bbbk X^{n-1}Y \Bbbk \oplus 
\dots \oplus \Bbbk Y^n
\]
for any $n \ge 0$. The modules $\nabla_0, \nabla_1$ and
$\nabla_2$ being the cases considered above. We have
\[
\dim \nabla_n = n +1 .
\]
If our field were of characteristic zero, these
modules would provide a complete list of inequivalent irreducible representations
of $\SL_2$.\footnote{The reader may have seen this in the guise of the
irreducible representations of $\mathfrak{sl}_2(\CM)$, or of the
compact Lie group $SU_2$, where these representations are closely tied
to the theory of spherical harmonics.}

However, in the current setting the situation is a little more
interesting. Let us for concreteness assume that $p = 3$, and consider
\[
\nabla_3 = \Bbbk X^3 \oplus \Bbbk X^2Y \oplus \Bbbk XY^2 \oplus\Bbbk Y^3.
\]
If we act on the vector $X^3 \in \nabla_3$ we have
\[
\left ( \begin{matrix} a & b \\ c & d \end{matrix} \right )  \cdot X^3
= (aX + cY)^3 = a^3X^3 + c^3 Y^3
\]
by the ``Freshman's dream'' in characteristic 3. A similar calculation
for $Y^3$ shows that
\[
\Bbbk X^2 \oplus \Bbbk Y^3 \subset \nabla_3
\]
is a non-trivial $\SL_2$-invariant submodule. Moreover, this
submodule affords the representation
\[
\left ( \begin{matrix} a & b \\ c & d \end{matrix} \right )   \mapsto 
\left ( \begin{matrix} a^3 & b^3 \\ c^3 & d^3 \end{matrix} \right )  
\]
obtained by pulling the natural module back along the Frobenius
morphism $\Fr$ from earlier. This operation on representations is
called \emph{Frobenius twist} and is fundamental to the theory.

It turns out that $\nabla_n$ always has a unique simple
submodule. Moreover, if we denote this submodule by $L_n \subset
\nabla_n$ then we have a bijection:
\begin{align*}
\ZM_{\ge 0}  &\simto \left \{ \begin{array}{c} \text{simple algebraic} \\ 
           \SL_2\text{-modules} \end{array} \right \}_{ / \cong} \\
n &\mapsto \hspace{1.5cm} L_n  
\end{align*}

\begin{remark}
This is a example of \emph{Chevalley's theorem},  which is true for any reductive algebraic
group. In the context of a general
algebraic group it tells us that simple modules are always classified
by highest weight, just as over $\CM$. (The reader is cautioned that
although they are classified independently of $p$, their structure
varies subtly based on $p$.)
\end{remark}

\subsection{Characters} Inside $\SL_2$ we can consider the \emph{maximal
torus}
\[
T := \left \{ \left ( \begin{matrix} a & 0 \\ 0 & a^{-1} \end{matrix}
  \right ) \right \} \subset \SL_2.
\]
Any algebraic representation $V$ of $\SL_2$ splits as a direct sum
\[ V = \bigoplus_{m \in \ZM} V_m \]
of its \emph{weight spaces}
\[
V_m := \left \{ v \in V \; \middle | \; \left ( \begin{matrix} a & 0 \\ 0 &
  a^{-1} \end{matrix} \right ) \cdot v = a^m v \text{ for all $a \in
\Bbbk^*$} \right \}.
\]
Probably the most basic invariant of a representation aside from its
dimension is its \emph{character}
\[
\ch(V) := \sum_{m \in \ZM} (\dim V_m) e^m \in \ZM_{\ge 0}[e^{\pm 1}].
\]
For example, $X^i Y^{n-i} \in \nabla_n$ satisfies
\[
\left ( \begin{matrix} a & 0 \\ 0 &
  a^{-1} \end{matrix} \right ) \cdot X^i Y^{n-i}  = a^i a^{i-n} X^i
Y^{n-i} = a^{-n + 2i} X^i Y^{n-i}.
\]
Thus all weight spaces of $\nabla_n$ are either 0 or 1-dimensional and
\[
\ch(\nabla_n) = e^{-n} + e^{-n+2} + \dots + e^{n-2} + e^n = \frac{e^n
  - e^{-n-2}}{1-e^{-2}}.
\]

\begin{remark}
The last equality is an example of \emph{Weyl's character formula}, which is
of central importance in the theory of compact Lie groups, as it is to
the theory of algebraic groups over $\CM$. It is also very
important in the theory we consider here, as it gives us the
characters of certain building blocks of all representations
(so-called \emph{Weyl modules} and \emph{induced modules}, of which
$\nabla_n$ is an example).
\end{remark}

\subsection{Characters of simples}  Let us assume $p = 3$. A few lines
of calculations give the following descriptions of $L_n \subset
\nabla_n$ for $n = 0, 1, 
\dots, 6$:
\[
\begin{tikzpicture}[yscale=0.6,xscale=0.75]
\node at (0,0) {$\small \Bbbk $};
\node at (-1,-1) {$\small \Bbbk Y$};
\node at (1,-1) {$\small \Bbbk X$};
\node at (-2,-2) {$\small \Bbbk Y^2$};
\node at (0,-2) {$\small \Bbbk XY$};
\node at (2,-2) {$\small \Bbbk X^2$};
\node at (-3,-3) {$\small \Bbbk Y^3$};
\node at (-1,-3) {$0$};
\node at (1,-3) {$0$};
\node at (3,-3) {$\small \Bbbk X^3$};
\node at (-4,-4) {$\small \Bbbk Y^4$};
\node at (-2,-4) {$\small \Bbbk Y^3X$};
\node at (0,-4) {$0$};
\node at (2,-4) {$\small \Bbbk YX^3$};
\node at (4,-4) {$\small \Bbbk X^4$};
\node at (-5,-5) {$\small \Bbbk Y^5$};
\node at (-3,-5) {$\small \Bbbk Y^4X$};
\node at (-1,-5) {$\small \Bbbk Y^3X^2$};
\node at (1,-5) {$\small \Bbbk Y^2X^3$};
\node at (3,-5) {$\small \Bbbk YX^4$};
\node at (5,-5) {$\small \Bbbk X^5$};
\node at (-6,-6) {$\small \Bbbk Y^6$};
\node at (-4,-6) {$0$};
\node at (-2,-6) {$0$};
\node at (0,-6) {$\small \Bbbk X^3Y^3$};
\node at (2,-6) {$0$};
\node at (4,-6) {$0$};
\node at (6,-6) {$\small \Bbbk Y^6$};
\node at (-8,0) {$L_0 = $};
\node at (-8,-1) {$L_{1} = $};
\node at (-8,-2) {$L_{2} = $};
\node at (-8,-3) {$L_{3} = $};
\node at (-8,-4) {$L_{4} = $};
\node at (-8,-5) {$L_{5} = $};
\node at (-8,-6) {$L_{6} = $};
\end{tikzpicture}
\]
The third line is our calculation using the Freshman's dream from
earlier.  Thus the characters (for $n = 0, 1, \dots, 6$ as earlier) are given as
follows:
\[
\begin{tikzpicture}[yscale=0.6,xscale=0.5]
\node at (0,0) {$e^0$};
\node at (-1,-1) {$e^{-1}$};
\node at (0,-1) {$+$};
\node at (1,-1) {$e^1$};
\node at (-2,-2) {$e^{-2}$};
\node at (-1,-2) {$+$};
\node at (0,-2) {$e^0$};
\node at (1,-2) {$+$};
\node at (2,-2) {$e^2$};
\node at (-3,-3) {$e^{-3}$};
\node at (-2,-3) {$+$};
\node at (2,-3) {$+$};
\node at (3,-3) {$e^3$};
\node at (-4,-4) {$e^{-4}$};
\node at (-3,-4) {$+$};
\node at (-2,-4) {$e^{-2}$};
\node at (-1,-4) {$+$};
\node at (1,-4) {$+$};
\node at (2,-4) {$e^{2}$};
\node at (3,-4) {$+$};
\node at (4,-4) {$e^{4}$};
\node at (-5,-5) {$e^{-5}$};
\node at (-4,-5) {$+$};
\node at (-3,-5) {$e^{-3}$};
\node at (-2,-5) {$+$};
\node at (-1,-5) {$e^{-1}$};
\node at (0,-5) {$+$};
\node at (1,-5) {$e^{1}$};
\node at (2,-5) {$+$};
\node at (3,-5) {$e^{3}$};
\node at (4,-5) {$+$};
\node at (5,-5) {$e^{5}$};
\node at (-6,-6) {$e^{-6}$};
\node at (-5,-6) {$+$};
\node at (-1,-6) {$+$};
\node at (0,-6) {$e^0$};
\node at (1,-6) {$+$};
\node at (5,-6) {$+$};
\node at (6,-6) {$e^{6}$};
\node at (-10,0) {$\ch(L_0) = $};
\node at (-10,-1) {$\ch(L_{1}) = $};
\node at (-10,-2) {$\ch(L_{2}) = $};
\node at (-10,-3) {$\ch(L_{3}) = $};
\node at (-10,-4) {$\ch(L_{4}) = $};
\node at (-10,-5) {$\ch(L_{5}) = $};
\node at (-10,-6) {$\ch(L_{6}) = $};
\end{tikzpicture}
\]
This should remind the reader of the picture earlier:
\[
\begin{array}{c}
  \includegraphics[width=12cm]{pics/cp3} 
\end{array}
\]
This is not an accident: the non-zero $(n-2i)^{th}$ weight space in
$L_n$ is non-zero (hence one-dimensional), if and only $n \choose i$
is non-zero modulo $p$.\footnote{For the experts: one can check this statement using
  Steinberg's tensor product theorem as we will see in a moment. However the best proof that I know uses
  the Jantzen filtration: the Shapavalov form on the $(n-2i)^{th}$
  weight space is
  $n \choose i$, and hence this weight space is in the $0^{th}$-layer
  of the Jantzen filtration if and only if $n \choose i$
  is non-zero modulo $p$.}

\subsection{Steinberg tensor product theorem} Let us continue our
running example of $p = 3$. We have seen that
\begin{equation} \label{eq:simples}
\nabla_0, \nabla_1, \nabla_2
\end{equation}
are simple, but that $\nabla_3$ is not. In fact, we saw that it
contains a two-dimensional submodule $L_3$ which is isomorphicto a
Frobenius twist
\[
L_3 \cong \nabla_1^{(1)} \subset \nabla_3.
  \]
where $V \mapsto V^{(m)}$ denotes the operation of precomposing $m$
times with the Frobenius map.

In fact, our three modules \eqref{eq:simples} above are the building
blocks of any representation. Given $m$ we consider its $3$-adic
expansion
\[
m = \sum_{i = 1}^\ell m_i p^i \quad \text{with $0 \le m_i < 3$}
\]
and we have
\begin{equation}
  \label{eq:steinberg}
L_m := \nabla_{m_0} \otimes \nabla_{m_1}^{(1)} \otimes \dots \otimes \nabla_{m_\ell}^{(\ell)}.  
\end{equation}
More precisely, one has an evident map given by multiplication from
the above into $\nabla_m$, and the claim is that its image is simple.

Of course there is nothing special about $p = 3$. For general $p$
we consider its $p$-adic expansion
\[
m = \sum_{i = 1}^\ell m_i p^i \quad \text{with $0 \le m_i < p$}
\]
and the above tensor decomposition holds. Taking characters this implies
\[
\ch(L_m) = \prod_{i = 1}^\ell ( e^{-m_i} + e^{-m_i + 2} + \dots + e^{m_i-2} + e^{m_i})^{(i)}
\]
where $f \mapsto f^{(i)}$ is induced by the map $e \mapsto e^{p^i}$ on
$\ZM[e^{\pm 1}]$.

\begin{remark}
  The reader can use this formula to check that the
$(n-2i)^{th}$ weight space of $L_m$ is non-zero if and only if there
are no carries when $i$ and $m$ are added $p$-adically, which is the
condition for the binomial coefficient $n \choose i$ to be non-zero,
by Kummer's theorem on valuations of binomial coefficients.
\end{remark}

\begin{remark}
  Equation \eqref{eq:steinberg} is an instance of the \emph{Steinberg tensor product
    theorem}, which is valid for any reductive group. For a general
  group however, the question of the description of the building
  blocks ($\nabla_0, \dots, \nabla_{p-1}$ for $\SL_2$) is much more
  complicated. This is the subject of Lusztig's character formula
  \cite{LusztigSomeProblems, WTakagi, LusztigGen}.
\end{remark}

\subsection{Generations} It is interesting to consider Pascal's
triangle, where we replace the binomial coefficients with their
Gau{\ss}ian cousins
\begin{equation}
  \label{eq:choose}
{ n \choose i } \rightsquigarrow \left [ \begin{matrix} n \\
    i \end{matrix} \right ]_v := \frac{[n]_v![n-1]_v! \dots
  [n-i+1]_v!}{[i]_v![i-1]_v! \dots [2]_v![1]_v!} \in \ZM[v^{\pm 1}].
\end{equation}
where
\begin{equation}
  \label{eq:factor} \small
[n]_v! = [n]_v [n-1]_v \dots [2]_v[1]_v = 
\frac{v^n-v^{-n}}{v-v^{-1}} \cdot \frac{v^{n-1}-v^{-n+1}}{v-v^{-1}}
\cdot  \dots \cdot
(v +v^{-1}) \cdot 1.
\end{equation}

That is we replace
\[
\tiny
  \begin{array}{c} 
      \begin{tikzpicture}[xscale=0.18,yscale=0.6]
    \node at (0,0) {$1$};
    \node at (-1,-1) {$1$};
    \node at (1,-1) {$1$};
\node at (-2,-2) {$1$};
\node at (0,-2) {$2$};
\node at (2,-2) {$1$};
\node at (-3,-3) {$1$};
\node at (-1,-3) {$3$};
\node at (1,-3) {$3$};
\node at (3,-3) {$1$};
\node at (-4,-4) {$1$};
\node at (-2,-4) {$4$};
\node at (0,-4) {$6$};
\node at (2,-4) {$4$};
\node at (4,-4) {$1$};
\node at (-5,-5) {$1$};
\node at (-3,-5) {$5$};
\node at (-1,-5) {$10$};
\node at (1,-5) {$10$};
\node at (3,-5) {$5$};
\node at (5,-5) {$1$};
\node[rotate=-15] at (-6,-6) {$\vdots$};
\node at (0,-6) {$\vdots$};
\node[rotate=15] at (6,-6) {$\vdots$};
  \end{tikzpicture}
  \end{array}
\rightsquigarrow 
\tiny
 \begin{array}{c} 
      \begin{tikzpicture}[xscale=0.6,yscale=0.6]
    \node at (0,0) {$1$};
    \node at (-1,-1) {$1$};
    \node at (1,-1) {$1$};
\node at (-2,-2) {$1$};
\node at (0,-2) {$[2]_v$};
\node at (2,-2) {$1$};
\node at (-3,-3) {$1$};
\node at (-1,-3) {$[3]_v$};
\node at (1,-3) {$[3]_v$};
\node at (3,-3) {$1$};
\node at (-4,-4) {$1$};
\node at (-2,-4) {$[4]_v$};
\node at (0,-4) {$[3]_v[2]_{v^2}$};
\node at (2,-4) {$[4]_v$};
\node at (4,-4) {$1$};
\node at (-5,-5) {$1$};
\node at (-3,-5) {$[5]_v$};
\node at (-1,-5) {$[5]_v[2]_{v^2}$};
\node at (1,-5) {$[5]_v[2]_{v^2}$};
\node at (3,-5) {$[5]_v$};
\node at (5,-5) {$1$};
\node[rotate=-45] at (-6,-6) {$\vdots$};
\node at (0,-6) {$\vdots$};
\node[rotate=45] at (6,-6) {$\vdots$};
  \end{tikzpicture}
  \end{array}
\]

It has been observed in several circumstances that Gau{\ss}ian
binomial coefficients at a $p^{th}$-root of unity ``imitate
characteristic $p$.'' (This observation goes back at least to
\cite{LusztigQuantum}.) 
We can see this here: if we specialise $v :=
e^{{2\pi i}/3}$ we get a vanishing behaviour that looks like the ``first
level'' of the picture that we began with (for clarity we have replaced zeroes with
empty space):
\[
\begin{tikzpicture}[scale=0.3]
\tiny
\node at (0, 0) {$1$};

\node at (-1, -1) {$1$};
\node at (1, -1) {$1$};

\node at (-2, -2) {$1$};
\node at (0, -2) {$-1$};
\node at (2, -2) {$1$};

\node at (-3, -3) {$1$};
\node at (-1, -3) {};
\node at (1, -3) {};
\node at (3, -3) {$1$};

\node at (-4, -4) {$1$};
\node at (-2, -4) {$1$};
\node at (0, -4) {};
\node at (2, -4) {$1$};
\node at (4, -4) {$1$};

\node at (-5, -5) {$1$};
\node at (-3, -5) {$-1$};
\node at (-1, -5) {$1$};
\node at (1, -5) {$1$};
\node at (3, -5) {$-1$};
\node at (5, -5) {$1$};

\node at (-6, -6) {$1$};
\node at (-4, -6) {};
\node at (-2, -6) {};
\node at (0, -6) {$2$};
\node at (2, -6) {};
\node at (4, -6) {};
\node at (6, -6) {$1$};

\node at (-7, -7) {$1$};
\node at (-5, -7) {$1$};
\node at (-3, -7) {};
\node at (-1, -7) {$2$};
\node at (1, -7) {$2$};
\node at (3, -7) {};
\node at (5, -7) {$1$};
\node at (7, -7) {$1$};

\node at (-8, -8) {$1$};
\node at (-6, -8) {$-1$};
\node at (-4, -8) {$1$};
\node at (-2, -8) {$2$};
\node at (0, -8) {$-2$};
\node at (2, -8) {$2$};
\node at (4, -8) {$1$};
\node at (6, -8) {$-1$};
\node at (8, -8) {$1$};

\node at (-9, -9) {$1$};
\node at (-7, -9) {};
\node at (-5, -9) {};
\node at (-3, -9) {$3$};
\node at (-1, -9) {};
\node at (1, -9) {};
\node at (3, -9) {$3$};
\node at (5, -9) {};
\node at (7, -9) {};
\node at (9, -9) {$1$};

\node at (-10, -10) {$1$};
\node at (-8, -10) {$1$};
\node at (-6, -10) {};
\node at (-4, -10) {$3$};
\node at (-2, -10) {$3$};
\node at (0, -10) {};
\node at (2, -10) {$3$};
\node at (4, -10) {$3$};
\node at (6, -10) {};
\node at (8, -10) {$1$};
\node at (10, -10) {$1$};

\node at (-11, -11) {$1$};
\node at (-9, -11) {$-1$};
\node at (-7, -11) {$1$};
\node at (-5, -11) {$3$};
\node at (-3, -11) {$-3$};
\node at (-1, -11) {$3$};
\node at (1, -11) {$3$};
\node at (3, -11) {$-3$};
\node at (5, -11) {$3$};
\node at (7, -11) {$1$};
\node at (9, -11) {$-1$};
\node at (11, -11) {$1$};

\node at (-12, -12) {$1$};
\node at (-10, -12) {};
\node at (-8, -12) {};
\node at (-6, -12) {$4$};
\node at (-4, -12) {};
\node at (-2, -12) {};
\node at (0, -12) {$6$};
\node at (2, -12) {};
\node at (4, -12) {};
\node at (6, -12) {$4$};
\node at (8, -12) {};
\node at (10, -12) {};
\node at (12, -12) {$1$};

\node at (-13, -13) {$1$};
\node at (-11, -13) {$1$};
\node at (-9, -13) {};
\node at (-7, -13) {$4$};
\node at (-5, -13) {$4$};
\node at (-3, -13) {};
\node at (-1, -13) {$6$};
\node at (1, -13) {$6$};
\node at (3, -13) {};
\node at (5, -13) {$4$};
\node at (7, -13) {$4$};
\node at (9, -13) {};
\node at (11, -13) {$1$};
\node at (13, -13) {$1$};

\node at (-14, -14) {$1$};
\node at (-12, -14) {$-1$};
\node at (-10, -14) {$1$};
\node at (-8, -14) {$4$};
\node at (-6, -14) {$-4$};
\node at (-4, -14) {$4$};
\node at (-2, -14) {$6$};
\node at (0, -14) {$-6$};
\node at (2, -14) {$6$};
\node at (4, -14) {$4$};
\node at (6, -14) {$-4$};
\node at (8, -14) {$4$};
\node at (10, -14) {$1$};
\node at (12, -14) {$-1$};
\node at (14, -14) {$1$};

\node at (-15, -15) {$1$};
\node at (-13, -15) {};
\node at (-11, -15) {};
\node at (-9, -15) {$5$};
\node at (-7, -15) {};
\node at (-5, -15) {};
\node at (-3, -15) {$10$};
\node at (-1, -15) {};
\node at (1, -15) {};
\node at (3, -15) {$10$};
\node at (5, -15) {};
\node at (7, -15) {};
\node at (9, -15) {$5$};
\node at (11, -15) {};
\node at (13, -15) {};
\node at (15, -15) {$1$};

\node at (-16, -16) {$1$};
\node at (-14, -16) {$1$};
\node at (-12, -16) {};
\node at (-10, -16) {$5$};
\node at (-8, -16) {$5$};
\node at (-6, -16) {};
\node at (-4, -16) {$10$};
\node at (-2, -16) {$10$};
\node at (0, -16) {};
\node at (2, -16) {$10$};
\node at (4, -16) {$10$};
\node at (6, -16) {};
\node at (8, -16) {$5$};
\node at (10, -16) {$5$};
\node at (12, -16) {};
\node at (14, -16) {$1$};
\node at (16, -16) {$1$};

\node at (-17, -17) {$1$};
\node at (-15, -17) {$-1$};
\node at (-13, -17) {$1$};
\node at (-11, -17) {$5$};
\node at (-9, -17) {$-5$};
\node at (-7, -17) {$5$};
\node at (-5, -17) {$10$};
\node at (-3, -17) {$-10$};
\node at (-1, -17) {$10$};
\node at (1, -17) {$10$};
\node at (3, -17) {$-10$};
\node at (5, -17) {$10$};
\node at (7, -17) {$5$};
\node at (9, -17) {$-5$};
\node at (11, -17) {$5$};
\node at (13, -17) {$1$};
\node at (15, -17) {$-1$};
\node at (17, -17) {$1$};

\node at (-18, -18) {$1$};
\node at (-16, -18) {};
\node at (-14, -18) {};
\node at (-12, -18) {$6$};
\node at (-10, -18) {};
\node at (-8, -18) {};
\node at (-6, -18) {$15$};
\node at (-4, -18) {};
\node at (-2, -18) {};
\node at (0, -18) {$20$};
\node at (2, -18) {};
\node at (4, -18) {};
\node at (6, -18) {$15$};
\node at (8, -18) {};
\node at (10, -18) {};
\node at (12, -18) {$6$};
\node at (14, -18) {};
\node at (16, -18) {};
\node at (18, -18) {$1$};

\node at (-19, -19) {$1$};
\node at (-17, -19) {$1$};
\node at (-15, -19) {};
\node at (-13, -19) {$6$};
\node at (-11, -19) {$6$};
\node at (-9, -19) {};
\node at (-7, -19) {$15$};
\node at (-5, -19) {$15$};
\node at (-3, -19) {};
\node at (-1, -19) {$20$};
\node at (1, -19) {$20$};
\node at (3, -19) {};
\node at (5, -19) {$15$};
\node at (7, -19) {$15$};
\node at (9, -19) {};
\node at (11, -19) {$6$};
\node at (13, -19) {$6$};
\node at (15, -19) {};
\node at (17, -19) {$1$};
\node at (19, -19) {$1$};

\node at (-20, -20) {$1$};
\node at (-18, -20) {$-1$};
\node at (-16, -20) {$1$};
\node at (-14, -20) {$6$};
\node at (-12, -20) {$-6$};
\node at (-10, -20) {$6$};
\node at (-8, -20) {$15$};
\node at (-6, -20) {$-15$};
\node at (-4, -20) {$15$};
\node at (-2, -20) {$20$};
\node at (0, -20) {$-20$};
\node at (2, -20) {$20$};
\node at (4, -20) {$15$};
\node at (6, -20) {$-15$};
\node at (8, -20) {$15$};
\node at (10, -20) {$6$};
\node at (12, -20) {$-6$};
\node at (14, -20) {$6$};
\node at (16, -20) {$1$};
\node at (18, -20) {$-1$};
\node at (20, -20) {$1$};

\end{tikzpicture}
\]
From the perspective of these notes, the most important thing to
notice about this picture is that its vanishing behaviour 
approximates the earlier pictures, but is markedly simpler.

There exists an object (Lusztig's form of the quantum group of
$\mathfrak{sl}_2$ specialised at a $p^{th}$-root of unity) whose
simple modules have characters determined by the vanishing
behaviour in the above picture. This is a shadow of the fact that the
quantum group at a root of unity imitates the representation theory of
a reductive algebraic group in characteristic $p$, but is simpler.

\begin{remark}
The analogue of the questions discussed in this note for quantum group was
discussed by Wolfgang Soergel at the Current Developments in 1997. (More precisely, due to
the intervention of a border agent they were not discussed; but that
is another story. However
the notes are available \cite{SoergelCDM}.)
\end{remark}

\subsection{Themes of these notes} This discussion of
$\SL_2$ is intended to suggest two themes, which are major motivations
for the considerations in these notes.

\emph{Self-similarity:} We expect fractal-like behaviour in modular
representation theory; both on a numerical level (dimensions,
characters); and on a categorical level.  This behaviour is often tied to the Frobenius
endomorphism
\[
\Fr : G \to G
\]
which implies that the category of representations of $G$ is
self-similar. Indeed, it contains copies of itself given by the
essential images of the Frobenius twist and their variants.\footnote{A
  particularly important variant is the functor $V \mapsto \St
  \otimes V^{(1)}$, where $\St$ denotes the Steinberg representation
  ($L_{p-1} = \nabla_{p-1}$ for $\SL_2$).} 
We suggest that this similarity is connected to the self-similarity of
affine Weyl groups, which contain many isomorphic copies of
themselves.

\emph{Philosophy of generations:} We seek ``blueprints'' for the
fractal-like behaviour, which behave in a simpler fashion. The
archetypal example of this is Pascal's triangle for Gau{\ss}ian
binomial coefficients, discussed in the previous section. These often
involve deformations of our categories (either over polynomial rings,
or to characteristic 0). This philosophy has first been enunciated by
Lusztig \cite{LusztigGen}, where he shows that simple characters display
a ``generational behaviour''. It is also pursued in 
\cite{LW,LWbilliards} for tilting characters, where it is a very
useful guiding principle. Despite its usefulness, we still
lack a rigorous definition of generations (aside from the case of
quantum groups at roots unity, which provide ``generation 1''). A second aim of these
notes is to point out that the self-similarity of affine Weyl groups
suggests a possible approach to a rigorous definition.

\begin{remark}
(Highly speculative!) Following on from Remark \ref{rem:fractal termination} one can hope
to replace $G$ with an object which has simple characters displaying a
genuinely fractal nature. It seems likely that such an object is given
by the projective limit
\[
\hat{G} := \lim_{\leftarrow} G
\]
over the tower of Frobenius maps. I believe this is a perfectoid space
in Scholze's sense \cite{Scholze}. At least one can make a sensible
guess as to what its algebra of distributions is, and study its
representation theory that way. Its representation theory is potentially tied to
the generic cohomology of \cite{CPSvdK}.
\end{remark}

\section{Philosophy of deformations} \label{sec:deformations}

In the following I will try to explain the philosophy of deformations
as it applies to representation theory, and give a few examples.

\subsection{Deformations}
The notion of deformation is fundamental to algebraic geometry. We are
particularly interested in deformations relating smooth and singular
varieties. This is encapsulated in the classic picture
\[
\includegraphics[width=6cm]{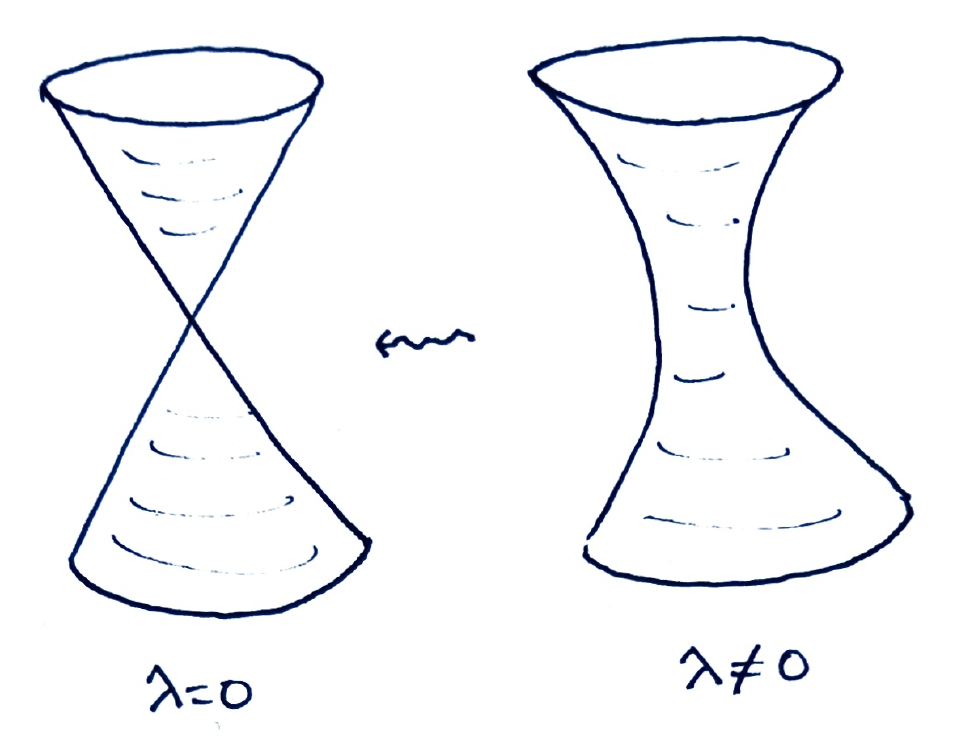}
\]
of a smooth affine conic degenerating to a singular one. In algebraic
geometry the relationship between smooth and singular goes both
ways. For example, smooth varieties are used to construct interesting
objects on singular varieties (e.g. ``vanishing cycles''), or singular
varieties are used to understand smooth varieties (e.g. in the
Gross-Siebert program in mirror symmetry).

One can draw a similar picture in the modular representation theory of finite
groups. Consider a finite group $G$.
Then one can think of $\Rep \ZM G$ as being a
family of categories lying over the line $\Spec \ZM$ with
generic fibre $\Rep \QM G$ (which is semi-simple, therefore
``smooth'') and other fibres $\Rep \FM_p G$ (which is never of finite
homological dimension if $p$ divides the order of $G$, and is
therefore ``singular'' at these points):
\[
\includegraphics[width=8cm]{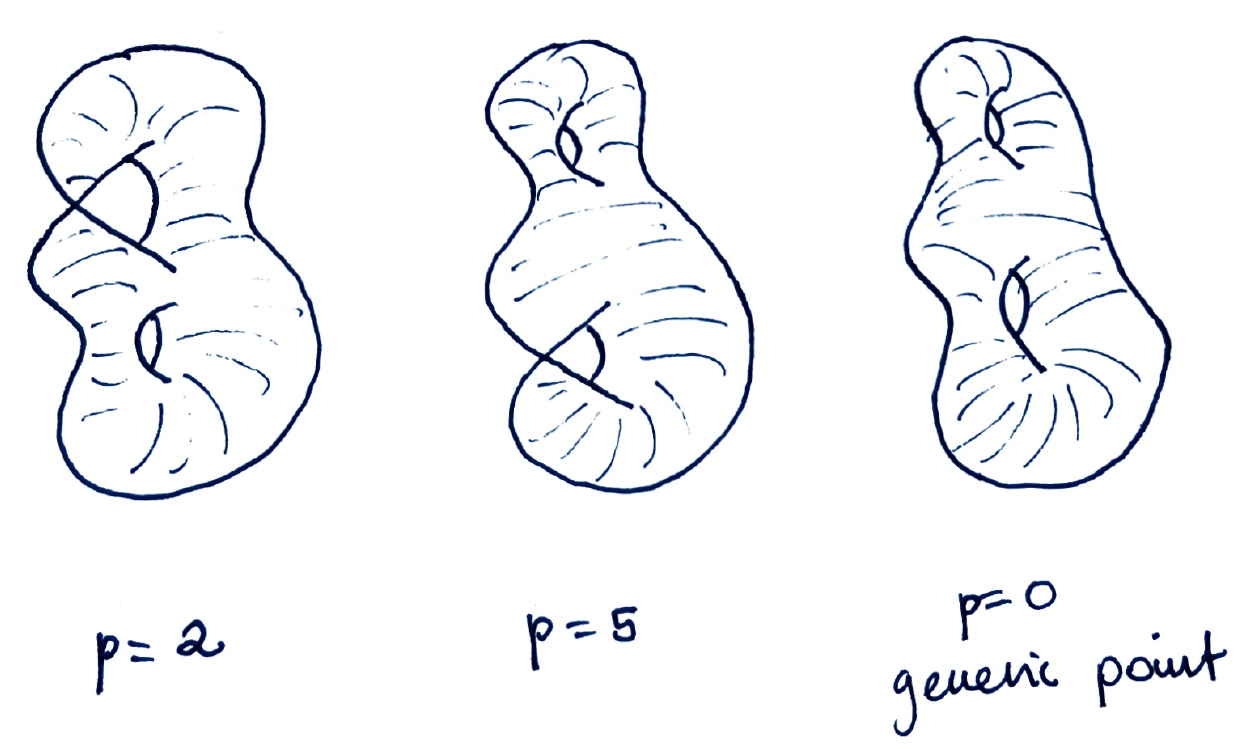}
\]
The reader is cautioned not to take this picture literally. A
slightly more accurate picture would have finitely many reduced points
(the simple representations of $G$ over $\QM_p$) degenerating to some
complicated non-reduced scheme at the singular fibre.
In representation theory the flow of information is almost always from the
``smooth'' to the ``singular''.

The above setting of modular representations of finite groups is
extraordinarily complicated, and there are few
groups where we understand well what happens at the ``singular
fibre''. However quantum objects (Hecke algebras, quantum groups) give
us simpler problems which we can attempt to study (both for their own
interest, and in the hope that they may shed some light on modular
representation theory).

\subsection{The Hecke algebra}
The simplest example of this phenomenon is the Hecke algebra of the
symmetric group. Recall that the symmetric group $S_n$ has a presentation with
generators the $i^{th}$ simple transposition $s_i = (i,i+1)$, for $i = 1, \dots, n-1$, and
relations:
\begin{align}
  \label{eq:Sym}
  s_i^2 &= \id, \\
  s_is_{i+1}s_i &= s_{i+1}s_is_{i+1}, \\
s_is_j &= s_js_i  \qquad \text{if $|i-j| > 1$}.
\end{align}
The Hecke algebra $\Hecke_{S_n}$ of the symmetric group is a $\ZM[v^{\pm 1}]$-algebra
with generators $\delta_i$, for $i = 1, \dots, n-1$, and relations:
\begin{align}
  \label{eq:Hecke}
  \d_i^2 &= (v^{-1}-v)\d_i + 1, \\
  \d_i\d_{i+1}\d_i &= \d_{i+1}\d_i\d_{i+1}, \\
\d_i\d_j &= \d_j\d_i  \qquad \text{if $|i-j| > 1$}.
\end{align}
Setting $v = 1$ we recover the relations of the symmetric group. 
The Hecke algebra is free over $\ZM[v^{\pm 1}]$ with a ``standard
basis'' $\{ \delta_x \; | \; x \in S_n \}$, which becomes  the standard
basis of the group algebra when we specialise $v := 1$. It is useful to think of the
Hecke algebra as a flat deformation of the group algebra of the
symmetric group over $\Spec \ZM[v^{\pm 1}]$. 

In the group algebra of $S_n$ one has the central element
\[
e = \sum_{x \in S_n} x
\]
which satisfies
\begin{equation}
  \label{eq:sne}
e^2 = |S_n| \cdot e = n! \cdot e.  
\end{equation}
One can use this to show that $kS_n$ is semi-simple if and only if
$n!$ is non-zero in $k$.

The analogue in $\Hecke_{S_n}$ of $e$ is
\[
e_v = \sum_{x \in S_n} v^{N-\ell(x)}\delta_x
\]
where $\ell$ is the usual length a permutation, and $N = n(n-1)/2$ is
the length of the permutation that switches $1$ and $n$, $2$ and
$n-1$, etc. The analogue of \eqref{eq:sne} is
\begin{equation}
  \label{eq:Hne}
e_v^2 = [n]_v! \cdot e_v.
\end{equation}
where $[n]_v!$ is as in \eqref{eq:factor}. Similarly to the case of $S_n$, one can use \eqref{eq:sne} to show that,
for any field and non-zero $\lambda \in k$, the specialisation
(``fibre'') $\Hecke_{S_n} \otimes_{v \mapsto \lambda} k$ is
semi-simple if and only if the evaluation of $[n]_v!$ at $v = \lambda$
is non-zero in $k$.
If we specialise $v \mapsto \lambda \in \CM$ then we deduce from
\eqref{eq:factor} that $\Hecke_{S_n} \otimes_{v \mapsto \lambda} \CM$
is semi-simple if and only if $\lambda$ is not an $m^{th}$-root of
unity, for $m = 2, \dots, n$.

Now $\Spec \ZM[v^{\pm 1}]$ is of dimension 2 and hence has two directions: a ``geometric''
direction (corresponding to specialisations of $v$); and an
``arithmetic'' direction (corresponding to specialisations of $\ZM$ to
various fields. There has been significant progress in our
understanding of the geometric direction (e.g. the LLT conjecture \cite{LLT},
proved by Ariki \cite{Ariki}). Progress in the arithmetic direction has
been more modest, but the picture with Hecke algebras has been a
fruitful source of problems and motivation (e.g. the James conjecture
\cite{JC,Wexplosion}).


\subsection{Higher-dimensional bases} In algebraic geometry one also
encounters deformations over higher dimensional bases.
It is a fundamental observation (developed by Jantzen \cite{JModuln},
Gabber-Joseph \cite{GJ},
Soergel \cite{SHC}, \dots ) that a similar picture exists in the
infinite-dimensional representation theory of complex semi-simple Lie
algebras.

Let $\mathfrak{g}$ denote a complex semi-simple Lie algebra, with Cartan
and Borel subalgebra $\hg \subset \bg \subset \mathfrak{g}$, root system $\Phi \subset \hg^*$ and Weyl
group $W$. Let $\OC$ denote the BGG category of certain
infinite-dimensional representations of $\mathfrak{g}$. Recall that we
have a decomposition via central character
\[
\OC = \bigoplus_{ \lambda \in \hg^*/(W \cdot)} \OC_\lambda
\]
and the most complicated pieces are $\OC_\lambda$, where $\lambda$ is
dominant integral.

Let us fix $\lambda$, and suppose we wish to study
$\OC_\lambda$. Deformed category $\OC$ defines a deformation of $\OC_{\lambda}$
over $\Spec \hat{R}$, where $\hat{R}$ denotes the completion of the
symmetric algebra on $\hg$ at the maximal ideal generated by the point
$\lambda$; it should be thought of as 
an infinitesimal neighbourhood of $\lambda \in \hg^*$. The picture of
the fibres of deformed category $\OC$ are as follows:
\[
\includegraphics[width=9cm]{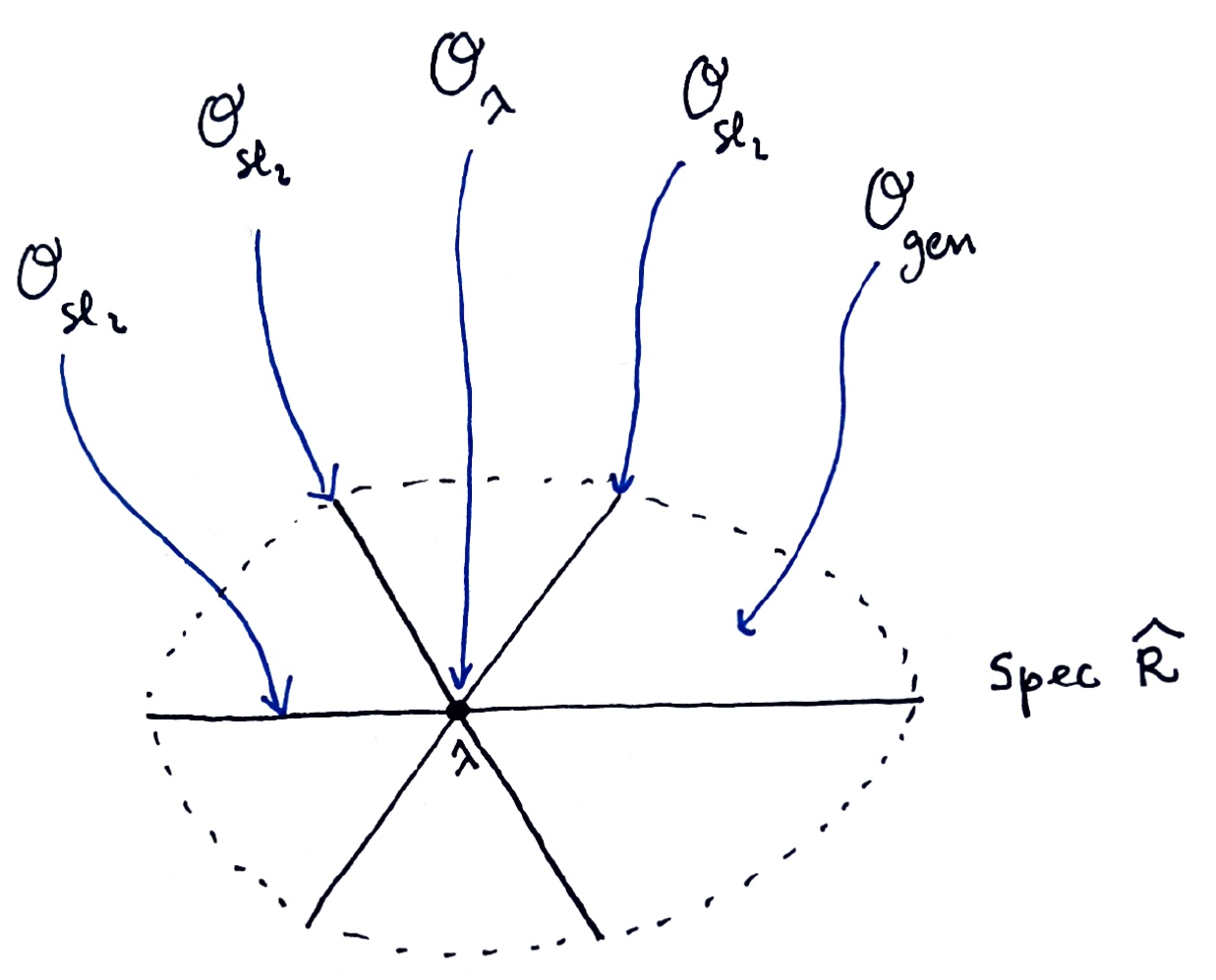}
\]
Here we have drawn the picture for $\sl_3$. The lines in $\Spec
\hat{R}$ are the root hyperplanes passing through 
$\lambda$. The fibres are as follows:
\begin{enumerate}
\item $\OC_\lambda$ is the fibre over the central point; it is
  the category that we are interested in.
\item $\OC_{\sl_2}$ is the fibre over all points that lie in precisely
  one root hyperplane. Each such fibre is equivalent to a direct sum
  of blocks of category $\OC$ for $\sl_2(\CM)$.
\item $\OC_{\gen}$ is the fibre over points which do not lie on any
  root hyperplane. Each such fibre is a semi-simple category, equivalent to $|W|$
  copies of the category of finite-dimensional $\CM$-vector spaces.
\end{enumerate}
The reason why this picture is so powerful is that it allows one to
see $\OC_\lambda$ (a very complicated category) as glued together out
of simple categories (copies of vector spaces, and categories $\OC$
for $\sl_2$, which may be described explicitly).

\begin{remark}
  A similar setting occurs in the localisation theorem in equivariant
  cohomology, where the cohomology injects into equivariant cohomology
  of fixed points, with image described by conditions coming from
  one-dimensional orbits. This similarity is more than a coincidence
  and has been exploited in Soergel's work.
\end{remark}

\begin{remark}
  This example may appear unrelated to the earlier examples
  from modular representation theory. In fact, the examples from
  modular representation theory were an important motivation for Jantzen to
  introduce deformed category $\OC$: he was seeking a way to apply the
  ideas from his study of Weyl modules in characteristic $p$ (and in particular their Jantzen
  filtrations) to category $\OC$.
\end{remark}






\subsection{The anti-spherical category} In this section we discuss
the various localisations of the anti-spherical category. The goal is
to try to understand the anti-spherical category via the philosophy of
deformations.

\begin{remark}The
case of the anti-spherical module was the principle motivating example when writing these notes. We
thus feel it is important not to defer the discussion of this case
until the very end, which would perhaps be logical from a
mathematical perspective.
The discussion of this section is necessarily more technical than the
previous sections. The reader unfamiliar with the Hecke category
should defer the reading of this section until they have read the
section on the Hecke category. It would also be very helpful to have read the
introductions of \cite{RWpcan} and \cite{LWasph}.
\end{remark}

Let $\HC$ denote the Hecke category of an affine Weyl group $W$. As a
right module over itself it is linear over $R$, where $R$ denotes the
symmetric algebra on $\Chi_{\rot}$, the characters of the extended
torus
\[
T_\rot := T \times \CM^*_\rot
\]
where $T$ is a maximal torus of a reductive group $G$, and
$\CM^*_\rot$ is the loop rotation $\CM^*$ inside the loop group
$\CM^*_\rot  \ltimes G((t))$. Thus we have
\[
\Chi_{\rot} := \ZM \delta \oplus \Chi
\]
where $\Chi$ denotes the character lattice of $T$. Here $\delta$
(which will play a crucial role below), is the identity character of
the loop rotation $\CM^\times_\rot \subset T_\rot$.

Let us consider the Hecke category $\HC_{\ZM_p}$
over the $p$-adic integers $\ZM_p$. The anti-spherical module is by definition the quotient
of additive categories
\[
\AS := \HC_{\ZM_p}/ \langle B_w \; | \; w \notin {}^f W \rangle_{\oplus}
\]
where ${}^f W$ denotes minimal coset representatives for the right
action of the finite Weyl group $W_f$.
This is naturally a right $ \HC_{\ZM_p}$-module, as $\langle B_w \; |
\; w \notin {}^f W \rangle_{\oplus}$ is an ideal. (We have not covered
all notation, see \cite[\S 1.3]{RWpcan} for more details.)

When we form this quotient we are forced to kill certain symmetries
of $\HC_{\ZM_p}$. For example, if $\alpha_s \in \Chi$ denotes a simple
root then $\alpha_s$ (viewed as an endomorphism of $\HC$) factors
through $B_s$, and hence is zero in $\AS$. Hence all of $\Chi$
necessarily acts by zero.\footnote{We are tacitly assuming $p$ is not too
  small, so the weight and root lattices coincide when tensored with $
 \ZM_p$.} If we denote
by $\mathbb{1}_{\AS}$ the image of the unit in $\HC_{\ZM_p}$ in $\AS$,
then it turns out that
\begin{equation} \label{eq:unit-end}
\End^{\bullet}(\mathbb{1}_{\AS}) = \ZM_p[\delta]
\end{equation}
where $\delta$ has degree 2. (In other words, the obvious
endomorphisms $\alpha_s$ just discussed are killed, and the rest
survives.)

\begin{figure} \caption{Localisations and specialisations of
    $\AS$.} \label{fig:locspec}
\[
\begin{array}{c}
\begin{tikzpicture}[xscale=1.3,yscale=1]
\draw[gray] (-4,0) -- (4,0);
\draw[gray] (0,-4) -- (0,4);
\draw[gray] (-4,4) -- (4,4) -- (4,-4) -- (-4,-4) -- (-4,4);
\node at (-2,3) {$\FM_p[\delta^{\pm 1}]\otimes_{\ZM_p[\delta]} \AS$};
\node[rotate=90] at (-2,2.25) {$\cong$};
\node at (-2,1.5) { $\bigoplus_{x \in {}^r W} \FM_p[\delta^{\pm 1}]\otimes_{\ZM_p[\delta]} \AS_x$};
\node at (-2,.75) {(\cite{Hazi}, \text{these notes})};
\node at (2,3) {$\QM_p[\delta^{\pm 1}]\otimes_{\ZM_p[\delta]} \AS$};
\node[rotate=90] at (2,2.25) {$\cong$};
\node at (2,1.5) { $\bigoplus_{x \in {}^f W} \QM_p[\delta^{\pm 1}]-\textrm{mod}$};
\node at (2,.75) {(semi-simple, \cite{LWasph})};
\node at (-2,-.75) {$\FM_p\otimes_{\ZM_p[\delta]} \AS$};
\node[rotate=90] at (-2,-1.5) {$\cong$};
\node at (-1.2,-1.5)  {\tiny (degrading)};
\node at (-2,-2.25) {$\Tilt_0 (G^\vee_{\FM_p})$};
\node at (-2,-3) {(\cite{RWpcan})};
\node at (2,-.75) {$\QM_p\otimes_{\ZM_p[\delta]} \AS$};
\node[rotate=90] at (2,-1.5) {$\cong$};
\node at (2.8,-1.5)  {\tiny (degrading)};
\node at (2,-2.25) {$\Tilt_0 (U_\epsilon(\mathfrak{g}^\vee))$};
\node at (2,-3) {(\cite{SoergelCDM, AB})};
\node at (-2,-4.5) {$\FM_p$};
\node at (2,-4.5) {$\QM_p$};
\node at (-5,2) {$\delta \ne 0$};
\node at (-5,-2) {$\delta = 0$};
\end{tikzpicture}
\end{array}
\]
\end{figure}

The crucial equality \eqref{eq:unit-end} allows us to view $\AS$ as a
family over the two-dimensional base $\ZM_p[\delta]$. Figure
\ref{fig:locspec} depicts the various localisations and
specialisations of $\AS$. Here are some more details:
\begin{enumerate}
\item ($\ZM_p[\delta] \rightsquigarrow \FM_p$): This is the source of our interest in
  $\AS$. Let $G^\vee_{\FM_p}$ denote the (split) Langlands dual group
  over $\FM_p$, and let $\Tilt_0 (G^\vee_{\FM_p})$ denote the full
  subcategory of tilting modules in the same block as the trivial
  module (the ``principal block''). We have a functor\footnote{This functor is constructed by
    combining \cite[Theorem 7.4]{AMRW} with the main result of \cite{AR}. It is
    constructed for $G = GL_n$ in \cite{RWpcan}. We are tacitly
    making further assumptions on $p$ \dots}
\[
\phi : \FM_p\otimes_{\ZM_p[\delta]} \AS \to \Tilt_0 (G^\vee_{\FM_p})
\]
which satisfies $\phi \circ [1] \cong \phi$ and
\[
\phi : \bigoplus_{m \in \ZM} \Hom( \ES, \FS[m]) \simto 
\Hom(\phi(\ES),\phi(\FS)).
\]
(In other words, $\phi$ induces an equivalence between $\Tilt_0
(G^\vee_{\FM_p})$ and the anti-spherical module specialised over
$\FM_p$ with $\delta = 0$, once we forget the grading on the latter.)
This allows one to deduce a character formula for tilting modules in
terms of the $p$-Kazhdan-Lusztig basis.
\item ($\ZM_p[\delta] \rightsquigarrow \QM_p$):  Let $\mathfrak{g}^\vee$ denote the
  complex semi-simple Lie algebra which is Langlands dual to $G$. Let
  $U_\epsilon(\mathfrak{g}^\vee)$ denote the quantum group at a
  $p^{th}$ root\footnote{Actually, the root of 1 that one takes doesn't
  matter below as long as it isn't too small, as the principal blocks
  at different roots of $1$ are all equivalent!}  of $1$ and let
$\Tilt_0(U_\e(\mathfrak{g}^\vee))$ denote its full subcategory of
tilting modules. We have a functor\footnote{This functor may be
  constructed by combining the main result of \cite{ABG} with the
  parabolic/Whittaker Koszul duality of \cite{BY}.}
\[
\phi : \QM_p\otimes_{\ZM_p[\delta]} \AS \to \Tilt_0(U_\epsilon(\mathfrak{g}^\vee))
\]
which is an ``equivalence up to degrading'' (as above). Because our
coefficients are of characteristic zero, the combinatorics of the
left-hand side are governed by Kazhdan-Lusztig theory. In this way one
obtains a formula for the characters of tilting modules in terms of
the anti-spherical module, recovering the result of \cite{Wkipp,
  SoergelCDM}.
\item ($\ZM_p[\delta] \rightsquigarrow \QM_p[\delta^{\pm 1}]$): Here the category is semi-simple,
  with objects parametrised by dominant alcoves. In terms of the
  previous equivalence, this is the statement that the principal block
  of $U_\epsilon(\mathfrak{g}^\vee)$ is semi-simple away from roots of
  unity. This semi-simplicity at the generic point is
  a key feature of \cite{LWasph}.
\item ($\ZM_p[\delta] \rightsquigarrow \FM_p[\delta^{\pm 1}]$):  This is the starting point of this
  work. In a remarkable recent work, Hazi \cite{Hazi} showed that one has an
  equivalence
  \begin{equation}
    \label{eq:hazi ss}
R[\delta^{-1}] \otimes_{\FM_p} \HC_{\FM_p} \simto \bigoplus_{ w \in
  {}^r W} R[\delta^{-1}] \otimes_{\FM_p} \HC_{\FM_p}.    
  \end{equation}
(At first pass, this is an equivalence of additive categories. We
emphasise that it is not monoidal. We will
see later on that it is naturally interpreted as an equivalence of
certain bimodule categories for the Hecke categories of $W$ and a
reflection subgroup, see Theorem \ref{thm:sm} as
well as \S \ref{sec:hypbim}, for a geometric (resp. algebraic)
version).
 In other words, $\HC$ demonstrates some
self-similarity 
after localisation. The equivalence 
  \begin{equation}
    \label{eq:as ss}
\FM_p[\delta^{\pm 1}]\otimes_{\ZM_p[\delta]} \AS
\cong 
\bigoplus_{x \in {}^r W} \FM_p[\delta^{\pm 1}]\otimes_{\ZM_p[\delta]} \AS
  \end{equation}
is a reasonably easy consequence of \eqref{eq:hazi ss}, as we try to
explain in \S \ref{sec:locas}.
\end{enumerate}

The key technical motivations for these notes are the following:
\begin{enumerate}
\item To place \eqref{eq:hazi ss} and \eqref{eq:as ss} in a broader
  consequence of localisation to ``good'' reflection subgroups. (The
  background for this occurs in \S \ref{sec:affine} and the details
  are in \S \ref{sec:heckeloc}.)
\item To explain the geometric meaning of \eqref{eq:hazi ss} and
  \eqref{eq:as ss}. When our reflection subgroup is a parabolic subgroup,
  such equivalences may be understood in terms of hyperbolic
  localisation. However when we are in characteristic $p$ (and in
  particular to get the equivalences \eqref{eq:hazi ss} and
  \eqref{eq:as ss}) we need
  Smith theory. Here we follow work of Treumann
  \cite{Treumann} and 
  Leslie-Lonergan \cite{LL}.
\end{enumerate}

\begin{remark}
  It was via the embedding
\[
\AS \into \QM_p[\delta] \otimes_{\ZM_p[\delta]} \AS 
\]
that the author was able to compute many new tilting characters for
$\SL_3$, leading to the billiards conjecture of
\cite{LWbilliards}. (See \cite[\S 3]{LWbilliards} for more on how this
was done.) More recently, Thorge Jensen has implemented the embedding
\[
\AS \into \QM_p[\delta^{\pm 1}] \otimes_{\ZM_p[\delta]} \AS
\]
and discovered that the resulting algorithm is much quicker.\footnote{The
calculations that led to \cite{LWbilliards} took 10 months. Now T. Jensen
can repeat them in two weeks.} (Roughly
speaking, it is easier to compute in a semi-simple category than in a
category given as a quiver with relations.) It is an
interesting question as to whether the ``new'' localisation
\[
\AS \into \ZM_p[\delta^{\pm 1}] \otimes_{\ZM_p[\delta]} \AS
\]
will have computational consequences.
\end{remark}

\begin{remark} One of the reasons that I like the
  above picture is that it suggests a possible definition of the
  principal block for ``higher
  generations''. Recall that such an object should complete the
  dots in the sequence
  \[
\Rep \mathfrak{g^\vee}, \Rep U_\epsilon(\mathfrak{g^\vee}),  \dots , \Rep G^\vee_{\FM_p}.
\]
with the first (semi-simple) category being generation 0, the second
generation 1, and the last generation $\infty$ (see \cite[\S
2]{LWbilliards}). Let us imagine that we have a category $\AS^g$ which
is a deformed version of the principal block in generation $g$. Recent
calculations of Hazi and Elias suggest  that the after localiation the top left of the above digram will
  consist of a number of copies of $\AS^{g-1}$.  It might be possible
  to reverse this observation, and use the above deformation
  philosophy to \emph{define} (a deformed version of) $\AS^g$ by
  taking a direct sum of copies of $\AS^{g-1}$. 
  \end{remark}









\section{Affine reflection groups and Kazhdan-Lusztig theory} \label{sec:affine}

In this section we review the theory of Coxeter groups, paying
particular attention to the case of affine Weyl groups and their
reflection subgroups. We briefly review Kazhdan-Lusztig theory and
give examples of positivity phenomona tied to reflection subgroups.

\subsection{Affine reflection groups} \label{sec:affine groups}

Let $V$ denote a finite-dimensional real Euclidean vector space and
consider a discrete subgroup $W$ of the affine transformations of $V$
which is generated by reflections. We will refer to such a group as an \emph{affine
reflection group} (although it might, for example, be finite). Let us
briefly review the beautiful theory that allows us to understand $W$
via its action on $V$.

We consider the set $\reflect$ of hyperplanes $H$ fixed by some element of
$W$ (then necessarily an affine reflection). Elements of $\reflect$
are called \emph{reflecting hyperplanes}. Of course $W$ acts on the 
arrangement $\reflect$. Our discreteness assumption guarantees that
$\reflect$ is a locally-finite arrangement. (``Locally finite'' means that
any point has a neighbourhood which only meets finitely many
hyperplanes.) The connected components of the
complement
\[
V \setminus \bigcup_{H \in \reflect} H
\]
of all reflecting hyperplanes are called
\emph{alcoves}.

We denote the set of alcoves by $\AC$. Any alcove $A
\in \AC$ is open, and its closure is homeomorphic to a product of:
simplices of various dimensions; copies of $\RM_{\ge 0}$; and copies
of $\RM$. (The most familiar setting is when our group is an
irreducible affine reflection group, in which case the closure of $A$ is homeomorphic
to a simplex.) On the set of alcoves we can consider something like a metric:
\[
\delta(A,B) := \# \{ H \in \reflect \; | \; H \text{ separates $A$ and $B$} \}.
\]

Let us make a fixed, arbitrary choice of an alcove $A_0 \in \AC$. Any reflecting
hyperplane which intersects the closure of $A_0$ in codimension one is
called a \emph{wall} of $A_0$. We denote by $\reflect_0$ the set of walls
of $A_0$, and $S$ the set of reflections in the hyperplanes $\reflect_0$.

The two fundamental facts which get the theory started are:
\begin{gather}
  \label{eq:generation}
  \text{$W$ is generated by $S$;} \\
  \label{eq:fundamental}
  \text{$\overline{A}_0$ is a fundamental domain for the $W$-action on
    $V$.}
\end{gather}
(A sketch of why these two properties hold: First one shows that any alcove $A$ can be moved by $W$ back to $A_0$,
by showing that $\delta(A_0, A)$ decreases by 1 when we act by a reflection
in a wall of $A_0$ separating $A_0$ and $A$. This implies that any
reflection (and hence any element) in $W$ can be expressed as a word
in $S$, which is \eqref{eq:generation}. By induction, it also implies that
\[
\delta(A_0, xA_0) = \ell(x)
\]
where $\ell(x)$ denotes the minimal length of an expression for $x$ in
the generators $S$, from which \eqref{eq:fundamental} follows.)
From \eqref{eq:fundamental} it follows that our choice of $A_0$ leads to a bijection:
\begin{equation}
  \label{eq:bijection}
  W \simto \AC : x \mapsto x(A_0).
\end{equation}

We now explain why our choice of $A_0$ also gives rise to a Coxeter presentation
of $W$. Consider two walls $H, H'$ of $A_0$, and let $s$ and $t$
denote the reflections in these walls. If $H$ and $H'$ meet then $st$
is a rotation about the axis $H \cap H'$ of some finite order
$m_{st}$. If $H$ and $H'$ do not meet then they are parallel, and $st$
is a translation (then of infinite order, in which case we set $m_{st} =
\infty$.)

The third fundamental fact is that $W$ admits a \emph{Coxeter presentation}:
\begin{gather}
  \label{eq:presentation}
W = \left \langle s \in S \; \middle | \;
\begin{array}{c} s^2 = 1 \text{ for all $s \in S$}\\ (st)^{m_{st}} =
                              1 \text{ for all $s,t \in S$}\end{array}
\right \rangle
\end{gather}
In other words, all relations in $W$ amongst the generator $S$ can be deduced from the
``obvious'' relations of the previous paragraph. Again, the action of
$W$ on $V$ and the set of alcoves can be used to give a simple
proof of \eqref{eq:presentation}.

\begin{remark}
A canonical reference for the material above is \cite[\S VI.2]{Bourbaki}. The
book of Humphreys \cite{Humphreys} is also useful.
\end{remark}


\subsection{Reflection subgroups} \label{sec:ref1}

A \emph{reflection subgroup} of $W$ is a subgroup generated by
reflections. Being itself an affine reflection group, it is clear that
the above theory applies just as well to any reflection
subgroup (e.g. it is itself a Coxeter group, \dots). However the theory just outlined also allows us to study the
interaction between $W$ and its reflection subgroups, as we now explain.

Let $W_r \subset W$ denote a reflection subgroup.
Let us denote by $\reflect_r$ and $\AC_r$ the reflecting hyperplanes (fixed
points of reflections) and alcoves for $W_r$ acting on $V$. Then the
hyperplanes of $\reflect_r$ are amongst those of $\reflect$, because $W_r$ is a
subgroup of $W$. Also, having fixed our base alcove $A_0$, we have a
canonical choice for a base alcove $A_r \in \AC_r$, namely
\[
A_r := \text{the unique alcove in $\AC_r$ containing $A_0$.}
\]

Because $A_r$ is a fundamental domain for the action of $W_r$ it
follows that we can transport any alcove $A \in \AC$ into $A_r$ in a
unique way using just $W_r$. Interpreted via our bijection \eqref{eq:bijection}, this
is telling us that we have a canonical bijection
\[
W_r \setminus W \simto \{ A \in \AC \; | \; A \subset A_r \}.
\]
Moreover, this bijection is well-adapted to the Coxeter structure. If
we identify the right hand side with a subset
\[
{}^rW \subset W
\]
via
\eqref{eq:bijection} then ${}^rW$ consists of representatives of each
right coset of \emph{minimal length}. By inverting these elements we
also obtain a set
\[
W^r \subset W
\]
of minimal length  of each left coset.

\begin{remark}
The case where $W_r$ is
generated by a subset of $S$ returns the (probably more familiar) minimal length
coset representatives of standard parabolic subgroups.  
\end{remark}

\begin{remark} It is useful to keep in mind that
  $W_r \subset W$ is rarely a normal subgroup: it is rare
  for a collection of reflecting hyperplanes to be stable under $W$.
\end{remark}

\subsection{General Coxeter groups}  \label{sec:ref2}

We briefly discuss general Coxeter groups, their reflection
representations and reflection subgroups. The statements below without references are proved in \cite[\S VI]{Bourbaki} and \cite[\S
5]{Humphreys}.

The above discussion shows that affine reflection groups are examples
of Coxeter groups, i.e. groups with a distinguished set $S$ of
generators, and a presentation of the form \eqref{eq:presentation},
where now
\[
m_{st} \in \{ 2, 3, \dots, \infty\} \quad\text{for $s, t \in S$ with
  $s \ne t$}
\]
is arbitrary.

Let $V$ be a finite-dimensional vector space. A \emph{reflection} is
a linear transformation of $V$ which fixes a hyperplane and sends a
vector to its negative. A
\emph{reflection pair} is pair $(\gamma^\vee, \gamma) \in V^* \times
V$ such that $\langle \gamma^\vee, \gamma \rangle = 2$. Any reflection
pair $(\gamma, \gamma^\vee)$ gives rise to a reflection $s$ via the
formula
\[
s(v) = v - \langle \gamma^\vee, v \rangle \gamma.
\]
(Indeed, $s$ fixes the zero set of $\gamma^\vee$ and sends $\gamma$ to
its negative.) A \emph{based
  reflection} is a reflection given by a (fixed) reflection
pair. Given a based reflection, we refer to $\gamma$ and $\gamma^\vee$ as
its \emph{root} and \emph{coroot} respectively.

\begin{remark}
  Outside of characteristic 2, the condition $\langle \gamma^\vee,
  \gamma \rangle = 2$ forces $\gamma^\vee$ and $\gamma$ to be
  non-zero, and hence the fixed points of $s$ is precisely the zero
  set of $\gamma^\vee$, and $\gamma$ spans its $-1$-eigenspace. This
  determines $\gamma$ and $\gamma^\vee$ up to simultaneous (inverse)
  scaling. It follows that there is not a great
  difference between reflections and based reflections. In
  characteristic 2 however things are tricky: reflections may be the
  identity or unipotent.
\end{remark}


Let $(W,S)$ be a general Coxeter group. The \emph{reflections} in $W$ are the conjugates
\[
T = \bigcup_{w \in W} wSw^{-1}
\]
of the set $S$. A \emph{reflection representation} is one in which the
reflections $t \in T$ are given by based reflections. (Thus for every
reflection $t$ we may refer to its root $\gamma$ and coroot
$\gamma^\vee$.)

\begin{remark}
  For finite Coxeter groups the irreducible reflection representations
  are usually unique (at least up to Galois conjugacy, and 
  ambiguity in choices of roots and coroots). For example,
  the natural representation of the symmetric group on $\{ \lambda\in
  \QM^n \; | \; \sum \lambda_i = 0\}$ is the unique irreducible
  reflection representation of $S_n$ over $\QM$, whereas the dihedral group of
  order 10 has two reflection representations. Both are of dimension
  2, are defined over $\QM(\phi)$
  where $\phi$ is the golden ratio, and are swapped by the
  action of the Galois group. Thus, up to adding copies of the trivial representation, reflection
  representations are almost unique for finite Coxeter groups. The situation
  is quite different for infinite Coxeter groups, where one may have non-trivial
  moduli of reflection representations. (This is the case for example
  for the affine Coxeter group of type $A$. A one-dimensional family
  of representations is described
  in \cite[\S 1]{EliasAlgSat}. In fact, the existence of deformations
  of the natural representation is crucial to the results of that paper.)
\end{remark}

Any Coxeter group possesses a distinguished reflection representation,
known as the \emph{geometric representation}. We take
$V$ to be a vector space over $\RM$ with a basis $\{ \alpha_s \; | \; s \in S \}$ and a
$W$-action defined by
\[
s(v) := v - B(\alpha_s,-)\alpha_s
\]
where $B$ is the symmetric bilinear form defined on our basis by
\[
B(\alpha_s, \alpha_t) = -\cos \left ( \frac{\pi}{m_{st}} \right ). 
\]
(In the above language, the root and coroot of each simple reflection
is $\alpha_s \in V$ and $B(\alpha_s, -) \in V^*$ respectively.) This representation is faithful.

\begin{remark}
  The term \emph{geometric representation} is common in the
  literature, but is perhaps unfortunate. In fact, $(W,S)$ has other
  reflection representations which play an important role in the structure theory
  of $V$, and which have an equal right to be called
  ``geometric''. The simplest is the dual of $V$ which is crucial to
  the definition of the Tits cone. Another is the representation
  considered by Soergel \cite{SB} which has the above representation
  as a subrepresentation, and its dual as a quotient. 
\end{remark}

If we consider the set
\[
\Phi := W \cdot \{ \alpha_s \; | \; s \in S \}
\]
then one gets something like a root system. We have a
decomposition
\[
\Phi = \Phi^+ \sqcup \Phi^-
\]
where $\Phi^+$ consists of those elements of $\Phi$ which can be
expressed as positive linear combinations of the basis vectors $\{
\alpha_s \; | \; s \in S \}$.

If $T \subset W$ denotes the reflection as above, one has a bijection
\[
\Phi^+ \simto T
\]
which sends $\gamma \in \Phi^+$ to an element $s_\gamma$
which acts on $V$ via
\[
s_\gamma(v) = v- B(v,\gamma)\gamma.
\]
(That is, $\gamma$ and $B(\gamma, -)$ are the root and coroot of the
reflection $s_\gamma$.)

A reflection subgroup is a subgroup $W_r \subset W$ generated by a
subset of $T$. Following the intuition gained in the affine case, one
might hope that any such subgroup is again a Coxeter group, with a
canonical set of generators. This is indeed the case, as was proved by
Dyer \cite{DyerR}\footnote{Dyer's proof is short, beautiful and
  well-worth a look!} and Deodhar \cite{DeodharR}.

As above one has minimal coset representatives ${}^r W$ and $W^r$ for
the right and left cosets of $W$. They may be characterised as follows:
\begin{gather} \label{eq:minl}
  {}^r W = \{ x \in W \; | \; x^{-1}(\alpha) > 0 \text{ for all
    $\alpha \in \Phi_r$} \}, \\ 
\label{eq:minr}
  W^r = \{ x \in W \; | \; x(\alpha) > 0 \text{ for all
    $\alpha \in \Phi_r$} \}.
\end{gather}

\subsection{Self-similarity} \label{sec:self-similar}
Our intuition suggests that reflection
subgroups should be ``smaller'' than the original Coxeter group. This
intuition is deceptive, for example one can find reflection
subgroups with infinitely many Coxeter generators inside hyperbolic
reflection groups. Here we point out that affine reflection subgroups
often have natural chains of reflection subgroups which are isomorphic to the
groups themselves.

Let us return to the setting of an affine reflection group acting on a
affine space $V$.  If we fix a base-point $b \in V$ then $V$
becomes a vector space by declaring $0 := b$. Now scaling by a real
number $\ell$ makes sense.

Let us assume that there exists a real number $\ell$ such that
$\ell\reflect \subset \reflect$. There is a trivial case in which all
reflecting hyperplanes pass through $b$. In this case 
$W$ is finite, and any $\ell$ will do. In the more interesting case
when $W$ is infinite we have:

\begin{lem}
If $W$ is infinite, then $\ell$ is an integer.
\end{lem}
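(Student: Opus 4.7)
The plan is to reduce the problem to a one-dimensional scaling question by isolating a single parallelism class of reflecting hyperplanes. The scaling $v \mapsto \ell v$ about the chosen origin $b$ is a homothety: it fixes the normal direction of every hyperplane and therefore sends each parallelism class of reflecting hyperplanes to itself. It is enough to locate one parallelism class with infinitely many members, and to extract the integrality of $\ell$ from the one-dimensional arrangement it cuts out along a fixed normal direction.

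Producing an infinite parallelism class is the main step. Since $W$ is a discrete subgroup of the isometry group of $V$, its image in the orthogonal part $O(V)$ is a discrete subgroup of a compact group, hence finite; so if $W$ is infinite it must contain a non-trivial translation $\tau$. Assuming (as we may, after restricting to the span of the roots) that $W$ acts without a non-trivial fixed direction, some reflection $s \in W$ has hyperplane $H$ not parallel to the translation vector of $\tau$. Then the conjugates $\tau^k s \tau^{-k}$ are reflections in the pairwise distinct parallel hyperplanes $\tau^k(H)$, yielding an infinite parallelism class.

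Fix such a class, let $\alpha \in V^*$ be a linear functional whose level sets are the hyperplanes in it, and set
\[
\Sigma := \{\, c \in \RM \mid \alpha^{-1}(c) \in \reflect\,\}.
\]
Then $\Sigma$ is a discrete subset of $\RM$, infinite by construction. Moreover, for any $c, c' \in \Sigma$ the product of the reflections in $\alpha^{-1}(c)$ and $\alpha^{-1}(c')$ is a translation of $V$ by $(c'-c)\gamma$ for a fixed non-zero $\gamma \in V$; hence the differences $\Sigma - \Sigma \subset \RM$ generate a non-trivial discrete subgroup of $\RM$, which is necessarily of the form $D\ZM$ with $D > 0$.

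Finally, the hypothesis $\ell \reflect \subset \reflect$ translates to $\ell \Sigma \subset \Sigma$, because scaling about $b$ sends $\{\alpha(v) = c\}$ to $\{\alpha(v) = \ell c\}$. Taking differences gives $\ell(\Sigma - \Sigma) \subset \Sigma - \Sigma$, whence $\ell D \in D\ZM$ and thus $\ell \in \ZM$. The only delicate point is the existence of the infinite parallelism class in the second step; once that is in hand, the remainder is an elementary exercise about scale-invariant discrete subsets of the real line.
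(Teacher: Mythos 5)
Your reduction to a one-dimensional scaling problem along a parallelism class, and the endgame using the lattice generated by $\Sigma - \Sigma$, match the paper's argument in spirit: the paper likewise reduces to a line $L$ perpendicular to two parallel hyperplanes and identifies the induced one-dimensional arrangement with $\epsilon + \ZM$, and both arguments rest on the fact that products of reflections in parallel members of $\reflect$ are translations in $W$, forcing the one-dimensional trace of $\reflect$ to be a coset of a lattice in $\RM$. The substantive difference is in how the parallel family is produced, and this is where your proposal has a gap.

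The claim that ``since $W$ is a discrete subgroup of the isometry group of $V$, its image in the orthogonal part $O(V)$ is a discrete subgroup of a compact group, hence finite'' is not a consequence of discreteness alone. Discreteness of a subgroup of $\mathrm{Isom}(V)$ does \emph{not} force discreteness of its image in $O(V)$: the cyclic group generated by a screw motion in $\RM^3$ (rotation through an irrational angle about the $z$-axis composed with a unit vertical translation) is discrete, yet its image in $O(3)$ is dense in a circle. For an affine reflection group with locally finite reflecting arrangement the linear part $\overline{W}$ \emph{is} finite, but that is a theorem (Bourbaki, \emph{Groupes et alg\`ebres de Lie}, Ch.~V, \S~3) whose proof uses exactly what the paper invokes: local finiteness together with the alcove structure shows that only finitely many normal directions occur, after which $\overline{W}$ permutes a finite spanning family of lines with finite kernel. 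You need this (or an equivalent) before you can deduce the existence of a translation $\tau \in W$; the compactness one-liner does not supply it. A smaller point at the end: $\ell(\Sigma - \Sigma) \subset \Sigma - \Sigma$ gives $\ell D \in D\ZM$ only once you know $D \in \Sigma - \Sigma$. This does hold, but it needs the observation that $s_c s_{c'} s_{c''} = s_{c - c' + c''} \in W$ for $c,c',c'' \in \Sigma$, so $\Sigma$ is a single coset of the group $\langle \Sigma - \Sigma\rangle$ and therefore $\Sigma - \Sigma = D\ZM$ is itself a group.
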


\begin{proof}
Our local finiteness assumption implies that the hyperplanes in $\reflect$ consist of finitely
many directions. (Otherwise, $\reflect$ would contain pairs of hyperplanes
intersecting at arbitrarily small angles, which is impossible by the
discussion of the fundamental domain earlier.) In particular, if $W$
is infinite then $\reflect$ contains two parallel hyperplanes.

By considering the line $L$ perpendicular to two such hyperplanes and
passing through $b$ we reduce
to the case of dimension $1$. We may identify $L$ with the real line
and assume that our set of reflecting hyperplanes is of the form
\[
\reflect = \{ \epsilon + \ZM \} \subset \RM
\]
for some $0 \le \epsilon < 1$. This set is stable under multiplication
by $\ell$. In particular
\[
\ell\e \in \e + \ZM \quad \text{and} \quad \ell\e + \ell \in \e + \ZM
\]
which implies that $\ell \in \ZM$ as claimed.
\end{proof}

We assume that $W$ is infinite from now on, so that $\ell$ is an
integer. In this case the reflections in the hyperplane
arrangements
\[
\reflect \supset \ell \reflect \supset  \ell^2 \reflect \supset \dots
\]
generate reflection subgroups
\[
W \supset W_1
\supset W_2 \supset
\]
which are all abstractly isomorphic to $W$.

Let us assume that $\overline{A_0}$ is compact. In this case we have a sequence
of fundamental alcoves
\[
A_0 \subset A_0^1 \subset A_0^2 \subset
\]
for each of these reflection subgroups. If we delete from $A_0^{i+1}$
all reflecting hyperplanes in $W_i$ (a set of measure zero), then the
resulting set is an $\ell^n$-sheeted covering of $A_0^i$. In particular,
each $A_0^{i+1}$ contains $\ell^n$ copies of $A_0^i$.

\begin{ex}
  Consider the affine Weyl group of a root system of type $A_1$. Here
  is a picture of the reflecting hyperplanes $\reflect$ (in grey), the
  reflecting hyperplanes $3\reflect$ (in light red), and the reflecting
  hyperplanes $9\reflect$ (in dark red):
\begin{equation*}
  \begin{tikzpicture}[scale=0.7]
\draw[fill,blue!10!white] (0,-.1) rectangle (9,.1);
\draw[fill,blue!40!white] (0,-.1) rectangle (3,.1);
\draw[fill,blue] (0,-.1) rectangle (1,.1);
\draw[gray!60!white] (-5.5,0) to (12.5,0);
\foreach \x in {-5,...,12}
{\draw[gray!60!white] (\x,-.2) to (\x,.2);}
\foreach \x in {-3,0,3,6,9,12}
{\draw[red!20!white,thick] (\x,-.2) to (\x,.2);}
\foreach \x in {0,9}
{\draw[red,thick] (\x,-.2) to (\x,.2);}
\node[below] at (.5,0) {\tiny $A_0$};
\end{tikzpicture}
\end{equation*}The fundamental alcove
  $A_0$ is dark blue, $A_0^1$ is light blue, and $A_0^2$ is even
  lighter blue. Note that $A_0^1$ (resp. $A_0^2$) contains $3 = 3^{\textrm{rank}}$
  (resp. $3^2$) copies of $A_0$.
\end{ex}

\begin{ex}
  Consider the affine Weyl group of a root system of type $C_2$. Here
  is a picture the reflecting hyperplanes $\reflect$ (in grey) and the
  reflecting hyperplanes $3\reflect$ (in red):
\begin{equation*}
  \begin{tikzpicture}
\clip (-1.7,-1.7) rectangle (4.7,5.7);
\draw[fill,blue!10!white] (0,0) -- (0,9) -- (4.5,4.5) -- (0,0);
\draw[fill,blue!40!white] (0,0) -- (0,3) -- (1.5,1.5) -- (0,0);
\draw[fill,blue] (0,0) -- (0,1) -- (.5,.5) -- (0,0);
\foreach \x in {-6,...,6}
{    \draw[gray!60!white] (\x,-6) to (\x,10); }
\foreach \y in {-6,...,10}
{    \draw[gray!60!white] (-6,\y) to (6,\y); }
\foreach \z in {-5,...,6}
{    \draw[gray!60!white] (\z,-6) to (-6,\z); }
\foreach \z in {-5,...,5}
{    \draw[gray!60!white] (\z,6) to (6,\z); }
\foreach \z in {-6,...,5}
{    \draw[gray!60!white] (-6,\z) to (-\z,6); }
\foreach \z in {-5,...,5}
{    \draw[gray!60!white] (6,\z) to (-\z,-6); }
\foreach \x in {-6,-3,0,3,6}
{    \draw[red] (\x,-6) to (\x,6); }
\foreach \y in {-6,-3,0,3,6}
{    \draw[red] (-6,\y) to (6,\y); }
\foreach \z in {-6,-3,0,3,6}
{    \draw[red] (\z,-6) to (-6,\z); }
\foreach \z in {-6,-3,0,3,6}
{    \draw[red] (\z,6) to (6,\z); }
\foreach \z in {-6,-3,0,3,6}
{    \draw[red] (-6,\z) to (-\z,6); }
\foreach \z in {-6,-3,0,3,6}
{    \draw[red] (6,\z) to (-\z,-6); }
  \end{tikzpicture}
\end{equation*}The fundamental alcove
  $A_0$ is dark blue, $A_0^1$ is light blue, and $A_0^2$ is even
  lighter blue. Note that $A_0^1$ (resp. $A_0^2$) contains $9 =
  3^{\textrm{rank}}$ (resp. 81) copies of $A_0$.
\end{ex}

\subsection{Kazhdan-Lusztig theory} \label{sec:kl}
Fix a Coxeter system $(W,S)$ as above and let
\[
\ell : W \to \ZM_{\ge 0}
\]
denote its length function. Let $H = H_W$ denote the Hecke algebra, which is
free over $\ZM[v^{\pm 1}]$ with basis $\{ \delta_x \; | \; x \in W
\}$, unit $1 = \delta_{\id}$ and multiplication determined by
\[
  \delta_s \delta_x =  \begin{cases}
\delta_{sx} & \text{if $\ell(sx) = \ell(x) + 1$,} \\
\delta_{sx} + (v^{-1} - v) \delta_x & \text{if $\ell(sx) = \ell(x) -
  1$.}
\end{cases}
\]
One checks that $\delta_s$ is invertible with inverse $\delta_s^{-1} =
\delta_s + (v-v^{-1})$ and concludes that $\delta_x$ is invertible for
all $x \in W$. The Kazhdan-Lusztig involution $h \mapsto
\overline{h}$ is the unique algebra involution that sends $v$ to 
$v^{-1}$ and $\delta_x$ to $\delta_{x^{-1}}^{-1}$.

The following theorem of Kazhdan and Lusztig \cite{KL} is fundamental to
geometric representation theory. We use the normalisations of
\cite{SoeKL}, where one can also find a very readable proof:

\begin{thm}
  There exists a unique basis $\{ b_x \; | \; x \in W \}$ of $H$
  satisfying the following two conditions:
  \begin{enumerate}
  \item $\overline{b_x} = b_x$;
  \item $b_x = \delta_x + \sum_{y \ne x} h_{y,x} \delta_y$ with $h_{y,x}
    \in v\ZM[v]$.
  \end{enumerate}
\end{thm}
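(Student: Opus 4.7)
The plan is to prove uniqueness by a leading-term argument, and existence by induction on $\ell(x)$, starting from $b_e = 1$ and then multiplying by the rank-one element $b_s := \delta_s + v$ at each step and correcting by already-constructed basis elements.

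For uniqueness I would first establish the auxiliary fact: if $h = \sum_{z \in W} a_z \delta_z \in H$ is bar-invariant and every $a_z$ lies in $v\ZM[v]$, then $h = 0$. Fix any total refinement $\preceq$ of the Bruhat order, and suppose for contradiction that $z_0$ is maximal for $\preceq$ with $a_{z_0} \neq 0$. A direct inductive check of $\overline{\delta_z} = \delta_z + (\text{strictly lower terms})$ (using $\overline{\delta_s} = \delta_s^{-1} = \delta_s + (v - v^{-1})$ and the braid relations) shows that the coefficient of $\delta_{z_0}$ in $\overline{h}$ is $\overline{a_{z_0}}$, which must equal $a_{z_0}$. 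But $v\ZM[v] \cap v^{-1}\ZM[v^{-1}] = 0$, giving $a_{z_0} = 0$, a contradiction. Applied to the difference of two candidate $b_x$'s, this yields uniqueness.

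For existence I proceed by induction on $\ell(x)$. The base case $b_e = \delta_e$ is immediate. For $\ell(x) \geq 1$, choose $s \in S$ with $\ell(sx) < \ell(x)$ and write $x = sy$ with $\ell(y) = \ell(x) - 1$. The element $b_s := \delta_s + v$ is bar-invariant (using $\overline{\delta_s} = \delta_s + (v - v^{-1})$) and satisfies (2). By the inductive hypothesis we have $b_y$, and I set
\[
h := b_s b_y,
\]
which is bar-invariant as a product of bar-invariant elements. Using the quadratic relation $\delta_s^2 = 1 + (v^{-1} - v)\delta_s$ to expand $\delta_s \delta_z$ according as $sz > z$ or $sz < z$, one computes
\[
h = \delta_x + \sum_{z < x} c_z \delta_z,
\]
where each $c_z \in \ZM[v^{\pm 1}]$, and one can read off that for $z$ maximal among elements $< x$ the polynomial $c_z$ is bar-invariant.

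The correction step is where the main work lies. Define
\[
b_x := h - \sum_{\substack{z < y \\ sz < z}} \mu(z,y)\, b_z,
\]
where $\mu(z,y) \in \ZM$ is the coefficient of $v$ in $h_{z,y}$. Since each $b_z$ appearing here is bar-invariant by induction and $\mu(z,y) \in \ZM$, the element $b_x$ remains bar-invariant, and it still has leading term $\delta_x$. The substantive task is to check condition (2), namely that every coefficient of $b_x$ at $\delta_{z}$ with $z \neq x$ lies in $v\ZM[v]$. I would do this by processing the elements $z < x$ in decreasing order of $\preceq$: at each stage, bar-invariance of the partially corrected element forces the "problematic" part of the coefficient at $\delta_z$ to be bar-invariant, and a short calculation with the expansion of $b_s \delta_z$ (splitting cases on whether $sz > z$ or $sz < z$) shows that this part is exactly $\mu(z,y) \in \ZM$, so the subtraction of $\mu(z,y) b_z$ removes it cleanly. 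I expect this bookkeeping — tracking cancellations between terms arising from $\delta_s \delta_z$ with $sz < z$ and the corrective terms — to be the main obstacle; everything else is formal consequence of the quadratic and bar-involution relations. Uniqueness then guarantees that the resulting $b_x$ is the same regardless of which reduced decomposition $x = sy$ was chosen.
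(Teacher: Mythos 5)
The paper does not prove this theorem itself; it states it and refers to Soergel's paper \cite{SoeKL} for "a very readable proof," following Kazhdan--Lusztig \cite{KL}. Your argument is exactly that standard proof and is correct: uniqueness via the observation that a bar-invariant element with all standard-basis coefficients in $v\ZM[v]$ must vanish (a maximal-term argument using the Bruhat-triangularity of $\overline{\delta_z}$), and existence by induction on length, forming $b_s b_y$ and then subtracting $\sum \mu(z,y)\, b_z$ over those $z<y$ with $sz<z$ to push the remaining coefficients into $v\ZM[v]$.

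Two small remarks, neither a genuine gap. First, the aside that "for $z$ maximal among elements $<x$ the polynomial $c_z$ is bar-invariant" is not established by what you wrote (bar-invariance of $b_s b_y$ only forces $\overline{c_z} = c_z - r_{z,x}$ for such $z$, where $r_{z,x}$ comes from $\overline{\delta_x}$); fortunately the claim plays no role in the proof, so you can simply delete it. Second, the "process $z$ in decreasing order of $\preceq$" verification is more machinery than you need: the direct computation of the coefficient of $\delta_w$ in $b_s b_y$ — namely $h_{sw,y} + v\,h_{w,y}$ when $sw>w$, and $h_{sw,y} + v^{-1}h_{w,y}$ when $sw<w$ — immediately shows the constant term is $0$ in the first case and $\mu(w,y)$ in the second, so the one-step subtraction already lands all coefficients (for $w\ne x$) in $v\ZM[v]$, since each subtracted $b_z$ contributes only a $1$ at $\delta_z$ and elements of $v\ZM[v]$ elsewhere.
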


For example, one has
\[
b_s = \delta_s + v \quad \text{for $s \in S$.}
\]
It turns out that $h_{y,x} = 0$ unless $y$ preceeds $x$ in Bruhat
order. The basis $\{ b_x\}$ is the \emph{Kazhdan-Lusztig basis} and
the polynomials $h_{y,x}$ are \emph{Kazhdan-Lusztig 
  polynomials} (up to a normalisation).

When we come to discuss the Hecke category in \S \ref{sec:Hecke} a
certain form
\[
(- , - ) : H \times H \to \ZM[v^{\pm 1}]
\]
will take on great importance. It is defined by
\[
(h, h') := \text{coefficient of $\delta_{\id}$ in $\tau(h)h'$ in the
  standard basis.}
\]
where $\tau$ is the anti-involution of $H$ which fixes $b_s$ and
sends $v$ to $v^{-1}$.
  It is a nice exercise to check that $\tau(\delta_x) = \delta_x^{-1}$
  and that $\{ \delta_x^{-1} \}$ and $\{ \delta_x \}$ are dual bases
  for $(-,-)$.

We will also need the $p$-Kazhdan-Lusztig basis below. Assume that
$(W,S)$ is crystallographic, and that we have fixed an integral root system
giving rise to $W$. The following is explained in \cite{JW-pcan}:

\begin{thm}
  For all primes $p$ there exists a basis $\{ {}^p b_x \; | \; x \in
  W \}$ of $H$, given by the classes of indecomposable objects in the
  Hecke category in characteristic $p$. This basis is self-dual (i.e. $\overline{{}^p b_x} =
  {}^p b_x$); has positive structure constants; and admits
a positive expression
\[
{}^pb_x = \sum {}^p a_{y,x} b_y \quad \text{with ${}^p a_{y,x} \in
  \ZM_{\ge 0}[v^{\pm 1}]$}
  \]
in the Kazhdan-Lusztig basis.
\end{thm}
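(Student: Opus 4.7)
The plan is to deduce the theorem from the categorification of $H$ by the diagrammatic Hecke category $\HC_{\Bbbk}$ in characteristic $p$, following the framework developed by Elias--Williamson and refined by Abe. I would first set up the category: take $\Bbbk$ a field of characteristic $p$, a realisation of $(W,S)$ defined over $\Bbbk$ satisfying Soergel's conditions, and let $\HC_{\Bbbk}$ be the associated graded $\Bbbk$-linear monoidal category (or its additive Karoubian closure). The Bott--Samelson generators $\Bs(\underline{w})$ for expressions $\underline{w}$ in $S$ generate $\HC_\Bbbk$, and the category is Krull--Schmidt with a shift-equivariant grading, so indecomposables come in grading orbits.

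Next I would invoke the classification of indecomposables: for each $x \in W$ there is a unique (up to isomorphism and shift) indecomposable object $B_x$ appearing as a summand of $\Bs(\underline{x})$ for any reduced expression $\underline{x}$ for $x$, and not appearing in any $\Bs(\underline{y})$ for $\ell(y) < \ell(x)$. Setting ${}^p b_x := [B_x]$ in the split Grothendieck group, with $v$ acting as the grading shift, yields a well-defined element of $H$ under the standard character map $\ch : [\HC_\Bbbk] \simto H$ that sends $[\Bs(\underline{s})]$ to $b_s$.

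To show $\{{}^p b_x\}$ is a basis and has the claimed properties, I would proceed as follows. Unicity of triangular decomposition of $[\Bs(\underline{x})]$ in the standard basis, together with the indecomposable characterisation, gives a unitriangular change of basis between $\{{}^p b_x\}$ and $\{\delta_x\}$, hence a basis. Self-duality $\overline{{}^p b_x} = {}^p b_x$ follows from the fact that $B_x$ carries a canonical (up to scalar) isomorphism $B_x \cong B_x^{\vee}$ coming from the Frobenius structure on $\HC_\Bbbk$ (indecomposable objects are self-dual since both $B_x$ and $B_x^{\vee}$ satisfy the defining property). Positivity of structure constants is immediate from categorification: the multiplicity of $B_z$ in $B_x B_y$ is the non-negative integer $\dim_\Bbbk \Hom^\bullet(\Bs(\underline{z}), B_x B_y)$ adjusted by the unitriangular basis change. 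Finally, for the positive expansion in the characteristic-zero KL basis, I would use that $\HC_\Bbbk$ is obtained from an integral form $\HC_{\mathcal{O}}$ by base change, with indecomposables $B_x^{\mathcal{O}}$ in characteristic $0$ (which categorify $b_x$ by Elias--Williamson's proof of Soergel's conjecture). The base-changed object $\Bbbk \otimes B_x^{\mathcal{O}}$ decomposes as a direct sum with non-negative graded multiplicities of indecomposables ${}^p B_y$, and inverting this unitriangular relation yields ${}^p b_x = \sum {}^p a_{y,x} b_y$ with coefficients in $\ZM_{\ge 0}[v^{\pm 1}]$.

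The main obstacle is the last step: one must be careful to pass from ``$\Bbbk \otimes B_y^{\mathcal{O}}$ is a sum of ${}^p B$'s with non-negative coefficients'' to ``the inverse base-change matrix has non-negative coefficients.'' The correct formulation is not to invert naively but to observe that each $B_x^{\mathcal{O}}$ already reduces to a single indecomposable ${}^p B_x$ plus lower terms, and then use induction in the Bruhat order together with the unitriangularity of both bases with respect to $\{\delta_x\}$. The positivity of structure constants and of the expansion in $\{b_y\}$ both ultimately rest on the purity/parity vanishing phenomena for $\HC_\Bbbk$, which in the diagrammatic setting is guaranteed by the Krull--Schmidt and categorification properties rather than any geometric input.
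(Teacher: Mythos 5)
Your route to the positive expansion ${}^pb_x = \sum {}^p a_{y,x}\, b_y$ has the base change running in the wrong direction, and the proposed fix does not repair it. Over a complete discrete valuation ring $\OM$ (say $\ZM_p$) with residue field $\Bbbk$ of characteristic $p$ and fraction field $K$ of characteristic $0$, the indecomposable $B_x^{\OM}$ does \emph{not} categorify $b_x$: because $\OM$ is complete local, idempotents lift from $\End(B_x^{\OM}) \otimes_{\OM} \Bbbk$, so $\Bbbk \otimes_{\OM} B_x^{\OM}$ is itself indecomposable, i.e.\ isomorphic to ${}^pB_x$, and hence $[B_x^{\OM}] = {}^p b_x$. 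What categorifies $b_x$ (by Elias--Williamson) is the indecomposable over the characteristic-zero fraction field, $B_x^{K}$. Your plan — decompose $\Bbbk\otimes B_x^{\OM}$ into ${}^pB_y$'s with non-negative multiplicities and then ``invert the unitriangular relation'' — fails on two counts: the identification $[B_x^{\OM}]=b_x$ is false, and even granting it, the inverse of a non-negative unitriangular matrix is generally \emph{not} non-negative (already for a $2\times 2$ upper unitriangular matrix with a positive off-diagonal entry). Your Bruhat-induction patch would yield ${}^pb_x = b_x - \sum(\text{non-negative})\,b_y$, which is the wrong sign.

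The correct argument tensors \emph{up} to the fraction field rather than down to the residue field, and needs no inversion. By Soergel's hom formula over $\OM$ (equivalently: freeness over $\OM$ of the stalks of $\OM$-parity sheaves), the class $[B_x^{\OM}]$ is insensitive to base change, so
\[
{}^p b_x \;=\; [B_x^{\OM}] \;=\; [K \otimes_{\OM} B_x^{\OM}].
\]
Now $K \otimes_{\OM} B_x^{\OM}$ lives in the Hecke category over a field of characteristic zero and hence decomposes (Krull--Schmidt) as $\bigoplus_y \bigl(B_y^{K}\bigr)^{\oplus a_{y,x}}$ with graded multiplicities $a_{y,x} \in \ZM_{\ge 0}[v^{\pm1}]$. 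Since $[B_y^{K}] = b_y$ by Elias--Williamson, this gives ${}^pb_x = \sum_y a_{y,x}\, b_y$ directly. (Geometrically: $K\otimes_{\OM}\ES_x$ is a parity complex on the flag variety with characteristic-zero coefficients, hence by the decomposition theorem a direct sum of shifted $\ic$'s.) The remaining parts of your proposal — unitriangularity against $\{\delta_x\}$, non-negativity of structure constants from graded multiplicities in tensor products — are fine. For self-duality, the relevant structure is not a Frobenius structure but the contravariant duality functor on $\HC_{\Bbbk}$ (e.g.\ $\Hom_R(-,R)$ for Soergel bimodules, or the vertical flip of diagrams), which fixes each $B_s$, hence each $B_x$ by the defining characterisation, and decategorifies to the bar involution on $H$.
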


Note that the crucial condition $h_{y,x} \in v\ZM[v]$ is missing from
the $p$-Kazhdan-Lusztig basis, and in general is not satisfied. If we write
\[
{}^p b_x = \sum {}^p h_{y,x} \delta_y
\]
we refer to ${}^p h_{y,x} $ as \emph{$p$-Kazhdan-Lusztig polynomials.}
(They are Laurent polynomials in general.)

\begin{remark}
  We should emphasise that the definition of the Kazhdan-Lusztig basis
  is combinatorial. This is not so for the $p$-Kazhdan-Lusztig basis,
  which requires the Hecke category for its calculation. It seems
  unlikely that the $p$-Kazhdan-Lusztig basis will admit a uniform
  combinatorial description for all $p$. For much more detail on the
  $p$-Kazhdan-Lusztig basis, see \cite{JW-pcan} and \cite[Chapter 27]{soergelbook}.
\end{remark}

\begin{ex} \label{ex:sp4}
  Consider the root system of type $C_2$
\begin{equation*}
  \begin{tikzpicture}[scale=0.8]
    \draw[->] (0,0) to (2,0); \node[right] at (2,0) {\tiny $\beta$};
    \draw[->] (0,0) to (0,2);
    \draw[->] (0,0) to (-2,0);
    \draw[->] (0,0) to (0,-2);
    \draw[->] (0,0) to (1,1);
    \draw[->] (0,0) to (1,-1);
    \draw[->] (0,0) to (-1,1); \node[above,left] at (-1,1) {\tiny $\alpha$};
    \draw[->] (0,0) to (-1,-1);
  \end{tikzpicture}
\end{equation*}
with simple roots $\alpha = \e_1 - \e_2$ and $\beta = 2\e_2$. Let $W$
denote the corresponding Weyl group, with corresponding simple
reflections $s = s_\alpha$ and $t = s_\beta$.

We can depict the elements of $W$ via their chambers as follows: 
\begin{equation*}
  \begin{tikzpicture}[scale=1]
    \draw[gray!50!white] (0,0) to (2,0);
    \draw[gray!50!white] (0,0) to (0,2);
    \draw[gray!50!white] (0,0) to (-2,0);
    \draw[gray!50!white] (0,0) to (0,-2);
    \draw[gray!50!white] (0,0) to (2,2);
    \draw[gray!50!white] (0,0) to (2,-2);
    \draw[gray!50!white] (0,0) to (-2,2);
    \draw[gray!50!white] (0,0) to (-2,-2);
\node at (67.5:1.6) {\small $\id$};
\node at (112.5:1.6) {\small $t$};
\node at (157.5:1.6) {\small $t s$};
\node at (202.5:1.6) {\small $t s t$};
\node at (247.5:1.6) {\small $s t s t $};
\node at (292.5:1.6) {\small $s t s $};
\node at (337.5:1.6) {\small $s t$};
\node at (382.5:1.6) {\small $s$};
  \end{tikzpicture}
\end{equation*}
We can use such pictures to illustrate Kazhdan-Lusztig
polynomials, by writing the $h_{y,x}$ in the alcove $y$ (we read off
$x$ by locating the unique 1 in the picture). For example
\[
b_s := \delta_s + v \delta_{\id}
\]
is depicted as follows:
\begin{equation*}
  \begin{tikzpicture}[scale=.6]
    \draw[gray!50!white] (0,0) to (2,0);
    \draw[gray!50!white] (0,0) to (0,2);
    \draw[gray!50!white] (0,0) to (-2,0);
    \draw[gray!50!white] (0,0) to (0,-2);
    \draw[gray!50!white] (0,0) to (2,2);
    \draw[gray!50!white] (0,0) to (2,-2);
    \draw[gray!50!white] (0,0) to (-2,2);
    \draw[gray!50!white] (0,0) to (-2,-2);
\node at (67.5:1.6) {\small $v$};
\node at (382.5:1.6) {\small $1$};
  \end{tikzpicture}
\end{equation*}

For later use, we depict 2 elements of the Kazhdan-Lusztig basis:
\begin{equation*}
  \begin{array}{ccc}
b_{st} = \begin{array}{c} 
  \begin{tikzpicture}[scale=1]
    \draw[gray!50!white] (0,0) to (2,0);
    \draw[gray!50!white] (0,0) to (0,2);
    \draw[gray!50!white] (0,0) to (-2,0);
    \draw[gray!50!white] (0,0) to (0,-2);
    \draw[gray!50!white] (0,0) to (2,2);
    \draw[gray!50!white] (0,0) to (2,-2);
    \draw[gray!50!white] (0,0) to (-2,2);
    \draw[gray!50!white] (0,0) to (-2,-2);
\node at (67.5:1.6) {\small $v^2$};
\node at (112.5:1.6) {\small $v$};
\node at (337.5:1.6) {\small $1$};
\node at (382.5:1.6) {\small $v$};
  \end{tikzpicture}\end{array} & &
b_{sts} = \begin{array}{c} 
    \begin{tikzpicture}[scale=1]
    \draw[gray!50!white] (0,0) to (2,0);
    \draw[gray!50!white] (0,0) to (0,2);
    \draw[gray!50!white] (0,0) to (-2,0);
    \draw[gray!50!white] (0,0) to (0,-2);
    \draw[gray!50!white] (0,0) to (2,2);
    \draw[gray!50!white] (0,0) to (2,-2);
    \draw[gray!50!white] (0,0) to (-2,2);
    \draw[gray!50!white] (0,0) to (-2,-2);
\node at (67.5:1.6) {\small $v^3$};
\node at (112.5:1.6) {\small $v^2$};
\node at (157.5:1.6) {\small $v$};
\node at (292.5:1.6) {\small $1$};
\node at (337.5:1.6) {\small $v$};
\node at (382.5:1.6) {\small $v^2$};
  \end{tikzpicture}  
  \end{array} 
  \end{array}
\end{equation*}
We also draw 2 elements of the 2-canonical basis:
\begin{equation*}
  \begin{array}{ccc}
{}^2b_{st} = \begin{array}{c} 
  \begin{tikzpicture}[scale=1]
    \draw[gray!50!white] (0,0) to (2,0);
    \draw[gray!50!white] (0,0) to (0,2);
    \draw[gray!50!white] (0,0) to (-2,0);
    \draw[gray!50!white] (0,0) to (0,-2);
    \draw[gray!50!white] (0,0) to (2,2);
    \draw[gray!50!white] (0,0) to (2,-2);
    \draw[gray!50!white] (0,0) to (-2,2);
    \draw[gray!50!white] (0,0) to (-2,-2);
\node at (67.5:1.6) {\small $v^2$};
\node at (112.5:1.6) {\small $v$};
\node at (337.5:1.6) {\small $1$};
\node at (382.5:1.6) {\small $v$};
  \end{tikzpicture}\end{array} & &
                                   {}^2b_{sts} = \begin{array}{c} 
    \begin{tikzpicture}[scale=1]
    \draw[gray!50!white] (0,0) to (2,0);
    \draw[gray!50!white] (0,0) to (0,2);
    \draw[gray!50!white] (0,0) to (-2,0);
    \draw[gray!50!white] (0,0) to (0,-2);
    \draw[gray!50!white] (0,0) to (2,2);
    \draw[gray!50!white] (0,0) to (2,-2);
    \draw[gray!50!white] (0,0) to (-2,2);
    \draw[gray!50!white] (0,0) to (-2,-2);
\node at (67.5:1.6) {\small $v^3 + v$};
\node at (112.5:1.6) {\small $v$};
\node at (157.5:1.6) {\small $v$};
\node at (292.5:1.6) {\small $1$};
\node at (337.5:1.6) {\small $v$};
\node at (382.5:1.6) {\small $v^2 + 1$};
  \end{tikzpicture}  
  \end{array} 
  \end{array}
\end{equation*}
\end{ex}

\subsection{Kazhdan-Lusztig theory and standard parabolic
  subgroups}
In the following sections we explore various positivity properties of
the Kazhdan-Lusztig basis with respect to reflection subgroups.

We start with the easiest case of
a standard parabolic subgroup.
Let $(W,S)$ denote a Coxeter group. Assume our reflection subgroup is of
the form
\[
W_r := \langle s \; | \; s \in I \rangle
\]
for some subset $I \subset S$. Such subgroups are called standard
parabolic subgroups. Recall that ${}^rW$ denotes the set of
distinguished representatives for the right cosets $W_r\setminus
W$. If $H$ denotes the Hecke algebra we can consider the following set
\begin{equation}
  \label{eq:mixed_basis}
\{ b_v\delta_x \; | \; v \in W_r, x \in {}^r W \}.  
\end{equation}
For $v \in W_r$ let us write
\[
b_v = \sum_{u \le v} h_{u,v} \delta_u
\]
Because the lengths add under multiplication $W_r \times {}^rW \to W$
we see that, if $x \in {}^r W$, then
\[
b_v\delta_x = \sum_{u \in W_r \atop u \le v} h_{u,v} \delta_{ux}.
\]
in particular, the set \eqref{eq:mixed_basis} is a basis of $H$, which
we call the \emph{mixed basis}. The following ``mixed positivity'' is
comparatively easy but  useful:

\begin{thm} \label{thm:pos1}
If we write
\[
b_w = \sum_{v \in W_r \atop x \in {}^r W} h^r_{v,x,w} \cdot b_v
\delta_x \qquad\text{with $h^r_{v,x,w} \in \ZM[v^{\pm 1}]$}
\]
then the $h^I_{v,x,w} $ have non-negative coefficients.
\end{thm}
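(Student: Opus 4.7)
The plan is to categorify the mixed-basis expansion and read positivity off of graded multiplicities in the Hecke category.

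First I would work in the Hecke category $\HC$ of $(W,S)$ (Soergel bimodules, or parity sheaves on the flag variety in the geometric setting). Write $B_w$ for the indecomposable object with $[B_w] = b_w$, and let $\Delta_x$ denote the standard (Rouquier) complex with $[\Delta_x] = \delta_x$ in $K^b(\HC)$. Let $\HC_r \subset \HC$ be the full monoidal subcategory generated by $\{B_s : s \in I\}$; Soergel's theorem applied to $(W_r, I)$ identifies $[\HC_r] = H_r$, with indecomposable objects $\{B_v : v \in W_r\}$. The upshot is that an element of $H_r$ has non-negative expansion in the basis $\{b_v\}$ precisely when it arises as the class of an object of $\HC_r$.

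Second, using length-additivity $\ell(ux)=\ell(u)+\ell(x)$ for $u \in W_r$ and $x \in {}^r W$, one has a free left $H_r$-module decomposition $H = \bigoplus_{x \in {}^r W} H_r \cdot \delta_x$. Expanding $b_w = \sum_x c_x \delta_x$ accordingly gives $c_x = \sum_{u \in W_r} h_{u x, w} \delta_u \in H_r$, and by construction $h^r_{v,x,w}$ is the coefficient of $b_v$ when $c_x$ is expanded in the KL basis of $H_r$. Positivity therefore reduces to producing, for each $x \in {}^r W$, an object $G^w_x \in \HC_r$ with $[G^w_x] = c_x$: Soergel's theorem for $\HC_r$ then splits $G^w_x$ as a direct sum of shifts of $B_v$'s with graded multiplicities $h^r_{v,x,w}$, automatically in $\ZM_{\geq 0}[v^{\pm 1}]$.

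To construct $G^w_x$ I would refine the standard $\Delta$-filtration of $B_w$, whose graded subquotients are the $\Delta_y$ with multiplicity $h_{y,w}$, by grouping the standards $\Delta_{ux}$ (for $u \in W_r$) into one block for each fixed $x \in {}^r W$; the identity $\Delta_{u x} \cong \Delta_u \otimes \Delta_x$ then ought to let the $x$-block be written as $G^w_x \otimes \Delta_x$ for some $G^w_x \in \HC_r$ whose own $\Delta$-flag realises $c_x$. The hard part will be checking that this block genuinely splits off as a tensor product with $\Delta_x$, since the extensions in the $\Delta$-flag might a priori mix different cosets. A cleaner, perhaps preferable, route is to identify $h^r_{v,x,w}$ directly as the graded dimension of a $\Hom$-space in $K^b(\HC)$ between $B_v \Delta_x$ and a suitable dual object (built from $B_w$ and the inverse Rouquier complex $\nabla_x$), using a biorthogonality of the mixed basis under Soergel's $\Hom$-pairing; non-negativity then follows tautologically from the fact that graded dimensions of $\Hom$-spaces are non-negative.
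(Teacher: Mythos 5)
Your plan is in the right categorical framework, and the reduction is correct: since $W_r$ is standard parabolic, $\ell(ux)=\ell(u)+\ell(x)$ makes $H$ a free left $H_{W_r}$-module on $\{\delta_x : x\in{}^rW\}$, so positivity is equivalent to each coefficient $c_x = \sum_{u\in W_r}h_{ux,w}\,\delta_u$ being the class of an object of $\HC_{W_r}$. The difficulty you flag in Route A, however, is not a technicality but the entire content of the theorem. Writing $R_{ux}\cong R_u\otimes_R R_x$ lets you peel off $R_x$ from the $x$-block of the $\nabla$-flag of $B_w$ and produce a bimodule $N_x$ with a $\nabla$-flag by $R_u$'s with $u\in W_r$; but having such a flag is far weaker than being a Soergel bimodule for $W_r$, which is what you need in order to split $N_x$ into shifts of $B_v$'s. (One also needs that the $\nabla$-flag of $B_w$ can be ordered so that right $W_r$-cosets are consecutive; this does hold for standard parabolics because the projection $W\to{}^rW$ is Bruhat-order-preserving, but it is an extra ingredient worth stating.) Separately, you should decide between Rouquier complexes $\Delta_x\in K^b(\HC)$ and the standard bimodules $R_x$ appearing in Soergel's $\nabla$-flag: both decategorify to $\delta_x$, but it is the $R_x$'s that filter the honest bimodule $B_w$ with multiplicities $h_{y,w}$, and the argument is much cleaner if you stay inside $R\gbmod R$ rather than passing to the homotopy category.

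The missing step is supplied by exactly the machinery the paper sets up in \S\ref{sec:heckeloc}. In the geometric realisation, your $x$-block is the hyperbolic localisation of the parity sheaf $\ES_w$ onto the $x$-indexed component of $(\GS/\BS)^{\CM^*}$ for a cocharacter whose centraliser is the standard Levi $\LS_I$; the non-trivial input is that hyperbolic localisation preserves parity sheaves (Remark~\ref{rem:hlparity}). For a standard parabolic the attracting sets refine the Bruhat decomposition, so this functor preserves the cohomological grading, which is why one obtains positivity over $\ZM[v^{\pm 1}]$ here, in contrast to Theorem~\ref{thm:pos2} where the grading is lost and one must specialise $v:=1$. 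Route B (biorthogonality under the Hom-pairing) is also plausible, but as written it relies on a dual mixed basis and an adjunction you have not exhibited, and is no more complete than Route A. In short: the shape of the argument is right, and you have correctly located the gap, but the gap is where the theorem actually lives, and filling it requires the parity-preservation input that the paper's localisation framework provides.
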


\begin{remark}
  The case $I = \emptyset$ is equivalent to the positivity of Kazhdan-Lusztig
  polynomials.
\end{remark}

\begin{remark} The consideration of the mixed basis is influenced by the work of Geck
  \cite{Geck}. Recently Jensen and Patimo \cite{JP} have been able to
  obtain a similar result for the $p$-Kazhdan-Lusztig basis.
\end{remark}

\begin{ex}
  We illustrate this positivity, continuing Example \ref{ex:sp4}. Let $W_r =
  \langle t \rangle$, so that our minimal coset representatives ${}^rW$ 
  are indexed by those chambers in the right-hand half-space:
\begin{equation*}
  \begin{tikzpicture}[scale=1]
    \draw[gray!50!white] (0,0) to (2,0);
    \draw[gray!50!white] (0,0) to (0,2);
    \draw[gray!50!white] (0,0) to (-2,0);
    \draw[gray!50!white] (0,0) to (0,-2);
    \draw[gray!50!white] (0,0) to (2,2);
    \draw[gray!50!white] (0,0) to (2,-2);
    \draw[gray!50!white] (0,0) to (-2,2);
    \draw[gray!50!white] (0,0) to (-2,-2);
\node at (67.5:1.6) {\small $\id$};
\node at (292.5:1.6) {\small $s t s $};
\node at (337.5:1.6) {\small $s t$};
\node at (382.5:1.6) {\small $s$};
  \end{tikzpicture}
\end{equation*}
Here two interesting cases are given by the elements $b_{st}$ and
$b_{sts}$, which have already been written down in Example
\ref{ex:sp4}. For $b_{st}$ we have
\begin{align*}
\begin{array}{c}
\begin{tikzpicture}[scale=.5]
    \draw[gray!50!white] (0,0) to (2,0);
    \draw[gray!50!white] (0,0) to (0,2);
    \draw[gray!50!white] (0,0) to (-2,0);
    \draw[gray!50!white] (0,0) to (0,-2);
    \draw[gray!50!white] (0,0) to (2,2);
    \draw[gray!50!white] (0,0) to (2,-2);
    \draw[gray!50!white] (0,0) to (-2,2);
    \draw[gray!50!white] (0,0) to (-2,-2);
\node at (67.5:1.6) {\small $v^2$};
\node at (112.5:1.6) {\small $v$};
\node at (337.5:1.6) {\small $1$};
\node at (382.5:1.6) {\small $v$};
  \end{tikzpicture}
\end{array}
\quad = \quad
 \begin{array}{c}
\begin{array}{c}
\begin{tikzpicture}[scale=.5]
    \draw[gray!50!white] (0,0) to (2,0);
    \draw[gray!50!white] (0,0) to (0,2);
    \draw[gray!50!white] (0,0) to (-2,0);
    \draw[gray!50!white] (0,0) to (0,-2);
    \draw[gray!50!white] (0,0) to (2,2);
    \draw[gray!50!white] (0,0) to (2,-2);
    \draw[gray!50!white] (0,0) to (-2,2);
    \draw[gray!50!white] (0,0) to (-2,-2);
\node at (67.5:1.6) {\small $v^2$};
\node at (112.5:1.6) {\small $v$};
  \end{tikzpicture}
\end{array} \\
+ \\
\begin{array}{c}
\begin{tikzpicture}[scale=.5]
    \draw[gray!50!white] (0,0) to (2,0);
    \draw[gray!50!white] (0,0) to (0,2);
    \draw[gray!50!white] (0,0) to (-2,0);
    \draw[gray!50!white] (0,0) to (0,-2);
    \draw[gray!50!white] (0,0) to (2,2);
    \draw[gray!50!white] (0,0) to (2,-2);
    \draw[gray!50!white] (0,0) to (-2,2);
    \draw[gray!50!white] (0,0) to (-2,-2);
\node at (382.5:1.6) {\small $v$};
  \end{tikzpicture}
\end{array} \\
+ \\
\begin{array}{c}
\begin{tikzpicture}[scale=.5]
    \draw[gray!50!white] (0,0) to (2,0);
    \draw[gray!50!white] (0,0) to (0,2);
    \draw[gray!50!white] (0,0) to (-2,0);
    \draw[gray!50!white] (0,0) to (0,-2);
    \draw[gray!50!white] (0,0) to (2,2);
    \draw[gray!50!white] (0,0) to (2,-2);
    \draw[gray!50!white] (0,0) to (-2,2);
    \draw[gray!50!white] (0,0) to (-2,-2);
\node at (337.5:1.6) {\small $1$};
  \end{tikzpicture}
\end{array}
\end{array}
\quad = \quad 
\begin{array}{c}
v \cdot b_t\delta_{\id} \\
\\
\\
+ \\
\\
\\
v \cdot b_{\id}\delta_{s} \\
\\
\\
+ \\
\\
\\
1 \cdot b_{\id}\delta_{st} 
\end{array}
\end{align*}
We leave it to the reader to find a similar (positive) expansion for $b_{sts}$.
\end{ex}

\subsection{Kazhdan-Lusztig theory and parabolic subgroups}
\label{sec:parabolic}

We now continue a generalisation to a broader class of reflection
subgroups, namely parabolic subgroups. A \emph{parabolic subgroup} is
a subgroup of $W$ which is conjugate to a standard parabolic
subgroup. Let us fix a parabolic subgroup $W_r \subset W$, of course
$W_r$ is a reflection subgroup, and hence inherits a natural set of
Coxeter generators $S_r \subset T$, as in Sections \ref{sec:ref1} and
\ref{sec:ref2}. Let ${}^rW$ denote minimal representatives as usual. It is important to keep in mind that in general the
elements of $S_r$ are not simple reflections, and the multiplication
map
\[
W_r \times {}^r W \to W
\]
is not compatible with lengths.

Let us consider the group algebra $\ZM[W]$ as a bimodule over the
Hecke algebra of $W_r$ and $W$ respectively, via specialisation at $v
:= 1$:
\[ \begin{tikzpicture}[scale=0.8]
  \node at (-4,0) {$H_{W_r}$};
\node at (-2.6,0) {$\acts$};
\node at (-2.6,-.3) {\tiny $v := 1$};
\node at (0,0) {$\ZM[W] = $};
\node[rotate=90] at (0,-1) {$=$};
\node at (0,-2) {$\bigoplus_{x \in W} \ZM \hat{\delta}_x$};
\node at (2.6,0) {$\racts$};
\node at (2.6,-.3) {\tiny $v := 1$};
  \node at (4,0) {$H_{W}$};
\end{tikzpicture}
\]
(We denote the standard basis of the group algebra by $\{
\hat{\delta}_x \}$.)
When viewed as a bimodule in this way, we refer to $\ZM[W]$ as the
\emph{hyperbolic bimodule}.\footnote{The main reason
  for the name is that it sounds impressive. A second reason is that it is
  related to hyperbolic localisation, as we will see.} 
In this bimodule we can consider an analogue of the mixed basis:
\[
\{ b_v \hat{\delta}_x \; | \; v \in W_r, x \in {}^r W \}.
\]
Now the multiplication $W_r \times {}^r W \to W$ does not preserve lengths, but we still have
\[
b_v\delta_x = \sum_{u \in W_r \atop u \le v} h_{u,v}(1) \delta_{ux}.
\]
because we have specialised $v := 1$. In particular, we still get a basis.

\begin{thm}[\cite{BilleyBraden}] If we write \label{thm:pos2}
\[
\hat{\delta}_{\id} \cdot b_w = \sum_{v \in W_r \atop x \in {}^r W}  h^r_{v,x,w} b_v \hat{\delta}_x
\qquad\text{with $h^r_{v,x,w} \in \ZM$}
\]
 then the $h^r_{v,x,w}$ are non-negative.
\end{thm}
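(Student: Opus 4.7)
The plan is to prove the positivity geometrically using Braden's hyperbolic localisation theorem. After conjugation one may assume $W_r$ is the Weyl group of a Levi subgroup $L$ of a parabolic $P \subset G$ for some reductive $G$ realising $W$. Choose a cocharacter $\sigma : \Gm \to T$ whose centraliser in $G$ is exactly $L$; then the $\sigma$-fixed locus on $G/B$ coincides with the $T$-fixed locus $\{wB/B : w \in W\}$, while the Bia{\l}ynicki-Birula cells are the $P$-orbits $PxB/B$, indexed by the minimal coset representatives $x \in {}^rW$, each isomorphic to $L/B_L$.

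Under the Kazhdan--Lusztig dictionary, the coefficient of $\hat{\delta}_y$ in $\hat{\delta}_{\id} \cdot b_w$ (i.e.\ in $b_w$ at $v=1$) is the Euler characteristic of the stalk of the IC sheaf $\ic_w$ at the $T$-fixed point $y$. Writing $y = ux$ uniquely with $u \in W_r$ and $x \in {}^rW$, and using $b_v \hat{\delta}_x = \sum_{u \le v} h_{u,v}(1)\,\hat{\delta}_{ux}$, the claimed expansion in the mixed basis rearranges to
\[
h_{ux,w}(1) \;=\; \sum_{v \in W_r} h^r_{v,x,w} \cdot h_{u,v}(1) \qquad \text{for every } u \in W_r,\, x \in {}^rW,
\]
so positivity of $h^r_{v,x,w}$ amounts to showing that, for each fixed $x$, the stalk data $(h_{ux,w}(1))_{u \in W_r}$ lies in the non-negative integer span of rows of the KL matrix of $W_r$.

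The key ingredient is Braden's theorem: the hyperbolic localisation $\mathrm{Hyp}_\sigma(\ic_w)$, defined on $(G/B)^\sigma = \bigsqcup_{x \in {}^rW} L/B_L$, is a pure perverse sheaf (up to prescribed cohomological and Tate shifts), and its stalk at a fixed point $ux$ agrees with the stalk of $\ic_w$ at $ux$ up to that shift. Purity and the decomposition theorem then yield a canonical splitting
\[
\mathrm{Hyp}_\sigma(\ic_w) \;\simeq\; \bigoplus_{v \in W_r,\, x \in {}^rW} V_{v,x} \otimes \ic^{L}_{v,x}
\]
where $\ic^L_{v,x}$ is the IC sheaf of the Schubert subvariety indexed by $v$ inside the $x$-th copy of $L/B_L$, and the multiplicity spaces $V_{v,x}$ have non-negative integer dimensions. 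Taking stalks at $ux$ on both sides matches the displayed equation above with $h^r_{v,x,w} = \dim V_{v,x} \ge 0$, which is what we wanted.

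The main obstacle is bookkeeping: one must keep careful track of the shifts, orientations and the identification of the Schubert filtration on each component of $(G/B)^\sigma$ with the Bruhat stratification of $L/B_L$ (which requires $\sigma$ to be compatibly chosen with $B$ and $P$). A second, more serious issue is that $W$ need not arise from a reductive group (for example, many infinite Coxeter groups do not); in that case the geometric argument must be replaced by its Soergel-bimodule avatar, where hyperbolic localisation becomes an algebraic operation on Soergel bimodules and the positivity follows from the Hodge-theoretic results of Elias--Williamson applied to $W_r$.
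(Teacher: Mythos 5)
Your approach---hyperbolic localisation of IC sheaves, followed by the decomposition theorem to read off non-negative multiplicities---is exactly the route the paper intends (see Theorem~\ref{thm:hypbim} and the remark immediately following it, which leaves the deduction of Theorem~\ref{thm:pos2} over $\QM$ to the reader, and is also the original argument of Billey--Braden). Two claims in the middle of your proposal are inaccurate and should be repaired, although they do not affect the viability of the strategy.

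First, the assertion that ``the $\sigma$-fixed locus on $G/B$ coincides with the $T$-fixed locus $\{wB/B\}$'' is wrong, and in fact contradicts what you correctly state two paragraphs later. Since the centraliser of $\sigma$ is the Levi $L$, which properly contains $T$ whenever $W_r\neq\{\id\}$, the fixed locus is $\bigsqcup_{x\in {}^rW} L\cdot xB/B \cong \bigsqcup_{x\in {}^rW} L/B_L$; it is these \emph{components of the fixed locus}, not the attracting cells $PxB/B$, that are isomorphic to $L/B_L$ (the attracting cells are affine-space bundles over them). Second, and more substantively, it is not true that the stalk of $\mathrm{Hyp}_\sigma(\ic_w)$ at a fixed point $ux$ ``agrees with the stalk of $\ic_w$ at $ux$ up to that shift.'' The functor $(v_+)^*(u_+)^!$ does not preserve stalks (already for the constant sheaf on $\PM^1$ under dilation, one picks up the \emph{costalk} $k[-2]$ at the repelling fixed point). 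What is preserved is the pointwise Euler characteristic of the stalk, which is the weaker statement your argument actually needs; this is precisely why the positivity holds only after specialising $v:=1$, since the grading on stalk cohomology is destroyed (cf.\ the remark following Theorem~\ref{thm:pos2} in the paper). With ``stalk'' replaced by ``Euler characteristic of the stalk'' throughout, the comparison of the two sides of your displayed identity goes through and yields $h^r_{v,x,w}=\dim V_{v,x}\ge 0$, as desired. Your closing remark that a Soergel-bimodule/Hodge-theoretic substitute is needed beyond the Weyl group case is correct, though not required for the theorem as cited.
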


\begin{remark}
The key difference between this theorem and Theorem \ref{thm:pos1} in
the previous section is that we have to specialise $v := 1$ to get a
positivity statement. Later we will explain (in both geometric and
Soergel bimodule language) that this need to specialise occurs because
we lose a grading.
\end{remark}

\begin{ex}
  We illustrate this positivity in Example \ref{ex:sp4}. Let $W_r =
  \langle u := sts \rangle$. Now $W_r$ is not a standard parabolic
  subgroup. We have $S_r = \{ u \}$ and our minimal coset representatives ${}^rW$ are
  indexed by those chambers in the upper half-space:
\begin{equation*}
  \begin{tikzpicture}[scale=.7]
    \draw[gray!50!white] (0,0) to (2,0);
    \draw[gray!50!white] (0,0) to (0,2);
    \draw[gray!50!white] (0,0) to (-2,0);
    \draw[gray!50!white] (0,0) to (0,-2);
    \draw[gray!50!white] (0,0) to (2,2);
    \draw[gray!50!white] (0,0) to (2,-2);
    \draw[gray!50!white] (0,0) to (-2,2);
    \draw[gray!50!white] (0,0) to (-2,-2);
\node at (67.5:1.6) {\small $\id$};
\node at (112.5:1.6) {\small $t$};
\node at (157.5:1.6) {\small $t s$};
\node at (382.5:1.6) {\small $s$};
  \end{tikzpicture}
\end{equation*}
Here an interesting case is given by the element $b_{sts}$ which has
already been considered in Example \ref{ex:sp4}. We have:
\begin{gather*}
  \begin{array}{c} 
    \begin{tikzpicture}[scale=.7]
    \draw[gray!50!white] (0,0) to (2,0);
    \draw[gray!50!white] (0,0) to (0,2);
    \draw[gray!50!white] (0,0) to (-2,0);
    \draw[gray!50!white] (0,0) to (0,-2);
    \draw[gray!50!white] (0,0) to (2,2);
    \draw[gray!50!white] (0,0) to (2,-2);
    \draw[gray!50!white] (0,0) to (-2,2);
    \draw[gray!50!white] (0,0) to (-2,-2);
\node at (67.5:1.6) {\small $v^3$};
\node at (112.5:1.6) {\small $v^2$};
\node at (157.5:1.6) {\small $v$};
\node at (292.5:1.6) {\small $1$};
\node at (337.5:1.6) {\small $v$};
\node at (382.5:1.6) {\small $v^2$};
  \end{tikzpicture} 
\end{array}
 \stackrel{ \small v := 1}{\longrightarrow} 
  \begin{array}{c} 
    \begin{tikzpicture}[scale=.5]
    \draw[gray!50!white] (0,0) to (2,0);
    \draw[gray!50!white] (0,0) to (0,2);
    \draw[gray!50!white] (0,0) to (-2,0);
    \draw[gray!50!white] (0,0) to (0,-2);
    \draw[gray!50!white] (0,0) to (2,2);
    \draw[gray!50!white] (0,0) to (2,-2);
    \draw[gray!50!white] (0,0) to (-2,2);
    \draw[gray!50!white] (0,0) to (-2,-2);
\node at (67.5:1.6) {\small $1$};
\node at (112.5:1.6) {\small $1$};
\node at (157.5:1.6) {\small $1$};
\node at (292.5:1.6) {\small $1$};
\node at (337.5:1.6) {\small $1$};
\node at (382.5:1.6) {\small $1$};
  \end{tikzpicture} 
\end{array} = \\
 = 
  \begin{array}{c} 
    \begin{tikzpicture}[scale=.5]
    \draw[gray!50!white] (0,0) to (2,0);
    \draw[gray!50!white] (0,0) to (0,2);
    \draw[gray!50!white] (0,0) to (-2,0);
    \draw[gray!50!white] (0,0) to (0,-2);
    \draw[gray!50!white] (0,0) to (2,2);
    \draw[gray!50!white] (0,0) to (2,-2);
    \draw[gray!50!white] (0,0) to (-2,2);
    \draw[gray!50!white] (0,0) to (-2,-2);
\node at (157.5:1.6) {\small $1$};
  \end{tikzpicture} 
\end{array} + 
  \begin{array}{c} 
    \begin{tikzpicture}[scale=.5]
    \draw[gray!50!white] (0,0) to (2,0);
    \draw[gray!50!white] (0,0) to (0,2);
    \draw[gray!50!white] (0,0) to (-2,0);
    \draw[gray!50!white] (0,0) to (0,-2);
    \draw[gray!50!white] (0,0) to (2,2);
    \draw[gray!50!white] (0,0) to (2,-2);
    \draw[gray!50!white] (0,0) to (-2,2);
    \draw[gray!50!white] (0,0) to (-2,-2);
\node at (112.5:1.6) {\small $1$};
  \end{tikzpicture} 
\end{array}+ 
  \begin{array}{c} 
    \begin{tikzpicture}[scale=.5]
    \draw[gray!50!white] (0,0) to (2,0);
    \draw[gray!50!white] (0,0) to (0,2);
    \draw[gray!50!white] (0,0) to (-2,0);
    \draw[gray!50!white] (0,0) to (0,-2);
    \draw[gray!50!white] (0,0) to (2,2);
    \draw[gray!50!white] (0,0) to (2,-2);
    \draw[gray!50!white] (0,0) to (-2,2);
    \draw[gray!50!white] (0,0) to (-2,-2);
\node at (67.5:1.6) {\small $1$};
\node at (292.5:1.6) {\small $1$};
  \end{tikzpicture} 
\end{array}
+ 
  \begin{array}{c} 
    \begin{tikzpicture}[scale=.5]
    \draw[gray!50!white] (0,0) to (2,0);
    \draw[gray!50!white] (0,0) to (0,2);
    \draw[gray!50!white] (0,0) to (-2,0);
    \draw[gray!50!white] (0,0) to (0,-2);
    \draw[gray!50!white] (0,0) to (2,2);
    \draw[gray!50!white] (0,0) to (2,-2);
    \draw[gray!50!white] (0,0) to (-2,2);
    \draw[gray!50!white] (0,0) to (-2,-2);
\node at (337.5:1.6) {\small $1$};
\node at (382.5:1.6) {\small $1$};
  \end{tikzpicture} \end{array} 
\\
= \qquad b_{\id} \cdot \hat{\delta}_{ts} \qquad+\qquad 
b_{\id} \cdot \hat{\delta}_{t} \qquad+\qquad
b_{u} \cdot \hat{\delta}_{\id} \qquad+\qquad 
b_{u} \cdot \hat{\delta}_{s}
\end{gather*}
The third term shows that we cannot expect positivity
before specialising $v := 1$.
\end{ex}

\subsection{Kazhdan-Lusztig theory and good reflection subgroups} \label{sec:good}
We hope that \S\ref{sec:self-similar} convinced the reader that
Coxeter groups may have many interesting reflection subgroups which are
not parabolic. A central tenant of these notes is that some sort of positivity holds
for more general reflection subgroups, which we now define.

Let us fix a finite-dimensional reflection representation $V$ of our Coxeter group,
and a subspace $V'$. Consider the reflection subgroup
\[
W_r := \langle s_\gamma \; | \; \gamma \in V' \rangle.
\]
That is, we consider the reflection subgroup generated by all
reflections whose roots lie in $V'$. Alternatively, we have
\[
\gamma \in V' \Leftrightarrow 
\text{ the contragredient action of $s_\gamma$ fixes $(V')^\perp \subset V^*$}.
\]
We will call a reflection subgroup that arises in this manner
$V$-\emph{good} (or simply \emph{good} if the context is clear).
The following principle is at the heart of this paper, it says roughly
that $V$-analogues of Kazhdan-Lusztig polynomials satisfy an analogue of
  Theorem \ref{thm:pos2}, for $V$-good reflection subgroups. Here is
  the meta theorem:

\begin{thm}
  If we write \label{thm:pos3}
\[
\hat{\delta}_{\id} \cdot b^V_w = \sum_{v \in W_r \atop x \in {}^r W}  h^r_{v,x,w} b^V_v \hat{\delta}_x
\qquad\text{with $h^r_{v,x,w} \in \ZM$}
\]
 then the $h^r_{v,x,w}$ are non-negative.
\end{thm}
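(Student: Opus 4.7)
My plan is to deduce the positivity statement by categorifying the bimodule expansion. Working in the Hecke category $\HC_V$ associated to the reflection representation $V$, the indecomposable self-dual objects $B_w$ categorify the distinguished basis $\{b^V_w\}$ by construction. In place of the hyperbolic bimodule $\ZM[W]$ (with its left $H_{W_r}$- and right $H_W$-action after specialising $v := 1$), I would construct an $(\HC_{V'}, \HC_V)$-bimodule category $\mathsf{Hyp}$ whose indecomposable objects naturally categorify the mixed basis $\{ b^V_v \cdot \hat{\delta}_x \; | \; v \in W_r,\, x \in {}^rW \}$. Abe's bimodule formalism furnishes the natural algebraic model (this is the point of the reference to \S~\ref{sec:hypbim}); geometrically, $\mathsf{Hyp}$ should arise as a suitable category of (constructible or parity) sheaves on affine flags equipped with the $W_r$-action.

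The heart of the argument is a categorified ``hyperbolic localisation'' functor
\[
\Psi : \HC_V \longrightarrow \mathsf{Hyp}
\]
whose induced map on split Grothendieck groups is exactly left multiplication by $\hat{\delta}_{\id}$. When $W_r$ is an ordinary parabolic subgroup, $\Psi$ is the classical Braden functor, and this recovers Theorem~\ref{thm:pos2}. For a general $V$-good reflection subgroup the defining condition -- that $W_r$ acts trivially on $(V')^\perp \subset V^*$ -- is precisely what allows one to build $\Psi$ using the Smith-theoretic variant of hyperbolic localisation developed by Treumann and Leslie--Lonergan, with $V'$ playing the role of the attracting directions. Granted this functor, positivity becomes automatic: $\Psi(B_w)$ is an object of an additive Krull--Schmidt category, and hence decomposes as
\[
\Psi(B_w) \;\cong\; \bigoplus_{v,x} (B_v \cdot \hat{\delta}_x)^{\oplus h^r_{v,x,w}}
\]
with $h^r_{v,x,w} \in \ZM_{\ge 0}$. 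Passing to the Grothendieck group yields the desired expansion.

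The main obstacle is the construction of $\Psi$ beyond the parabolic case, together with the identification of the indecomposable objects of $\mathsf{Hyp}$. For parabolic $W_r$ both are geometric, coming from the Bia{\l}ynicki-Birula decomposition under a cocharacter fixing $(V')^\perp$; for arbitrary $V$-good $W_r$ there is in general no subgroup of a reductive group with Weyl group $W_r$, so one must replace honest torus localisation with Smith localisation along a cyclic $p$-subgroup whose fixed-point set in the flag geometry is indexed by $W_r$-orbits, in the spirit of Treumann and Hazi. A secondary, expected, subtlety is that $\Psi$ lands in an \emph{ungraded} category -- the categorical shadow of the specialisation $v := 1$ -- which is why positivity is asserted in $\ZM$ rather than in $\ZM[v^{\pm 1}]$, in contrast with Theorem~\ref{thm:pos1}. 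Finally, one must check that the class of $\Psi(B_w)$ in $\mathsf{Hyp}$ indeed computes $\hat{\delta}_{\id} \cdot b^V_w$; this is a trace-compatibility statement for Smith localisation, and is precisely where the recent results of Hazi, Abe and Leslie--Lonergan should enter.
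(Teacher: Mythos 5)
Your overall strategy -- categorify the specialised mixed basis inside a suitable bimodule category, produce a ``localisation'' functor $\Psi$ whose decategorification is $\hat{\delta}_{\id}\cdot(-)$, and harvest positivity from the Krull--Schmidt property -- is exactly the route the paper takes. The statement itself is a \emph{meta}-theorem (the text explicitly says ``we will not be precise at this stage''), and the actual content lives in \S\ref{sec:heckeloc}: the bimodule category you call $\mathsf{Hyp}$ is the category $\CC = \langle\hat R\rangle \subset \hat R\bmod R$ of \S\ref{sec:hypbim}, the classification of its indecomposables by $W_r\times {}^rW$ is Theorem~\ref{thm:hyperbolic}, and the decategorification compatibility is Theorem~\ref{thm:cathypbim}.

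Where your sketch blurs the paper's logic is in the role assigned to Smith theory. The paper maintains two parallel routes to $\CC$. The geometric one (\S\S~4, 6.3--6.4) does use Smith localisation along $\mu_p\subset T$, and there Treumann and Leslie--Lonergan are essential; this is also where the parity-preservation and character-compatibility statements (Theorems \ref{thm:sm}, \ref{thm:hypbim}) live. But the route the paper actually carries through in detail (\S\S~6.5--6.8) is purely algebraic: one completes $R$ along the ideal generated by $V_r$ to obtain $\hat R$, checks that indecomposable Soergel bimodules for $W_r$ remain indecomposable after $(-)\otimes_R\hat R$ (Theorem~\ref{thm:BstaysIndecomposable}, which needs only the light-leaves--style filtration of $\End(B_w)$ and a Nakayama argument), and builds $\CC$ as the $(\hat{\HC}_{W_r,V},\HC_{W,V})$-bimodule generated by $\hat R$. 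No sheaves and no Smith theory appear; the functor $\Psi$ is simply $B\mapsto \hat R\otimes_R B$. Attributing the ``trace-compatibility'' step to Smith localisation and listing Abe alongside Treumann and Leslie--Lonergan therefore inverts the division of labour: Abe's bimodule formalism is precisely what lets one bypass Smith theory.

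One smaller inaccuracy: you say that for a general $V$-good $W_r$ ``there is in general no subgroup of a reductive group with Weyl group $W_r$.'' There is -- the identity component of the centraliser of the torsion element $\zeta$ (e.g.\ $\SL_2\times\SL_2\subset\Sp_4$, or $G((t^p))\subset G((t))$). The obstruction is rather that this subgroup is not a Levi, i.e.\ it is not the fixed locus of a $\CM^*$-action, so classical hyperbolic localisation is unavailable and one falls back either on Smith theory (geometrically) or on the algebraic completion (which side-steps the issue entirely).
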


Roughly, the element $b^V_w$ means some Kazhdan-Lusztig like (or
$p$-Kazhdan-Lusztig like) basis computed via Soergel bimodules or
variants thereof using the reflection representation $V$. We will not
be precise at this stage, and instead give two examples.

\begin{ex} \label{ex:c2}
  Consider the root system of type $C_2$ again, continuing Example
  \ref{ex:sp4}:
\begin{equation*}
  \begin{tikzpicture}[scale=0.8]
    \draw[->] (0,0) to (2,0); \node[right] at (2,0) {\tiny $\beta$};
    \draw[->] (0,0) to (0,2);
    \draw[->] (0,0) to (-2,0);
    \draw[->] (0,0) to (0,-2);
    \draw[->] (0,0) to (1,1);
    \draw[->] (0,0) to (1,-1);
    \draw[->] (0,0) to (-1,1); \node[above,left] at (-1,1) {\tiny $\alpha$};
    \draw[->] (0,0) to (-1,-1);
  \end{tikzpicture}
\end{equation*}
The
simplest example of a reflection subgroup that is not a parabolic
subgroup is the subgroup
\[
W_r := \langle t, u =  sts \rangle = S_2 \times
S_2 \subset W
\]
generated by reflections in the long roots $2\e_1$ and $2\e_2$.
We have $S_r = \{ t, u \}$ and our minimal coset representatives ${}^rW$
  are indexed by those chambers in the upper right quadrant:
\begin{equation*}
  \begin{tikzpicture}[scale=.7]
    \draw[gray!50!white] (0,0) to (2,0);
    \draw[gray!50!white] (0,0) to (0,2);
    \draw[gray!50!white] (0,0) to (-2,0);
    \draw[gray!50!white] (0,0) to (0,-2);
    \draw[gray!50!white] (0,0) to (2,2);
    \draw[gray!50!white] (0,0) to (2,-2);
    \draw[gray!50!white] (0,0) to (-2,2);
    \draw[gray!50!white] (0,0) to (-2,-2);
\node at (67.5:1.6) {\small $\id$};
\node at (382.5:1.6) {\small $s$};
  \end{tikzpicture}
\end{equation*}
Here an interesting case is given by the element $b_{sts}$ which has
already been considered in Example \ref{ex:sp4}. We have:
\begin{align*}
  \begin{array}{c} 
    \begin{tikzpicture}[scale=.7]
    \draw[gray!50!white] (0,0) to (2,0);
    \draw[gray!50!white] (0,0) to (0,2);
    \draw[gray!50!white] (0,0) to (-2,0);
    \draw[gray!50!white] (0,0) to (0,-2);
    \draw[gray!50!white] (0,0) to (2,2);
    \draw[gray!50!white] (0,0) to (2,-2);
    \draw[gray!50!white] (0,0) to (-2,2);
    \draw[gray!50!white] (0,0) to (-2,-2);
\node at (67.5:1.6) {\small $v^3$};
\node at (112.5:1.6) {\small $v^2$};
\node at (157.5:1.6) {\small $v$};
\node at (292.5:1.6) {\small $1$};
\node at (337.5:1.6) {\small $v$};
\node at (382.5:1.6) {\small $v^2$};
  \end{tikzpicture} 
\end{array}
& \stackrel{ \small v := 1}{\longrightarrow} 
  \begin{array}{c} 
    \begin{tikzpicture}[scale=.5]
    \draw[gray!50!white] (0,0) to (2,0);
    \draw[gray!50!white] (0,0) to (0,2);
    \draw[gray!50!white] (0,0) to (-2,0);
    \draw[gray!50!white] (0,0) to (0,-2);
    \draw[gray!50!white] (0,0) to (2,2);
    \draw[gray!50!white] (0,0) to (2,-2);
    \draw[gray!50!white] (0,0) to (-2,2);
    \draw[gray!50!white] (0,0) to (-2,-2);
\node at (67.5:1.6) {\small $1$};
\node at (112.5:1.6) {\small $1$};
\node at (157.5:1.6) {\small $1$};
\node at (292.5:1.6) {\small $1$};
\node at (337.5:1.6) {\small $1$};
\node at (382.5:1.6) {\small $1$};
  \end{tikzpicture} 
\end{array} = \\
& = 
  \begin{array}{c} 
    \begin{tikzpicture}[scale=.5]
    \draw[gray!50!white] (0,0) to (2,0);
    \draw[gray!50!white] (0,0) to (0,2);
    \draw[gray!50!white] (0,0) to (-2,0);
    \draw[gray!50!white] (0,0) to (0,-2);
    \draw[gray!50!white] (0,0) to (2,2);
    \draw[gray!50!white] (0,0) to (2,-2);
    \draw[gray!50!white] (0,0) to (-2,2);
    \draw[gray!50!white] (0,0) to (-2,-2);
\node at (157.5:1.6) {\small $1$};
\node at (337.5:1.6) {\small $1$};
\node at (382.5:1.6) {\small $1$};
  \end{tikzpicture} 
\end{array} + 
  \begin{array}{c} 
    \begin{tikzpicture}[scale=.5]
    \draw[gray!50!white] (0,0) to (2,0);
    \draw[gray!50!white] (0,0) to (0,2);
    \draw[gray!50!white] (0,0) to (-2,0);
    \draw[gray!50!white] (0,0) to (0,-2);
    \draw[gray!50!white] (0,0) to (2,2);
    \draw[gray!50!white] (0,0) to (2,-2);
    \draw[gray!50!white] (0,0) to (-2,2);
    \draw[gray!50!white] (0,0) to (-2,-2);
\node at (67.5:1.6) {\small $1$};
\node at (112.5:1.6) {\small $1$};
\node at (292.5:1.6) {\small $1$};
  \end{tikzpicture} 
\end{array}
\end{align*}
which does not admit a positive expansion in the mixed basis. (We do
not expect it to be either, as $W_r$ is not a good reflection subgroup for our
representation over $\QM$.)

However, if we reduce the weight lattice modulo $2$, then both long
roots $2\e_1$ and $2\e_2$ become zero. In particular, $W_r$ is a good
reflection subgroup in characteristic 2! In this case, the Theorem
\ref{thm:pos3} amounts to the following (already rather remarkable!) positivity:
\begin{gather*}
{}^2 b_{sts} = 
  \begin{array}{c} 
    \begin{tikzpicture}[scale=.8]
    \draw[gray!50!white] (0,0) to (2,0);
    \draw[gray!50!white] (0,0) to (0,2);
    \draw[gray!50!white] (0,0) to (-2,0);
    \draw[gray!50!white] (0,0) to (0,-2);
    \draw[gray!50!white] (0,0) to (2,2);
    \draw[gray!50!white] (0,0) to (2,-2);
    \draw[gray!50!white] (0,0) to (-2,2);
    \draw[gray!50!white] (0,0) to (-2,-2);
\node at (67.5:1.6) {\small $v + v^3$};
\node at (112.5:1.6) {\small $v^2$};
\node at (157.5:1.6) {\small $v$};
\node at (292.5:1.6) {\small $1$};
\node at (337.5:1.6) {\small $v$};
\node at (382.5:1.6) {\small $1 + v^2$};
  \end{tikzpicture} 
\end{array}
 \stackrel{ \small v := 1}{\longrightarrow} 
  \begin{array}{c} 
    \begin{tikzpicture}[scale=.8]
    \draw[gray!50!white] (0,0) to (2,0);
    \draw[gray!50!white] (0,0) to (0,2);
    \draw[gray!50!white] (0,0) to (-2,0);
    \draw[gray!50!white] (0,0) to (0,-2);
    \draw[gray!50!white] (0,0) to (2,2);
    \draw[gray!50!white] (0,0) to (2,-2);
    \draw[gray!50!white] (0,0) to (-2,2);
    \draw[gray!50!white] (0,0) to (-2,-2);
\node at (67.5:1.6) {\small $2$};
\node at (112.5:1.6) {\small $1$};
\node at (157.5:1.6) {\small $1$};
\node at (292.5:1.6) {\small $1$};
\node at (337.5:1.6) {\small $1$};
\node at (382.5:1.6) {\small $2$};
  \end{tikzpicture} 
\end{array} = \\
 = 
  \begin{array}{c} 
    \begin{tikzpicture}[scale=.5]
    \draw[gray!50!white] (0,0) to (2,0);
    \draw[gray!50!white] (0,0) to (0,2);
    \draw[gray!50!white] (0,0) to (-2,0);
    \draw[gray!50!white] (0,0) to (0,-2);
    \draw[gray!50!white] (0,0) to (2,2);
    \draw[gray!50!white] (0,0) to (2,-2);
    \draw[gray!50!white] (0,0) to (-2,2);
    \draw[gray!50!white] (0,0) to (-2,-2);
\node at (157.5:1.6) {\small $1$};
\node at (337.5:1.6) {\small $1$};
\node at (382.5:1.6) {\small $2$};
  \end{tikzpicture} 
\end{array} + 
  \begin{array}{c} 
    \begin{tikzpicture}[scale=.5]
    \draw[gray!50!white] (0,0) to (2,0);
    \draw[gray!50!white] (0,0) to (0,2);
    \draw[gray!50!white] (0,0) to (-2,0);
    \draw[gray!50!white] (0,0) to (0,-2);
    \draw[gray!50!white] (0,0) to (2,2);
    \draw[gray!50!white] (0,0) to (2,-2);
    \draw[gray!50!white] (0,0) to (-2,2);
    \draw[gray!50!white] (0,0) to (-2,-2);
\node at (67.5:1.6) {\small $2$};
\node at (112.5:1.6) {\small $1$};
\node at (292.5:1.6) {\small $1$};
  \end{tikzpicture} 
\end{array}
= (b_t \hat{\delta}_s + b_u \hat{\delta}_s) + (b_t \hat{\delta}_{\id} + b_u \hat{\delta}_{\id}).
\end{gather*}
\end{ex}

\begin{remark} \label{rem:geomc2}
In the previous example,  $W_r$ arises as Weyl group of the centraliser (isomorphic to $\SL_2
  \times \SL_2$) of the element   $\diag(1,-1,-1,1) \in \Sp_4$. The
  fact that this element is of order 2 explains why this subgroup is
  good in characteristic $2$, and not otherwise.
The two cosets of $W_r$ correspond to two copies of $\PM^1 \times \PM^1$ (the
flag variety of $\SL_2  \times \SL_2$) inside the flag variety of
$\Sp_4$. These two copies of the flag variety are the fixed points
under the element $\diag(1,-1,-1,1) \in \Sp_4$ considered above.
\end{remark}


\section{Localisation: equivariant, hyperbolic, Smith} \label{sec:localisation}

In this section we discuss three types of localisation: equivariant
localisation (for torus actions); hyperbolic
localisation (for $\CM^*$-actions); and ``Smith
localisation'' (for $\ZM/p\ZM$-actions, with characteristic $p$ coefficients).
Later we will apply these three theories to the Hecke category,
providing a bridge between the Hecke category for a Coxeter group; and
those of reflection subgroups.

There are no new results in this section, and the discussion is
deliberately informal. We try to explain why the ideas are natural, and to give references.

\subsection{The principle of localisation}  \label{sec:principal}
Suppose that a group $G$ acts on
a space $X$. Localisation attempts to relate $X$ and
$X^G$. One of its core principles is easily stated: suppose that a
``theory'' satisfies some sort of ``additivity'' and takes ``small'' values
on non-trivial $G$-orbits; then the values of this theory on $X$ and
$X^G$ will differ by ``small'' values.\footnote{All terms in scare
quotes have no precise meaning. The examples below should give some idea
what is meant.} Indeed, by additivity, the difference between the value of the theory on $X$ and
$X^G$ will be its value on $X \setminus X^G$, which consists only of
non-trivial orbits, which make small contributions!

The most fundamental example takes $G = S^1$ and the theory to be the
Euler characteristic $\chi$. Because any non-trivial homogeneous space
is isomorphic to $S^1$ and $\chi(S^1) = 0$ we expect a close relation
between the Euler characteristics of  $X$ and $X^{S^1}$. Indeed,
\[
\chi(X) = \chi(X^{S^1})
\]
for compact $S^1$-spaces $X$ of finite type. (The easiest way to see
this is to use the compactly supported Euler characteristic, which is
additive on open/closed decompositions.)

A second example of a similar nature takes $G = \mu_p = \ZM/p\ZM$ and the
theory to be the Euler characteristic modulo $p$. Because any non-trivial
homogeneous space for $G$ has $p$ points, one may easily conclude that
\[
\chi(X) = \chi(X^{\mu_p}) \mod p
\]
for a finite CW complex $X$ with $G$-action.

These two examples are fundamental but also deceptive. In both
examples the value of the theory on non-trivial orbits was zero (or at
least $0$ modulo $p$). Thus the theory is equal to its value on fixed
points.  In further examples the difference is small,
but non-zero, leading to subtle and important differences between the
theory evaluated on $X$
and $X^G$.

\subsection{Equivariant localisation for $\CM^*$}
It is natural to try to upgrade the above equality of Euler
characteristics to statements in cohomology. The most powerful tool to
do so is equivariant cohomology. Given a variety $X$ with
$\CM^*$-action\footnote{We change setting slightly from $S^1$-actions
  to $\CM^*$-actions to more closely match later considerations.},
recall that its equivariant cohomology is given by
\[
H^*_{\CM^*}(X,k) := H^*(X \times_{\CM^*} E\CM^*,k)
\]
where $E\CM^* \to B\CM^*$ is a universal principal $\CM^*$-bundle (we
can take $\CM^{\infty} \setminus \{ 0 \} \to \PM^\infty \CM$), and $X
\times_{\CM^*} E\CM^*$ denotes the quotient by the diagonal
action. The equivariant cohomology is a module (via the projection $X
\times_{\CM^*} E\CM^* \to B\CM^*$) over
\[
H^*_{\CM^*}(\pt,k) = k[x]
\]
where $x \in H^2(B\CM^*,k)$ is the Chern class of $E\CM^*$.

For simplicity, let us assume $k = \QM$. For any (algebraic) homogenous
$\CM^*$-space $X$ we have
\begin{equation}
  \label{eq:C*calc}
H^*_{\CM^*}(X,\QM) = 
\begin{cases} \QM[x] & \text{if $X = \pt$,}\\
\QM = \QM[x]/(x) & \text{otherwise.}
\end{cases}  
\end{equation}
The principle of localisation (where here ``small'' means ``torsion'')
leads us to guess the following:

\begin{thm}[Localisation theorem, easy version]
  The restriction map
\[
H_{\CM^*}^*(X, \QM) \to H_{\CM^*}^*(X^{\CM^*},\QM)
\]
becomes an isomorphism after inverting $x$.
\end{thm}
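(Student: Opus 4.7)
The plan is to reduce the theorem to showing that the equivariant cohomology of the complement $U := X \setminus X^{\CM^*}$ is killed by inverting $x$. Since localisation $\QM[x, x^{-1}] \otimes_{\QM[x]} -$ is an exact functor on $\QM[x]$-modules, applying it to the long exact sequence coming from the open/closed decomposition
\[
U \stackrel{j}{\injto} X \stackrel{i}{\longleftarrow} X^{\CM^*}
\]
(i.e.\ the one obtained by applying $H^*_{\CM^*}$ to the distinguished triangle $j_! \QM_U \to \QM_X \to i_* \QM_{X^{\CM^*}} \triright$) reduces the claim to showing that $H^*_{\CM^*, c}(U, \QM)$ is $x$-torsion.

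To establish the torsion statement, I would exploit the fact that $\CM^*$ acts on $U$ \emph{without fixed points}. Stratify $U$ by $\CM^*$-invariant locally closed subvarieties $U_\alpha$ on each of which the stabiliser is a fixed (proper, hence finite) subgroup $H_\alpha \subset \CM^*$. By the slice theorem, each $U_\alpha$ is a $\CM^*$-equivariant fibre bundle with fibre $\CM^*/H_\alpha$, so
\[
H^*_{\CM^*}(U_\alpha, \QM) \cong H^*(U_\alpha/\CM^*, \QM) \otimes_{\QM} H^*(B H_\alpha, \QM).
\]
Because $H_\alpha$ is finite, $H^*(BH_\alpha, \QM) = \QM$ lives in degree $0$; and because $U_\alpha/\CM^*$ is of finite type, its rational cohomology is bounded above. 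As $x$ has positive degree $2$ and acts through the classifying map $U_\alpha/\CM^* \to B\CM^*$, sufficiently high powers of $x$ must act by zero, so each stratum contributes an $x$-torsion $\QM[x]$-module. This is the higher analogue of the computation \eqref{eq:C*calc}.

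One then assembles the strata via the long exact sequences for successive open/closed decompositions. Because $x$-torsion $\QM[x]$-modules are closed under extensions, $H^*_{\CM^*, c}(U, \QM)$ is itself $x$-torsion, and inverting $x$ in the initial long exact sequence yields the desired isomorphism. The main obstacle is technical rather than conceptual: one must set up the invariant stratification and finiteness hypotheses on $X$ carefully, and choose consistently between ordinary and compactly supported cohomology (they agree for compact $X$, and in any case agree after inverting $x$). Morally, this is exactly the principle of \S\ref{sec:principal} made precise: non-trivial $\CM^*$-orbits contribute only ``small'' things in the sense of being $x$-torsion, so they vanish after passing to the generic point of $\Spec \QM[x]$.
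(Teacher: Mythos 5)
Your proposal follows exactly the same route the paper sketches: the paper's ``proof'' consists only of the remark that the restriction map sits in a long exact sequence whose third term is $x$-torsion, ``essentially by \eqref{eq:C*calc}'', and you have simply filled in the details (the open/closed triangle producing that long exact sequence, the stratification of $U = X \setminus X^{\CM^*}$ by orbit type, and the finiteness argument making each stratum's contribution $x$-torsion). The argument is correct; the only thing worth flagging is that, as you note yourself, one should be explicit about the standing finite-type/compactness hypothesis on $X$ and the choice between ordinary and compactly supported cohomology, since the paper leaves this implicit.
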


The rough idea is that the map $H_{\CM^*}^*(X, \QM) \to
H_{\CM^*}^*(X^{\CM^*},\QM)$ fits into a long exact sequence whose
third term is a torsion $H^*_{\CM^*}(\pt,\QM)$-module, essentially by
\eqref{eq:C*calc}.


\subsection{Equivariant localisation for tori} Now suppose that $T
\cong (\CM^*)^m$ is an algebraic torus with lattice of characters
\[
\Chi := \Hom(T, \CM^*).
\]
As for $\CM^*$, the equivariant cohomology of a $T$-variety $X$ is
defined to be
\[
H^*_T(X,\Bbbk) := H^*(X \times_T ET, \Bbbk)
\]
where $ET \to BT$ is a universal principal $T$-bundle. (Once we choose
an isomorphism $T = (\CM^*)^m$ we can take a product of $m$ copies of $\CM^\infty \setminus 0 \to
\PM^\infty\CM$.) Just as before, the projection map $X \times_T ET \to
BT$ means that $H^*_T(X,\Bbbk) $ is always a module over
\[
H^*_T(\pt,\Bbbk) = H^*(BT,\Bbbk).
\]
The Borel isomorphism 
gives us a canonical isomorphism
\[
\Chi \simto H^2(BT,\ZM) \quad \text{and} \quad
H^*_T(\pt) = S(\Chi_\Bbbk)
\]
where $\Chi_\Bbbk = \Chi \otimes_\ZM \Bbbk$, and $S$ denotes the symmetric
algebra over $\Bbbk$. Just as before we have

\begin{thm}
  The map
\[
H_T^*(X, \QM) \to H_T^*(X^{\CM^*},\QM)
\]
becomes an isomorphism after inverting all characters of $T$ which are
non-trivial on $\CM^*$.
\end{thm}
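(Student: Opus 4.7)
The plan is to reduce the statement to a calculation on single $T$-orbits and invoke the localisation principle from \S\ref{sec:principal}, using that $H^*_T(-, \QM)$ is additive in the appropriate long-exact-sequence sense.

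First I would identify the multiplicative set: let $S \subset H^*_T(\pt, \QM) = S(\Chi_\QM)$ denote the multiplicative subset generated by those characters $\lambda \in \Chi$ whose restriction to $\CM^* \subset T$ is non-trivial. Applying $-\otimes_{H^*_T(\pt,\QM)} S^{-1}H^*_T(\pt,\QM)$ to the long exact sequence of the open/closed decomposition $X^{\CM^*} \hookrightarrow X \hookleftarrow X \setminus X^{\CM^*}$, it suffices to show that $H^*_T(X \setminus X^{\CM^*}, \QM)$ is killed by $S^{-1}$. Thus the heart of the matter is to prove that for any $T$-variety $U$ on which $\CM^*$ acts without fixed points, $S^{-1} H^*_T(U, \QM) = 0$.

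Next, I would stratify $U$ by $T$-orbit type. By a standard stratification argument (reducing via Noetherian induction on the closed strata, using the long exact sequence for each open/closed pair to piece everything together), it suffices to show the vanishing for a single orbit $T/H$ with $\CM^* \not\subset H$. Here one has $H^*_T(T/H, \QM) = H^*_H(\pt, \QM)$, and this is the ring $S(\Chi(H^\circ)_\QM)^{\pi_0(H)}$ (invariants of the symmetric algebra on the character lattice of the identity component of $H$). The condition $\CM^* \not\subset H$ means $\CM^* \cap H$ is finite, so the restriction $\Chi(T)_\QM \to \Chi(H^\circ)_\QM$ has a non-trivial kernel containing a character $\lambda$ whose restriction to $\CM^*$ is non-trivial: indeed, the restriction $\Chi(T)_\QM \to \Chi(\CM^*)_\QM$ is surjective, so a preimage of a generator, suitably modified by elements of $\ker(\Chi(T)_\QM \to \Chi(H^\circ)_\QM)$ if necessary, provides such a $\lambda$. (More cleanly: the image of $\Chi(T)_\QM \to \Chi(H^\circ)_\QM \oplus \Chi(\CM^*)_\QM$ has both factors; since the second factor is one-dimensional and cannot be zero on all of the image by $\CM^* \not\subset H$, one can find $\lambda$ in the first coordinate's kernel with non-zero second coordinate.) This $\lambda$ lies in $S$ yet acts by $0$ on $H^*_H(\pt,\QM)$, so $S^{-1} H^*_T(T/H, \QM) = 0$.

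The main obstacle is the passage from orbits to $U$ itself, i.e.\ the stratification step: in full generality one wants $X$ to be reasonable enough (e.g.\ a finite-type scheme with finitely many orbit types, or a compact manifold with a smooth action) so that Noetherian induction actually terminates. Assuming such reasonable hypotheses, the rest is formal: exactness of localisation and the five lemma applied to the localised long exact sequences assemble the single-orbit vanishing into vanishing on all of $X \setminus X^{\CM^*}$, whence the displayed restriction map becomes an isomorphism after inverting $S$.
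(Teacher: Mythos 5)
The paper does not actually prove this theorem: it is stated as a standard fact with a pointer to \cite{FW}, and the only hint given is the ``rough idea'' attached to the easier $\CM^*$-only statement, which is essentially your step of feeding the open/closed long exact sequence into localisation. So your argument is the standard one that the paper is implicitly invoking. The reduction to $S^{-1}H^*_T(U,\QM)=0$ for $U = X\setminus X^{\CM^*}$ and the single-orbit calculation (producing $\lambda\in\Chi(T)$ vanishing on $H^\circ$ but non-trivial on $\CM^*$, which annihilates $H^*_H(\pt,\QM)$) are both correct and are the crux of the matter; your dimension count for the surjectivity of $\Chi(T)_\QM\to\Chi(H^\circ)_\QM\oplus\Chi(\CM^*)_\QM$ is valid because $\CM^*\not\subset H^\circ$ forces $\dim(H^\circ\cdot\CM^*)=\dim H^\circ+1$.

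The one genuinely loose step — which you flag as the ``main obstacle'' but then wave through — is the reduction from $U$ to a single orbit. Stratifying $U$ by orbit type (finitely many strata for a finite-type $T$-variety) produces locally closed strata $Z$ on which the stabiliser is a fixed $H$, but $Z$ is a union of orbits, not a single orbit, and $H^*_T(Z,\QM)$ is not $H^*_H(\pt,\QM)$. Rather, $Z\times_T ET\to Z/T$ is a fibration with fibre $BH$, and the Leray spectral sequence has $E_2^{p,q}=H^p(Z/T,\QM)\otimes H^q(BH,\QM)$ (trivial monodromy over $\QM$). Your $\lambda$ acts by $0$ on the fibre cohomology $H^*(BH,\QM)$ and hence by $0$ on each $E_2$-row, so it acts \emph{nilpotently}, not by zero, on each $H^n_T(Z,\QM)$: that is still enough for $S^{-1}H^*_T(Z,\QM)=0$, and then Noetherian induction over the closures of the orbit-type strata completes the argument. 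So the gap is fixable but real: you reduce to orbit-type strata rather than orbits, and the correct conclusion on a stratum is nilpotence of $\lambda$ rather than annihilation, with the spectral sequence (or an equivariant tubular-neighbourhood argument in the compact setting) doing the work of transporting the single-orbit calculation to the stratum.
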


\begin{remark}
  We have brushed issues around torsion and the localisation theorem
  under the rug above. These are dealt with systematically in \cite{FW}.
\end{remark}

\subsection{Hyperbolic localisation: smooth case} \label{sec:smooth}
To motivate hyperbolic localisation we first consider a simple
situation, given by smooth variety equipped with a $\CM^*$-action. I
am given to understand that this setting was one of the origins of the
idea (see \cite{Kirwan}).

Suppose that $X$ is a smooth projective variety over $\CM$
with an algebraic $\CM^*$-action. The famous Bia\l ynicki-Birula
decomposition \cite{BB1,BB2} yields that 
\begin{enumerate}
\item Each component of the fixed point locus $X^{\CM^*}$ is smooth.
\item For each component $F \subset X^{\CM^*}$ the \emph{attracting
    set}
\[
X_F^+ := \{ x \in X \; | \; \lim_{z \to 0} z \cdot x \in F \}
\]
is locally closed.
\item The action of $\CM^*$ on $X_F^+$ may be extended to an action of the
  monoid $\CM$ under multiplication. In particular, the \emph{limit map}
\[
p_F : X_F^+ \to F : x \mapsto 0 \cdot x = \lim_{z \to 0} z \cdot x 
\]
is a morphism of algebraic varieties.
\item For any component $F$, the limit map realizes $X_F^+$ as
bundle of affine spaces over $F$.
\item There exists an ordering $F_0, F_1, \dots$ of the components of
  $X^{\CM^*}$  such that the sets
\[
X_i := \bigcup_{i \le j} X_{F_j}^+
\]
yield a filtration of $X$ by closed subvarieties.
\end{enumerate}

Of course, we could replace $\CM^*$ with its inverse action. This
amounts to instead considering the \emph{repelling set}
\[
X_F^- := \{ x \in X \; | \; \lim_{z \to \infty} z \cdot x \in F \}
\]
for which the obvious analogues of (2), (3), (4) and (5) are true.

\begin{remark}
  It is useful to consider the above statements when
\[
  X = \PM(V)
\]
  is
  the projective space of a finite-dimensional algebraic $\CM^*$-module $V$. Write $V =
  \bigoplus_{i \in \ZM} V_i$ for the decomposition of $V$ into weight
  spaces. By twisting by a character of $\CM^*$ we may assume that
  only positive weights occur, i.e. $V =\bigoplus_{i \ge 0} V_i$. We use this to write points of $V$ in ``homogenous
  coordinates'' $[ v_0 : v_1 : \dots ]$, with $v_i
  \in V_i$. Then
\[
  \PM(V)^{\CM^*}= \PM(V_0) \sqcup \PM(V_1) \sqcup \PM(V_2) \sqcup \dots
\]
and our filtration is $X_i = \PM( \bigoplus_{j \ge i} V_j)$. We have
\[
X_{\PM(V_i)}^{+} = \{ [ v_0 : v_1 : \dots ] \; | \;
v_j =0 \text{ for $j<p$ and $v_i \ne 0$} \} 
\]
and the limit map is given by
\[
X_{\PM(V_i)}^{+} \to \PM(V_i) : [v_0 : v_1 : \dots ] \mapsto [v_i].
\]
This calculation shows that the limit map realises
$X_{\PM(V_i)}^{+}$ as a direct sum of line bundles (in fact, copies of
$\OC(1)$) over $\PM(V_i)$.
\end{remark}

\begin{remark}
Suppose that there exists an algebraic
$\CM^*$-module $V$ and an equivariant embedding
\[
X \into \PM(V).
\]
This is often the case in practice, in which case it is easy to see
why (2), (3)
and (5) hold. (In fact this is always true locally, 
by the ``equivariant embedding theorem'' \cite{Sumihiro}.)
\end{remark}

Now let us apply the local-global spectral sequence (see e.g. \cite[Proof of Prop. 3.4.4]{SL}) associated to our
filtration of $X$ to
calculate the cohomology of $X$. It has the form
\begin{gather*}
E_1^{i,j} = H^{i+j}(X_{i}\setminus X_{i+1}, u_i^! k_X) \Rightarrow
H^{i+j}(X,k) \\  
\quad\text{where $u_i : X_{F_i}^+ \into X$ is the inclusion.}
\end{gather*}
Because $u_j$ is the inclusion of a smooth subvariety (of codimension
$c_j$, say) we have $u_j^! k_X = k_X[-2c_j]$ and we can rewrite the terms
of our spectral sequence as
\[
H^{i+j}(X_{F_j}^+, u_i^! k_X) 
= H^{i+j + 2c_j}(X_{F_j}^+, k ) = H^{i+2c_j}(F_j, k) 
\]
where, for the last step we use that $X_{F_j}^+$ and $F_j$ have the
same cohomology, the first being an affine space bundle over the
second.

The upshot is that we have found a spectral sequence calculating the
cohomology of $X$ in terms of the cohomology of the components $F_i$
of the fixed point locus $X^{\CM^*}$. This spectral sequence is 
useful in practice. For example it degenerates in the following
situations:
\begin{enumerate}
\item over $\QM$ for weight reasons (each differential in the spectral
  sequence goes between groups of distinct weights);
\item in general if the components $F_i$ have no odd cohomology (then
  the possibly non-vanishing terms in the spectral sequence resemble
  the black squares on a chess board, and all differentials are zero
  for trivial reasons).
\end{enumerate}

\subsection{Hyperbolic localisation for one component}

Let us assume that $X$ is smooth (and not necessarily projective),
and that $X^{\CM^*}$ consists of one component $F$. Let $X_F^+$ (resp.
$X_F^-$) denote its attractive (resp. repelling) set (defined as in the previous section). We 
consider the diagram
\[
  \begin{tikzpicture}
    \node (F) at (0,0) {$F$};
    \node (F+) at (3,0) {$X_F^+$};
    \node (F-) at (0,-3) {$X_F^-$};
    \node (X) at (3,-3) {$X$};
 \draw[->] (F+) to node[right] {$u_+$} (X);
 \draw[->] (F-) to node[below] {$u_-$} (X);
 \draw[->] (F) to[out=20,in=160] node[above] {$v_+$} (F+);
 \draw[->] (F) to[out=-110,in=110] node[left] {$v_-$} (F-);
 \draw[<-] (F) to[out=-20,in=-160] node[below] {$p_+$} (F+);
 \draw[<-] (F) to[out=-70,in=70] node[right] {$p_-$} (F-);
\draw[->] (F) to node[above,right] {$i$} (X);
  \end{tikzpicture}
\]
where $v_+,v_-,u_+,u_-$ are the inclusions, and $p_+$, $p_-$ are the
limit maps (see the previous section).

The discussion of the previous section showed that for any complex
$\FS$ of sheaves on $X$ there exists a spectral sequence whose terms
involve the cohomology of the complex
\[
(p_+)_*(u_+)^! \FS \in D^b(F).
\] 
If we assume that $\FS$ is $\CM^*$-equivariant then the attractive
proposition (see \cite{SpringerPurity} or \cite[\S~2.6]{FW}, this is essentially a kind of homotopy
invariance, using that $X_F^+$ retracts onto $F$) yields that the
canonical map provides an isomorphism
\[
(p_+)_*(u_+)^! \FS \simto (v_+)^*(u_+)^! \FS \in D^b_{\CM^*}(F).
\]
Thus the terms of our spectral sequence involve a certain variant of
``restriction to fixed points''. The functor
\begin{align*}
  D^b_{\CM^*}(X)  &\to D^b_{\CM^*}(F) \\
\FS & \mapsto (\FS)^{*!} := (v_+)^*(u_+)^! \FS
\end{align*}
is called \emph{hyperbolic localisation}. The most important basic
theorem (and the reason for drawing the big diagram above) is the
following \cite{Braden}:

\begin{thm}[Braden's theorem]
  For any $\FS \in D^b_{\CM^*}(X)$ we have canonical isomorphisms
\[
(v_+)^*(u_+)^! \FS \simto (v_-)^!(u_-)^* \FS
\]
\end{thm}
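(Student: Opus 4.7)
The plan is to construct a canonical natural transformation between the two functors and then verify that it is an isomorphism by reduction to a local model. I would proceed in three stages.

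First, I would construct a canonical comparison map
$$\beta_\FS : (v_-)^!(u_-)^* \FS \longrightarrow (v_+)^*(u_+)^! \FS$$
defined for every (not necessarily equivariant) $\FS \in D^b(X)$. Such a map exists by formal nonsense: since $i = u_+ \circ v_+ = u_- \circ v_-$, one produces $\beta_\FS$ by combining the base-change natural transformations associated with the two (\emph{a priori} non-Cartesian) squares built from $v_\pm$ and $u_\pm$ with the $(u_\pm)^*\dashv (u_\pm)_*$ and $(u_\pm)_!\dashv (u_\pm)^!$ adjunctions. The non-Cartesianness means $\beta_\FS$ is not an isomorphism in general; Braden's theorem asserts that it becomes one once $\FS$ is $\CM^*$-equivariant.

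Second, I would use the attractive proposition, already invoked in the text, to rewrite both sides in a form better suited to computation. On the attracting side, the proposition gives $(v_+)^*(u_+)^! \FS \simto (p_+)_*(u_+)^! \FS$; the completely analogous argument applied to the \emph{inverse} $\CM^*$-action on $X_F^-$ supplies the ``co-attractive'' statement $(v_-)^!(u_-)^* \FS \simto (p_-)_!(u_-)^* \FS$. The task is thereby reduced to checking that $\beta_\FS$ induces an isomorphism $(p_-)_!(u_-)^* \FS \simto (p_+)_*(u_+)^! \FS$, i.e. a comparison between two integration-along-fibres functors landing on $F$.

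Third, I would reduce to a local model via the equivariant embedding theorem of Sumihiro: \'etale-locally along any component of $F$, the variety $X$ is identified with a $\CM^*$-equivariant affine bundle over $F$, and since $X$ is smooth this bundle further splits as a direct sum of the ``positive weight'' sub-bundle (which is $X_F^+$) and the ``negative weight'' sub-bundle (which is $X_F^-$). In this local model, proper base change and a K\"unneth-type argument express both sides as the same sheaf on $F$, and one checks directly that $\beta_\FS$ realises this identification. The main obstacle, in my view, is the delicate construction of $\beta_\FS$ together with its compatibility with the identifications in the second and third stages: the canonical maps coming from different base-change diagrams agree only up to natural transformations that must be tracked carefully. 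Braden's original proof hinges precisely on such a bookkeeping argument, while the more conceptual reformulation of Drinfeld--Gaitsgory expresses everything in an $(\infty,2)$-categorical framework of correspondences where the comparison map becomes transparent.
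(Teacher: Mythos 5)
The paper does not prove this result; it states it and cites Braden \cite{Braden}, so there is no in-text proof to compare your attempt against. Your outline does, however, follow the architecture of Braden's original argument: construct a canonical comparison morphism, use the contraction lemma (what the text calls the ``attractive proposition'') and its Verdier dual to rewrite $(v_+)^*(u_+)^!$ as $(p_+)_*(u_+)^!$ and $(v_-)^!(u_-)^*$ as $(p_-)_!(u_-)^*$, and then verify on a local model. In that sense it is a reasonable sketch.

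The place where you are underselling the difficulty is the first step. The map $\beta_\FS$ does not arise ``by formal nonsense'' from the factorisations $i = u_\pm \circ v_\pm$ alone: neither of the two resulting squares is Cartesian, and abstract nonsense produces no base-change morphism from a non-Cartesian square. The geometric input one actually needs is that $X_F^+ \cap X_F^- = F$ (scheme-theoretically), so that
\[
\begin{array}{ccc}
F & \xrightarrow{\ v_-\ } & X_F^- \\
v_+ \downarrow & & \downarrow u_- \\
X_F^+ & \xrightarrow{\ u_+\ } & X
\end{array}
\]
\emph{is} Cartesian; only then do you get the base-change transformation that seeds the construction. Establishing this identity (Braden uses normality of $X$) is not a formality and is precisely where the hypotheses on $X$ enter. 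A second, smaller point: Sumihiro gives an equivariant local linear model, but the étale-local splitting of $X$ as the sum of its positive- and negative-weight subbundles over $F$ that you invoke is the Bia\l ynicki-Birula linearisation and genuinely requires $X$ smooth; Braden's argument has to work harder to get the general (normal, possibly singular) case, which is the one actually needed when applying the theorem to Schubert varieties. You correctly identified that the bookkeeping of the base-change 2-morphisms is the crux of the matter, and that the Drinfeld--Gaitsgory correspondence formalism makes that bookkeeping conceptual.
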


\begin{remark}
From the theorem we deduce that we have canonical maps
\begin{equation}
  \label{eq:hyplocdiag}
\begin{array}{c}
  \begin{tikzpicture}[scale=1.4]
    \node (tl) at (-3,0) {$i^!\FS$};
    \node (t0) at (0,0) {$(\FS)^{*!}$};
    \node (tr) at (3,0) {$i^*\FS$};
\draw[->] (tl) to (t0); \draw[->] (t0) to (tr);
\node (bl) at (-3,-1) {$ (v_+)^!(u_+)^! \FS$};
\node (b0) at (0,-1) {$(v_+)^*(u_+)^! \FS \simto (v_-)^!(u_-)^* \FS$};
\node (br) at (3,-1) {$ (v_-)^*(u_-)^* \FS$};
\draw[->] (bl) to (b0); \draw[->] (b0) to (br);
\node[rotate=90] at (-3,-.5) {$=$};
\node[rotate=90] at (0,-.5) {$=$};
\node[rotate=90] at (3,-.5) {$=$};
  \end{tikzpicture}  
\end{array}
\end{equation}
Thus, in a certain sense, the hyperbolic localisation functor $(-)^{*!}$ sits ``between $i^!$ and
$i^*$''. This is somewhat analogous to the fact that the intermediate
extension fucnctor $j_{!*}$ sits ``between $j_!$ and $j_*$''. (One
should be somewhat cautious with this analogy though, as we can have
e.g. $i^! \FS = (\FS)^{*!}$ or $i^* \FS = (\FS)^{*!}$, which happens
for purely repulsive or attractive actions.)\footnote{As pointed out
  to me by D. Juteau, one can also has various versions of
  ${}^pj_{!*}$ between $j_!$ and $j_*$ by
  varying the perversity $p$ \dots}
\end{remark}

\begin{remark}
  If we apply Verdier duality to the diagram \eqref{eq:hyplocdiag} we
  get a similar diagram with $\FS$ replaced with $\DM \FS$, and the
  $\CM^*$-action replaced by its inverse action. Thus there are really
  two hyperbolic localisation functors: one for the $\CM^*$-action, and
  one for the inverse action. They are interchanged by Verdier duality.
\end{remark}

\begin{remark} \label{rem:hlic}
  An important consequence of Braden's theorem is that hyperbolic
  localisation preserves weight. (Indeed, if $\FS$ is of weight $w$,
  then $(v_+)^*(u_+)^! \FS = (p_+)_*(u_+)^! \FS$ is of weight $\ge w$,
  as $(p_+)_*$ and $(u_+)^!$ only increase weight. The right hand side
  provides us with the opposite conclusion. Hence $(v_+)^*(u_+)^! \FS$
  is pure of weight $w$.) This implies, for example, that hyperbolic
  localisation preserves semi-simple complexes (with $\QM$
  coefficients). This fact is very useful
  in practice.
\end{remark}

\subsection{Hyperbolic localisation}

We now return to the general setting of \S \ref{sec:smooth}. We
consider the diagram
\[
X^{\CM^*} \stackrel{v_+}{\longto} 
\bigsqcup_{\begin{array}{c}\text{\tiny components $F$} \\ \text{\tiny of
             $X^{\CM^*}$} \end{array}} X_F^+ \stackrel{u_+}{\longto} X
\]
and define \emph{hyperbolic localisation} as before:
\begin{align*}
D^b_{\CM^*}(X) &\to D^b_{\CM^*}(X^{\CM^*})  \\
\FS &\mapsto (\FS)^{*!} := (v_+)^!(u_+)^*\FS
\end{align*}
This may be expressed as a direct sum over the components of
$X^{\CM^*}$ of the functor considered in the previous section, in
particular the discussion of the previous section applies here.

  If $X$ has a $G$-action, extending the action of $\CM^* \subset G$,
  and if $C \subset G$ denotes the centraliser of $\CM^*$, then the
  above diagram is $C$-equivariant and produces a functor
  \begin{align*}
    D^b_G(X) &\to D^b_{C}(X^{\CM^*}) \\
\FS &\mapsto \FS^{*!}
  \end{align*}
between equivariant derived categories.

\subsection{Hyperbolic localisation and cohomology} It is interesting
to ask how hyperbolic localisation interacts with equivariant
cohomology. Assume that $T$ acts on $X$ and we are given a
cocharacter $\chi: \CM^* \to T$ with which we perform hyperbolic
localisation. Let us write $X^{\chi}$ for the fixed points under this
$\CM^*$-action. We obtain maps 
\[
i^! \FS \to (\FS)^{*!} \to i^*\FS \quad \text{in $D^b_T(X^{\chi})$.}
\]
In the following we take $\QM$-coefficients for simplicity:

\begin{thm}
  The induced maps
\[
H_T^*(X^\chi, i^! \FS)
\to 
H_T^*(X^\chi, (\FS)^{*!})
\to 
H_T^*(X^\chi, i^* \FS)
\]
become isomorphisms when we invert all characters $\alpha$ of $T$
which are non-trivial on $\CM^*$ (i.e. whose composition with $\chi$
is not the identity).
\end{thm}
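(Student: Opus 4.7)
The plan is to reduce the assertion to two applications of the classical equivariant localisation theorem (in its sheaf-theoretic form, e.g.\ as in \cite{FW}), applied not to $X$ itself but to the attracting and repelling sets $X_F^+$ and $X_F^-$ for each component $F$ of $X^\chi$. The crucial observation is that each of these contracts onto $F$ under the $\CM^*$-action, so their $\chi$-fixed points coincide with $F$; equivariant localisation on $X_F^\pm$ then relates cohomology there to cohomology on $F$ with the expected torsion correction.

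First, I apply equivariant localisation on $X_F^+$ with coefficients in $u_+^!\FS$. In its two complementary forms (the $*$-restriction statement and its Verdier dual for $!$-restriction) this asserts that, after inverting characters non-trivial on $\chi$, both natural maps
\[
H^*_T(F, v_+^! u_+^! \FS) \longto H^*_T(X_F^+, u_+^! \FS) \longto H^*_T(F, v_+^* u_+^! \FS)
\]
are isomorphisms. Summing over components of $X^\chi$ identifies the outer terms with $H^*_T(X^\chi, i^! \FS)$ and $H^*_T(X^\chi, (\FS)^{*!})$ respectively, giving the first claimed isomorphism.

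Symmetrically, I apply equivariant localisation on each $X_F^-$ with coefficients in $u_-^* \FS$. Braden's theorem identifies $v_-^! u_-^* \FS \simeq (\FS)^{*!}$, and of course $v_-^* u_-^* \FS = i^*\FS$, so the same argument produces, after localisation, an isomorphism $H^*_T(X^\chi, (\FS)^{*!}) \simto H^*_T(X^\chi, i^* \FS)$, which is the second claim.

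The main obstacle is naturality: one has to verify that the two isomorphisms constructed above really agree with the maps induced by diagram \eqref{eq:hyplocdiag} together with the canonical transformations $v_\pm^! \to v_\pm^*$ for closed inclusions. This is a diagram chase using the definition of the localisation isomorphism in terms of the long exact sequences attached to the filtration of $X_F^\pm$ by $\CM^*$-orbit strata, combined with the adjunction definition of $v_\pm^!$. Apart from this naturality check, the only substantive technical point is the precise sheaf-theoretic formulation of equivariant localisation and its associated torsion bookkeeping, for which I would follow \cite{FW}.
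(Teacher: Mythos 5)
The paper states this theorem without proof, so there is nothing internal to compare against; assessed on its own terms, your proposal is correct. The reduction to equivariant localisation on the individual $X_F^\pm$ is the right decomposition, and your observation that $X_F^\pm$ meets $X^\chi$ precisely in $F$ (a $\CM^*$-fixed point that flows into $F$ already lies in $F$) is what makes the bookkeeping go through.

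Two remarks that tighten the write-up. First, the naturality check you flag as the main obstacle is in fact automatic by construction rather than a diagram chase: for a closed embedding $v_+ \colon F \hookrightarrow X_F^+$, the canonical morphism $v_+^! \to v_+^*$ is obtained by applying $v_+^*$ to the adjunction counit $(v_+)_* v_+^! \to \id$ (using $v_+^*(v_+)_* \cong \id$), and this counit together with the unit $\id \to (v_+)_* v_+^*$ are precisely the two morphisms whose equivariant cohomology the localisation theorem makes invertible. So the composite of your two localisation isomorphisms on $X_F^+$ coincides with the map $H^*_T(X^\chi, i^! \FS) \to H^*_T(X^\chi, (\FS)^{*!})$ essentially by definition; the same applies to the second map via $X_F^-$. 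Second, the attractive proposition $(p_+)_* u_+^! \FS \simeq v_+^* u_+^! \FS$, recalled in the text just before this theorem, already identifies $H^*_T(F, (\FS)^{*!})$ with $H^*_T(X_F^+, u_+^! \FS)$ before any localisation; this reduces the first map to a single application of the $!$-direction of the localisation theorem on $X_F^+$, and symmetrically the second map to a single application of the $*$-direction on $X_F^-$, halving the bookkeeping. The only substantive technical input remaining is indeed the sheaf-theoretic (rather than constant-coefficient) form of the localisation theorem, for which the reference you give is the correct one.
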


\subsection{Smith theory}

Let $\Bbbk$ denote either a field of characteristic $p$, the ring
$\ZM_p$, or a finite extension thereof.  The discussion of $\mu_p = \ZM/p\ZM$ earlier (in \S \ref{sec:principal}) leads us to hope
for an analogue of the hyperbolic localisation functor for actions of
$\mu_p$ on sheaves with coefficients in $\Bbbk$. We would like some functor
\[
\Sm : D^b_{\mu_p}(X, \Bbbk) \to D^b_{\mu_p}(X^{\mu_p}, \Bbbk) 
\]
($\Sm$ for ``Smith'') lying ``between'' $i^!$ and $i^*$. That is, for every
$\FS \in D^b_{\mu_p}(X, \Bbbk)$ we would like a functorial diagram
\begin{equation}
  \label{eq:5}
i^! \FS \to \Sm(\FS) \to i^*(\FS)
\end{equation}
in $D^b_{\mu_p}(X^{\mu_p}, \Bbbk)$.

As far as I know, there is no such functor. However, it is a fundamental
observation of Treumann \cite{Treumann} that the cone $C$ in the triangle
\begin{equation}
  \label{eq:SmTr}
i^! \FS \to i^* \FS \to C
\end{equation}
is a perfect complex of $\mu_p$-modules when $\Bbbk$ is a field, and
is \emph{weakly injective}\footnote{A $\Bbbk[\mu_p]$-module $M$ is
  weakly injective if it is isomorphic to a module of the form
  $M \otimes \Bbbk[\mu_p]$ for some $\Bbbk$-module $M$. A complex is
  weakly injective if it isomorphic to a bounded complex of weakly
  injective modules.} in general. (For the constant sheaf,
this is a consequence of the fact that the cone calculates the
cohomology of a space with free $\mu_p$-action, which may be
represented by a complex of free modules.)

Thus it makes sense to consider the Verdier quotient
\[
\Perf(X^{\mu_p}, \TC_{\Bbbk}) := D^b_{\mu_p}(X^{\mu_p}, \Bbbk) / (
\text{weakly injective complexes} ).
\]
In more detail, as the $\mu_p$-action is trivial, we may identify
\[
D^b_{\mu_p}(X^{\mu_p}, \Bbbk) = D^b(X^{\mu_p}, \Bbbk[\mu_p])
\]
where the right hand side denotes sheaves of $\Bbbk[\mu_p]$-modules.
Then we quotient out by the full triangulated subcategory generated by
complexes with weakly injective stalks.

We now define $\Sm$ to be the composition
\[
\Sm : D^b_{\mu_p}(X, \Bbbk) \stackrel{i^!}{\to}
D^b_{\mu_p}(X^{\mu_p}, \Bbbk)  \to \Perf(X^{\mu_p}, \TC_\Bbbk).
\]
By Treumann's observation, this agrees with a similar definition with
$i^*$ in place of $i^!$.\footnote{To quote Treumann \cite[Remark 4.4]{Treumann}: ``Smith theory is analogous to hyperbolic
  localisation in the following sense: instead of combining two
  restriction functors in a clever way, we simply erase the
  distinction between them''.} (It is here that the choice of coefficients
is crucial.)

\begin{remark}
The notation $\Perf(X^{\mu_p}, \TC_\Bbbk)$ can be taken simply as
notation for this quotient. In \cite{Treumann} it is used to remind us
that we may view objects in this category as sheaves over a certain
ring spectrum. As this point of view is new to the author, it will not
be used below!
\end{remark}

\begin{remark} \label{rem:smith shift}
The reader is warned that $\Perf(X^{\mu_p}, \TC_\Bbbk)$ is not a derived
category, and takes some getting used to. For example, the ``shift by
2 functor'' $[2]$ is
isomorphic to the identity functor on $\Perf(X^{\mu_p}, \TC_\Bbbk)$. Indeed,
the four-term exact sequence of $k[\mu_p]$-modules
\[
\Bbbk \to \Bbbk[\mu_p] \stackrel{1-\z}{\longto} \Bbbk[\mu_p] \to \Bbbk
\]
where $\zeta$ is a generator of $\mu_p$, 
gives rise to a map $\Bbbk \to \Bbbk[2]$ in $D^b_{\mu_p}(\pt, \Bbbk)$ which
becomes an isomorphism in $\Perf(\pt, \TC_\Bbbk)$. Pulling this map back to
$X^{\mu_p}$ shows that the same is true in $\Perf(X^{\mu_p}, \TC_\Bbbk)$.
\end{remark}

\begin{remark} \label{rem:smith miracle}
  An important principle explained in \cite{Treumann} is that ``$\Sm$
  commutes with all functors'' (see \cite[Theorem
  1.3(2)]{Treumann}). This implies, for example, Smith's observation \cite{Smith}
  that fixed points under $\mu_p$-actions on $p$-smooth
  spaces are $p$-smooth, as the restriction to $\mu_p$ fixed points commutes
  with Verdier duality.
It also implies the following, which will be  useful below. If
$j : Z \into X$ is the inclusion of a locally-closed subset then, for $? \in \{ ! , * \}$,
\begin{equation}
  \label{eq:smith standard}
\Sm(j_? \Bbbk_Z) \cong j_?(\Sm(\Bbbk_Z)) \cong j_? \Bbbk_{Z^\zeta}
\qquad \text{in $\Perf(X^{\mu_p}, \TC_\Bbbk)$}.
\end{equation}
\end{remark}

\subsection{Smith theory and equivariance}

We now consider a slight variant of this construction, which will
prove useful when we come to consider the Hecke category. Let
$\Bbbk$ be as in the previous section ($\FM_p, \ZM_p$ or a
finite extension thereof).


Suppose that $X$ is now a $T$-variety, for an algebraic torus $T \cong
(\CM^*)^m$. Fix an element $\z \in T$ of order $p$, and let
$\mu_p \subset T$ denote the subgroup it generates. Given an
equivariant sheaf $\FS \in D^b_T(X,\Bbbk)$, Treumann's observation
implies that the cone $\GS$ in the exact triangle
\[
i^! \FS \to i^* \FS \to \GS \triright
\]
is weakly injective (in the same sense as the previous section), when regarded as
an object in $D^b_{\mu_p}(X^\zeta, \Bbbk)$.
(More precisely, all
functors above commute with the restriction to $\mu_p \subset T$,
and so the discussion of the previous section applies.) 

In particular, if we
consider the full subcategory
\[
( \text{$\z$-perfect complexes} ) := \left \langle \FS \in
  D^b_T(X^\z,\Bbbk) \; \middle | \; \begin{array}{c} \text{the restriction of
                                   $\FS$}\\
\text{to $\mu_p \subset T$ in $D^b_{\mu_p}(X^\z,\Bbbk)$}\\
\text{is weakly-injective} \end{array} \right \rangle
\]
and the Verdier quotient
\[
\Perf_T(X^{\z}, \TC_{\Bbbk}) := D^b_{T}(X^{\z}, \Bbbk) / ( \text{$\zeta$-perfect complexes} )
\]
then we can define the \emph{equivariant Smith localisation} functor
$\Sm$ as the composition
\[
\Sm : D^b_{T}(X, \Bbbk) \stackrel{i^!}{\to}
D^b_{T}(X^{\z}, \Bbbk)  \to \Perf_T(X^{\z}, \TC_{\Bbbk})
\]
where $i : X^\zeta \into X$ is the inclusion. For the same reasons as earlier, $\Sm$ is canonically isomorphic to a
similar functor defined with $*$-restriction in place of
$!$-restriction. (It is here that our choice of coefficients
is crucial.)

More generally, the above constructions work if we replace $T$ by a
general algebraic group $G$.  If $X$ has a $G$-action, extending the action of $T \subset G$,
  and if $C \subset G$ denotes the centraliser of $\zeta$, then we may define
$\Perf_C(X^\zeta, \TC_{\Bbbk})$ by replacing $T$ by $G$ and $C$ above
as appropriate. The
  above diagram is $C$-equivariant and produces a functor
  \begin{align*}
    D^b_G(X) &\to \Perf_C(X^\zeta, \TC_{\Bbbk})\\
\FS &\mapsto \Sm(\FS)
  \end{align*}
between equivariant derived categories.

\subsection{Smith theory and cohomology}

We now see that a natural functor exists on the category $\Perf_T(X^{\z},
\TC_{\Bbbk})$. Consider the set
\[
\Chi_r := \{ \gamma \in \Chi \; | \; \gamma(\zeta) \ne 1 \}.
\]
Note that no element of $\Chi_r$ is divisible by $p$, and hence no
element of $\Chi_r$ is zero in $\Chi_{\Bbbk}$. Define
\[
R^{(r)} := R[\Chi_r^{-1}].
\]
Somewhat miraculously we have:

\begin{prop} \label{prop:smith co}
  Given any $\zeta$-perfect complex $\FS$, its
  hypercohomology $H_T(X^\zeta, \FS)$ is torsion over any $\gamma \in
  \Chi$ such that $\gamma(\zeta) \ne 1$. In particular, the functor
\[
\FS \mapsto R^{(r)} \otimes_R H^{\bullet}_T(\FS)
\]
is zero on $\zeta$-perfect complexes and we obtain an additive functor
on $\Perf_T(X^{\z}, \TC_{\Bbbk})$.
\end{prop}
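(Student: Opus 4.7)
\emph{Proof plan.} The strategy is to exhibit a single character $\gamma_0 \in \Chi_r$ that annihilates $H^\bullet_T(X^\z, \FS)$: this implies $R^{(r)} \otimes_R H^\bullet_T(X^\z, \FS) = 0$ and yields the $\gamma$-torsion statement for every $\gamma \in \Chi_r$. Let $\Chi' \subset \Chi$ be the sublattice of characters trivial on $\mu_p$, so $\Chi/\Chi' \simeq \mu_p^\vee \simeq \ZM/p$. By Smith normal form, choose a $\ZM$-basis $\gamma_0, \gamma_1, \dots, \gamma_{m-1}$ of $\Chi$ such that $p\gamma_0, \gamma_1, \dots, \gamma_{m-1}$ is a $\ZM$-basis of $\Chi'$; then $\gamma_0(\z)$ is a primitive $p$-th root of unity in $\CM$, so $\gamma_0 \in \Chi_r$.

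Because $\mu_p$ acts trivially on $X^\z$, the Borel construction $ET \times_T X^\z$ fibers over $E(T/\mu_p) \times_{T/\mu_p} X^\z$ with fiber $B\mu_p$. The associated Leray spectral sequence
\[
E_2^{p,q} = H^p\bigl(E(T/\mu_p) \times_{T/\mu_p} X^\z, \; R^q \pi_* \FS\bigr) \Longrightarrow H^{p+q}_T(X^\z, \FS)
\]
has $R^q \pi_* \FS$ with stalks $H^q(\mu_p, \FS_x)$. The $\z$-perfect hypothesis is exactly that each $\FS_x$ is free as a $\Bbbk[\mu_p]$-module, so these stalks vanish for $q > 0$; the spectral sequence then collapses onto the row $q = 0$ and the induced Leray filtration on $H^\bullet_T(X^\z, \FS)$ has only one non-trivial piece.

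It remains to describe the $R$-action. Specialising to $X^\z = \pt$ and $\FS = \underline{\Bbbk}$ and rerunning the same spectral sequence in characteristic $p$: the transgression of a degree-one generator of $H^1(B\mu_p, \Bbbk)$ is a nonzero scalar multiple of $p\gamma_0$ in $H^2(B(T/\mu_p), \Bbbk)$, killing both the transgressed class and $p\gamma_0$, while a class $\tau \in H^2(B\mu_p, \Bbbk)$ survives and is identified with $\gamma_0 \in R = H^*(BT)$ via the convergence of the spectral sequence. By multiplicativity of the Leray spectral sequence, the action of $\gamma_0 \in R$ on $H^\bullet_T(X^\z, \FS)$ therefore agrees, modulo Leray filtration, with cup product by $\tau$ along the fiber. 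This operation shifts the $q$-degree by $+2$ and so lands in $E_\infty^{\bullet, 2} = 0$; since the Leray filtration is concentrated in $q = 0$, we conclude that $\gamma_0$ acts as zero on $H^\bullet_T(X^\z, \FS)$ itself. The main technical obstacle is the multiplicative bookkeeping of the spectral sequence in characteristic $p$, which is essentially the computational core of Smith theory as deployed in \cite{Treumann}.
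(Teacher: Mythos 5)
Your approach uses the fibration $B\mu_p \to ET \times_T X^\z \to E(T/\mu_p) \times_{T/\mu_p} X^\z$, whereas the paper (after reducing to $T = \CM^*$) uses the $\CM^*$-bundle $\pi' : X^\z \times_{\mu_p} E\CM^* \to X^\z \times_{\CM^*} E\CM^*$. The latter has the advantage of a two-row Leray spectral sequence --- i.e.~a Gysin-type long exact sequence --- with connecting map equal to $p$ times the generator of $H^2_{\CM^*}(\pt,\Bbbk)$; the finite-dimensionality of $H^\bullet_{\mu_p}(X^\z,\FS)$ then forces $H^\bullet_{\CM^*}(X^\z,\FS)$ to vanish in high degree (immediate over $\FM_p$ where $pc = 0$, and by a $p$-divisibility/Nakayama step over $\ZM_p$). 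Your fibration has fiber $B\mu_p$, whose cohomology is nonzero in every nonnegative degree, so its Leray spectral sequence has infinitely many rows and is not equivalent to a long exact sequence.

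The central error is the claim that $R^q\pi_*\FS$ has stalks $H^q(\mu_p,\FS_x)$ which vanish for $q>0$. The $\z$-perfect condition says the stalk \emph{complex} $\FS_x$ is weakly injective, i.e.~isomorphic in $D^b(\Bbbk[\mu_p])$ to a bounded complex of free $\Bbbk[\mu_p]$-modules (over a field); it does not say that $\FS_x$ is a single free module placed in degree zero. Applying $R\Gamma(\mu_p,-)$ degreewise to such a complex yields $\FS_x^{\mu_p}$, a bounded complex whose cohomology is spread over the whole amplitude of $\FS_x$. Thus $R^q\pi_*\FS$ is nonzero on a bounded $q$-window, not only at $q=0$, the spectral sequence does not collapse onto a single row, and the argument that cup product with $\tau$ lands in a vanishing $E_\infty^{\bullet,2}$ fails. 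Even granting a bounded $q$-window, the degree-shift argument does not close on its own: $\gamma_0$ preserves, and may strictly raise, the Leray filtration degree $p$, so the $q$-index of $\gamma_0^N x$ can stay inside the bounded window as the total degree grows --- one needs an additional finiteness input like the one the paper extracts from $H^\bullet_{\mu_p}$. Finally, a single $\gamma_0$ annihilating $H^\bullet_T$ does give $R^{(r)}\otimes_R H^\bullet_T = 0$, but not $\gamma$-torsion for every $\gamma\in\Chi_r$ (compare $\Bbbk[x,y]/(x)$, which is $x$-torsion but not $y$-torsion), so your opening reduction overstates what is proved; this only affects the first clause of the proposition, not the ``in particular.''
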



\begin{proof}
  We give an argument for $T = \CM^*$ and leave the reduction to this
  case to the reader. Consider first the map
\[
\pi : X \times E\CM^* \to X \times_{\CM^*} E\CM^*.
\]
It is a principal $\CM^*$-bundle and the extension
\[
c \in \Ext^2( \Bbbk_{X \times_{\CM^*} E\CM^*}, \Bbbk_{X \times_{\CM^*}
  E\CM^*}) = H^2_{\CM^*}(X,\Bbbk)
\]
corresponding to the truncation triangle
\[
\Bbbk_{X \times_{\CM^*} E\CM^*} \to \pi_* \Bbbk_{X \times E\CM^*} \to
\Bbbk_{X \times_{\CM^*} E\CM^*}[-1] \triright
\]
agrees with the image of the canonical generator of
$H^2_{\CM^*}(\pt,\Bbbk)$ in $H^2_{\CM^*}(X, \Bbbk)$. (This class is
stable under pull-back, and so we can take $X = \pt$.)

Now if we instead consider
\[
\pi' : X^\zeta \times (E\CM^*/\mu_p) = X \times_{\mu_p} E\CM^* \to X \times_{\CM^*} E\CM^*.
\]
Then this is again a principal $\CM^*$-bundle and the 
class
\[
c' \in \Ext^2( \Bbbk_{X \times_{\CM^*} E\CM^*}, \Bbbk_{X \times_{\CM^*}
  E\CM^*}) = H^2_{\CM^*}(X,\Bbbk)
\]
corresponding to the extension
\begin{equation}
  \label{eq:pext}
\Bbbk_{X \times_{\CM^*} E\CM^*} \to (\pi')_* \Bbbk_{X \times_{\mu_p} E\CM^*} \to
\Bbbk_{X \times_{\CM^*} E\CM^*}[-1] \triright  
\end{equation}
is $p$ times the class $c$ above. (This is more or less equivalent to
the fact that the Chern class of $E\CM^*/\mu_p \to B\CM^*$ is $p$
times the Chern class of $E\CM^* \to B\CM^*$.) The important point
below is that this
class is not invertible, thanks to our choice of coefficients.

Now let $\FS \in D^b_{\CM^*}(X, \Bbbk)$ be such that its restriction
$\FS' \in D^b_{\mu_p}(X,\Bbbk)$ is free. In other words, $\FS$ is a
complex on $X \times_{\CM^*} E\CM^*$ such that its pull-back via
$\pi'$ (as above) agrees with the pushfoward of a complex $\GS$ on $X
\times E\CM^*$ via 
\[
g : X \times E\CM^*  \to X \times_{\mu_p} E\CM^*.
\]
In particular, $(\pi')^* \FS = g_* \GS$ has finite-dimensional
total cohomology. (By definition of the constructible equivariant
derived category, the pushforward of $\GS$ to $X$ under the projection is a
constructible sheaf.)

By the projection formula
\[
(\pi')_*(\pi')^* \FS = \FS \otimes_{\Bbbk} ^L (\pi')_*(\pi')^*
\Bbbk_{X \times_{\CM^*} E\CM^*}
\]
and, by tensoring the truncation triangle for $ (\pi')_*(\pi')^*
\Bbbk_{X \times_{\CM^*} E\CM^*}$ with $\FS$ we get a distinguished
triangle
\[
\FS \to (\pi')_*(\pi')^* \FS \to \FS[-1] \triright
\]
where the connecting homomorphism (the tensor product of $\FS$ with
the class $c'$ above) is not invertible. Taking the long exact
sequence of cohomology gives us a long exact sequence
\[
\dots \to_{p} H^\bullet_{\CM*}(X, \FS) \to H^\bullet_{\mu_p}(X, \FS) \to
H^{\bullet-1}_{\CM*}(X, \FS) \stackrel{+1}{\to}_{p} \dots
\]
where the arrows labelled $p$ are not invertible. Because
$H^\bullet_{\mu_p}(X, \FS)$ is finite dimensional, we deduce that
$H^\bullet_{\CM*}(X, \FS)$ is only non-zero in finitely many
degrees. The proposition follows.
\end{proof}

\subsection{Parity sheaves} \label{sec:parity}

The theory of parity sheaves has two main starting points:
\begin{enumerate}
\item Parity arguments can be useful to show that connecting
homomorphisms are zero, and that spectral sequences degenerate.
\item The cohomology of spaces occurring
in geometric representation theory (flag varieties, Springer
fibres, fibres of Bott-Samelson resolutions, classifying
spaces \dots) often satisfy parity vanishing.
\end{enumerate}

We now briefly recall the theory; much more detail may be found in
\cite{JMW2} (in article form) and \cite{WICM} (in survey form).
Let us fix a variety $X$ with a nice\footnote{For example, Whitney
  would do. We certainly need to assume that $D^b_\Lambda(X,\Bbbk)$
  is preserved under Verdier duality, and this assumption is enough for
  everything we do below.}
stratification 
\[
X = \bigsqcup_{\lambda \in \Lambda} X_\lambda
\]
by locally closed subvarieties. We assume that:
\begin{gather}
  \text{each stratum is simply-connected;} \label{eq:sc assump}
\\
H^\odd(X_\lambda, \Bbbk) = 0 \quad \text{for all $\lambda \in
  \Lambda$.} \label{eq:parity assump}
\end{gather}
We write $D^b_\Lambda(X,\Bbbk)$ for the derived category of complexes
of $\Bbbk$-sheaves, which are bounded and constructible with respect
to the above stratification.

Let $j_\lambda : X_\lambda \into X$ denote the
inclusion. Fix a
complex $\FS \in D^b_\Lambda(X,k)$ and $? \in \{ *, !\}$. We say that
$\FS$ is
\begin{enumerate}
\item \emph{$?$-even}
if $\HC^{\odd}(j_\lambda^? \FS) = 0$ for all
$\lambda$;
\item \emph{$?$-odd} if $\FS[1]$ is $?$-even; 
\item \emph{even} if it is both $*$-even and $!$-even;
\item \emph{odd} if $\FS[1]$ is even.
\end{enumerate}
Finally, we say that $\FS$ is \emph{parity} if it may be written as a sum
$\FS \cong \FS_0 \oplus \FS_1$ with $\FS_0$ even and $\FS_1$ odd. We
write
\[
\Parity(X, \Bbbk) \subset D^b_\Lambda(X, \Bbbk)
\]
for the full subcategory of parity sheaves. Note that $\Parity(X, \Bbbk)$ is additive
but not triangulated.
The following theorem (whose proof is not difficult) is the starting
point of the theory:

\begin{thm}[\cite{JMW2}]
The support of any indecomposable parity complex is
irreducible. Moreover, any two indecomposable complexes $\FS, \GS \in \Parity(X,\Bbbk)$
  with the same support are isomorphic up to a shift. Thus, for any
  stratum $X_\lambda \subset X$ there is, up to isomorphism, at most one indecomposable
  $\FS \in \Parity(X,\Bbbk)$ which is supported on $\overline{X_\lambda}$
  and whose restriction to $X_\lambda$ is isomorphic to $k_{X_\lambda}[\dim_{\CM}X_\lambda]$.
\end{thm}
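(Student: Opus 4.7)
The plan is to establish a Krull--Schmidt style structure theorem for parity complexes, from which both the irreducibility of supports and the uniqueness up to shift follow. The central technical ingredient is a parity vanishing for Hom groups: if $\FS \in D^b_\Lambda(X,\Bbbk)$ is $*$-even and $\GS \in D^b_\Lambda(X,\Bbbk)$ is $!$-odd, then $\Hom(\FS,\GS)=0$ (equivalently, Homs between parity sheaves of ``wrong'' parity vanish in degree zero).

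To prove the vanishing I would induct on the number of strata in the support of $\GS$. Let $j:X_\lambda\into X$ be an open stratum of the support and $i:Z\into X$ its closed complement, and consider the triangle $j_!j^*\GS\to \GS\to i_*i^*\GS\triright$. Applying $\Hom(\FS,-)$ gives a long exact sequence whose terms involving $i_*i^*\GS$ reduce by adjunction and induction to a Hom on the proper closed subvariety $Z$; the contribution from $j_!j^*\GS$ is, again by adjunction, a direct sum of graded pieces of $H^\bullet(X_\lambda,\Bbbk)$ shifted by the difference of the shift multisets of $j^*\FS$ and $j^*\GS$. By \eqref{eq:sc assump} and \eqref{eq:parity assump} these pieces sit entirely in even degrees, while the parity mismatch between $\FS$ and $\GS$ forces the degree-zero part to vanish.

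Granted the vanishing, an indecomposable parity sheaf must be either $*$-even or $*$-odd (for otherwise the parity decomposition $\FS=\FS_0\oplus\FS_1$ would be non-trivial). For an indecomposable even $\FS$, I would show $\End(\FS)$ is local as follows: the endomorphisms that vanish on the open dense stratum $X_\lambda$ of the support of $\FS$ form a two-sided ideal $I$, and a dévissage along the stratification, using the vanishing lemma at each step to kill connecting maps, proves $I$ is nilpotent; meanwhile $\End(\FS)/I$ embeds into $\End(j_\lambda^*\FS)$, and indecomposability then forces it to be a field. This yields Krull--Schmidt for $\Parity(X,\Bbbk)$.

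Finally, for the geometric conclusions: if $\FS$ is indecomposable with support $Z=\overline{X_\mu}\cup\overline{X_\nu}$ for two distinct maximal strata, I restrict to the open set $U=X\setminus(\overline{X_\mu}\cap\overline{X_\nu})$, where the two pieces of the support are disjoint and $\FS|_U$ splits non-trivially; the vanishing lemma allows the corresponding idempotent of $\End(\FS|_U)$ to be lifted to an idempotent in $\End(\FS)$, contradicting indecomposability. For the uniqueness clause, if $\FS$ and $\GS$ are indecomposable with common irreducible support $\overline{X_\lambda}$, then Krull--Schmidt applied on $X_\lambda$ forces $j_\lambda^*\FS\cong \Bbbk_{X_\lambda}[n]$ and $j_\lambda^*\GS\cong \Bbbk_{X_\lambda}[m]$; after shifting $\GS$ by $n-m$, the vanishing lemma shows the restriction map $\Hom(\FS,\GS)\to\Hom(j_\lambda^*\FS,j_\lambda^*\GS)$ is surjective, so the identity on $X_\lambda$ lifts to a morphism $\FS\to\GS$ which must be an isomorphism by Krull--Schmidt. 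The principal obstacle is the key lemma together with its consistent use to verify that the various connecting and obstruction maps in each stratification-based induction vanish; once this machinery is in place the rest of the argument is formal.
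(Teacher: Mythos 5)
Your strategy is the standard one from Juteau--Mautner--Williamson \cite{JMW2}, which the paper cites without reproving: a parity vanishing lemma, locality of endomorphism rings via a nilpotent ideal, and then d\'evissage and idempotent lifting. The plan is sound, but two steps as written are incorrect. First, in the proof of the vanishing lemma you apply $\Hom(\FS,-)$ to $j_!j^*\GS\to\GS\to i_*i^*\GS$ and claim $\Hom(\FS,j_!j^*\GS)$ reduces ``by adjunction.'' It does not: for an open inclusion $j$, the functor $j_!$ has $j^*=j^!$ as its \emph{right} adjoint, so there is no adjunction turning $\Hom(\FS,j_!(-))$ into a Hom over $U$. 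The fix is to apply $\Hom(-,\GS)$ to the triangle $j_!j^*\FS\to\FS\to i_*i^*\FS$ (producing $\Hom(j^*\FS,j^!\GS)$ and $\Hom(i^*\FS,i^!\GS)$), or equivalently to apply $\Hom(\FS,-)$ to the dual triangle $i_*i^!\GS\to\GS\to j_*j^*\GS$; either way both boundary terms are adjointable and the parity count then works.

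Second, the passage from ``$\FS$ indecomposable'' to ``$j_\lambda^*\FS$ is a single shifted constant sheaf'' is not justified. You attribute it to ``Krull--Schmidt applied on $X_\lambda$,'' and earlier you argue that an \emph{embedding} $\End(\FS)/I\into\End(j_\lambda^*\FS)$ plus indecomposability forces $\End(\FS)/I$ to be a field. Neither implication holds on its own: the restriction of an indecomposable object can very well decompose, and a ring without nontrivial idempotents can embed in a product of matrix algebras without being a field. What the vanishing lemma actually gives you, and what you need, is that the degree-zero restriction map $\End(\FS)\to\End(j_\lambda^*\FS)$ is \emph{surjective} with nilpotent kernel $I$. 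Idempotents of $\End(j_\lambda^*\FS)$ then lift to $\End(\FS)$, so $j_\lambda^*\FS$ is indecomposable, hence $\cong\Bbbk_{X_\lambda}[n]$ by \eqref{eq:sc assump}; only now is $\End(\FS)/I\cong\Bbbk$ a field. This is exactly the lifting mechanism you correctly invoke when proving irreducibility of supports, and it must be invoked here as well. With these two repairs your argument goes through.
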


We denote the sheaf appearing in the theorem $\ES_\lambda$ (if it
exists). One of the main points of \cite{JMW2} is that in many
settings in geometric representation theory $\ES_\lambda$ exists for
all strata, and hence one has a bijection:
 \begin{align*}
\Lambda & \simto 
\begin{array}{c}
\left \{ \begin{array}{c}
\text{indecomposable objects} \\
\text{in $\Parity(X,\Bbbk)$} \end{array} \right \}_{ / \text{shifts} }
\end{array} \\
\lambda & \mapsto \begin{array}{c} \ES_\lambda. \end{array}
 \end{align*}

 \begin{remark}
   One also has an equivariant version of the theory, as long as one
   imposes equivariant versions of  the parity assumptions
   \eqref{eq:sc assump} and  \eqref{eq:parity assump}. Parity sheaves
   are well-adapted to equivariant cohomology, for example their
   global cohomology is often equivariantly formal \cite{FW}.
 \end{remark}

 \begin{remark} \label{rem:hlparity}
   We have seen in Remark \ref{rem:hlic} that hyperbolic localisation
   preserves semi-simple complexes (with $\QM$-coefficients). In
   \cite{JMWtilt} it is shown that (under the assumption of the
   existence of certain equivariant resolutions), hyperbolic
   localisation also preserves parity sheaves.
 \end{remark}

\subsection{Parity sheaves and Smith theory}
The ideas of this section (and even its title) are taken from
\cite{LL}. Their crucial observation is that the theory of parity
shaves is well-adapted to Smith theory.

The starting point for this discussion is our assumption \eqref{eq:parity assump}
which was crucial to get the theory of parity sheaves started. We can
rephrase it as
\begin{equation}
  \label{eq:parity assump rephrase}
\Hom_{D^b(X,k)}(k_{X_\lambda}, k_{X_\lambda}[m]) = 0 
\quad \text{for odd $m$.}
\end{equation}
At the centre of Smith theory, is the group $\mu_p = \ZM/p\ZM$. This
group appears at first sight to be poorly adapted to the theory of
parity sheaves because
\[
H^m_{\mu_p}(\pt, \Bbbk) = \Bbbk \quad \text{for all $m \ge 0$,}
\]
for any field $\Bbbk$ of characteristic $p$. With a little work (see
\cite[\S 2]{LL}) this
calculation implies that
\[
\Hom_{\Perf(\pt, \TC_{\Bbbk})} (\Bbbk_\pt, \Bbbk_\pt[m]) = \Bbbk
\]
for all $m \in \ZM$. Thus parity vanishing fails rather dramatically
in $\Perf(\pt, \TC_{\Bbbk})$, even on a point!

If one instead
instead takes $\Bbbk := \OM = \ZM_p$ one has
\begin{equation}
  \label{eq:mupco}
  H^m_{\mu_p}(\pt, \OM) = \begin{cases} \OM & \text{if $m = 0$,}\\
0 & \text{if $m$ is odd or $m < 0$,} \\
\FM_p & \text{if $m$ is even.} \end{cases}
\end{equation}
This calculation shows that some parity vanishing is still
present. Again, with a little work one may conclude
\begin{align*}
  \Hom_{\Perf(\pt,T_\OM)}(\OM, \OM[m]) 
&= \begin{cases} \FM_p & \text{if $m$ is even,} \\
0 & \text{if $m$ is odd.} \end{cases} 
\end{align*}
Thus we are in good shape on a point. Moreover, one can extend this
argument\footnote{using the map $U \times B\mu_p \to B\mu_p$} to show that if $U$ is any variety for which $H^\odd(U,\OM) =
0$ and $H^\even(X,\OM)$ is free over $\OM$, then
\begin{align*}
  \Hom_{\Perf(U,T_\OM)}(\OM_U, \OM_U[m]) 
&= \begin{cases} H^\even(U,\FM_p) & \text{if $m$ is even,} \\
0 & \text{if $m$ is odd} \end{cases} 
\end{align*}
(where, as always, $U$ has trivial $\mu_p$-action). Thus, if we think
of $U$ as being a stratum in our stratification of the previous
section, we are in good shape on each stratum.

Leslie-Lonergan \cite[\S 3]{LL}
then go on to define Tate functors
\[
T^0, T^1 : \Perf(U,T_\OM) \to Sh(U,\FM_p)
\]
where $Sh(U,\FM_p)$ denotes the abelian category of $\FM_p$-sheaves. 
(These should be thought of as analogues of cohomology functors, which
produce sheaves on $U$ out of complexes on $U$.) They then define
\emph{Tate parity complexes} by repeating the definition of parity
sheaves, with $T^0$ and $T^1$ in place of $\HC^\even$ and
$\HC^\odd$. Furthermore, they show that the Smith functor $\Sm$
takes parity sheaves with $\OM$ coefficients to Tate parity complexes.

We will consider a slight variant of the setup considered by
Leslie-Lonergan. As in the previous section, suppose that all spaces
are equipped with a $\CM^*$-action, which is trivial on $\mu_p \subset
\CM^*$. As before, consider
\[
\Perf_{\CM^*}(U, \TC_{\Bbbk}) := D^b_{T}(U, \Bbbk) / (
\text{$\zeta$-perfect complexes} ).
\]
The following gives a rather beautiful description of this category:
\begin{lem}
\[
\Perf_{\CM^*}(U, \TC_{\Bbbk}) \simto 
\left \{
\begin{array}{c}
\text{complexes of $\Bbbk$-sheaves on $U$ with} \\
\text{constructible and $2$-periodic cohomology}
\end{array} \right \} \subset D(U,\Bbbk).
\]
\end{lem}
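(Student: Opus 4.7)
The plan is to realise the equivalence by combining the $\mu_p$-Tate cohomology construction with the canonical isomorphism $\id \cong [2]$ in the Smith quotient coming from Remark \ref{rem:smith shift}.

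\textbf{Forward functor.} First I would construct a functor $D^b_{\CM^*}(U,\Bbbk) \to D(U,\Bbbk)$ landing in the $2$-periodic constructible complexes. Restriction along $\mu_p \subset \CM^*$ gives a functor to $D^b_{\mu_p}(U,\Bbbk)$, which, since $\mu_p$ acts trivially on $U$, is the same as $D^b(U,\Bbbk[\mu_p])$. Then I would tensor with the standard $2$-periodic projective resolution
$$
\cdots \longto \Bbbk[\mu_p] \stackrel{1-\z}{\longto} \Bbbk[\mu_p] \stackrel{N}{\longto} \Bbbk[\mu_p] \stackrel{1-\z}{\longto} \Bbbk[\mu_p] \longto \cdots
$$
of the trivial $\Bbbk[\mu_p]$-module $\Bbbk$ (with $N = 1 + \z + \cdots + \z^{p-1}$), and totalise. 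The result is a genuinely $2$-periodic complex of $\Bbbk$-sheaves, still constructible because the construction is a finite totalisation at each stalk. Since the Tate cohomology of a free (equivalently, weakly injective) $\Bbbk[\mu_p]$-module vanishes, this functor sends $\z$-perfect complexes to the zero complex, and hence descends to a functor $\Perf_{\CM^*}(U,\TC_{\Bbbk}) \to \{2\text{-periodic constructible}\}$. Conceptually, this is just the $2$-periodisation produced by the isomorphism $\id \cong [2]$ of Remark \ref{rem:smith shift}.

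\textbf{Inverse and one composition.} To go in the other direction, I would take a $2$-periodic constructible complex $\CC^\bullet$, view it as a graded module over the Tate algebra of $\mu_p$, and lift to $D^b(U,\Bbbk[\mu_p])$ by the ``inverse'' of the Tate construction (any truncation of the $2$-periodic complex into a bounded $\Bbbk[\mu_p]$-module complex whose Tate resolution reproduces $\CC^\bullet$). To further lift from $\mu_p$-equivariance to $\CM^*$-equivariance I would use induction along $\mu_p \subset \CM^*$; any two choices of lift differ by a weakly injective complex, so the class in $\Perf_{\CM^*}(U, \TC_{\Bbbk})$ is well-defined. The composition from $2$-periodic complexes back to $2$-periodic complexes is the identity, essentially because the Tate resolution is exact away from the weakly injective part.

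\textbf{Main obstacle.} The technical core of the argument is checking that the other composition $\Perf_{\CM^*}(U,\TC_{\Bbbk}) \to \{2\text{-periodic}\} \to \Perf_{\CM^*}(U,\TC_{\Bbbk})$ is isomorphic to the identity. This is a characteristic-$p$ Koszul-type statement: the $\CM^*$-equivariance encodes a $\Bbbk[x]$-module structure ($|x|=2$), while the $\mu_p$-restriction together with its Tate cohomology should recapture this structure modulo $\z$-perfect noise. The key input is to identify the canonical morphism $\id \to [2]$ from Remark \ref{rem:smith shift}, obtained from the four-term sequence $\Bbbk \to \Bbbk[\mu_p] \to \Bbbk[\mu_p] \to \Bbbk$, with the boundary of the Tate construction. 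Once this identification is made stalkwise, constructibility and equivariance extend globally by the same finiteness arguments used to define the Smith category, and both compositions become isomorphic to the identity.
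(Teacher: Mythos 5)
Your forward functor takes a genuinely different route from the paper, and it does not compute the right thing. The paper's functor uses the $\CM^*$-equivariant structure directly: for $\FS \in D^b_{\CM^*}(\pt,\Bbbk)$ one takes the equivariant cohomology $H^*_{\CM^*}(\pt,\FS)$, a $\Bbbk[x]$-module with $x$ the generator of $H^2_{\CM^*}(\pt,\Bbbk)$, and inverts $x$ (i.e.\ passes to cohomology in high degree). The kernel is then identified with the $\zeta$-perfect complexes via Proposition \ref{prop:smith co} together with the fact that the complex computing $H^*_{\CM^*}(\CM^*,\Bbbk)$ is $\zeta$-perfect and generates the kernel. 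By contrast, you first restrict along $\mu_p\subset\CM^*$, and only then take the $\mu_p$-Tate construction; this throws away the $\CM^*$-structure, which is precisely the extra data that distinguishes $\Perf_{\CM^*}(U,\TC_\Bbbk)$ from $\Perf(U,\TC_\Bbbk)$ and makes the former 2-periodic in the sense of the lemma.

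One can see the failure concretely on the unit over a point. For $\Bbbk = \FM_p$ your construction outputs the $2$-periodisation of $\hat{H}^*(\mu_p,\FM_p)$, which is $\FM_p$ in \emph{every} cohomological degree (the differentials $1-\zeta$ and $N$ both vanish on the trivial $\FM_p[\mu_p]$-module), whereas the paper's functor outputs $\Bbbk[x^{\pm 1}] = H^*_{\CM^*}(\pt,\FM_p)[x^{-1}]$, i.e.\ $\FM_p$ in even degrees only. These two objects are not isomorphic (the former decomposes as a direct sum of two shifted copies of the latter), so your functor and the paper's are genuinely different, and since the paper's is an equivalence, yours cannot be. For $\Bbbk = \OM$ the discrepancy is even starker: $\hat{H}^*(\mu_p,\OM) = \FM_p[x^{\pm 1}]$, but $H^*_{\CM^*}(\pt,\OM)[x^{-1}] = \OM[x^{\pm 1}]$. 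This is exactly the point the paper is making in the paragraphs preceding the lemma: $\Hom_{\Perf(\pt,\TC_{\FM_p})}(\Bbbk,\Bbbk[m]) = \Bbbk$ for \emph{all} $m$ (parity fails badly in the $\mu_p$-Smith category), and the $\CM^*$-equivariant enrichment is what restores a sensible parity/periodicity picture. To repair your argument you would need to carry the $\CM^*$-action along and localise the equivariant parameter rather than forgetting down to $\mu_p$ and running bare Tate theory; the proposed inverse via induction $\Ind_{\mu_p}^{\CM^*}$ has the same defect.
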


\begin{remark}
In the above, $2$-periodic means
  ``equipped with an isomorphism $\FS \to \FS[2]$''. Thus it is
  structure, rather than a property.
\end{remark}

\begin{proof}[Sketch of proof]
We outline why this is true for $U = \pt$, the general case follows by
a similar argument. Given $\FS \in D^b_{\CM*}(\pt, \Bbbk)$ we can
consider its cohomology, which is a complex of $\Bbbk$-modules. These
cohomology sheaves are periodic in high degree, with an isomorphism
given by multiplication by a fixed generator of
$H^2_{\CM^*}(\pt,\Bbbk)$. (Indeed, this is true for $\Bbbk_\pt \in
D^b_{\CM*}(\pt, \Bbbk)$, and hence is also true for the full
subcategory which it generates, which is $D^b_{\CM*}(\pt, \Bbbk)$.) By
considering cohomology in high degree we obtain a triangulated functor
\[
 D^b_{\CM^*}(\pt, \Bbbk) \to 
\left \{
\begin{array}{c}
\text{complexes of $\Bbbk$-sheaves on $pt$ with} \\
\text{constructible and $2$-periodic cohomology}
\end{array} \right \} \subset D(\pt,\Bbbk).
\]
The kernel of this functor consists of those complexes $\FS$ whose
total cohomology is of finite type. Any $\zeta$-perfect complex
belongs to this kernel, by Proposition \ref{prop:smith co}. On
the other hand, the complex computing the $\CM^*$-equivariant
cohomology of $\CM^*$ is $\zeta$-perfect, and generates the kernel. We
conclude that the kernel consists precisely of $\zeta$-perfect
complexes, which concludes the proof.
\end{proof}

Via the above lemma, we obtain Tate cohomology functors (``cohomology
in even degree'', ``cohomology in odd degree'')
\[
T^0, T^1 : \Perf_{\CM^*}(U,T_\Bbbk) \to Sh(U,\Bbbk).
\]
This allows us to define parity sheaves in
$\Perf_{\CM^*}(U,T_\Bbbk)$. It is not difficult to check that the
Smith functors preserve equivariant parity sheaves.

\begin{remark}
  The work of Leslie-Lonergan \cite{LL} builds on the expectation
 that the theory of perverse sheaves is ``compatible with Smith
 theory''. This expectation was first suggested by 
Treumann; see the final two sections of \cite{Treumann}. However it
is still not clear how to proceed with this expectation. For example,
as $[2] = [0]$ in $\Perf(X^\zeta, \TC_{\Bbbk})$ one cannot expect
a reasonable theory of $t$-structures.
\end{remark}

\section{The many faces of the Hecke category} \label{sec:Hecke}

The Hecke category is a categorification of the Hecke
algebra. That is, it is a monoidal category $\HC_W$ with a canonical
isomorphism
\[
H_W \simto [\HC_W]
\]
of its Grothendieck group with the Hecke algebra. Here $[\HC]$ denotes
a suitable Grothendieck group, however exactly what is meant by ``Grothendieck
group'' varies based on context. (Below it usually means the split Grothendieck
group of an additive category, however it might also mean the
Grothendieck group of a triangulated category, or variants
thereof.)

Let us emphasise one important point from the outset, which is a
general principle of categorification. Whenever we categorify, it is
worthwhile asking what extra structures we inherit on the Grothendieck
group. A well-known example is that categorification by additive or
abelian categories often defines some positive cone in the
Grothendieck group, consisting of the classes of actual objects,
rather than formal differences. This cone often leads to strong
positivity on the Grothendieck group (e.g. bases with positive
structure constants; bases which are positive in terms of other bases; \dots). In the
example of the Hecke category this cone gives rise to the
Kazhdan-Lusztig basis (when our coefficients are of characteristic 0)
and the $p$-Kazhdan-Lusztig basis (when our coefficients are of
characteristic $p$).

Another often overlooked structure is a form, obtained by decategorifying the
hom pairing.\footnote{I learnt this point of view from M.~Khovanov and B.~Elias.}
 That is we define
\[
\langle [\EC], [\EC'] \rangle := \chi (\Hom(\EC,\EC'))
\]
where $\chi$ denotes some numerical invariant (e.g. Euler characteristic,
rank, graded rank, \dots) which turns the space of homomorphisms into
a number or polynomial. In the case of the Hecke category, this should
yield the form
\[
(- , - ) : H_W \times H_W \to \ZM[v^{\pm 1}]
\]
defined in \S \ref{sec:kl}. Usually, the most important thing to know
about a potential Hecke category is that this is the case. This
statement is usually referred to as ``Soergel's hom formula'', as
Soergel established it in the first non-trivial case of Soergel
bimodules.

We now return to generalities on the Hecke category. In a sense, there should only be ``one'' Hecke category. However
experience has shown that it has several different realisations, and
it is useful to know about all of them!\footnote{This is somewhat
analogous to the theory of motives, where a motive has many different
realisations, all of which are useful. In fact, recent work
  of Soergel and Wendt \cite{SW}, Soergel, Wendt and Virk \cite{SVW}
  and Eberhardt and Kelly \cite{EK} shows that this 
  is probably more than an analogy.}

Historically the Hecke category arose from two different points of view.
The first is via the so-called function-sheaf correspondence. The
Hecke algebra\footnote{of a Weyl group, specialised at $v := 1/ \sqrt{|\FM_q|}$} may be
realised as a convolution algebra of bi-invariant functions on a
finite reductive group. As with any interesting space of functions
realised on the points of a variety over a finite field, Grothendieck tells us that we should seek a
categorification via a suitable category of
sheaves. Doing so exposes extremely rich structure (e.g. the
Kazhdan-Lusztig basis) that is invisible (or at least opaque)
prior to categorification. This leads us to the geometric realisation. (For much more
detail on this genesis of the Hecke algebra, see \cite{Springer},
\cite{WICM} \dots)

The second is the observation that many categories arising in Lie
theory admit endofunctors which, upon passage to Grothendieck groups,
yield actions of Weyl groups or their group algebras. (The Ur-example is the action of
wall-crossing functors on categories of infinite-dimensional
representations of semi-simple Lie algebras.) A detailed study of
these functors convinces one that it is not enough to understand them
on the Grothendieck group alone; one must understand these functors as
a monoidal category (i.e. understand the morphisms between them). It
is probably surprising that one often comes across the same, or closely
related monoidal categories, which one comes to call the Hecke
category. 

\begin{remark}
  In the geometric realisation the $v$ in the Hecke category has a
  transparent meaning (``Tate twist'', shift of weight
  filtration, \dots). However the $v$ is often well hidden in representation
  theory. (This is
  why one often sees an action of the Weyl group, and not the Hecke
  algebra, on the Grothendieck group.) That the grading is
  nevertheless there is the source of many mysteries and delights
  (Koszul duality, Jantzen filtration, \dots).
\end{remark}

\begin{remark}
The first geometric realisation can be thought of as a
``constructible'', whereas the second is ``coherent''. That both 
are incarnations of the Hecke category can be seen as a
``coherent/constructible duality'', and is often a shadow of geometric Langlands
duality. A basic example is the theory of Soergel bimodules, which
provides a coherent (bimodules over polynomial rings) description of
constructible sheaves. A second example is Bezrukavnikov's seminal
work \cite{Bez} relating the coherent and 
constructible realisations of the affine Hecke category.
\end{remark}

\subsection{The geometric realisation} \label{sec:geom}
Consider a complex reductive group $\GS$ with a fixed choice of Borel
subgroup $\BS$ and maximal torus $\TS \subset \BS$. To this we may associate the character
lattice $\Chi := X^*(\TS)$, cocharacter lattice $\Chi^\vee :=
X_*(\TS)$, roots $\Phi \subset \Chi$, coroots $\Phi^\vee \subset
\Chi^\vee$ etc. Our choice of Borel subgroup gives rise to positive roots
$\Phi^+$ and simple roots $\{ \alpha_s \}_{s \in S}$. We denote by
$(W,S)$ the Weyl group and its simple reflections, and $\ell : W \to
\ZM_{\ge 0}$ its length function.

For applications to representation theory we will also need to
consider affine Kac-Moody groups and their flag varieties. There is no
extra difficulty in considering general Kac-Moody groups, which we do
now.  However the
reader is encouraged to keep the setting of the previous paragraph in
mind.  Let us fix a generalised Cartan matrix
\[C = (c_{st})_{s,t
  \in S} \]
and let $(\hg_{\ZM}, \{ \alpha_s \}_{s \in S}, \{
\alpha^\vee_s \}_{s \in S})$ be a Kac-Moody root datum, so that
$\hg_{\ZM}$ is a free and finitely generated $\ZM$-module, $\alpha_s
\in \Hom(\hg_{\ZM},\ZM)$ are ``roots'' and $\alpha_s^\vee \in \hg_{\ZM}$ are
``coroots'' such that $\langle \alpha_s^\vee, \alpha_t \rangle =
c_{st}$. To this data we may associate a Kac-Moody group $\GS$  (a
group ind-scheme over $\CM$) together with a canonical Borel subgroup
$\BS$ and maximal torus $\TS \subset \BS$. 

We consider the flag variety $\GS/\BS$ (a projective variety in
the case of a reductive group, and an ind-projective variety in
general). As earlier, we denote by $W$ the Weyl group, $\ell$ the
length function and $\le$ the Bruhat order. We have the Bruhat
decomposition
\[
\GS/\BS = \bigsqcup_{w \in W} X_w \quad \text{where} \quad X_w :=\BS \cdot w\BS/\BS.
\]
The $X_w$ are isomorphic to affine spaces, and are called \emph{Schubert
cells}. Their closures $\overline{X}_w \subset \GS/\BS$ are
projective (and usually singular), and are called \emph{Schubert varieties}.

Fix a field $\Bbbk$ and consider $D^b_{\BS}(\GS/\BS;\Bbbk)$,
the bounded equivariant derived category with
coefficients in $\Bbbk$ (see e.g. \cite{BLu}).\footnote{By definition, any object of
  $D^b_{\BS}(\GS/\BS;\Bbbk)$ is supported on finitely many Schubert cells,
  and hence has finite-dimensional support.} This a
monoidal category under
convolution: given two complexes $\mathscr{F}, \GSS \in D^b_{\BS}(\GS/\BS;\Bbbk)$
their convolution is
\[
\FSS * \GSS := \mult_* ( \FSS \boxtimes_\BS \GSS ),
\]
where: $\GS \times_\BS \GS/\BS$ denotes the quotient of $\GS \times
\GS/\BS$ by $(gb,g' \BS) \sim (g,bg' \BS)$ for all $g,g' \in \GS$ and
$b \in \BS$;
$\mult : \GS \times_\BS \GS/\BS \to \GS/\BS$ is induced by the multiplication on $\GS$;
and $\FSS \boxtimes_\BS \GS  \in D^b_{\BS}(\GS \times_\BS \GS/\BS;\Bbbk)$ is
obtained via descent from $\FSS \boxtimes \GSS \in
D^b_{\BS^3}(\GS \times \GS/\BS;\Bbbk)$.\footnote{The reader is
  referred to \cite{Springer, Nad} for more detail on this
  construction.} (Note that $\mult$ is proper, and so $\mult_* = \mult_!$.)

\begin{remark} Under the function-sheaf correspondence the above
  definition categorifies convolution of $B$-biinvariant functions (if
  we work over $\FM_q$ rather than $\CM$, and $B$ denotes the
  $\FM_q$-points of $B$).
\end{remark}

For any $s \in S$ we can consider the parabolic subgroup
\[
\PS_s := \overline{\BS s\BS} = \BS s\BS \sqcup \BS \subset \GS.
\]
For any $s \in S$, set
\[
\EC_s := \Bbbk_{\PS_s/\BS}[1]
\]
We define the Hecke category (in its geometric incarnation) as follows
\[
\HC^{\geom}_{W,\Bbbk} := \langle \EC_s\; | \; s \in S \rangle_{*, \oplus,
  [\ZM], \ominus}.
\]
That is, we consider the full subcategory of
$D^b_{\BS}(\GS/\BS;\Bbbk)$ generated by $\EC_s$ under convolution ($*$),
direct sums ($\oplus$), homological shifts ($[\ZM]$) and direct summands
($\ominus$).

Let $[ \HC^{\geom}_{W,\Bbbk} ]_{\oplus}$ denote the split Grothendieck
group\footnote{The split Grothendieck group $[\AC]_{\oplus}$ of an
additive category is the abelian group generated by symbols
$[A]$ for all $A \in \AC$, modulo the relations $[A] = [A'] + [A'']$
whenever $A \cong A' \oplus A''$.} of
$\HC^{\geom}_{W,\Bbbk} $. Because
$\HC^{\geom}_{W,\Bbbk} $ is a monoidal category, $[\HC^{\geom}_{W,\Bbbk} ]_{\oplus}$ is an
algebra via $[ \FSS ]\cdot [\GSS] = [ \FSS * \GSS]$. We view $[
\HC_{\geom}^{\Bbbk}]_{\oplus}$ as a $\ZM[v^{\pm 1}]$-algebra via $v \cdot [\FSS] :=
[\FSS[1]]$.
Recall the Kazhdan-Lusztig basis element $b_s = \delta_s + v$ for all
$s \in S$ from \S \ref{sec:kl}.

\begin{thm} \label{thm:geom_cat}
  The assignment $b_s \mapsto [\EC_s]$ for all $s \in S$ yields an isomorphism of
  $\ZM[v^{\pm 1}]$-algebras:
\[
H \simto [ \HC^{\geom}_{W,\Bbbk}]_{\oplus}.
\]
Moreover, for any $w \in W$ there exists a unique indecomposable
object $\EC_w \in \HC^{\geom}_{W,\Bbbk}$ (defined up to non-unique
isomorphism) such that $\EC_w$ is a direct summand of
\[
\EC_{\un{w}} := \EC_s * \EC_t* \dots * \EC_u
\]
for any reduced expression $\un{w} = (s, t, \dots, u)$ for $w$; and
such that $\EC_w$ is not isomorphic to a shift of a summand of $\EC_{\un{v}}$ for any
shorter expression $\un{v}$. The classes $[ \EC_w ]$ give a basis of
$H$ under the
isomorphism above.
\end{thm}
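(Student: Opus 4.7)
The plan is to identify $\HC^{\geom}_{W,\Bbbk}$ with the category of $\BS$-equivariant parity sheaves on $\GS/\BS$ (with respect to the Bruhat stratification) and deduce both claims from the general theory reviewed in \S \ref{sec:parity}. First, I would check that the Bruhat stratification satisfies the parity hypotheses \eqref{eq:sc assump} and \eqref{eq:parity assump}: each cell $X_w$ is isomorphic to an affine space, hence simply connected with vanishing odd cohomology, and similarly in the $\BS$-equivariant sense. Consequently the indecomposable equivariant parity sheaves are classified (up to shift) by the strata, and they form a Krull--Schmidt additive subcategory of $D^b_{\BS}(\GS/\BS;\Bbbk)$.

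Next, for a sequence $\un{w}=(s_1,\dots,s_\ell)$, I would identify
\[
  \EC_{\un{w}} \cong \pi_* \Bbbk_{\mathrm{BS}(\un{w})}[\ell]
\]
where $\pi \colon \mathrm{BS}(\un{w}) = \PS_{s_1}\times_{\BS}\cdots\times_{\BS}\PS_{s_\ell}/\BS \to \GS/\BS$ is the Bott--Samelson map. Since $\mathrm{BS}(\un{w})$ is smooth and projective and its fibres over Schubert cells admit a Deodhar-type decomposition into affine spaces, $\EC_{\un{w}}$ is an equivariant parity sheaf. When $\un{w}$ is reduced for $w$, the map $\pi$ is birational onto $\overline{X}_w$, so by Krull--Schmidt there is a unique summand $\EC_w$ with support $\overline{X}_w$ and restriction to $X_w$ given by $\Bbbk_{X_w}[\ell(w)]$; this is the desired $\EC_w$. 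Any shorter sequence $\un{v}$ produces summands supported on strictly smaller Schubert varieties, which separates $\EC_w$ from them.

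For the algebra statement, I would first compute directly that in the split Grothendieck group
\[
[\EC_s] = \delta_s + v
\]
by evaluating the stalks of $\Bbbk_{\PS_s/\BS}[1]$ at the two $\BS$-orbits $X_e, X_s \subset \PS_s/\BS$. The Hecke relations must then be checked: the quadratic relation reduces to the decomposition $\EC_s * \EC_s \cong \EC_s[1]\oplus \EC_s[-1]$, which comes from the fact that $\PS_s/\BS \times_\BS \PS_s/\BS \to \PS_s/\BS$ is a $\mathbb{P}^1$-bundle; the braid relations reduce to a rank-two calculation, where one observes that the Bott--Samelson varieties for the two sides of a braid word of length $m_{st}$ have the same image and their parity-sheaf pushforwards agree (equivalently, both compute the unique indecomposable parity sheaf $\EC_{w_0^{s,t}}$ supported on the corresponding Schubert variety). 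This produces a well-defined $\ZM[v^{\pm 1}]$-algebra homomorphism $H\to[\HC^{\geom}_{W,\Bbbk}]_\oplus$.

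Finally, a support/restriction argument shows that
\[
[\EC_w] = \delta_w + \sum_{v<w} c_{v,w}\,\delta_v, \qquad c_{v,w}\in\ZM_{\ge 0}[v^{\pm 1}],
\]
since $\EC_w$ is supported on $\overline{X}_w$ and restricts to $\Bbbk_{X_w}[\ell(w)]$ on the open stratum. This unitriangularity against the standard basis $\{\delta_w\}$ simultaneously proves that $\{[\EC_w]\}$ is linearly independent and that the map $H\to[\HC^{\geom}_{W,\Bbbk}]_\oplus$ is an isomorphism (surjectivity follows because $\HC^{\geom}_{W,\Bbbk}$ is Karoubian and generated by Bott--Samelsons, whose classes lie in the image). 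I expect the main obstacle to be the verification of the braid relations at the categorical level: parity vanishing alone is not enough, and one needs the rank-two geometry (the explicit identification of Bott--Samelson resolutions in type $A_1\times A_1$, $A_2$, $B_2$, $G_2$) together with Krull--Schmidt to conclude. All other steps are essentially bookkeeping on top of the parity sheaf formalism of \S\ref{sec:parity}.
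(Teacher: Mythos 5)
Your overall strategy — realising $\HC^{\geom}_{W,\Bbbk}$ as $\BS$-equivariant parity sheaves on $\GS/\BS$, using Bott--Samelson pushforwards and Krull--Schmidt to produce the $\EC_w$, and then running a unitriangularity argument — is exactly the parity-sheaf approach the paper has in mind (deferring to \cite{JMW2, WICM}), and most of the steps are fine. But the paragraph on the braid relations contains a genuine error, and it is precisely the step you flagged as delicate.

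The elements $b_s = \delta_s + v$ do \emph{not} satisfy the braid relations $b_s b_t b_s \cdots = b_t b_s b_t \cdots$ (alternating products of length $m_{st}$): the braid relation $\delta_s \delta_t \delta_s \cdots = \delta_t \delta_s \delta_t \cdots$, rewritten via $\delta_s = b_s - v$, acquires lower-order correction terms. Concretely, for $m_{st} = 3$ one has $b_s b_t b_s = b_{sts} + b_s$ and $b_t b_s b_t = b_{sts} + b_t$, so $b_s b_t b_s - b_t b_s b_t = b_s - b_t \ne 0$. Correspondingly, the parity pushforwards from the two Bott--Samelson resolutions of the dihedral longest element do \emph{not} agree: for $m_{st}=3$ one finds $\EC_{(s,t,s)} \cong \EC_{sts} \oplus \EC_s$ while $\EC_{(t,s,t)} \cong \EC_{sts} \oplus \EC_t$ (one can see this already on stalks; e.g. the fibre of $\mathrm{BS}(s,t,s) \to \SL_3/\BS$ over $s\BS$ is $\PM^1$ whereas over $t\BS$ it is a point, and vice versa for $\mathrm{BS}(t,s,t)$). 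In particular they are not isomorphic, and neither of them equals the indecomposable $\EC_{w_0^{s,t}}$. So the parenthetical ``both compute the unique indecomposable parity sheaf'' is false, and the map $b_s \mapsto [\EC_s]$ is not well-defined by the route you describe.

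The standard fix is to avoid checking relations on generators altogether. Instead one proves directly that the character map $\ch \colon [\HC^{\geom}_{W,\Bbbk}]_\oplus \to H$ is a $\ZM[v^{\pm 1}]$-algebra homomorphism: for parity complexes $\FSS, \GSS$ the stalk of $\FSS * \GSS$ at a $\TS$-fixed point is the hypercohomology of a proper pushforward whose base is stratified by Bruhat cells, and parity vanishing of the relevant groups kills all differentials in the local-global spectral sequence, so graded dimensions (not merely Euler characteristics) multiply. Then Krull--Schmidt plus the classification of indecomposable parity sheaves gives that $\{[\EC_w]\}$ is a $\ZM[v^{\pm 1}]$-basis of the source, the unitriangularity $\ch([\EC_w]) = \delta_w + \sum_{y<w}(\cdots)\delta_y$ shows $\ch$ is a bijection, and $\ch([\EC_s]) = b_s$ then forces the inverse to be the desired isomorphism sending $b_s$ to $[\EC_s]$ --- with no need to identify a presentation of $H$ in terms of the $b_s$. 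The rest of your outline (Deodhar decompositions of Bott--Samelson fibres to verify parity, the stalk computation $[\EC_s] = \delta_s + v$, support arguments for unitriangularity) is correct and is exactly what is needed around this central multiplicativity lemma.
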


\begin{remark} When $\Bbbk$ is of characteristic zero, the complex
  $\EC_w$ of the theorem is the intersection cohomology complex of the
  Schubert variety $\overline{X_w}$. When $\Bbbk$ is of positive
  characteristic the $\EC_w$ are parity sheaves (see \S
  \ref{sec:parity} and \cite{WICM}). The theory of parity sheaves
  yields a beautiful and simple proof of the above theorem.
\end{remark}

The inverse to the isomorphism in the theorem is given by the
\emph{character map}
\begin{gather*}
  \ch : [\HC^{\geom}_{W,\Bbbk}]_{\oplus} \simto H   \qquad
\FSS  \mapsto \sum_{x \in W} \dim_\ZM ( H^*(\FSS_{x\BS/\BS}) ) v^{-\ell(x)} \delta_x
\end{gather*}
where: $\FSS_{x\BS/\BS}$ denotes the stalk of the constructible sheaf
on $\GS/\BS$ at the point $x\BS/\BS$ obtained from $\FSS$ by forgetting
$\BS$-equivariance; $H^*$ denotes cohomology; and $\dim_\ZM H^* := \sum
(\dim H^i) v^{-i} \in   \ZM[v^{\pm 1}]$ denotes graded dimension.

\begin{remark}
  We have tried to emphasise the role of the hom form above. In the
  geometric setting it gives the following. We set
\[
\Hom^\bullet(\FS, \GS) :=
\bigoplus_{n \in \ZM} \Hom_{D^b_{\BS}(\GS/\BS;\Bbbk)}(\FS, \GS[n])
\]
which is a free module over $H^*_{\BS}(\pt, \Bbbk)$. Under the above
isomorphism we have
\[
\langle [\FS], [\GS] \rangle := \begin{array}{c}
\text{graded rank of}\\
\text{$\Hom^\bullet(\FS, \GS)$ over $H^*_{\BS}(\pt, \Bbbk)$}
\end{array}
\]
which is Soergel's hom formula in this case. A proof of this equality
may be deduced from the local global spectral sequence (see
\cite[Proof of Prop. 3.4.4]{SL}). 
\end{remark}


\subsection{Realisation via Soergel bimodules} \label{sec:hecke sbim}

The starting point for the theory of Soergel bimodules is a Coxeter
group $(W,S)$ and a representation
\[
W \acts V.
\]
Let us write $T$ for the reflections (conjugates of $S$) in $W$. We
assume that $V$ is defined over a field of characteristic $\ne 2$ and  is \emph{reflection faithful}, that is, that our
representation is faithful and that 
\[
\text{$V^x \subset V$ is a hyperplane, if and only if $x \in T$.}
\]

\begin{remark}
In particular, our representation is a reflection representation (see
\S\ref{sec:ref2}), however reflection faithfulness is a
much stronger condition.  
\end{remark}

With this data, we now define $R$ to be the symmetric algebra of $V$,
with $V$ placed in degree 2 and consider the monoidal category
$R\gbmod R$ of graded $R$-bimodules. Set
\[
B_s := R \otimes_{R^s} R(1) \in R\gbmod R,
\]
where the shift-grading-by-1 symbol (1) means that $1 \otimes 1 \in
B_s$ lives in degree $-1$. We define the Hecke category (in its incarnation as Soergel bimodules) as follows
\[
\HC^{\Soe}_{W,V} := \langle B_s\; | \; s \in S \rangle_{\otimes_R, \oplus,
  (\ZM), \ominus}.
\]
In other words, Soergel bimodules is defined to be the smallest
strictly full
subcategory of $R\gbmod R$ which contains $B_s$ for each $s \in S$,
and is closed under tensor product ($\otimes_R$), direct sum
($\oplus$), shift of grading ($(\mathbb{Z})$) and direct summands
($\ominus$). 

Let $[ \HC^{\Soe}_{W,V} ]_{\oplus}$ denote the split Grothendieck
group (see the footnote a few pages earlier)  of
$\HC^{\Soe}_{W,V} $. Because
$\HC^{\Soe}_{W,V} $ is a monoidal category, $[\HC^{\Soe}_{W,V}]_{\oplus}$ is an
algebra via $[ B ]\cdot [B'] = [ B \otimes_R B']$. We view $[
\HC^{\Soe}_{W,V}]_{\oplus}$ as a $\ZM[v^{\pm 1}]$-algebra via $v \cdot [B] :=
[B(1)]$.
Recall the Kazhdan-Lusztig basis element $b_s = \delta_s + v$ for all
$s \in S$.

\begin{thm} \label{thm:Soe_cat}
  The assignment $b_s \mapsto [B_s]$ for all $s \in S$ yields an isomorphism of
  $\ZM[v^{\pm 1}]$-algebras:
\[
H \simto [ \HC^{\Soe}_{W,V}]_{\oplus}.
\]
Moreover, for any $w \in W$ there exists a unique indecomposable
object $B_w \in \HC^{\geom}_{W,\Bbbk}$ (defined up to non-unique
isomorphism) such that $B_w$ is a direct summand of
\[
B_{\un{w}} := B_s * B_t* \dots * B_u
\]
for any reduced expression $\un{w} = (s, t, \dots, u)$ for $w$; and
such that $B_w$ is not isomorphic to a shift of a summand of $B_{\un{v}}$ for any
shorter expression $\un{v}$. The classes $[ B_w ]$ give a basis of
$H$ under the
isomorphism above.
\end{thm}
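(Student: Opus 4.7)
The strategy mirrors Soergel's original approach and closely parallels the geometric Theorem~\ref{thm:geom_cat}. The plan is to construct explicitly a ring homomorphism $\phi: H \to [\HC^{\Soe}_{W,V}]_\oplus$ sending $b_s$ to $[B_s]$, together with a ``character map'' $\ch$ going the other way, and then show that the composition $\ch \circ \phi$ is the identity. Combined with an inductive Krull--Schmidt argument, this yields both the isomorphism and the classification of indecomposables.

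First I would verify that the $B_s$ satisfy, up to graded isomorphism in $R\gbmod R$, the defining relations of $H$. The quadratic relation $B_s \otimes_R B_s \cong B_s(1) \oplus B_s(-1)$ (which decategorifies to $b_s^2 = (v+v^{-1})b_s$) is an elementary bimodule calculation using the decomposition $R = R^s \oplus \alpha_s R^s$ of $R$ as a module over $R^s$. The braid relations $B_s \otimes_R B_t \otimes_R B_s \otimes_R \cdots \cong B_t \otimes_R B_s \otimes_R B_t \otimes_R \cdots$ (with $m_{st}$ factors on each side) can be reduced to the rank-two dihedral case, where they are classical; here reflection-faithfulness of $V$ plays its role by guaranteeing that the relevant standard modules are large enough to compute the dihedral Hecke algebra on the nose. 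This produces the ring homomorphism $\phi$.

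Next I would introduce the \emph{standard filtration}. For each $w \in W$ let $R_w$ denote the $R$-bimodule which equals $R$ as a left $R$-module with right action twisted by $w$, namely $r \cdot x \cdot r' := rx\, w(r')$. The structural input I need is Soergel's theorem that every object $B \in \HC^{\Soe}_{W,V}$ admits a filtration (by bimodule supports in $V \times V$) whose subquotients are of the form $R_w(n)$, and that the resulting graded multiplicities $h_w(B) \in \ZM_{\ge 0}[v^{\pm 1}]$ are well-defined. Setting $\ch([B]) := \sum_{w \in W} h_w(B)\, \delta_w$ gives a $\ZM[v^{\pm 1}]$-linear map $\ch : [\HC^{\Soe}_{W,V}]_\oplus \to H$, which one checks is an algebra homomorphism by analysing how the standard filtration behaves under $\otimes_R$; this is the step in which lengths add and the full multiplication table of $H$ emerges. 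A direct computation on $B_s$ yields $\ch([B_s]) = \delta_s + v\delta_{\id} = b_s$, so $\ch \circ \phi = \id_H$.

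Finally, Krull--Schmidt (available here because graded endomorphism rings of Bott--Samelson bimodules $B_{\underline{w}}$ are finite-dimensional and nonnegatively graded, with one-dimensional degree-zero part after passing to indecomposables) together with induction on $\ell(w)$ delivers the classification: for a reduced expression $\underline{w}$ one computes $\ch([B_{\underline{w}}]) = \delta_w + \sum_{y < w}(\ldots)\delta_y$, so $B_{\underline{w}}$ has, up to isomorphism and shift, a unique summand $B_w$ not appearing as a shifted summand of any $B_{\underline{v}}$ with $\ell(v) < \ell(w)$. The classes $\{[B_w]\}_{w \in W}$ then form a basis of $[\HC^{\Soe}_{W,V}]_\oplus$ whose $\ch$-images are upper-triangular with unit leading coefficient in the standard basis $\{\delta_w\}$, and bijectivity of $\phi$ follows. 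The main obstacle is genuinely establishing the standard-filtration theorem and the multiplicativity of $\ch$; it is here that reflection-faithfulness and the hypothesis $\Char \ne 2$ on the representation $V$ enter seriously, and the argument is lengthy but by now textbook (see the relevant chapters of \cite{soergelbook}).
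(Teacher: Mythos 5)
The paper itself does not supply a proof of Theorem~\ref{thm:Soe_cat}; it is quoted as Soergel's categorification theorem and the reader is referred to \cite{SB} and \cite[Chapters on Soergel bimodules]{soergelbook}. Your sketch reproduces the standard route faithfully: verify the quadratic and braid relations on the generating objects $B_s$ to obtain the homomorphism $\phi$; build the character map $\ch$ from the standard (also called $\Delta$- or nabla-) filtration of Soergel bimodules by the twisted bimodules $R_w(n)$; check $\ch\circ\phi=\id_H$ (in particular $\ch([B_s])=b_s$); and finally use Krull--Schmidt plus the upper-triangularity of $\ch$ on the Bott--Samelson objects $B_{\un w}$ to extract the indecomposables $B_w$ and show they give a basis. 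That is exactly the shape of Soergel's proof, so your proposal is correct in substance.

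Two small remarks that would sharpen it. First, the reduction of the braid relation to the dihedral (rank-two) case is itself a nontrivial step, since it amounts to knowing that the subcategory of Soergel bimodules for the standard parabolic $\langle s,t\rangle$ computes the rank-two Hecke algebra and that the restriction is fully faithful on the relevant objects; Soergel handles this via the support/standard filtration structure rather than by a direct rank-two computation, so the sentence ``classical'' is hiding a real argument. Second, there is a cleaner variant of the first step that avoids verifying relations directly: establish the standard filtration theorem first, show $\ch$ is a ring homomorphism with $\ch([B_s])=b_s$, and deduce surjectivity of $\ch$ from the fact that its image is a subring containing $b_s$ and $\ZM[v^{\pm1}]$; injectivity then follows from Soergel's hom formula (Remark~\ref{rem:SoeHom}) or from the classification, and $\phi:=\ch^{-1}$ is the sought isomorphism. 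This is the way the argument is usually organised, and it makes explicit that reflection-faithfulness enters through the standard filtration and hom formula (not really through the braid relation). Either route is fine; you have correctly identified where the hard work lies, namely the standard filtration theorem and multiplicativity of $\ch$.
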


\begin{remark}
  It is easy to see that in any reflection faithful representation
  distinct reflections fix distinct hyperplanes. Thus in a reflection
  faithful representation one has a bijection between reflections and
  the hyperplanes fixed by elements of $W$. This technical assumption
  makes the theory of Soergel bimodules work, and was discovered by
  Soergel following a comment by Polo on a first version of
  \cite{SB}. It appears essential to the proofs to ``classical''
  approaches to Soergel bimodules.
\end{remark}

\begin{remark} \label{rem:SoeHom}
Let us comment on Soergel's hom formula in the setting of Soergel
bimodules. We set
\[
\Hom^\bullet(B, B') :=
\bigoplus_{n \in \ZM} \Hom_{R\gbmod R}(B, B'(n))
\]
and it is a non-trivial theorem that this is a free module over $R$. Under the above
isomorphism we have
\begin{equation}
  \label{eq:shom}
\langle [B], [B'] \rangle := \begin{array}{c}
\text{graded rank of $\Hom^\bullet(B, B')$}
\end{array}
  \end{equation}
which is Soergel's hom formula in this case (see \cite[Lemma 6.13]{SB}). The proof is technical,
involves the reflection faithful assumption in a crucial way, and occupies most of \cite{SB}.
\end{remark}

\subsection{Abe's realisation} Although usually not Soergel bimodules
themselves, a key role in the classical theory of Soergel bimodules is
played by \emph{standard bimodules}
\[
R_x \in R\gbmod R \quad \text{for $x \in W$}.
\]
These are defined to be $R$ as a left module, with right action
twisted by $x$:
\[
m \cdot r := x(r) \cdot m \quad \text{for all $m \in R_x$ and $r \in R$}.
\]
Equivalently, they are functions on the ``twisted graph'':
\[
\Graph_x := \{ (xv, v) \; | \; v \in V \} \subset V \times V.
\]

A basic fact used repeatedly in the classical theory is that
\begin{equation}
  \label{eq:stvanish}
  \Hom(R_x, R_y) = 0 \quad \text{if $x \ne y$}.
\end{equation}
For example, this fact is used to construct the standard and
costandard filtrations on Soergel bimodules, which are crucial to
Soergel's proof of his hom formula in \cite{SB}.



One can check directly that \eqref{eq:stvanish} holds if and only if
our representation $V$ is faithful. Thus the classical theory of
Soergel bimodules breaks down as soon as we loose faithfulness. In
recent work, Abe has found a beautiful way around this issue \cite{Abe}, that we
now describe.

Let $V$ denote our reflection representation of $W$ as above, but now
we place no assumptions on $V$. Let $R$ denote the symmetric algebra
on $V$, and $Q$ its fraction field.

Let $\MC$ denote the category of pairs $(M, \phi)$ where $M$ is graded
$R$-bimodule, and $\phi$ is an isomorphism
\begin{equation}
    \label{eq:Abedec}
\phi : Q \otimes M \simto \bigoplus_{x \in W} M_x \quad \text{in
  $Q\bmod R$}
\end{equation}
satisfying:
\begin{equation}
  \label{eq:Abecond}
m \cdot r = x(r) \cdot m \quad \text{for all $m \in M_x$ and $r \in R$.}  
\end{equation}
Morphisms are morphisms of bimodules preserving the decomposition
\eqref{eq:Abedec}. 

\begin{remark}
  If $W$ acts faithfully on $V$ then there is at most one
$\phi$ satisfying \eqref{eq:Abecond}. Hence in this case $\MC$ is a
full subcategory of $R \gbmod R$.
\end{remark}

Condition \eqref{eq:Abecond} guarantees that right multiplication by
any non-zero $r \in R$ is an isomorphism on the right-hand side of
\eqref{eq:Abedec}, and hence on the left-hand side also. We conclude
that if $(M, \phi) \in \MC$ then $Q \otimes M$ is actually a
$Q$-bimodule.

This observation allows us to define a monoidal
structure on $\MC$. Given $(M, \phi)$ and $(N,\psi)$, we define $(M,
\phi) \otimes (N,\psi)$ to be the object whose underlying bimodule is
$M \otimes N$ and whose decomposition is given by the convolution formula
\[
(Q \otimes M \otimes_R N)_x := \bigoplus_{x = yz} M_y \otimes_Q N_z.
\]

For simplicity we continue to assume that $V$ is defined over a field.
We make the following assumption:
\begin{equation}
  \label{eq:assump}
  \text{for all $s \in S$, its root $\alpha_s \in V$ and coroot
    $\alpha_s^\vee \in V^*$ are non-zero.}
\end{equation}
 (This is automatic as soon as our characteristic is $\ne 2$). Recall
 our generating objects
\[
B_s := R \otimes_{R^s} R(1) \in R\gbmod R.
\]
Our assumptions guarantee that $B_s$ admits a unique lift to $\MC$
(see \cite[\S 2.4]{Abe}). We denote the resulting object by $(B_s,
\phi_s)$. Abe's definition is the following:
\[
\HC^{\Abe}_{W,V} := \langle (B_s, \phi_s)\; | \; s \in S \rangle_{\otimes_R, \oplus,
  (\ZM), \ominus} \subset \MC.
\]

Following Abe, let us make one additional assumption:
\begin{equation}
  \label{eq:dihed assump}
\begin{array}{c}  \text{for all $s, t \in S$, which generate a finite
  parabolic subgroup} \\
\text{the restriction of $V$ to $\langle s, t \rangle \subset W$ is reflection faithful.}
\end{array}
\end{equation}
Under this assumption, Abe shows the analogue of Theorem
\ref{thm:Soe_cat} holds for $\HC^{\Abe}_{W,V}$ \cite[Theorem
1.1]{Abe}, and that his category is equivalent to the
diagrammatic category of \cite{EWS}. Thus $\HC^{\Abe}_{W,V}$ is a
reasonable candidate to be called a Hecke category.

\begin{remark}
  One of the reasons Abe's observation is so useful is that it often allows one to
  carry out proofs in the setting of more general realisations, using
  intuition from the case of Soergel bimodules. We will see examples
  of this below. In principle one could carry these out in the
  diagrammatic language of \cite{EWS}, but this often involves
  formidable calculations (see, for example, some of the calculations
  necessary in \cite{Hazi}).
\end{remark}

\begin{remark}
  As appears always to be the case, a key step in Abe's proof of the
  above results is that the analogue of \eqref{eq:shom} holds in his
  category. It is interesting to note that here he cannot follow the
  lines of Soergel's proof, and instead has to imitate the
  construction of the Libedinsky's light leaves basis \cite{LLL}. Thus
  this part of the proof more closely follows \cite{EWS}.
\end{remark}

\begin{remark}
Note that the assumption \eqref{eq:dihed assump} is fairly harmless. For example, for affine
groups in characteristic $p$ it rules out only a small number of
primes, which can be read off the rank 2 sub root systems.  (This
should be contrasted with the fact that $V$ is \emph{never} reflection
faithful on all of $W$ in these settings.)
\end{remark}

\subsection{Other realisations}
Let us briefly mention that we have ignored entirely two other
important realisations of the Hecke category:
\begin{enumerate}
\item The theory of sheaves on moment graphs, developed by
  Braden-MacPherson \cite{BMP} and Fiebig \cite{Fiebig1,Fiebig2}.
\item The diagrammatic description, which presents the Hecke category
  by generators and relations. The first steps in this program were
  carried out by Elias-Khovanov \cite{EKh} and Libedinsky \cite{LibRA}, and an (almost)
  complete presentation was obtained by Elias and the author
  \cite{EWS}. A detailed survey of the diagrammatic category in the
  context of the present survey is available in \cite{WICM}.
\end{enumerate}

\section{The Hecke category and its localisations} \label{sec:heckeloc}

\subsection{Hyperbolic localisation of the Hecke category}

Let $\GS, \BS, \TS$ be a Kac-Moody group as above. (We have the
example of a reductive group or its loop group in mind.) Choose a
cocharacter
\[
\chi : \CM^* \to \TS
\]
and let $\LS \subset \GS$ be its centraliser. (In case $\GS$ is a
reductive group, then $\LS$ is a Levi subgroup of $\GS$.)

Set
\begin{gather*}
\Chi_r := \{ \gamma \in \Chi \; | \; \langle \chi, \gamma \rangle = 0 \},
\\
\Phi_r := \Phi \cap \Chi_r \quad \text{and} \quad  \Phi_r^+ := \Phi^+
\cap \Chi_r, \\
W_r := \langle s_\alpha \; | \; \alpha \in \Phi_r \rangle.
\end{gather*}
Because the Lie algebra of $\LS$ is the $\chi$-fixed points on the Lie
algebra of $\GS$, we deduce that
$\Phi_r$ (resp. $\Phi_r^+$, $W_r$) are the roots (resp. positive
roots, Weyl group) of $\LS$. In particular,
\[
\BS_{\LS} :=  \BS \cap \LS
\]
 is a Borel subgroup of $\LS$.

\begin{prop}
  $(\GS/\BS)^{\CM^*}$ is a disjoint union of flag varieties for
  $\LS$. These flag varieties are naturally parametrized by ${}^r W$.
\end{prop}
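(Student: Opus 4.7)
The plan is to identify the fixed locus as a disjoint union of $\LS$-orbits of $\TS$-fixed points, indexed by ${}^r W$, each of which will be a full flag variety of $\LS$. First, since $\CM^* \subset \TS$, every $\TS$-fixed point is $\CM^*$-fixed, and the $\TS$-fixed locus is the classical $\{w\BS/\BS : w \in W\}$. The centralizer $\LS$ commutes with $\CM^*$ and so acts on $(\GS/\BS)^{\CM^*}$. Using that $W_r = N_\LS(\TS)/\TS$ acts on $\TS$-fixed points by $v \cdot w\BS/\BS = vw\BS/\BS$, and that $\LS$ is generated by $\TS$ together with the root subgroups $U_\beta$ for $\beta \in \Phi_r$ (each of which either fixes a given $\TS$-fixed point or sweeps out a $1$-dimensional orbit joining it to a $W_r$-translate), one sees that two $\TS$-fixed points lie in the same $\LS$-orbit if and only if they lie in the same left $W_r$-coset; hence the $\LS$-orbits of $\TS$-fixed points are naturally parametrized by ${}^r W$.

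For $w \in {}^r W$ the stabilizer of $w\BS/\BS$ in $\LS$ is $\LS \cap w\BS w^{-1}$, a subgroup containing $\TS$ whose root set is $\Phi_r \cap w\Phi^+$. From \eqref{eq:minl} one has $w^{-1}(\Phi_r^+) \subset \Phi^+$, giving $\Phi_r^+ \subset \Phi_r \cap w\Phi^+$. Conversely, any $\alpha \in \Phi_r^- \cap w\Phi^+$ would yield $-\alpha \in \Phi_r^+$ with $w^{-1}(-\alpha) \in \Phi^-$, contradicting the defining property of ${}^r W$. Hence $\Phi_r \cap w\Phi^+ = \Phi_r^+$, and $\LS \cap w\BS w^{-1}$ has the same roots and maximal torus as $\BS_{\LS}$, forcing equality. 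Consequently $\LS \cdot w\BS/\BS \cong \LS/\BS_{\LS}$ is the full flag variety of $\LS$.

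It remains to verify these orbits exhaust $(\GS/\BS)^{\CM^*}$, which I would do cell by cell. The Schubert cell $\BS y\BS/\BS$ is $\CM^*$-equivariantly isomorphic to the affine space $\prod_{\alpha > 0,\, y^{-1}\alpha < 0} U_\alpha \cdot y\BS/\BS$, so its $\CM^*$-fixed locus is the subspace spanned by the root subgroups with $\langle \chi, \alpha\rangle = 0$, i.e.\ $\alpha \in \Phi_r^+$. Writing $y = vw$ with $w \in {}^r W$ and $v \in W_r$, and using $w^{-1}(\Phi_r^+) \subset \Phi^+$, the condition $y^{-1}\alpha < 0$ reduces to $v^{-1}\alpha \in \Phi_r^-$. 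This describes precisely the Bruhat cell $\BS_{\LS} v \BS_{\LS}/\BS_{\LS}$ sitting inside the flag variety $\LS \cdot w\BS/\BS \cong \LS/\BS_{\LS}$. Summing over $v \in W_r$ recovers $\LS \cdot w\BS/\BS$, and summing further over $w \in {}^r W$ exhausts the fixed locus.

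The main technical work is this last cell-by-cell identification: matching the $\CM^*$-fixed affine subspace inside each Schubert cell of $\GS/\BS$ with a Schubert cell of the flag variety $\LS/\BS_{\LS}$. In the Kac-Moody setting, the fact that $\GS/\BS$ is only ind-projective and that $W_r$ may be infinite introduces no new difficulty, since every Schubert cell is finite-dimensional and the argument localizes there.
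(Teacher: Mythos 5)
Your argument is correct and follows essentially the same route as the paper's: identify $\TS$-fixed points with $W$, compute the stabilizer of $x\BS/\BS$ for $x\in {}^r W$ via the root computation $\Phi_r\cap x(\Phi^+)=\Phi_r^+$ using \eqref{eq:minl}, and then verify the resulting flag varieties exhaust the fixed locus. The one difference is cosmetic: the paper establishes exhaustion by a tangent-space calculation at each $\TS$-fixed point, while you carry out the equivalent computation globally, Schubert cell by Schubert cell, matching the $\CM^*$-fixed affine subspace of $\BS y\BS/\BS$ with a Bruhat cell of $\LS/\BS_\LS$; this gives marginally more (the induced Bruhat stratifications agree) but is the same underlying calculation.
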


\begin{proof}[Sketch of proof:]
The map $x \mapsto x\BS/\BS$ provides a bijection between $W$ and the
$\TS$-fixed points on the flag variety $\GS/\BS$. We first claim that
the $\LS$-orbit through a point $x\BS/\BS$ corresponding to $x \in
{}^r W$ is a flag variety for $\LS$. Indeed, the stabiliser of
$x\BS/\BS$ in $\LS$ is $\LS \cap x\BS x^{-1}$ with Lie algebra
\[
x^{-1} \cdot \lg \cdot x \cap \bg
\]
($\lg$ is the Lie algebra of $\LS$, and $\bg$ is the Lie algebra of $\bg$).
The weights in the Lie algebra are
\[
x^{-1}(\Phi_r) \cap \Phi^+ = \Phi_r^+.
\]
(The equality follows from \eqref{eq:minl}.) Thus that the stabiliser
is the standard Borel $\BS_{\LS}$ and our claim follows. It also
follows that the $\TS$-fixed points in $\LS \cdot x\BS/\BS$ consists
of the coset $W_r x \subset W$.

Thus the $\LS$-orbits through the $\TS$-fixed points corresponding to
$x \in {}^r W$ give us distinct copies of the flag variety of
$\LS$. It is also immediate that all these flag varieties belong to
the fixed point locus $(\GS/\BS)^{\CM^*}$. Finally, a tangent space
calculation at each fixed point shows that these orbits exhaust
$(\GS/\BS)^{\CM^*}$ and the proposition follows.
\end{proof}

We now consider the hyperbolic localisation functor. The above
proposition shows that we may regard it as a functor:
\begin{align*}
D^b_{\BS} (\GS/\BS,k) & \to \bigoplus_{x \in W_r} D^b_{\BS_{\LS}}(\LS/\BS_\LS,k) \\
\FS &\mapsto \FS^{*!}
\end{align*}
Because hyperbolic localisation preserves parity sheaves, we obtain a
functor 
\[
  \HC^{\geom}_{W,k} \stackrel{(-)^{*!}}{
      \longto} \bigoplus_{x \in W_r}  \HC^{\geom}_{W_r,k}
\]

\begin{question}
  Except in several easy cases (e.g. when $\LS = \TS$) I don't know
  whether one can define a monoidal structure on the right hand side
  in order to make the above a monoidal functor.
\end{question}

\subsection{Hyperbolic localisation and Grothendieck groups}

Recall the hyperbolic localisation bimodule from
\S\ref{sec:parabolic}. Hyperbolic localisation
categorifies the hyperbolic bimodule. More precisely:

\begin{thm} \label{thm:hypbim}
We have a
commutative diagram
\begin{equation*}
  \begin{tikzpicture}
    \node (ul) at (-5.3,0) {$\HC^{\geom}_{W_r, \Bbbk}$};
    \node at (-4,0) {$\acts$};
    \node (ml) at (-2,0) {$\bigoplus_{x \in {}^rW}
      \HC^{\geom}_{W_r,\Bbbk}$};
    \node (mr) at (2,0) {$\HC^{\geom}_{W,\Bbbk}$};
     \node at (4,0) {$\racts$};
     \node (ur) at (5.3,0) {$\HC^{\geom}_{W,\Bbbk}$};
     \node (ll) at (-3.7,-2) {$H_{W_r}$};
         \node at (-2.2,-2) {$\acts$};
         \node (l) at (0,-2) {$\ZM[W]$};
                  \node at (2.2,-2) {$\racts$};
                  \node (lr) at (3.7,-2) {$H_{W}$};
                  \draw[->] (mr) to node[above] {$(-)^{*!}$} (ml);
               \draw[dashed,->] (ur) to node[right] {\small $\ch$} (lr);
               \draw[dashed,->] (ul) to node[left] {\small $\ch$} (ll);
                              \draw[dashed,->] (mr) to node[right]
                              {\small $\hat{\ch}$} (l);
                              \draw[dashed,->] (ml) to node[left]
                              {\small $\oplus \hat{\ch}$} (l);               
\node[rotate=90] at (0,-2.5) {$=$};
\node at (0,-3) {$\bigoplus_{x \in W} \ZM \hat{\delta}_x$};
             \end{tikzpicture}
\end{equation*}
where:
\begin{enumerate}
\item  $\ch$ is defined as in \S \ref{sec:geom};
\item $\hat{\ch}$ is
$\ch$ at $v := 1$; 
\item $\oplus \hat{\ch}$ is defined via
\[
\oplus \hat{\ch} ( (\FS_x)_{x \in {}^r W}) := \sum_{x \in {}^r W} (\hat{\ch}(\FS_x)) \hat{\delta}_x.
\]
\end{enumerate}
\end{thm}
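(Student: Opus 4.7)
The plan is to reduce the commutativity of the diagram to a pointwise identity of stalk dimensions at $\TS$-fixed points, and then layer on the bimodule compatibilities. First I would invoke Remark \ref{rem:hlparity} together with the preceding proposition to conclude that $(-)^{*!}$ restricts to a well-defined functor
\[
(-)^{*!} : \HC^{\geom}_{W,\Bbbk} \longto \bigoplus_{x \in {}^rW} \HC^{\geom}_{W_r,\Bbbk},
\]
with the $x$-th summand realised as parity sheaves on $\LS \cdot x\BS/\BS \cong \LS/\BS_\LS$.

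Given $\FS \in \HC^{\geom}_{W,\Bbbk}$, the $\TS$-fixed points of $\LS \cdot x\BS/\BS$ are precisely $\{wx\BS/\BS : w \in W_r\}$, and $y = wx$ is the unique such decomposition of any $y \in W$. Unpacking the definitions of $\hat{\ch}$ on each factor yields
\[
\oplus \hat{\ch}(\FS^{*!}) = \sum_{x \in {}^r W} \sum_{w \in W_r} \dim_\Bbbk H^*((\FS^{*!})_{wx\BS/\BS}) \, \hat{\delta}_{wx} = \sum_{y \in W} \dim_\Bbbk H^*((\FS^{*!})_y) \, \hat{\delta}_y,
\]
while $\hat{\ch}(\FS) = \sum_{y \in W} \dim_\Bbbk H^*(\FS_y) \, \hat{\delta}_y$. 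So the commutativity on objects reduces to the single pointwise equality
\[
\dim_\Bbbk H^*((\FS^{*!})_y) = \dim_\Bbbk H^*(\FS_y) \qquad \text{for all $y \in W$.} \qquad (\star)
\]

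The core of the proof is $(\star)$. My strategy is to promote both sides to graded statements in $\TS$-equivariant cohomology and use equivariant localisation. Since $\FS$ and, by Remark \ref{rem:hlparity}, $\FS^{*!}$ are parity, the $\TS$-equivariant stalk and costalk cohomologies $H^*_\TS(y, i_y^? \FS)$ for $? \in \{*,!\}$ and $H^*_\TS(y, (\FS^{*!})_y)$ are free modules over $R := H^*_\TS(\pt,\Bbbk)$ whose ranks (taken at $v := 1$) coincide with the total dimensions $\dim_\Bbbk H^*$ of the ordinary (co)stalks. Braden's theorem supplies canonical maps $i_y^! \FS \to (\FS^{*!})_y \to i_y^* \FS$, and the equivariant localisation theorem for $\CM^* \subset \TS$ (via $\chi$) forces these maps to become isomorphisms after inverting those characters of $\TS$ which are non-trivial on $\CM^*$. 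Since localisation preserves the rank of a free $R$-module, the three ranks coincide, and $(\star)$ follows upon specialising $v := 1$. Note that a graded version of this equality typically fails, because $u_+^!$ introduces shifts; the passage to $v := 1$ is precisely what erases them.

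With $(\star)$ established, the right $H_W$-action compatibility is essentially formal: the right action on the middle category is by convolution in $\HC^{\geom}_{W,\Bbbk}$ followed by $(-)^{*!}$, so it suffices to invoke the ring-homomorphism property of $\hat{\ch}$ together with the pointwise commutativity just proved. The left $H_{W_r}$-action compatibility reduces to the monoidality of $(-)^{*!}$ on sheaves supported on the identity component $\LS/\BS_\LS$, which follows from standard base change for convolution along the proper map $\mult$ combined with the observation that the $\CM^*$-fixed points of $\GS \times_\BS \GS/\BS$ decompose compatibly with those of $\GS/\BS$. The main obstacle is $(\star)$ itself, which requires precisely coordinating Braden's theorem, the equivariant localisation theorem, and the $\TS$-equivariant formality of parity sheaves; the subtlety is that the three objects involved are \emph{not} equal as graded $R$-modules, and it is only after specialisation $v := 1$ that the ranks match.
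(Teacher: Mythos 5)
Your reduction of the middle triangle to the pointwise identity $(\star)$ is correct and is essentially the reduction the paper makes. For the proof of $(\star)$, however, you take a genuinely different route: the paper argues directly that hyperbolic localisation preserves the Euler characteristic of stalks at $\TS$-fixed points, and that on a parity sheaf this signed Euler characteristic (together with preservation of the even/odd type) recovers the total stalk dimension; you instead invoke equivariant formality of parity sheaves, Braden's theorem, and the equivariant localisation theorem to compare ranks of free $H^*_\TS(\pt,\Bbbk)$-modules. This is a defensible alternative, but be careful with the objects: the Braden maps live on the fixed locus $X^{\CM^*}$, so after taking stalks at $y$ they compare $(i^!\FS)_y$ and $(i^*\FS)_y$ with $i\colon X^{\CM^*}\hookrightarrow X$, not the costalk $i_y^!\FS$ taken inside $\GS/\BS$; and the localisation theorem you cite is a statement about global equivariant cohomology, so it does not immediately give that the maps of individual stalks become isomorphisms after inverting characters. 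These details need to be supplied before the rank comparison is airtight; the paper's Euler-characteristic argument is shorter precisely because it avoids them.

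The more serious gap is the left square. That square does not involve $(-)^{*!}$ at all: it only asserts that $\oplus\hat{\ch}$ intertwines the summandwise left convolution action of $\HC^{\geom}_{W_r,\Bbbk}$ on $\bigoplus_{x\in{}^rW}\HC^{\geom}_{W_r,\Bbbk}$ with the left action of $H_{W_r}$ on $\ZM[W]$, and this follows immediately from $\ch$ being a ring homomorphism for $W_r$ (specialised at $v:=1$), exactly as for the right square. Routing it through monoidality of $(-)^{*!}$ is both unnecessary and untenable: the paper explicitly flags as an open Question whether the hyperbolic localisation functor can be made monoidal on $\bigoplus_{x\in{}^rW}\HC^{\geom}_{W_r,\Bbbk}$ outside a few easy cases. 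The diagram never asserts such monoidality and the theorem does not require it.
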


\begin{proof}
  The commutativity of the left and right squares (or more accurately
  rombhi) is a specialisation at $v := 1$ of the fact that $\ch$ is a
  homomorphism of algebras (see  \S \ref{sec:geom}). Thus we only need to check commutativity
  of the middle triangle. The basic reason for the commutativity of this
  triangle is that hyperbolic localisation preserves Euler
  characteristic. We need to be a little more careful though, as
  $\hat{\ch}$ is not quite given by the Euler characteristic at
  $T$-fixed points. It is enough to check commutativity for a parity
  sheaf $\FS$, in which case $\hat{\ch}$ is given by the Euler
  characteristic at $T$-fixed points up to a sign, depending on
  whether $\FS$ is even or odd. Now we only need to check that
  hyperbolic localisation preserves being even or odd, which follows
  from the fact (again) that hyperbolic localisation preserves Euler
  characteristics.
\end{proof}


\begin{remark}
  We leave it to the reader to take $k = \QM$ above and deduce the
  positivity properties in Theorem \ref{thm:pos2}.
\end{remark}

\subsection{Smith localisation of the Hecke category}

Let $\GS, \BS, \TS$ be as in the previous section. (Again, we have the
example of a reductive group or its loop group in mind.) We now fix a
prime $p$ and assume that our coefficients $\Bbbk$ are either a
finite field of characteristic $p$, $\ZM_p$, or a finite extension of
one of these. Fix an element of order $p$
\[
\z \in \TS
\]
and let $\LS \subset \GS$ be the identity compotent of its
centraliser.

\begin{remark}
If $\GS$ is reductive then such $\LS$ (as $\zeta$ varies over all
elements of finite but not necessarily prime order)
constitute all closed connected reductive subgroups of $\GS$ with
maximal torus $\TS$.
If we assume that $\GS$ is semi-simple and simply connected, then the
centraliser is already connected and is determined by a subset of the extended Dynkin
diagram of $\GS$ (Borel-de Siebenthal theory). Note that $\LS$ need
not be a Levi subgroup of $\GS$.
\end{remark}

\begin{ex} \label{ex:BdS}
  The reader is encouraged to keep two examples in mind:
  \begin{enumerate}
  \item $\GS = \Sp_4$, $p = 2$, and $\z = \diag(1, -1, -1, 1)$. In
    which case
\[
\LS = \SL_2 \times \SL_2.
\]
  \item $\GS$ a loop group (a central extension of $\CM^*_{\rot} \ltimes
    G((t))$ for a reductive group $G$, where $\CM^*_\rot$ acts by
    ``loop rotation''), $p$ is arbitrary and $\z \in
    \CM^*_\rot$ is a $p^{th}$ root of unity, in which case $\LS$ is a
    central extension of
\[
\CM^*_{\rot} \ltimes
    G((t^p)).
\]
  \end{enumerate}
\end{ex}

Echoing earlier notation, set
\begin{gather*}
\Chi_r := \{ \gamma \in \Chi \; | \; \chi(\z) = 1 \},
\\
\Phi_r := \Phi \cap \Chi_r \quad \text{and} \quad  \Phi_r^+ := \Phi^+
\cap \Chi_r, \\
W_r := \langle s_\alpha \; | \; \alpha \in \Phi_r \rangle.
\end{gather*}
Because the Lie algebra of $\LS$ is the $\zeta$-fixed points on the Lie
algebra of $\GS$, we deduce that
$\Phi_r$ (resp. $\Phi_r^+$, $W_r$) are the roots (resp. positive
roots, Weyl group) of $\LS$. In particular,
\[
\BS_{\LS} :=  \BS \cap \LS
\]
 is a Borel subgroup of $\LS$. As earlier we have (with essentially the same proof):

\begin{prop}
  $(\GS/\BS)^{\z}$ is a disjoint union of flag varieties for
  $\LS$. These flag varieties are naturally parametrised by ${}^r W$.
\end{prop}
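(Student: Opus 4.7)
The plan is to follow the proof of the preceding proposition essentially verbatim, since the only input really specific to the $\CM^*$-action was the reduction of the fixed locus to flag varieties for $\LS$, and that reduction goes through word for word once one replaces ``identity component of the centraliser of $\chi$'' by ``identity component of the centraliser of $\z$''. Concretely, I would first observe that $\z \in \TS$, so every $\TS$-fixed point $x\BS/\BS$ is automatically $\z$-fixed, and since $\LS$ commutes with $\z$ the full orbit $\LS \cdot x\BS/\BS$ lies in $(\GS/\BS)^{\z}$. For $x \in {}^r W$, the same Lie-algebra computation as in the $\CM^*$-case identifies the stabiliser of $x\BS/\BS$ in $\LS$ with $\BS_{\LS}$: the weights of $x^{-1}\cdot \lg \cdot x \cap \bg$ equal $x^{-1}(\Phi_r) \cap \Phi^+ = \Phi_r^+$ by \eqref{eq:minl}. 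Hence $\LS \cdot x\BS/\BS \cong \LS/\BS_{\LS}$ is a flag variety for $\LS$, and its $\TS$-fixed points are the coset $W_r x \subset W$.

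Secondly, I would note that distinct elements of ${}^rW$ give rise to distinct such orbits, since their sets of $\TS$-fixed points are disjoint $W_r$-cosets; so the union over $x \in {}^rW$ is a disjoint union of flag varieties parametrised exactly as claimed.

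The main (and really only) obstacle is the exhaustion step, i.e.\ showing that every $\z$-fixed point lies in one of these orbits. Here I would argue that any connected component $Z$ of $(\GS/\BS)^{\z}$ is stable under $\TS$ (since $\TS$ is connected and centralises $\z$) and sits inside a finite-dimensional $\TS$-stable Schubert subvariety of the ind-projective variety $\GS/\BS$; by Borel's fixed point theorem $Z$ thus contains a $\TS$-fixed point, which after translating by $W_r$ may be assumed to be $x\BS/\BS$ for some $x \in {}^rW$. A tangent space calculation at $x\BS/\BS$ then shows that the $\z$-fixed directions inside $T_{x\BS/\BS}(\GS/\BS)$ carry $\TS$-weights $x(\Phi^-) \cap \Chi_r = x(\Phi^-) \cap \Phi_r$, which agree with the $\TS$-weights of the tangent space to $\LS \cdot x\BS/\BS$ at $x\BS/\BS$. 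Since the fixed locus of a finite-order automorphism on a smooth variety is smooth, $Z$ and the closed subvariety $\LS \cdot x\BS/\BS \subseteq Z$ have the same dimension at $x\BS/\BS$, and irreducibility together with connectedness of $Z$ forces $Z = \LS \cdot x\BS/\BS$. The only technical wrinkle is the smoothness statement and the careful handling of the ind-projective setting; both are standard, the first via linearisation at $x\BS/\BS$ and the second via Schubert exhaustion.
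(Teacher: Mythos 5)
Your proposal is correct and takes the same route as the paper, which simply remarks that the proof is ``essentially the same'' as the $\CM^*$-version in the preceding proposition: identify the stabiliser of $x\BS/\BS$ in $\LS$ as $\BS_\LS$ via the weight computation $x(\Phi^+)\cap\Phi_r = \Phi_r^+$ for $x\in{}^rW$, and then use a tangent-space calculation at each $\TS$-fixed point to show these $\LS$-orbits exhaust the fixed locus. You have filled in the exhaustion step in somewhat more detail (Borel's fixed point theorem to locate a $\TS$-fixed point in each component, smoothness of $\zeta$-fixed loci, matching of tangent-space dimensions), which is precisely what the paper's phrase ``a tangent space calculation \dots shows that these orbits exhaust'' is gesturing at.
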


\begin{ex} We continue Example \ref{ex:BdS}. Part (1) is dicussed in Remark
  \ref{rem:geomc2}. For (2), let 
\[
\Fl := G((t))/\Iw
\]
denote the affine flag variety (the flag variety for the loop
group under consideration). For $\z$ as above we have
\[
(\Fl)^{\zeta} = \bigoplus_{x \in {}^r W} G((t^p))/(\Iw \cap G((t^p))).
\]
Thus each of the components is again isomorphic to an affine flag variety.
\end{ex}

We now consider the Smith localisation functor. The above
proposition shows that we may regard it as a functor:
\begin{align*}
D^b_{\BS} (\GS/\BS, \Bbbk) & \to \bigoplus_{x \in W_r} \Perf_{\BS_\LS} ( \LS/\BS_\LS,\TS_{\Bbbk}) \\
\FS &\mapsto \Sm(\FS)
\end{align*}
As we have already noted, one may check easily that $\Sm$ preserves
parity sheaves. Thus, if we define
\[
\HC^{\Sm}_{W_r,\Bbbk} := \text{image of $\HC^{\geom}_{W_r,\Bbbk}$ in $\Perf_{\BS_\LS}( \LS/\BS_{\LS},
\TS_\Bbbk)$}
\]
we obtain a Smith localisation functor between Hecke categories: 
\[
  \HC^{\geom}_{W, \Bbbk} \stackrel{\Sm}{
      \longto} \bigoplus_{x \in W_r}  \HC^{\Sm}_{W_r,\Bbbk}.
\]

\subsection{Smith localisation and Grothendieck groups}

Recall the hyperbolic localisation bimodule from
\S\ref{sec:parabolic}, now considered for a reflection subgroup $W_r$
which is potentially not a parabolic subgroup. Smith localisation
categorifies the hyperbolic bimodule. More precisely, we have the
following theorem, whose proof is similar to that of Theorem \ref{thm:hypbim}.

\begin{thm} \label{thm:sm}
We have a
commutative diagram
\begin{equation*}
  \begin{tikzpicture}
    \node (ul) at (-5.3,0) {$\HC^{\geom}_{W_r, \Bbbk}$};
    \node at (-4,0) {$\acts$};
    \node (ml) at (-2,0) {$\bigoplus_{x \in {}^r W}
      \HC^{\Sm}_{W_r,\Bbbk}$};
    \node (mr) at (2,0) {$\HC^{\geom}_{W,\Bbbk}$};
     \node at (4,0) {$\racts$};
     \node (ur) at (5.3,0) {$\HC^{\geom}_{W,\Bbbk}$};
     \node (ll) at (-3.7,-2) {$H_{W_r}$};
         \node at (-2.2,-2) {$\acts$};
         \node (l) at (0,-2) {$\ZM[W]$};
                  \node at (2.2,-2) {$\racts$};
                  \node (lr) at (3.7,-2) {$H_{W}$};
                  \draw[->] (mr) to node[above] {$\Sm$} (ml);
               \draw[dashed,->] (ur) to node[right] {\small $\ch$} (lr);
               \draw[dashed,->] (ul) to node[left] {\small $\ch$} (ll);
                              \draw[dashed,->] (mr) to node[right]
                              {\small $\hat{\ch}$} (l);
                              \draw[dashed,->] (ml) to node[left]
                              {\small $\oplus \hat{\ch}$} (l);               
\node[rotate=90] at (0,-2.5) {$=$};
\node at (0,-3) {$\bigoplus_{x \in W} \ZM \hat{\delta}_x$};
             \end{tikzpicture}
\end{equation*}
where:
\begin{enumerate}
\item  $\ch$ is defined as in \S \ref{sec:geom};
\item $\hat{\ch}$ is
$\ch$ at $v := 1$; 
\item $\oplus \hat{\ch}$ is defined via
\[
\oplus \hat{\ch} ( (\FS_x)_{x \in {}^r W}) := \sum_{x \in {}^r W} (\hat{\ch}(\FS_x)) \hat{\delta}_x.
\]
\end{enumerate}
\end{thm}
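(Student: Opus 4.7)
The plan is to mirror the three-step argument used for Theorem~\ref{thm:hypbim}. First, commutativity of the left and right (rhombus-shaped) parts of the diagram is simply the statement that $\ch$ is an algebra homomorphism, evaluated at $v:=1$; this is unaffected by whether one uses hyperbolic or Smith localisation. The content lies entirely in the middle triangle: for every $\FS \in \HC^{\geom}_{W,\Bbbk}$ one must verify that $(\oplus \hat{\ch})\,\Sm(\FS) = \hat{\ch}(\FS)$ in $\ZM[W]$.

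Since both sides are additive and both categories are Karoubian, it suffices to test the identity on an indecomposable parity sheaf $\ES_w$. For such an $\FS$ the stalk $\FS|_{x\BS/\BS}$ at each $T$-fixed point is concentrated in a single parity, so $\hat{\ch}(\FS)$ can be written as $\sum_{x \in W} (-1)^{\varepsilon(\FS)} \chi(\FS|_{x\BS/\BS})\, \hat{\delta}_x$, where $\varepsilon(\FS)\in\{0,1\}$ records the parity. Since $\z \in \TS$, the $T$-fixed points $\{x\BS/\BS : x \in W\}$ all lie in $(\GS/\BS)^\z$, and under the bijection $W_r \times {}^rW \simto W$ the point indexed by $x = vy$ (with $v \in W_r$, $y \in {}^rW$) is exactly the $T$-fixed point $v\BS_\LS/\BS_\LS$ inside the $y$-th component of $(\GS/\BS)^\z$.

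The required identity thus reduces, point by point, to
\[
(-1)^{\varepsilon(\Sm(\FS)_y)}\, \chi_{\mathrm{Tate}}\bigl(\Sm(\FS)_y|_{v\BS_\LS/\BS_\LS}\bigr) \;=\; (-1)^{\varepsilon(\FS)}\, \chi\bigl(\FS|_{x\BS/\BS}\bigr),
\]
where $\chi_{\mathrm{Tate}}$ denotes Euler characteristic computed via the Tate functors $T^0,T^1$. Two ingredients deliver this. The \emph{parity check} is precisely the statement, proved by Leslie--Lonergan \cite{LL} and recalled in \S\ref{sec:parity}, that the equivariant Smith functor sends parity sheaves to Tate-parity complexes of the same parity. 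The \emph{numerical check} follows from the principle that $\Sm$ commutes with all functors (Remark~\ref{rem:smith miracle} together with \eqref{eq:smith standard}), giving $\Sm(\FS)_y|_{v\BS_\LS/\BS_\LS}\cong \Sm(\FS|_{x\BS/\BS})$; combined with the observation that on a point carrying the trivial $\mu_p$-action the Tate functors $T^0,T^1$ recover $H^\even$ and $H^\odd$, one obtains equality of the underlying dimensions.

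The main obstacle is not the structure of the argument but the sign/parity bookkeeping: one must check that the conventions of \cite{LL} for Tate-parity are compatible with the grading conventions used in the definition of $\ch$, and that the subtle differences between the $\FM_p$-coefficient case (where $[2]\cong\id$ in the Smith category, see Remark~\ref{rem:smith shift}) and the $\ZM_p$-coefficient case do not affect the dimension count at $T$-fixed points. Once these compatibilities are verified, the commutativity of the middle triangle follows formally, exactly as in the proof of Theorem~\ref{thm:hypbim}.
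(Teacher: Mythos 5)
Your proposal follows exactly the route the paper intends: the paper's ``proof'' of Theorem~\ref{thm:sm} is the single sentence that it is ``similar to that of Theorem~\ref{thm:hypbim}'', and you have correctly identified what that means and supplied the Smith-theoretic substitutes for each step (reduction to the left/right rhombi via $\ch$ being an algebra homomorphism, reduction of the middle triangle to indecomposable parity sheaves, interpretation of $\hat\ch$ as a signed Euler characteristic at $\TS$-fixed points, and the fact that the Smith functor preserves parity and commutes with restriction to a fixed point). Your closing caveats about the sign/parity bookkeeping and the $\FM_p$ versus $\ZM_p$ subtleties are exactly the details the paper elides, and they are the places one would need to check carefully; but the structure of the argument is the paper's.
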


\begin{remark}
  We leave it to the reader to use this theorem to deduce instances of
  the positive properties in Theorem \ref{thm:pos3}.
\end{remark}



\subsection{Soergel bimodules for reflection subgroups} \label{subsec:reflection}
Let $V$ denote our reflection representation with roots $\alpha$,
coroots $\alpha^\vee$ etc. as in \ref{sec:hecke sbim}. Fix a subspace
\[
V_{r} \subset V
\]
and let $W_r$ denote the reflection subgroup generated by the
reflections $s_\alpha$ in roots $\alpha \in V_r$. We have seen in \S
\ref{sec:ref1} and \S\ref{sec:ref2} that $W_r$ is a Coxeter group with
natural Coxeter generators $S_r \subset W_r$. (We try
to write $t$ for reflections in $S_r$, to remind ourselves that these
are different from the simple reflections in $S$.)

Note that $V_r$ is certainly $W_r$ stable because
\[
s_\a(\lambda) = \lambda - \langle \a^\vee, \lambda \rangle \alpha
\]
for all $\lambda \in V_r$. We make the following assumption.
\begin{equation}
  \label{eq:RFAssumption}
\text{$V_r$ is reflection faithful, as a representation of $W_r$.}
\end{equation}

\begin{remark}
  This assumption is to make sure that
  Soergel bimodules behave well. However, the reader may check that
  this assumption can be replace with a much weaker one if one instead
  uses Abe's realisation of the Hecke category. We had originally
  planned on explaining the necessary changes below, but ran out of
  time when preparing these notes. 
\end{remark}

We denote by $R$ (resp. $R_r$) the symmetric
algebra on $V$ (resp. $V_r$). We have a canonical $W_r$-equivariant embedding
\[
R_r \into R.
\]
Because $(W_r, S_r)$ is a Coxeter group, we can consider Soergel
bimodules with respect to either or $V$ or $V_r$. Our notation is as
follows:
\begin{gather*}
\HC^\Soe_{W_r,V_r}: \text{Soergel bimodules for $W_r$ and the ``little'' representation $V_r$;}\\
\HC^\Soe_{W_r,V}: \text{Soergel bimodules for $W_r$ and the ``big'' representation $V$.}
\end{gather*}
(We will often drop the superscript $\Soe$ below, to reduce clutter.)

The goal of this section is to see that there is not a great difference
between these two categories. More precisely:

\begin{thm} \label{thm:ref_equiv}
  There is an equivalence of additive categories:
\[
\HC_{W_r,V_r} \otimes_{R_r} R \simto \HC_{W_r,V}.
\]
\end{thm}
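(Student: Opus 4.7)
The plan is to construct the equivalence via the Bott--Samelson model, matching generators and relations, and then to invoke Soergel's hom formula to control morphism spaces. As a first step, I would define the functor $F : \HC_{W_r,V_r} \otimes_{R_r} R \to \HC_{W_r,V}$ on generators by sending the Bott--Samelson bimodule $BS^{V_r}(\underline{w}) = R_r \otimes_{R_r^{t_1}} \cdots \otimes_{R_r^{t_n}} R_r(n)$ to $BS^{V}(\underline{w}) = R \otimes_{R^{t_1}} \cdots \otimes_{R^{t_n}} R(n)$ for each sequence $\underline{w} = (t_1, \dots, t_n)$ in $S_r$. To define $F$ on morphisms I would appeal to the diagrammatic presentation of Elias--Williamson \cite{EWS}: morphisms between Bott--Samelson objects are described by generators (dots, trivalent vertices, $2m_{tu}$-valent vertices, polynomial boxes) modulo relations whose coefficients only involve the pairings $\langle \alpha_t^\vee, \alpha_u\rangle$ for $t,u \in S_r$ and the image of the polynomial ring. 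Since the roots $\alpha_t \in V_r \subset V$ are the same in both realisations and the coroots $\alpha_t^\vee \in V^*$ restrict to the coroots in $V_r^*$, these pairings coincide, so every diagrammatic relation valid over $R_r$ remains valid over $R$. This gives a well-defined additive functor.

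Next, I would establish full faithfulness using Soergel's hom formula (Remark \ref{rem:SoeHom}). For any two Bott--Samelson sequences $\underline v, \underline w$ in $S_r$, both
\[
\Hom^\bullet_{\HC_{W_r,V_r}}\bigl(BS^{V_r}(\underline v), BS^{V_r}(\underline w)\bigr)
\qquad\text{and}\qquad
\Hom^\bullet_{\HC_{W_r,V}}\bigl(BS^{V}(\underline v), BS^{V}(\underline w)\bigr)
\]
are free graded modules (over $R_r$ and $R$ respectively) of the same graded rank, namely $\langle b_{\underline v}, b_{\underline w}\rangle$ in the Hecke algebra of $W_r$. Moreover both admit Libedinsky's double-leaves basis, built combinatorially out of light leaves in a way that makes no reference to the ambient polynomial ring beyond its action on the generators. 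Hence $F$ sends a double-leaves basis of the small hom space, tensored up to $R$, bijectively onto a double-leaves basis of the large hom space, and is therefore an isomorphism on Bott--Samelson hom spaces. Additivity and the compatibility of $F$ with idempotent splitting then extend this to full faithfulness on all objects.

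For essential surjectivity, I would use the uniqueness part of Theorem \ref{thm:Soe_cat}: the indecomposables on both sides are indexed by $W_r$, each $B_w$ being realised as the unique new summand of $BS(\underline w)$ for a reduced expression $\underline w$ of $w$. Since $F$ is fully faithful, the idempotent of $\End^\bullet(BS^{V_r}(\underline w))$ cutting out $B_w^{V_r}$ transports to an idempotent of $\End^\bullet(BS^{V}(\underline w))$ cutting out an indecomposable summand supported on $w$, which by uniqueness must be $B_w^V$. Hence every $B_w^V$ lies in the essential image.

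The main obstacle, in my view, is the verification at the level of the diagrammatic category that the relations survive the change of realisation when $V$ may fail to be reflection faithful for $W_r$. The hypothesis \eqref{eq:RFAssumption} guarantees this only on the $V_r$-side, so the classical Soergel framework does not apply to $\HC_{W_r,V}$ directly; one should work in Abe's category (as hinted in the remark following the theorem), where the uniqueness of the isomorphism $\phi$ no longer requires faithfulness, and where the analogues of Soergel's hom formula and Libedinsky's double-leaves basis are available on both sides. Once this is set up uniformly, the three steps above carry over essentially verbatim.
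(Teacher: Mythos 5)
Your proposal is correct in its conclusions but proceeds by a genuinely different mechanism than the paper. You construct the functor $F$ directly by matching Bott--Samelson objects via the Elias--Williamson diagrammatic presentation, then invoke Soergel's hom formula (via the double-leaves basis) to see that $F$ is an isomorphism on Bott--Samelson hom spaces. The paper never builds an explicit functor: it introduces an intermediate category $\QC\subset R_r\gbmod R$, namely the strictly full additive graded Karoubian subcategory generated by $R$ under the left $\HC_{W_r,V_r}$-action, observes via Proposition~\ref{prop:BSslide} that the very same subcategory is generated under the right $\HC_{W_r,V}$-action, and then verifies in Proposition~\ref{prop:homisos} that both resulting functors $\HC_{W_r,V_r}\otimes_{R_r}R\to\QC$ and $\HC_{W_r,V}\to\QC$ are equivalences --- the first by flat base change for finitely presented modules (pure commutative algebra), the second by exhibiting an embedding by restriction of scalars and then comparing graded ranks using the hom formula. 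Both routes rest on the hom formula, but the paper's bimodule detour buys two things: it sidesteps the diagrammatic calculus altogether, so there is no need to argue that the Elias--Williamson relations survive the change of realisation (exactly the delicate step you flag at the end), and it never needs to invoke the equivalence between Soergel bimodules and the diagrammatic category. What your route buys in exchange is an explicit description of the functor on morphisms.

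One correction to your closing paragraph: since the ambient $V$ is assumed reflection faithful for $W$ (this is the running hypothesis from \S\ref{sec:hecke sbim}), it is automatically reflection faithful for $W_r$ as well. Indeed, by Dyer's theorem the reflections of $(W_r,S_r)$ are exactly $T\cap W_r$, where $T$ is the set of reflections of $W$; so for $x\in W_r$ the condition that $V^x$ be a hyperplane detects precisely the reflections of $W_r$. Thus the classical Soergel framework applies to both $\HC_{W_r,V_r}$ and $\HC_{W_r,V}$, and the escape to Abe's realisation, while valuable for weakening hypotheses (as the remark following the theorem notes), is not needed for the theorem as stated.
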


\begin{remark}
  The issue is that (as far as we know) there is no natural
  \emph{monoidal} equivalence. (Except in special situations, for
  example, when the inclusion $V_r \into V$ admits a $W_r$-equivariant
  splitting.) Thus establishing this equivalence requires a little
  care. We will deduce the theorem from considerations of a bimodule category
  on which both categories act. This issue of changing representations
  for Soergel bimodules was first addressed in \cite{Lequiv}, which
  has influenced the discussion below.
\end{remark}

Consider $R_r \gbmod R$, the category of graded $(R_r,
R)$-bimodules. This category is naturally a bimodule for the monoidal
category of graded $R_r$-bimodules on the left, and graded
$R$-bimodules on the right. In particular this category is a bimodule
for the appropriate categories of Soergel bimodules acting on the left
and the right:
\[
\HC_{W_r,V_r} \acts R_r \gbmod R \racts \HC_{W_r,V}
\]

Given $t \in S_r$ we denote the generators of the appropriate Hecke
categories as follows:
\[
B_{r,t} := R_r \otimes_{(R_r)^t} R_r(1) \in \HC_{W_r,V_r}\quad \text{and} \quad
B^r_t := R \otimes_{R^t} R(1) \in \HC_{W_r,V}.
\]
(The sub- and superscripts are in order to distinguish these objects
from Soergel bimodules for the larger group, which will be considered
momentarily.) Given an expression $\un{w} = (t_1, t_2, \dots, t_m)$ in $S_r$ we consider the
corresponding Bott-Samelson bimodules
\begin{gather*}
  \label{eq:1}
B_{r,\un{w}} := B_{r,t_1} B_{r,t_2} \dots B_{r,t_m} \in \HC_{W_r,V_r}\quad \text{and} \quad
  B^{r}_{\un{w}} := B^r_{t_1} B^r_{t_2} \dots B^r_{t_m}\in \HC_{W_r,V}.
\end{gather*}

The following relates the left and right action considered above:

\begin{prop} \label{prop:BSslide}
For any expression $\un{w} = (t_1, t_2, \dots, t_m)$ in $S_r$ we
  have a canonical isomorphism
  \[
B_{r,\un{w}} \otimes_{R_r} R = R \otimes_{R} B^r_{\un{w}} \quad \text{in $R_r \gbmod R$,}
  \]
where on both sides of the equality $R$ is regarded as an $(R_r,R)$-bimodule. 
\end{prop}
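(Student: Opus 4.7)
The natural candidate map
\[
\Phi : B_{r,\un{w}} \otimes_{R_r} R \longto B^r_{\un{w}}, \qquad
a_0 \otimes a_1 \otimes \cdots \otimes a_m \otimes r \longmapsto a_0 \otimes a_1 \otimes \cdots \otimes (a_m r),
\]
is well-defined because $(R_r)^{t_i} \subseteq R^{t_i}$ (since $t_i$ acts on $R$ extending its action on $R_r$, so $R_r \cap R^{t_i} = R_r^{t_i}$). It is manifestly a morphism of graded $(R_r, R)$-bimodules, matching the right-hand side of the claimed equality. So the content of the proposition is that $\Phi$ is an isomorphism, and I would prove this by induction on $m$.

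The inductive engine is the following change-of-rings lemma: for each $t \in S_r$ and each graded $(R,R)$-bimodule $M$, the natural map
\[
\psi_t^M : R_r \otimes_{R_r^t} M \longto R \otimes_{R^t} M, \qquad a \otimes m \mapsto a \otimes m,
\]
is an isomorphism of graded $(R_r, R)$-bimodules. Granting this lemma, the inductive step goes as follows: write
\[
B_{r,\un{w}} \otimes_{R_r} R \;=\; R_r \otimes_{R_r^{t_1}} \bigl( B_{r,(t_2,\ldots,t_m)} \otimes_{R_r} R \bigr);
\]
the inductive hypothesis identifies the parenthesised factor with $B^r_{(t_2,\ldots,t_m)}$, which is an honest $(R,R)$-bimodule, so the lemma with $t = t_1$ and $M = B^r_{(t_2,\ldots,t_m)}$ finishes the step. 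The base case $m = 0$ reads $R_r \otimes_{R_r} R = R$, which is tautological.

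To prove the lemma, I would invoke the Demazure-style decompositions
\[
R_r = R_r^t \,\oplus\, R_r^t \cdot \alpha_t \qquad\text{and}\qquad R = R^t \,\oplus\, R^t \cdot \alpha_t
\]
as free right modules of rank $2$ over the respective reflection-invariants. These hold because $\alpha_t \in V_r \subseteq V$ is the common root of $t$ (it lies in $V_r$ by the very definition of $W_r$), and the assumptions in force (characteristic $\neq 2$ and \eqref{eq:RFAssumption}) guarantee it is a non-zero-divisor with $\langle \alpha_t^\vee, \alpha_t \rangle = 2$. Tensoring with $M$ yields identifications $R_r \otimes_{R_r^t} M \cong M \oplus \alpha_t M$ and $R \otimes_{R^t} M \cong M \oplus \alpha_t M$, and under these $\psi_t^M$ is literally the identity map.

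The main obstacle — to the extent there is one — is the verification that the Demazure decomposition for $R_r \subset R$ is compatible on the nose, i.e.\ that the splitting $R = R^t \oplus R^t \alpha_t$ restricts to $R_r = R_r^t \oplus R_r^t \alpha_t$ under $R_r \hookrightarrow R$. This is forced by the uniqueness of the decomposition of an element $a \in R_r$ in the form $a_0 + a_1 \alpha_t$ (whether computed inside $R_r$ or inside $R$), and all remaining verifications (well-definedness of $\Phi$, gradings, bimodule structures) are bookkeeping.
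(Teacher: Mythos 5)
Your proof is correct and takes essentially the same approach as the paper: the engine in both is the Demazure decomposition, exhibiting each side of the one-step case as a free right module with basis $\{1 \otimes 1, \alpha_t \otimes 1\}$. The paper reduces to a single simple reflection by (implicitly) peeling off the last letter $t_m$ of $\un{w}$, so it only ever needs the case $M = R$ of your change-of-rings lemma, whereas your left-to-right induction requires the lemma for a general graded $(R,R)$-bimodule $M$ — but that extra generality costs nothing, since the proof is identical.
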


\begin{proof}
  It is enough to check this when $\un{w}$ consists of a single simple
  reflection. In this case the left hand side is
\begin{gather*}
B_{r,t} \otimes_{R_r} R = R_r \otimes_{(R_r)^t} R_r \otimes_{R_r} R(1)
= R_r \otimes_{(R_r)^t} R(1)
\end{gather*}
and the right hand side is
\begin{gather*}
R \otimes_R B_t^r = R \otimes_{R^t} R (1).
\end{gather*}
As graded right $R$-modules, both sides are free of graded rank $(v
+v^{-1})$ with basis $\{ 1 \otimes 1, \alpha_t \otimes 1 \}$. The
obvious map from the left hand side to the right hand side maps 
a basis to a basis, and hence is an isomorphism.
\end{proof}

\begin{prop} \label{prop:homisos}
  \begin{enumerate}
  \item Given Soergel bimodules $B', B'' \in \HC_{W_r,V_r}$, action on $R \in
  R_r \gbmod R$ yields an isomorphism
\[
\Hom_{\HC_{W_r,V_r}}(B', B'') \otimes_{R_r} R \simto \Hom_{R_r \gbmod
  R}( B' \otimes_R R, B'' \otimes_{R'} R).
\]
\item Given Soergel bimodules $B', B'' \in \HC_{W_r,V}$, action on $R \in
  R_r \gbmod R$ yields an isomorphism
\[
\Hom_{\HC_{W_r,V}}(B', B'') \simto \Hom_{R_r \gbmod
  R}( R \otimes_R B', R \otimes_R B'').
\]
  \end{enumerate}
\end{prop}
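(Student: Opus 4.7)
Both parts are additive in $B'$ and $B''$, so by passing to summands of Bott-Samelson bimodules it suffices to prove them when $B' = B_{r,\un w}$, $B'' = B_{r,\un{w}'}$ (for (1)) and when $B' = B^r_{\un w}$, $B'' = B^r_{\un{w}'}$ (for (2)).

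For part (1), the plan is to combine the standard tensor-restriction adjunction with a flat base-change argument. The adjunction $-\otimes_{R_r} R \dashv \text{For}$ (where $\text{For}$ restricts the right $R$-action to $R_r$) gives
\[
\Hom_{R_r\gbmod R}(B'\otimes_{R_r} R,\ B''\otimes_{R_r}R) \;\cong\; \Hom_{R_r\gbmod R_r}(B',\ \text{For}(B''\otimes_{R_r}R)),
\]
and the question reduces to commuting $\Hom_{R_r\gbmod R_r}(B', -)$ past $-\otimes_{R_r} R$. This is legitimate because $R$ is free (hence flat) over $R_r$: choose any linear splitting $V = V_r \oplus V'$ so that $R \cong R_r \otimes_\Bbbk S(V')$ as left $R_r$-modules. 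The other input needed is that $B'$ is finitely presented as an $R_r$-bimodule, which is visible from its Bott-Samelson description, together with Soergel's hom formula (Remark~\ref{rem:SoeHom}) guaranteeing $\Hom^\bullet_{\HC_{W_r,V_r}}(B',B'')$ is $R_r$-free of known graded rank.

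For part (2), the natural map $\Hom_{R\gbmod R}(B',B'')\hookrightarrow \Hom_{R_r\gbmod R}(B',B'')$ (forgetting the left $R$-structure to an $R_r$-structure) is automatically injective, so the content is surjectivity. The plan is first to compare graded $R$-ranks and then to promote equality of ranks to equality of modules. On the LHS, Soergel's hom formula in $\HC_{W_r,V}$ produces a free $R$-module of graded rank equal to the Hecke-algebra pairing $\langle [B_{r,\un w}],[B_{r,\un{w}'}]\rangle$, which is intrinsic to $H_{W_r}$. For the RHS, Proposition~\ref{prop:BSslide} rewrites $B^r_{\un w} \cong B_{r,\un w}\otimes_{R_r}R$ as $(R_r,R)$-bimodules, so part (1) applies and identifies the RHS with $\Hom_{\HC_{W_r,V_r}}(B_{r,\un w}, B_{r,\un{w}'})\otimes_{R_r} R$; invoking Soergel's hom formula in $\HC_{W_r,V_r}$ gives a free $R$-module of the same graded rank.

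The main obstacle is upgrading this numerical coincidence to surjectivity of the injection. The plan is to use the standard/costandard filtration of $B'$ and $B''$ and reduce to the comparison on graded pieces. The key pointwise identity is that, for any $x,y\in W_r$,
\[
\Hom_{R\gbmod R}(R_x,R_y) \;=\; \Hom_{R_r\gbmod R}(R_x,R_y) \;=\; \begin{cases} R & x=y,\\ 0 & x\ne y,\end{cases}
\]
both computations reducing to when $(x(r)-y(r))a=0$ can have nonzero solutions $a\in R$; since $W_r$ acts faithfully on $V_r\subseteq V$, the outcome is the same in either case. Combined with the standard Ext-vanishing between standard and costandard bimodules (Soergel's key lemma in the $R$-bimodule setting, which has to be transported to the $(R_r,R)$-setting by the same adjunction and flatness arguments as in (1)), this collapses the relevant filtration spectral sequence and yields surjectivity. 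I expect this transport of the Ext-vanishing to be the delicate technical point, though it is significantly eased by the fact that after base change to $R$ the computation is governed by homs between standard bimodules, which we have already seen agree.
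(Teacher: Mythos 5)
Part (1) of your proposal is fine: your adjunction-plus-flat-base-change argument carries the same commutative-algebra content as the paper's direct appeal to the standard fact that $\Hom$ out of a finitely presented module commutes with flat base change (applied with $R_r\otimes R$ flat over $R_r\otimes R_r$).

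In part (2), however, you set things up correctly and then stop one step short before taking a long and unnecessary detour. After reducing to Bott--Samelson bimodules, applying Proposition~\ref{prop:BSslide} together with part (1), and invoking Soergel's hom formula on both sides, you already have a degree-preserving injection between two \emph{free} graded right $R$-modules whose graded ranks (as Laurent polynomials) agree, because $[B^r_{\un w}]$ and $[B_{r,\un w}]$ both equal $b_{t_1}\cdots b_{t_m}\in H_{W_r}$. That alone forces the injection to be an isomorphism: each graded piece is a finite-dimensional $\Bbbk$-vector space, and a degree-preserving injection of graded modules with identical Hilbert series is bijective degree by degree. This is precisely how the paper's proof concludes. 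The ``main obstacle'' you identify (upgrading the numerical coincidence to surjectivity) is therefore not an obstacle at all, and the whole final paragraph --- the standard/costandard filtration, the pointwise $\Hom$ computation for $R_x,R_y$, and especially the proposed transport of the Ext-vanishing between standard and costandard bimodules into the mixed $(R_r,R)$-setting --- is superfluous. It is also, as you yourself suspect, genuinely delicate: a morphism $R_x\to R_y$ in $R_r\gbmod R$ is only left $R_r$-linear, so it is not determined by its value at $1$, and the asserted equality $\Hom_{R\gbmod R}(R_x,R_y)=\Hom_{R_r\gbmod R}(R_x,R_y)$ needs more care than you give it; likewise the Ext-vanishing in the mixed setting would require its own argument. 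Fortunately none of that is needed --- recognize that the graded-rank comparison already closes the proof, and drop the last paragraph.
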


\begin{proof} The isomorphism in (1) is a commutative algebra fact: As
  $R_r \otimes R$ is a flat $R_r \otimes R_r$-algebra, and $B'$ is
  finitely-generated (hence finitely-presented) over $R_r \otimes R_r$
  this follows from \cite[Proposition 2.10]{Eisenbud}.

The isomorphism (2) is more delicate. Certainly restriction of scalars
yields an embedding
\[
\Hom_{\HC_{W_r,V}}(B', B'') \into \Hom_{R_r \gbmod
  R}( R \otimes_R B', R \otimes_R B'')
\]
and our task is to see that it is an isomorphism. Now we may assume that $B'$ and $B''$
are Bott-Samelson bimodules corresponding to expressions $\un{w}$ and
$\un{w}'$. In this case we have:
\begin{align*}
  \Hom_{\HC_{W_r,V}}(B^r_{\un{w}}, B^r_{\un{w}'}) & \into
                                                   \Hom_{R_r \gbmod
  R}(R
                                                   \otimes_R
                                                   B^r_{\un{w}}, R
                                                   \otimes_R
                                                   B^r_{\un{w}'}) \\
& = \Hom_{R_r \gbmod
  R}(B_{r,\un{w}} \otimes_{R_r} R,
  B_{r,\un{w}'}  \otimes_{R_r} R )\\
& = \Hom_{\HC_{W_r,V_r}}(B_{r,\un{w}},
  B_{r,\un{w}'}  ) \otimes_{R_r} R.
\end{align*}
The first equals sign follows from Proposition \ref{prop:BSslide} and
the second equals sign follows from part (1). Now Soergel's hom
formula (see Remark \ref{rem:SoeHom}) tells us that
\[
\Hom_{\HC_{W_r,V}}(B^r_{\un{w}},
B^r_{\un{w}'}) \quad \text{and} \quad \Hom_{\HC_{W_r,V_r}}(B_{r,\un{w}},
  B_{r,\un{w}'}  ) \otimes_{R_r} R\]
have graded rank (as free right $R$-modules respectively) given by the same expression in the Hecke algebra. We 
  conclude that our injection is in fact an isomorphism.
\end{proof}

We can now prove the main theorem of this section:

\begin{proof}[Proof of Theorem \ref{thm:ref_equiv}]
  Consider $\QC$, the strictly full, additive, graded and Karoubian subcategory of $R_r
  \gbmod R$ generated by $R \in R_r \gbmod R$ under the left action of
  $\HC_{W_r,V_r}$.
(That is, objects of $\CC$ are those graded $(R_r,
  R)$-bimodules which are isomorphic to direct sums of shifts of
  direct summands of objects of the form $B_{r,\un{w}} \otimes_{R_r}
  R \in R_r \gbmod R$.)
By Proposition \ref{prop:BSslide} this
  coincides with the full subcategory generated by $R \in R_r \gbmod
  R$  under the right action of $\HC_{W_r,V}$. Thus we can view $\QC$
  as a bimodule category:
\[ \begin{tikzpicture}[scale=0.7]
  \node at (-4,0) {$\HC_{W_r,V_r}$};
\node at (-2,0) {$\acts$};
\node at (0,0) {$\QC := \langle R \rangle $};
\node at (2,0) {$\racts$};
  \node at (4,0) {$\HC_{W,V}$};
\node[rotate=-90] at (0,-1) {$\subset$};
\node at (0,-2) {$R_r \bmod R$};
\end{tikzpicture}
\]
By Proposition
  \ref{prop:homisos} we have equivalences of additive categories
\[
\HC_{W_r,V_r} \otimes_{R_r} R \simto \QC \simfrom \HC_{W_r,V} 
\]
and the theorem follows.
\end{proof}

\subsection{The endomorphism ring of a Soergel bimodule}
\label{sec:end}

In the next subsection we will prove an important result concerning
localisations of Soergel bimodules. However before we come to this we
need to recall some structure theory concerning endomorphism
rings.

\begin{remark}
  The material in this subsection is an easy consequence of properties
  of the light leaves and double leaves basis, see in particular \cite[\S 6]{EWS} and
  \cite[\S 11.3]{soergelbook}. The light leaves basis was introduced
  in \cite{LLL}.
\end{remark}

Let $B \in \HC_{W,V}$ denote an indecomposable Soergel
bimodule. We wish to study the endomorphism ring of $B$.
By the classification of indecomposable Soergel bimodules,
$B$ is isomorphic (up to a shift) to $B_w$ for some $w \in W$. Thus
we may assume:
\begin{equation*}
  B = B_w \quad \text{for $w \in W$.}
\end{equation*}

Now we may filter $\End(B_w)$ as follows. For any $x \in W$, let
$(\HC_{(W,V)})_{\le x}$ (resp. $(\HC_{(W,V)})_{< x}$) denote the full graded additive subcategory generated by
indecomposable Soergel bimodules $B_y$ with $y \le x$ (resp. $y < x$). We consider
\[
I_{\le x} = \left \{ f \in \End(B_w) \; \middle | \;
\begin{array}{c}
  \text{$f$ admits a factorisation} \\
\text{$B_w \to B' \to B_w$ with $B' \in (\HC_{(W,V)})_{\le x}$}
\end{array}\right \}
\]
and define $I_{<x}$ similarly. 

Clearly $I_{\le x}$ and $I_{<x}$ are ideals in $\End(B_w)$. It is known that:
\begin{gather}
  \label{eq:id}
    I_{\le w}/ I_{<w} = \id_{B_w} \cdot R; \\
  \label{eq:free}
    I_{\le x}/ I_{< x} \text{ is free as a
      right $R$-module, for all $x \in W$}; \\
  \label{eq:idealproduct}
  I_{\le x} \cdot I_{\le y} \subset \sum_{z \in W \atop z \le x; z
  \le y} I_{\le z} \quad \text{for all $x, y \in W$}; \\
  \label{eq:idealsquare}
I_{\le x} \cdot I_{\le x} \subset \sum I_{\le x} \cdot R^+ +
  \sum_{z < x} I_{\le z}\quad \text{for all $x \in W$ with $x < w$}
\end{gather}
where $R^+$ denotes the elements of $R$ of positive degree.

By property \eqref{eq:free} we can choose a basis $\{ f_i \}_{i =
  0}^m$ for $\End(B_w)$ such that $f_0 = \id_{B}$ and all $f_i \in
I_{<w}$.

\begin{lem} \label{lem:fis}
  There exists a positive integer $M$ such that any product of $M$ elements of the
  set $\{ f_i \; | \; 1 \le i \le m \}$ belongs to $\End(B_w) \cdot R^+$. 
\end{lem}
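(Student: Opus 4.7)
The plan is to deduce the statement from the classical fact that the maximal ideal of a finite-dimensional local $k$-algebra is nilpotent. Write $A := \End(B_w)$. Since $B_w$ is indecomposable, $A$ is a graded local ring; let $\mathfrak{m} \subset A$ denote its (unique) homogeneous maximal ideal.

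First I would check that $I_{<w} \subset \mathfrak{m}$, so that each $f_i$ with $i \ge 1$ lies in $\mathfrak{m}$. Indeed, any $f \in I_{<w}$ factors as $B_w \to B' \to B_w$ with $B' \in (\HC_{W,V})_{<w}$; were $f$ invertible, then $B_w$ would be a summand of $B'$, contradicting the fact that $B_w$ does not occur as a summand of any object of $(\HC_{W,V})_{<w}$.

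Next I would pass to the quotient $\bar A := A / (A \cdot R^+)$. By hypothesis $\{f_0, \ldots, f_m\}$ is a basis for $A$ as a right $R$-module, so $\dim_k \bar A = m+1 < \infty$. Moreover $R^+ \cdot \id \subset \mathfrak{m}$: via the surjection $A \onto A/I_{<w} = \id \cdot R \cong R$ supplied by \eqref{eq:id}, any $\id \cdot r$ with $r \in R^+$ has non-invertible image, so is itself non-invertible in $A$. Hence $A \cdot R^+ \subset \mathfrak{m}$, and $\bar A$ is a finite-dimensional local $k$-algebra with maximal ideal $\bar{\mathfrak{m}} := \mathfrak{m}/(A \cdot R^+)$.

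The main step is then the standard observation that $\bar{\mathfrak{m}}^M = 0$ for some $M \ge 1$, since $\bar A$ is finite-dimensional and local. Lifting this back to $A$ yields $\mathfrak{m}^M \subset A \cdot R^+$, and any product of $M$ elements of $\{f_1, \ldots, f_m\} \subset \mathfrak{m}$ then lies in $\mathfrak{m}^M \subset \End(B_w) \cdot R^+$, as required. The only real subtleties are verifying the locality of $A$ and the containment $I_{<w} \subset \mathfrak{m}$, both of which follow immediately from the indecomposability of $B_w$ together with the defining factorisation property of the ideals $I_{\le x}$; the finer multiplicative properties \eqref{eq:idealproduct} and \eqref{eq:idealsquare} are not needed for this argument, although a more hands-on proof by descending induction on the Bruhat order along the filtration $\{I_{\le x}\}$ would use them directly.
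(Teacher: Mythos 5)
Your argument is correct in substance but takes a genuinely different route from the paper's. The paper proves the lemma by a descending induction along a linear refinement $w = w_0 > w_1 > \cdots > w_N = \id$ of the Bruhat order, showing that any product of $2^j$ of the $f_i$ lies in $\End(B_w)\cdot R^+ + \sum_{k>j} I_{\le w_k}$; the inductive step is exactly where \eqref{eq:idealproduct} and \eqref{eq:idealsquare} are used, and it yields the explicit bound $M = 2^N$. Your argument instead reduces matters to the nilpotency of the radical of the finite-dimensional algebra $A \otimes_R k$, which is shorter and more conceptual, but it silently leans on two structural facts that the paper's proof deliberately avoids and does not list among \eqref{eq:id}--\eqref{eq:idealsquare}: that $\End^\bullet(B_w)$ has no components in negative degree, and that its degree-zero part is a local ring (so that there is a unique maximal homogeneous ideal $\mathfrak{m}=\mathfrak{m}_0 + A_{>0}$). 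Both are true in the Soergel-bimodule setting — they follow from Soergel's hom formula and Krull–Schmidt — but they are extra inputs, and this is precisely what the paper's self-contained induction with \eqref{eq:idealproduct}, \eqref{eq:idealsquare} is designed to bypass.

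One small logical slip worth fixing: you justify $R^+ \cdot \id \subset \mathfrak{m}$ by observing that $\id\cdot r$ is non-invertible. In a graded-local ring the non-units do \emph{not} form an ideal (think of $R$ itself, where $x$ and $1-x$ are non-units but their sum is a unit), so ``non-invertible'' does not give membership in $\mathfrak{m}$. The correct, and simpler, reason is purely degree-theoretic: $R^+$ has positive degree, so $\id\cdot R^+ \subset A_{>0} \subset \mathfrak{m}$. Similarly, the locality of $\bar{A}$ deserves a sentence: $\bar{A}$ is non-negatively graded and finite-dimensional with $\bar{A}_0 = A_0$ local, so $\bar{A}_{>0}$ is nilpotent and $\bar{\mathfrak{m}} = \mathfrak{m}_0 + \bar{A}_{>0}$ is the Jacobson radical. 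With these repairs, the argument is fine and does not need \eqref{eq:idealproduct} or \eqref{eq:idealsquare} at all — though, as you note, it trades them for structural facts about $\End^\bullet(B_w)$, and it gives no explicit value of $M$.
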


\begin{proof} 
  Let us fix an enumeration
\[
w = w_0 > w_1 > \dots > w_N = \id
\]
of the elements less than $w$, refining the Bruhat order. We will
prove the following statement, by induction on $j$:
\begin{equation}
  \label{eq:6}
  \begin{array}[c]{c}
\text{Any product of $2^j$ elements of the
  set $\{ f_i \; | \; 1 \le i \le m \}$ belongs to } \\
\text{$I_j := \End(B_w) \cdot R^+ + \sum_{k > j} I_{\le w_k}$.}
  \end{array}
\end{equation}
where we interpret $I_{\le w_k} = 0$ if $k > N$. Clearly the statement
for $j = N$ implies the lemma (with $M = 2^N$).

The statement clearly holds for $j = 0$. Now consider a product $\pi$ of
$2^j$ elements of $\{ f_i \; | \; 1 \le i \le m \}$. We can write this
as the product $\pi = \pi_1 \pi_2$, where $\pi_1$ and $\pi_2$ belong
to $\End(B_w) \cdot R^+ + \sum_{k > j-1} I_{\le w_k}$. Then
properties \eqref{eq:idealproduct}, \eqref{eq:idealsquare} and the
fact that $\End(B_w) \cdot R^+$ is an ideal imply that $\pi \in I_j.$
The lemma follows.
\end{proof}

\subsection{Localising Soergel bimodules for reflection subgroups}

Recall our representation $V$ and its subspace $V_r \subset V$. As
above we let $R$ denote the symmetric algebra on $V$. Let $\mg$ denote
the ideal generated by $V_r \subset V$, and $R_\mg$ with the
corresponding local ring, with maximal ideal $\mg$. Set
\[
\hat{R} := \lim_{\leftarrow} R_{\mg} /(\mathfrak{m}^\ell)
\]
for the completion of $R_{\mg}$ along $\mg$. It will be important
below that $\hat{R}$ is a complete local ring.





Note that $W_r$ acts naturally on $\mathfrak{m}$, $R_{\mg}$ and
  $\hat{R}$. This allows us to imitate the definition of Soergel
  bimodules for the reflection subgroup, with $R$ replaced by
  $\hat{R}$. Define
\[
\hat{B}^r_t := \hat{R} \otimes_{ \hat{R}^t}  \hat{R}
\]
and
\[
\hat{\HC}_{W_r,V} := \langle \hat{B}^r_t \; | t \in S_r\;
\rangle_{\otimes, \oplus, \ominus } \subset
\hat{R} \bmod \hat{R}
\]
to be the smallest strictly full subcategory of
$\hat{R}$-bimodules containing $\hat{B}^r_t$ for all $t \in
S_r$, and closed under taking tensor products, direct sums and direct summands.

\begin{remark}
  Note that $\hat{R}$ is no longer graded. Hence
  any grading (on $R$-bimodules etc.) is lost after passage to these rings.
\end{remark}

\begin{prop} \label{prop:Rhat}
  Given any $M \in \HC_{W_r, V}$ the $(R, \hat{R})$-bimodule $M
  \otimes_R \hat{R}$ is naturally an
  $\hat{R}$-bimodule. We obtain in this way a monoidal functor:
  \begin{align*}
    \HC_{W_r,V} &\to \hat{\HC}_{W_r,V} \\
M &\mapsto M \otimes_R \hat{R}.
  \end{align*}
\end{prop}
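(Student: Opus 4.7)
The plan is to first establish the desired extension on the generating bimodules $B^r_t$ via a direct identification with $\hat B^r_t$, then to propagate through Bott--Samelson bimodules by iterated tensor product, and finally to pass to summands by a continuity argument that also handles morphisms.

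For the generators, $R$ is free of rank two over $R^t$ for each $t \in S_r$, and taking $\langle t\rangle$-invariants commutes with $\mg$-adic completion (giving $\hat R^t = \widehat{R^t}$). Flat base change along $R^t \to \hat R$ then yields a canonical identification
\[
B^r_t \otimes_R \hat R \;=\; R \otimes_{R^t} \hat R \;\cong\; \hat R \otimes_{\hat R^t} \hat R \;=\; \hat B^r_t
\]
of $(R,\hat R)$-bimodules, and the right-hand side is tautologically an $\hat R$-bimodule lying in $\hat{\HC}_{W_r,V}$. For a Bott--Samelson $B^r_{\un w} = B^r_{t_1}\otimes_R \cdots \otimes_R B^r_{t_m}$ I would induct on $m$, using the elementary identity $X \otimes_R N \cong X \otimes_R \hat R \otimes_{\hat R} N$ valid whenever $X$ is a right $R$-module and $N$ a left $\hat R$-module. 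Applied with $N = \hat B^r_{t_m}$ and iterated, this gives
\[
B^r_{\un w} \otimes_R \hat R \;\cong\; \hat B^r_{t_1} \otimes_{\hat R} \cdots \otimes_{\hat R} \hat B^r_{t_m} \;=\; \hat B^r_{\un w},
\]
again naturally an object of $\hat{\HC}_{W_r,V}$.

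For an arbitrary $M \in \HC_{W_r,V}$, write $M$ as the image of an idempotent $e: B^r_{\un w} \to B^r_{\un w}$, and more generally view any morphism $f: M_1 \to M_2$ in $\HC_{W_r,V}$ as an $(R,R)$-bimodule map. Tensoring with $\hat R$ produces an $(R,\hat R)$-bimodule map $\hat f := f \otimes \id_{\hat R}$ between $\hat B^r_{\un w}$-type objects. The crucial point is that $\hat f$ is automatically left $\hat R$-linear: since $\hat B^r_{\un w}$ is finitely generated as a right $\hat R$-module over the complete local ring $\hat R$, the $\mg$-adic topology on it is canonical and respected by every right $\hat R$-linear map, so $\hat f$ is continuous, and the identity $\hat f(rx) = r\hat f(x)$ for $r \in R$ extends by density of $R$ in $\hat R$ to all $\hat r \in \hat R$. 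It follows that $\hat M := M \otimes_R \hat R$ is an $\hat R$-bimodule summand of $\hat B^r_{\un w}$ and lies in $\hat{\HC}_{W_r,V}$, and the assignment is functorial. Monoidality is then immediate from the same tensor identity used in the induction, which yields $\widehat{M \otimes_R N} \cong \hat M \otimes_{\hat R} \hat N$.

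The main technical obstacle is the automatic left $\hat R$-linearity of maps in the last step: it requires a clean continuity argument that rests on the finite freeness of $R$ over each $R^t$ propagating through the Bott--Samelson tensor product, ensuring that $\hat B^r_{\un w}$ remains a finitely generated $\hat R$-module on both sides. Once that is in place, the uniqueness of continuous extensions of $R$-actions to $\hat R$-actions finishes the proof.
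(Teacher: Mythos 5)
Your handling of the Bott--Samelson objects matches the paper's computation (the paper exhibits a basis of $\hat R \otimes_{\hat R^t}\hat R$ directly, whereas you invoke flat base change and commutation of $t$-invariants with completion, but it is the same calculation). The difficulty is in your passage to summands, where the proof has a genuine gap: you invoke ``density of $R$ in $\hat R$,'' but $R$ is \emph{not} dense in $\hat R$ unless $V_r = V$. Indeed $\hat R/\mg\hat R$ is the fraction field of $R/\mg \cong \mathrm{Sym}(V/V_r)$, and the image of $R$ there is only the proper subring $R/\mg$ as soon as $V_r \subsetneq V$. It is $R_\mg$ that is dense in $\hat R$. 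To repair this, one must first show that $\hat f$ commutes with left multiplication by elements of $R_\mg$: this holds because for $s \in R\setminus\mg$ the operator $L_s$ on $\hat B^r_{\un w} \cong \hat R \otimes_{\hat R^{t_1}} \cdots \otimes_{\hat R^{t_m}}\hat R$ is invertible (it acts through the $W_r$-twists $x(s)$, which remain units of $\hat R$), so commuting with $L_s$ forces commuting with $L_s^{-1}$. Only then can density of $R_\mg$ be brought to bear.

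There is a second point you glide over. The density argument also needs the left-multiplication maps $\hat r \mapsto \hat r\cdot x$ to be continuous for the \emph{right} $\mg$-adic topology on $\hat B^r_{\un w}$. This is not automatic (the left and right actions differ), but it can be established: from the standard filtration with graded pieces $\hat R_x$ for $x \in W_r$, on which the two $\mg$-adic filtrations agree because $\mg$ is $W_r$-stable, one obtains $\mg^k \cdot \hat B^r_{\un w} \subseteq \hat B^r_{\un w}\cdot\mg$ for $k$ large enough. With these two repairs your strategy does go through --- and it addresses a step that the paper dispatches with a bare ``this implies the proposition,'' namely that the left $\hat R$-action on a Bott--Samelson survives passage to direct summands.
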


\begin{proof}
  Suppose that $\un{w} = (t_1, \dots, t_m)$ is an expression in $S_r$. We will prove that
  the natural inclusion
\[
B^r_{\un{w}} \otimes_R \hat{R}
\to 
\hat{R}
\otimes_{\hat{R}^{t_1}} \dots
  \otimes_{\hat{R}^{t_m}} \hat{R} 
\]
is an isomorphism. This implies the proposition.

By induction it is enough to check this when $\un{w} = (t)$. That is,
we want to show that the natural map
\[
R \otimes_{R^t} R \otimes_R \hat{R} = R \otimes_{R^t} \hat{R} 
\to
\hat{R} \otimes_{\hat{R}^t} \hat{R}\]
is an isomorphism. Now $B_t^r$ (resp. $\hat{R} \otimes_{\hat{R}^t} \hat{R}$) is free as a right $R$-
(resp. $\hat{R}$-)
module with basis $1 \otimes 1$ and $\alpha_t \otimes 1$. Hence $B_t^r
\otimes_R \hat{R}$ is a free $\hat{R}$-module with basis
$1 \otimes 1 \otimes 1$ and $\alpha_t \otimes 1 \otimes 1$. The
natural map above sends this basis to our basis of $\hat{R}
\otimes_{\hat{R}^t} \hat{R}$, and hence is an isomorphism as claimed.
\end{proof}

\begin{remark}
  For a related result, see \cite[Lemma 3.20]{EWS}.
\end{remark}


\subsection{The Krull-Schmidt property} \label{sec:locKS}
Recall that an additive  category is \emph{Krull-Schmidt} if: every object is
isomorphic to a finite direct sum of indecomposable objects, and an
object is indecomposable if and only if its endomorphism ring is
local. In a Krull-Schmidt category every object decomposes uniquely
as a direct sum of indecomposables.

\begin{prop} \label{prop:locKS}
  $\hat{\HC}_{W_r,V}$ is Krull-Schmidt.
\end{prop}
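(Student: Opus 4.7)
The plan is to show that every object $M \in \hat{\HC}_{W_r,V}$ has a semi-perfect endomorphism ring, which in an idempotent-complete category automatically yields the Krull--Schmidt property. Since $\hat{\HC}_{W_r,V}$ is defined to be closed under direct summands, idempotent completeness is built into the definition, so the whole issue is the semi-perfectness of $\End(M)$. Recall that a finite algebra over a complete Noetherian local ring is always semi-perfect: its Jacobson radical contains the maximal ideal of the base, the quotient is a finite-dimensional algebra over the residue field (hence semi-perfect), and idempotents lift by Hensel's lemma from the completeness of the base.

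First I would verify that every $M \in \hat{\HC}_{W_r,V}$ is finitely generated as a right $\hat{R}$-module. This is easy by induction on the construction: each generator $\hat{B}^r_t = \hat{R}\otimes_{\hat{R}^t}\hat{R}$ is free of rank $2$ as a right $\hat{R}$-module, tensor products over $\hat{R}$, finite direct sums, and direct summands all preserve finite generation as right $\hat{R}$-modules, and $\hat{R}$ is Noetherian (as the completion of a localisation of the Noetherian ring $R$ at a maximal ideal).

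Next I would show that $\End(M)$ is a finitely generated $\hat{R}$-module. The point is that left multiplication by $\hat{R}$ defines a ring homomorphism
\[
\hat{R} \longto Z(\End(M)), \qquad r \longmapsto (m \mapsto rm),
\]
whose image lies in the centre of $\End(M)$: for any $\hat{R}$-bimodule endomorphism $\varphi$ and any $r \in \hat{R}$, one has $r\varphi(m) = \varphi(rm)$ by left $\hat{R}$-linearity. Via this central action, $\End(M)$ is a sub-$\hat{R}$-module of $\End_{\hat{R}\text{-right}}(M)$, and the latter is finitely generated over $\hat{R}$ since $M$ is finitely generated as a right $\hat{R}$-module and $\hat{R}$ is Noetherian. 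Hence $\End(M)$ is itself finitely generated as an $\hat{R}$-module.

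Finally, $\End(M)$ is a finite algebra over the complete Noetherian local ring $\hat{R}$. By the standard result recalled above, $\End(M)$ is semi-perfect: its Jacobson radical $J$ satisfies $\mathfrak{m}\hat{R}\cdot \End(M) \subset J$, the quotient $\End(M)/J$ is finite-dimensional semi-simple over $\hat{R}/\mathfrak{m}\hat{R}$, and idempotents lift from $\End(M)/J$ to $\End(M)$. Combined with idempotent completeness, this delivers the Krull--Schmidt property. The only subtle point---and the main thing to check carefully---is the centrality of the left $\hat{R}$-action, but this is immediate for bimodule endomorphisms; everything else is standard commutative algebra applied to the category built from the generators $\hat{B}^r_t$.
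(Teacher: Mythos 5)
Your argument is correct and reaches the same target as the paper's proof --- that every endomorphism ring in $\hat{\HC}_{W_r,V}$ is finite over the complete local ring $\hat{R}$, hence semi-perfect by Azumaya's theorem, hence (together with Karoubianness) Krull--Schmidt --- but it gets there by a genuinely different route. The paper first proves Lemma \ref{lem:lemExt}, which identifies $\End_{\hat{\HC}_{W_r,V}}(B\otimes_R\hat R)$ with $\End_{\HC_{W_r,V}}(B)\otimes_R\hat R$, and then invokes Soergel's hom formula (that $\End_{\HC_{W_r,V}}(B)$ is \emph{free} and finitely generated over $R$) to conclude that $\End(B\otimes_R\hat R)$ is free of finite rank over $\hat R$. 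You bypass both of these inputs: you observe directly that the generators $\hat B^r_t$ are finitely generated (indeed free of rank 2) as one-sided $\hat R$-modules, that tensor products, finite direct sums and summands preserve this, and that therefore any $M\in\hat{\HC}_{W_r,V}$ is a finitely generated one-sided $\hat R$-module; then $\End(M)$ sits inside $\End_{\hat R\text{-right}}(M)$, which is a submodule of $M^n$, hence finitely generated over the Noetherian ring $\hat R$. This is a more elementary argument and uses less of the paper's machinery. The tradeoff: the paper's route yields the sharper statement that $\End$ is \emph{free} of finite rank over $\hat R$ (which tracks the Hecke-algebraic graded rank), and Lemma \ref{lem:lemExt} is not a detour --- it is reused in the proof of Theorem \ref{thm:BstaysIndecomposable}, so the paper invests in it once and gets two payoffs. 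One small point worth phrasing carefully in your write-up: when you embed $\End(M)$ into $\End_{\hat R\text{-right}}(M)$, the $\hat R$-module structure you are comparing across the embedding should be specified consistently (e.g.\ act by post-composition with one-sided multiplication); since $\hat R$ is commutative and your endomorphisms are bimodule maps, the left and right actions on $\End(M)$ both land in the centre, so any consistent choice works, but the sentence as written conflates the bimodule $\End$, where centrality holds, with the larger one-sided $\End$, where it need not.
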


Before we prove it, we need a preparatory lemma:

\begin{lem} \label{lem:lemExt}
  Let $B, B' \in \HC_{W_r, V}$. We have an isomorphism
\[
\Hom_{\HC_{W_r, V} }(B,B') \otimes_R \hat{R} \simto \Hom_{\hat{\HC}_{W_r, V} }(B \otimes_R \hat{R}, B' \otimes_R \hat{R}).
\]
\end{lem}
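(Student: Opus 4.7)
The natural map $\mu : \Hom_{\HC_{W_r, V}}(B, B') \otimes_R \hat R \to \Hom_{\hat{\HC}_{W_r, V}}(B \otimes_R \hat R, B' \otimes_R \hat R)$ sends $f \otimes r$ to the $\hat R$-bilinear extension of $b \otimes s \mapsto f(b) \otimes (rs)$. The plan is first to reduce to the case where $B = B^r_{\un w}$ and $B' = B^r_{\un w'}$ are Bott--Samelson bimodules. Both sides are additive bifunctors, commute with shifts, and commute with direct summands, using flatness of $R \to \hat R$ (which factors as a localisation followed by the completion of a Noetherian local ring). Since $B^r_{\un w} \otimes_R \hat R \cong \hat B^r_{\un w}$ by Proposition \ref{prop:Rhat}, and Bott--Samelsons generate $\HC_{W_r, V}$ under these operations, this reduction is immediate.

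The core step is to establish an analogue of Soergel's hom formula (Remark \ref{rem:SoeHom}) in $\hat{\HC}_{W_r, V}$: namely that the right $\hat R$-module $\Hom_{\hat R\text{-bim}}(\hat B^r_{\un w}, \hat B^r_{\un w'})$ is free of rank equal to the total rank of the free right $R$-module $\Hom^\bullet(B^r_{\un w}, B^r_{\un w'})$. I would mimic Soergel's original argument from \cite{SB}, using three ingredients: (i) $\hat R$ is a domain (indeed a power series ring), and $W_r$ acts faithfully on $V \subset \hat R$ by the reflection-faithfulness assumption \eqref{eq:RFAssumption}, hence $\Hom_{\hat R\text{-bim}}(\hat R_x, \hat R_y) = 0$ for $x \ne y$ in $W_r$; (ii) each $\hat B^r_{\un w}$ inherits a standard filtration with subquotients $\hat R_x(n)$ by base-changing the standard filtration of $B^r_{\un w}$ along the flat map $R \to \hat R$; (iii) the double leaves construction, built from the unit, counit, multiplication, and comultiplication morphisms on $B^r_t$ together with scalar multiplication, transfers verbatim to $\hat{\HC}_{W_r, V}$, producing a spanning set whose $\hat R$-linear independence follows from (i) and (ii) exactly as in the classical case.

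With this hom formula in hand, the map $\mu$ sends the base change of a classical double leaves basis of the left-hand side to a double leaves basis of the right-hand side; both are free $\hat R$-bases of matching rank, so $\mu$ is an isomorphism.

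The main obstacle is verifying steps (ii)--(iii) above: checking that the standard bimodule and light leaves machinery survive completion. The key observation that makes this work is that all of this structure (generating morphisms, defining relations, standard filtrations) is already defined over $R$, and the map $R \to \hat R$ is flat; thus the Hom-vanishing in (i) reduces to a domain argument, the filtrations in (ii) are preserved by flat base change, and the light leaves recipe in (iii) lifts unchanged. A cleaner variant, if one is willing to leave the classical setting, would be to perform the same argument inside Abe's realisation $\HC^{\Abe}_{W_r,V}$, where the reflection-faithfulness hypothesis is not needed and the analogous base-change properties are more transparent.
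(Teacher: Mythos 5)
Your proof is essentially correct, but it is far heavier than necessary, and in fact it hides the very fact it should be built on. The paper proves this lemma in two lines: since $B$ is finitely generated over the Noetherian ring $R\otimes R$ and $R\to\hat R$ is flat (localisation followed by completion), the standard commutative-algebra fact that $\Hom$ of finitely presented modules commutes with flat base change (e.g.\ Eisenbud, Prop.\ 2.10) gives $\Hom_{R\bmod R}(B,B')\otimes_R\hat R\simto\Hom_{R\bmod\hat R}(B\otimes_R\hat R,B'\otimes_R\hat R)$; and Proposition~\ref{prop:Rhat} then identifies the right-hand side with $\Hom_{\hat{\HC}_{W_r,V}}(B\otimes_R\hat R,B'\otimes_R\hat R)$. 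That is the whole proof.

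You instead propose to re-establish Soergel's hom formula over $\hat R$ — transferring standard filtrations, the vanishing $\Hom(\hat R_x,\hat R_y)=0$ for $x\neq y$, and the light/double leaves machinery — and then to argue that the natural map sends a double-leaves basis to a double-leaves basis. This works, and the ingredients you list (faithfulness of the $W_r$-action on $V\subset\hat R$, flatness preserving standard filtrations, the generating morphisms being defined over $R$) are the right ones. But it is logically backwards: the hom formula over $\hat R$ is naturally a \emph{consequence} of the lemma, not a prerequisite, and your step (iii) is exactly the sort of "the bases match up under base change" statement that the Eisenbud lemma hands you for free without any Soergel-theoretic input. Two smaller points: $\hat R$ is ungraded (the paper remarks on this explicitly), so "commute with shifts" is not quite the right formulation for your reduction to Bott--Samelson objects; and the direct-summand stability you invoke requires knowing $\hat{\HC}_{W_r,V}$ is Krull--Schmidt, which in the paper is only established \emph{after} this lemma (Proposition~\ref{prop:locKS} uses it). So while your plan can be made to work, the paper's order of development — prove the base-change lemma first by pure commutative algebra, then deduce freeness, Krull--Schmidt, and the hom formula over $\hat R$ — is both shorter and avoids these dependency issues.
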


\begin{proof} 
Because $B$ is finitely-generated over $R \otimes
  R$, we have
\[
\Hom_{R\bmod R}(B, B') \otimes_R \hat{R}\simto 
\Hom_{R\bmod \hat{R}}(B \otimes_R \hat{R}, B' \otimes_R \hat{R})
\]
by elementary facts in commutative
  algebra (e.g. \cite[Proposition 2.10]{Eisenbud}). Then we have
\[
 \Hom_{R\bmod \hat{R}}(B \otimes_R \hat{R}, B' \otimes_R \hat{R}) =
\Hom_{\hat{\HC}_{W_r, V} }(B \otimes_R \hat{R}, B' \otimes_R \hat{R})
\]
by Proposition \ref{prop:Rhat}.
\end{proof}

\begin{proof}
It is known (see e.g. \cite[\S 11.4.2]{soergelbook}) that an additive
category is Krull-Schmidt if it is Karoubian and the endomorphism ring
of any object is semi-perfect.(Recall that a ring is $A$ is semi-perfect if we
can write the identity as a sum $e_1 + \dots + e_m$ of orthogonal idempotents,
with each $e_iAe_i$ local.) Our category   $\hat{\HC}_{W_r,V}$ is
Karoubian by definition, so it remains to check that endomorphism
rings are semi-perfect.

 Any object in   $\hat{\HC}_{W_r,V}$ is a
direct summand of an object of a form $B \otimes_R \hat{R}$ for some
$B \in \HC_{W_r,V}$. By Lemma \ref{lem:lemExt} we have
\[
\End_{\hat{\HC}_{W_r,V}}(B \otimes_R
\hat{R}) = \End_{R\bmod \hat{R}}(B \otimes_R
\hat{R}) = \End_{\HC_{W_r,V}}( B) \otimes_R \hat{R}.
\]
By Soergel's hom formula, $\End_{\HC_{W_r,V}}( B) $ is free and
finitely generated as a right $R$-module. Hence $\End_{\hat{\HC}_{W_r,V}}(B \otimes_R
\hat{R}) $ is free and finitely-generated over $\hat{R}$. Thus
the same follows for any object of $\hat{\HC}_{W_r,V}$. We are done,
because it is known (see e.g. \cite[\S 11.4.4]{soergelbook}, which
apparently goes back to \cite{Azumaya}) that
any algebra which is finite over a complete local ring is semi-perfect.
\end{proof}

\subsection{Indecomposable objects stay indecomposable}
The goal of this section is to prove:

\begin{thm} \label{thm:BstaysIndecomposable}
  If $B \in \HC_{W_r,V}$ is indecomposable, then so is
\[
B \otimes_R \hat{R} \in \hat{\HC}_{W_r,V}.
\]
\end{thm}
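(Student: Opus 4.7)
The plan is to show that the endomorphism ring $\hat{E} := \End_{\hat{\HC}_{W_r,V}}(B \otimes_R \hat{R})$ is local; by the Krull--Schmidt property established in Proposition \ref{prop:locKS}, this is equivalent to $B \otimes_R \hat{R}$ being indecomposable.

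First I would use Lemma \ref{lem:lemExt} to identify $\hat{E} = E \otimes_R \hat{R}$, where $E := \End_{\HC_{W_r,V}}(B)$. By Theorem \ref{thm:ref_equiv} the indecomposables of $\HC_{W_r,V}$ correspond to those of $\HC_{W_r,V_r}$ under $B' \mapsto B' \otimes_{R_r} R$, so we may write $B \cong B' \otimes_{R_r} R$ for an indecomposable $B' \in \HC_{W_r,V_r}$, and Proposition \ref{prop:homisos}(1) then yields $E \cong E' \otimes_{R_r} R$, where $E' := \End_{\HC_{W_r,V_r}}(B')$. Combining,
\[
\hat{E} \cong E' \otimes_{R_r} \hat{R}.
\]
This rewriting is essential, because it places us in the setting where the grading machinery of \S\ref{sec:end} directly applies --- to the pair $(W_r,V_r)$ --- and where the ``small'' positive part $R_r^+$ governs the structure of $E'$.

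Next I would apply \S\ref{sec:end} to $(W_r,V_r)$: write $B' \cong B_w$ for some $w \in W_r$ and fix a homogeneous right-$R_r$-basis $\{g_0 = \id, g_1, \dots, g_m\}$ of $E'$ with $g_i \in I_{<w}$ for $i \geq 1$. The images $g_i \otimes 1$ form a right-$\hat{R}$-basis of $\hat{E}$, and $J := I_{<w} \otimes_{R_r} \hat{R}$ is a two-sided ideal with $\hat{E}/J \cong \hat{R}$. By Lemma \ref{lem:fis} there exists $M$ such that any product of $M$ elements from $\{g_1,\dots,g_m\}$ lies in $E' \cdot R_r^+$, and hence $J^M \subseteq \hat{E} \cdot (R_r^+ \hat{R})$. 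The crucial observation is that $R_r^+$ is generated by $V_r \subseteq \mathfrak{m}$, so its image in $\hat{R}$ lies in the maximal ideal $\hat{\mathfrak{m}}$ of the complete local ring $\hat{R}$. Since $\hat{E}$ is finite over $\hat{R}$, the quotient $\hat{E}/\hat{\mathfrak{m}}\hat{E}$ is a finite-dimensional algebra over $\hat{R}/\hat{\mathfrak{m}}$ and in particular $\hat{\mathfrak{m}}\hat{E} \subseteq \rad(\hat{E})$. Therefore $J^M \subseteq \rad(\hat{E})$, and by the standard fact that any ideal nilpotent modulo the Jacobson radical is itself contained in it, we get $J \subseteq \rad(\hat{E})$. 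Then $\hat{E}/\rad(\hat{E})$ is a quotient of $\hat{E}/J \cong \hat{R}$, and in fact equals $\hat{R}/\hat{\mathfrak{m}}$, which is a field; so $\hat{E}$ is local.

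The main obstacle is precisely the mismatch between $R^+$ --- which is large enough to control the $I_{<w}$-filtration in $E$ via the grading, as in Lemma \ref{lem:fis} --- and the maximal ideal $\mathfrak{m}$ generated by $V_r$, which defines the completion. Applying Lemma \ref{lem:fis} naively to $(W_r, V)$ would only give $J^M \subseteq \hat{E} \cdot (R^+ \hat{R})$, and $R^+ \hat{R}$ is strictly larger than $\hat{\mathfrak{m}}$ (it contains lifts of coordinates on $V/V_r$), which does not lie in $\rad(\hat{E})$. The reduction to $(W_r, V_r)$ afforded by Theorem \ref{thm:ref_equiv} is what forces the nilpotence estimate to land inside $\hat{\mathfrak{m}}$, and is therefore the crux of the argument.
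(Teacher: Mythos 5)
Your proof is correct and follows essentially the same strategy as the paper's: the crucial step in both is to pass through Theorem \ref{thm:ref_equiv} and Proposition \ref{prop:homisos} to the pair $(W_r, V_r)$, so that the nilpotence bound of Lemma \ref{lem:fis} lands in $R_r^+ \subseteq \mathfrak{m}$ rather than in $R^+$ (which is too large, exactly as you observe in your final paragraph). The only difference is cosmetic: you deduce that $\hat{E}$ is local via $J \subseteq \rad(\hat{E})$, while the paper works directly with a single idempotent $e$, shows $e \in \End(B)\otimes_R \mathfrak{m}^j$ for all $j$, and concludes $e = 0$ by completeness — these are equivalent here since $\hat{E}$ is finite over the complete local ring $\hat{R}$, hence semiperfect.
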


\begin{proof}[Proof of Theorem \ref{thm:BstaysIndecomposable}:]
By the discussion in \S\ref{subsec:reflection}, we can find a Soergel
bimodule $B' \in \HC_{W_r,V_r}$ such that
\begin{equation}\label{eq:formOfB}
B' \otimes_{R_r} R = R \otimes_R B \quad \text{in $R_r \gbmod
  R$}.
\end{equation}
and moreover
\begin{equation}
  \label{eq:endeq}
  \End_{R_r \gbmod
  R}(B' \otimes_{R_r} R) = \End_{\HC_{W_r,V}}(B).
\end{equation}

Let us choose a basis $\{ f_i\}_{i = 0}^m$ as for $\End(B')$ as right
$R_r$-module as in \S \ref{sec:end}. In particular:
\begin{gather} \label{eq:endprop1}
  f_0 = \id_{B'}; \\\label{eq:endprop2}
\text{$\{f_1, \dots, f_m \}$ is a basis (as a right $R_r$-module)
  for an ideal $I \subset \End(B')$}; \\\label{eq:endprop3}
\id_{B'} \cdot R \simto \End(B')/I.
\end{gather}
By Lemma
\ref{lem:fis} there exists an $M$ such that any product of at least
$M$ elements of $\{ f_i\}_{i = 1}^m$ belongs to $\End(B') \cdot \mg$.
If we set $f_i' \in \End(B)$ to be the image of $f_i \otimes 1$ under the
isomorphism \eqref{eq:endeq}, then the analogue (with $B'$ replaced by
$B$, and $f_i$ replaced with $f_i'$) of
properties \eqref{eq:endprop1}, \eqref{eq:endprop2} and
\eqref{eq:endprop3} still hold. Moreover, it is still the case that
any product of $M$ elements of $\{ f_i' \}_{i = 1}^m$ belongs to
$\End(B) \cdot \mg$.

Now let $e \in \End(B \otimes_R \hat{R})$ be an idempotent. By Lemma \ref{lem:lemExt}
we can write
\[
e = \sum f'_i \otimes r_i
\]
with our $f'_i$ as above, and $r_i \in \hat{R}$. By considering the
image of $e$ under the quotient map
\[
\End(B \otimes_R \hat{R}) = \End(B) \otimes_R \hat{R} \onto (\End(B)/I)
\otimes_R \hat{R} = (\id_B \cdot R) \otimes_R \hat{R}
\]
we deduce that $r_0$ is either 0 or 1. By replacing $e$ with $1-e$ if
necessary, we may assume that $r_0 = 0$. Now, $e$ is an idempotent, and hence
\begin{gather*}
\sum f_i \otimes r_i = e = 
e^M = \sum_{1 \le i_1, \dots, i_M\le m} (f_{i_1} \circ \dots \circ f_{i_M})
\otimes r_{i_1} \dots r_{i_M} \in \End(B) \otimes_R \mg
\end{gather*}
(where $M$ is the same $M$ from earlier).
Hence, for any $j \ge 0$, $e = e^j \in \End(B) \otimes_R
\mg^j \subset \End(B) \otimes_R
\hat{R}$ from which it follows that $e = 0$, by a version of
the graded Nakayama lemma. Hence $B \otimes_R \hat{R}$ is indecomposable as
claimed.
\end{proof}


\subsection{Indecomposable objects in the localised category}

\begin{prop} \label{prop:locindec}
  Any indecomposable object in $\hat{\HC}_{W_r,V}$ is isomorphic to
  $B_x^r \otimes_R \hat{R}$ for some $x \in W_r$.
\end{prop}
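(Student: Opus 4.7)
The plan is to leverage the Krull–Schmidt property of $\hat{\HC}_{W_r,V}$ established in Proposition \ref{prop:locKS} together with Theorem \ref{thm:BstaysIndecomposable}. The basic strategy is: every object of $\hat{\HC}_{W_r,V}$ is a summand of a ``Bott–Samelson'' object $\hat{B}^r_{\un{w}}$, this Bott–Samelson object is the localisation of a Bott–Samelson in $\HC_{W_r,V}$, it decomposes upstairs into the known indecomposables $B^r_x$, and these pieces stay indecomposable under localisation.

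More precisely, I would argue as follows. Fix an indecomposable object $X \in \hat{\HC}_{W_r,V}$. By definition of $\hat{\HC}_{W_r,V}$ as the closure under tensor products, direct sums, and direct summands of the generators $\hat{B}^r_t$, there exists an expression $\un{w} = (t_1,\dots,t_m)$ in $S_r$ such that $X$ is a direct summand of
\[
\hat{B}^r_{\un{w}} := \hat{B}^r_{t_1} \otimes_{\hat{R}} \cdots \otimes_{\hat{R}} \hat{B}^r_{t_m}.
\]
By Proposition \ref{prop:Rhat} (more precisely, by its proof, applied iteratively) we have a canonical identification
\[
\hat{B}^r_{\un{w}} \;\cong\; B^r_{\un{w}} \otimes_R \hat{R},
\]
where $B^r_{\un{w}} \in \HC_{W_r,V}$ is the corresponding Bott–Samelson bimodule.

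Next I would invoke the classification of indecomposables in $\HC_{W_r,V}$ (the analogue of Theorem \ref{thm:Soe_cat} for the reflection subgroup, valid under the reflection faithfulness assumption \eqref{eq:RFAssumption}): there is a decomposition
\[
B^r_{\un{w}} \;\cong\; \bigoplus_{x \in W_r} \bigoplus_{n \in \ZM} B^r_x(n)^{\oplus m_{x,n}}
\]
in $\HC_{W_r,V}$, for some multiplicities $m_{x,n} \in \ZM_{\ge 0}$. Tensoring with $\hat R$ (which kills the grading) therefore yields
\[
\hat{B}^r_{\un{w}} \;\cong\; \bigoplus_{x \in W_r} \bigl(B^r_x \otimes_R \hat R\bigr)^{\oplus m_x}, \qquad m_x := \sum_{n} m_{x,n}.
\]
By Theorem \ref{thm:BstaysIndecomposable} each summand $B^r_x \otimes_R \hat R$ is indecomposable in $\hat{\HC}_{W_r,V}$. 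Finally, since $\hat{\HC}_{W_r,V}$ is Krull–Schmidt by Proposition \ref{prop:locKS}, the decomposition of $\hat{B}^r_{\un{w}}$ into indecomposables is essentially unique, and therefore the indecomposable summand $X$ must be isomorphic to $B^r_x \otimes_R \hat R$ for some $x \in W_r$, proving the claim.

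I do not expect a real obstacle here: all the genuine work has been done in the previous two subsections. The only points requiring a touch of care are (i) the identification $\hat{B}^r_{\un{w}} \cong B^r_{\un{w}} \otimes_R \hat R$, which follows by the same free-basis argument as in the proof of Proposition \ref{prop:Rhat} applied to each tensor factor, and (ii) the fact that passing from the graded category $\HC_{W_r,V}$ to the ungraded $\hat{\HC}_{W_r,V}$ collapses the grading shifts but, thanks to Krull–Schmidt, does not create new isomorphism classes of indecomposables beyond the $B^r_x \otimes_R \hat R$.
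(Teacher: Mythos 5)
Your proof is correct and follows essentially the same argument as the paper: realize the indecomposable as a summand of a localised Bott--Samelson, decompose it upstairs in $\HC_{W_r,V}$, invoke Theorem \ref{thm:BstaysIndecomposable} to see that each summand stays indecomposable, and conclude by the Krull--Schmidt property from Proposition \ref{prop:locKS}. The only cosmetic difference is that you spell out the Bott--Samelson reduction (via Proposition \ref{prop:Rhat}) where the paper simply asserts that any indecomposable of $\hat{\HC}_{W_r,V}$ is a summand of some $B \otimes_R \hat{R}$ with $B \in \HC_{W_r,V}$.
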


\begin{proof}
  Any indecomposable object in $\hat{\HC}_{W_r,V}$ is isomorphic to a
  direct summand of $B \otimes_R \hat{R}$, for some $B \in \HC_{W_r,V}$. By the
  classification of indecomposable Soergel bimodules, we can choose a decomposition
\[
B = \bigoplus_{x \in W_r} (B^r_x)^{\oplus m_x}
\]
for certain $m_x \in \ZM_{\ge 0}$. Now each $B^r_x$ stays
indecomposable when we apply $(-) \otimes_R \hat{R}$ (by Theorem
\ref{thm:BstaysIndecomposable}). Hence 
\[
B \otimes_R \hat{R}= \bigoplus_{x \in W_r} (B^r_x \otimes_R \hat{R})^{\oplus m_x}
\]
is one possible decomposition of $B \otimes_R \hat{R}$ into
indecomposables. As $\hat{\HC}_{W_r,V}$ is Krull-Schmidt, we deduce
that any summand of $B \otimes_R \hat{R}$ is isomorphic to some $B_x^r
\otimes_R \hat{R}$. This concludes the proof.
\end{proof}



\subsection{The hyperbolic bimodule} \label{sec:hypbim}

In this subsection everything will revolve around a certain
bimodule for the Hecke categories corresponding to $W$ and our
reflection subgroup $W_r$.

Let $\hat{R}$ be as in the previous section, and consider the bimodule
category $\hat{R}\bmod R$. Tensor product over $\hat{R}$ (resp. $R$) equips
this category with a left (resp. right) action of $\hat{\HC}_{W_r,V}$
(resp. $\HC_{W,V}$).
Let $\CC$ denote the strictly
full additive subcategory generated, under the actions of $\HC^{(r)}_{W_r,V}$
and $\HC_{W,V}$, by $\hat{R} \in \hat{R} \bmod R$. (So any object of
$\CC$ is isomorphic to a finite direct sum of direct summands of
$(\hat{R},R)$-bimodules of the form $B^r \otimes_{\hat{R}} \hat{R} \otimes_R B$ for
$B^r \in \HC^{(r)}_{W_r,V}$ and $B \in \HC_{W,V}$.)  We call $\CC$ the
\emph{categorified hyperbolic bimodule}.  In formulas:
\[ \begin{tikzpicture}[scale=0.7]
  \node at (-4,0) {$\HC^{(r)}_{W_r,V}$};
\node at (-2,0) {$\acts$};
\node at (0,0) {$\CC := \langle \hat{R} \rangle $};
\node at (2,0) {$\racts$};
  \node at (4,0) {$\HC_{W,V}$};
\node[rotate=-90] at (0,-1) {$\subset$};
\node at (0,-2) {$\hat{R} \bmod R$};
\end{tikzpicture}
\]

For any $x \in W$, consider
\[
\hat{R}_x := \hat{R} \otimes_R R_x.
\]
Thus $\hat{R}_x$ is isomorphic to $\hat{R}$ as a left
$\hat{R}$-module, and the right $R$-action is given by $m \cdot r =
x(r)m$ for $m \in \hat{R}_x$ and $r \in R$.

Recall the minimal coset representatives ${}^r W$ from \S \ref{sec:ref1}.

\begin{lem} \label{lem:hyp}
  Suppose that $z \in {}^rW$ and $s \in S$. We have:
  \begin{equation}
    \label{eq:2}
    \hat{R}_z \cdot B_s = \begin{cases} \hat{R}_z \oplus \hat{R}_{zs}
      &\text{if $zs \in {}^rW$;} \\
B^r_t \cdot \hat{R}_z  & \text{if $zs = tz$ with $t \in S_r$.}
\end{cases}
  \end{equation}
(These two cases are mutually exclusive, see \S \ref{sec:ref1}.)
\end{lem}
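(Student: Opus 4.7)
The plan is to compute both sides explicitly as $(\hat{R}, R)$-bimodules that are free of rank two over $\hat{R}$ on the left, then compare the matrices of the right $R$-action. Since $B_s = R \otimes_{R^s} R$ is free as a left $R$-module on $\{1 \otimes 1,\; 1 \otimes \alpha_s\}$, the bimodule $\hat{R}_z \otimes_R B_s$ is free as a left $\hat{R}$-module of rank two on $e_1 := 1 \otimes (1 \otimes 1)$ and $e_2 := 1 \otimes (1 \otimes \alpha_s)$. For $r \in R$, decomposing $r = r^s + r^a \alpha_s$ with $r^s, r^a \in R^s$ and computing the right action, which factors through $B_s$ and then through the twist $m \cdot r = z(r) m$ on $\hat{R}_z$, gives the matrix
$$\tilde M(r) = \begin{pmatrix} z(r^s) & z(r^a)\,z(\alpha_s)^2 \\ z(r^a) & z(r^s) \end{pmatrix},$$
whose eigenvalues are $z(r^s) \pm z(\alpha_s) z(r^a) = z(r)$ and $(zs)(r)$.

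The dichotomy in the lemma is controlled by $z(\alpha_s) \in \hat{R}$: if $z(\alpha_s) \in V_r$ then $zsz^{-1} = s_{z(\alpha_s)}$ is a reflection in $W_r$, so $zs \in W_r z$ and (by minimality of $z$ in its coset, together with an alcove argument identifying the relevant wall of $A_r$) we are in Case~2 with $z(\alpha_s) = +\alpha_t$; otherwise $z(\alpha_s) \notin V_r$ and we are in Case~1. Thus $z(\alpha_s)$ is a unit in the complete local ring $\hat{R}$ in Case~1 and lies in $\mathfrak{m}$ in Case~2.

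In Case~1, since $z(\alpha_s)$ is a unit (and $2$ is invertible), $\tilde M(r)$ is diagonalisable over $\hat{R}$. A direct calculation verifies that $v_+ := z(\alpha_s) e_1 + e_2$ and $v_- := -z(\alpha_s) e_1 + e_2$ are eigenvectors with eigenvalues $z(r)$ and $(zs)(r)$. Their $\hat{R}$-spans are complementary sub-bimodules (they span because $\det\begin{pmatrix} z(\alpha_s) & -z(\alpha_s) \\ 1 & 1 \end{pmatrix} = 2 z(\alpha_s)$ is a unit), and on each summand the right action of $r$ is multiplication by the appropriate twisted character, identifying the summands with $\hat{R}_z$ and $\hat{R}_{zs}$.

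In Case~2, I will exhibit an isomorphism of $(\hat{R}, R)$-bimodules
$$\hat{R}_z \otimes_R B_s \simto \hat{R} \otimes_{\hat{R}^t} \hat{R}_z = \hat{B}^r_t \otimes_{\hat{R}} \hat{R}_z$$
by sending $e_1 \mapsto 1 \otimes 1$ and $e_2 \mapsto 1 \otimes \alpha_t$; both sides are free of rank two over $\hat{R}$ on these bases. Since $s = z^{-1} t z$, the automorphism $z$ restricts to an isomorphism $R^s \simto R^t$, so $z(r) = z(r^s) + z(r^a)\alpha_t$ is exactly the decomposition of $z(r) \in \hat{R}$ relative to $\hat{R} = \hat{R}^t \oplus \hat{R}^t \alpha_t$. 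Computing the right $R$-action on the target in the basis $\{1 \otimes 1,\; 1 \otimes \alpha_t\}$ therefore yields the same matrix $\tilde M(r)$ (using $z(\alpha_s) = \alpha_t$), and the map is an isomorphism of bimodules. The main obstacle is not conceptual but bookkeeping: one must keep careful track of whether a given occurrence of $z$ arises from the twist in the right action on $\hat{R}_z$ or from the intertwiner $R^s \simto R^t$; once the notation is regimented, the identifications are forced.
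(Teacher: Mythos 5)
Your proof is correct. It performs the same underlying computation as the paper's, but organizes it differently. The paper first transports $B_s$ across the twist, writing $\hat{R}_z\otimes_R B_s \cong (\hat{R}\otimes_{\hat{R}^t}\hat{R})\otimes_{\hat{R}}\hat{R}_z$ with $t = zsz^{-1}$, and then either cites \cite[(3.7)]{EWS} for the splitting $\hat{R}\otimes_{\hat{R}^t}\hat{R}\cong\hat{R}\oplus\hat{R}_t$ when $\alpha_t$ is a unit in $\hat{R}$, or observes that this is literally $\hat{B}^r_t$ when $t\in S_r$. You instead fix the free rank-two left $\hat{R}$-basis $\{e_1,e_2\}$ of $\hat{R}_z\otimes_R B_s$, record the right $R$-action as a $2\times 2$ matrix over $\hat{R}$, and either diagonalise it (Case~1) or match it entry by entry against $\hat{B}^r_t\otimes_{\hat{R}}\hat{R}_z$ (Case~2); your Case~1 is exactly a by-hand derivation of the cited splitting. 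One modest advantage of your bookkeeping is that Case~1 never invokes $\hat{R}^t$ for a reflection $t\notin W_r$: such a $t$ need not stabilise the ideal $\mathfrak{m}=(V_r)$, so it is not obvious that $t$ acts on $\hat{R}$ at all, whereas your eigenvector computation lives entirely inside $\hat{R}$. Your remarks on the dichotomy (that $z(\alpha_s)\in V_r$ forces $zsz^{-1}\in S_r$, and otherwise $zs\in{}^rW$) are only gestured at, but this is consistent with the paper, which likewise defers this point via the parenthetical reference to \S~\ref{sec:ref1}.
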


\begin{proof}
We have an isomorphism of $(\hat{R},R)$-bimodules
\[
\hat{R}_z \cdot B = \hat{R}_z \otimes_{R^s} R \cong \hat{R}
\otimes_{R^t} R_z = (\hat{R} \otimes_{\hat{R}^t} \hat{R}) \otimes_{\hat{R}} R_z
\]
where $t = zsz^{-1}$.

If $\alpha_t$ is invertible in $\hat{R}$ (which
is the case and only if $\alpha_t \notin V_r$, which is the case if
and only if $zs \in {}^rW$) then 
\[
(\hat{R} \otimes_{\hat{R}^t} \hat{R}) \cong \hat{R} \oplus \hat{R}_t
\]
 (see
\cite[(3.7)]{EWS}, alternatively this can be done by hand). This implies the first isomorphism of the lemma.
If 
$zs = tz$ with $t \in S_r$ then $\hat{R} \otimes_{\hat{R}^t} \hat{R}
= \hat{B}_t^r$ and the second isomorphism of the lemma follows.
\end{proof}

The following gives a kind of ``block decomposition'' of $\CC$:

\begin{lem}
  \label{lem:block}
Suppose $z, z' \in {}^r W$ with $z \ne z'$. Then for $\hat{B}, \hat{B}' \in
\hat{\HC}_{W_r,V}$ we have
\[
\Hom_{\CC}( \hat{B} \otimes_{\hat{R}} \hat{R}_z,  \hat{B}' \otimes_{\hat{R}} \hat{R}_{z'}) = 0.
\]
\end{lem}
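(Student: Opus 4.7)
The plan is to establish the vanishing after tensoring on the left with the fraction field $\hat{Q}$ of $\hat{R}$, and then to check that this localisation is faithful on Hom.

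First I would establish, for every $\hat{B}\in\hat{\HC}_{W_r,V}$, an Abe-type decomposition
\[
\hat{Q}\otimes_{\hat{R}}\hat{B} \;\cong\; \bigoplus_{w\in W_r}\hat{Q}_w^{\oplus n_w(\hat{B})}
\]
as $(\hat{Q},\hat{R})$-bimodules, where $\hat{Q}_w$ denotes $\hat{Q}$ with right $\hat{R}$-action twisted by $w$. For the generator $\hat{B}_t^r=\hat{R}\otimes_{\hat{R}^t}\hat{R}$ one has the standard computation $\hat{Q}\otimes_{\hat{R}}\hat{B}_t^r\cong\hat{Q}\oplus\hat{Q}_t$ (cf.\ \cite[(3.7)]{EWS}), valid because $\alpha_t\in V_r$ is non-zero in $\hat{R}$ and hence invertible in $\hat{Q}$. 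The analogue $\hat{Q}_w\otimes_{\hat{R}}\hat{B}_t^r\cong\hat{Q}_w\oplus\hat{Q}_{wt}$ then yields the decomposition for Bott--Samelsons by induction on length, and it passes to summands because $\hat{Q}$ is a field.

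Next, repeating the calculation at the end of the proof of Lemma~\ref{lem:hyp} shows that $\hat{Q}_w\otimes_{\hat{R}}\hat{R}_z\cong\hat{Q}_{wz}$ as $(\hat{Q},R)$-bimodules, where on the right the $R$-action is given by the composite $R\xrightarrow{wz}R\hookrightarrow\hat{R}\subset\hat{Q}$. Combining,
\[
\hat{Q}\otimes_{\hat{R}}\hat{B}\otimes_{\hat{R}}\hat{R}_z \;\cong\; \bigoplus_{w\in W_r}\hat{Q}_{wz}^{\oplus n_w(\hat{B})}, \qquad \hat{Q}\otimes_{\hat{R}}\hat{B}'\otimes_{\hat{R}}\hat{R}_{z'} \;\cong\; \bigoplus_{w'\in W_r}\hat{Q}_{w'z'}^{\oplus n_{w'}(\hat{B}')}.
\]
Any $(\hat{Q},R)$-bimodule map $\hat{Q}_y\to\hat{Q}_{y'}$ is $\hat{Q}$-linear, hence of the form $m\mapsto m\lambda$ for some $\lambda\in\hat{Q}$; right $R$-linearity forces $y(r)\lambda=y'(r)\lambda$ for all $r\in R$, so $\lambda$ vanishes whenever $y\ne y'$ as ring automorphisms of $R$ (which holds under the reflection faithfulness assumption \eqref{eq:RFAssumption}, since $W_r$ then acts faithfully on $V_r\subset V$, and any coincidence $wz=w'z'$ in $W$ would force $w^{-1}w'=z(z')^{-1}\in W_r$, contradicting the minimality of $z,z'$). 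The key combinatorial input is now that distinct minimal coset representatives $z,z'\in{}^rW$ satisfy $W_rz\cap W_rz'=\emptyset$ (see \S\ref{sec:ref1}), so every label $wz$ appearing in the first decomposition differs from every label $w'z'$ in the second; hence the induced map on Hom spaces is zero.

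The remaining step, which I expect to be the main technical obstacle, is to verify that the localisation map
\[
\Hom_{\CC}(\hat{B}\otimes_{\hat{R}}\hat{R}_z,\hat{B}'\otimes_{\hat{R}}\hat{R}_{z'}) \longrightarrow \Hom(\hat{Q}\otimes_{\hat{R}}\hat{B}\otimes_{\hat{R}}\hat{R}_z,\hat{Q}\otimes_{\hat{R}}\hat{B}'\otimes_{\hat{R}}\hat{R}_{z'})
\]
is injective. My approach is to show that $\hat{B}'\otimes_{\hat{R}}\hat{R}_{z'}$ is free, and in particular $\hat{R}$-torsion-free, as a left $\hat{R}$-module: for Bott--Samelsons this follows by iterating the fact that $\hat{R}\otimes_{\hat{R}^t}\hat{R}$ is a free left $\hat{R}$-module of rank $2$; tensoring with $\hat{R}_z$ on the right does not change the left structure; and finitely generated projective modules over the complete local ring $\hat{R}$ are free, as invoked in Proposition~\ref{prop:locKS}, so the conclusion extends to all of $\hat{\HC}_{W_r,V}$. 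Torsion-freeness implies that $\hat{B}'\otimes_{\hat{R}}\hat{R}_{z'}$ injects into its left localisation, which gives the required injectivity and completes the proof.
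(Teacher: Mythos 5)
Your proof is correct, but it follows a genuinely different route from the paper's. The paper invokes the existence of standard filtrations on the Soergel bimodules $B,B'\in\HC_{W_r,V}$ with subquotients given by twisted bimodules $R_{x_i}$, passes to the completion, and reads off the vanishing directly from $\Hom_{\hat{R}\bmod R}(\hat{R}_{x_iz},\hat{R}_{x_jz'})=0$ for $x_iz\ne x_jz'$. You instead pass to the generic point: you decompose $\hat{Q}\otimes_{\hat R}\hat B$ into twists $\hat Q_w$, compute the vanishing there, and then come back to $\hat R$ by checking that $\hat B'\otimes_{\hat R}\hat R_{z'}$ is a free (hence torsion-free) left $\hat R$-module, so the localisation is injective on Hom. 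Both arguments hinge on the same two ingredients --- the vanishing between twisted standard bimodules with distinct twists, and the coset combinatorics $W_rz\cap W_rz'=\emptyset$ --- but the filtration route gives the vanishing in one stroke, whereas the localisation route trades the filtration input for the torsion-freeness step. One small inaccuracy to note: in the parenthetical you cite the reflection faithfulness assumption \eqref{eq:RFAssumption} (faithfulness of $W_r$ on $V_r$) to conclude that $wz\ne w'z'$ in $W$ act differently on $R$. That step actually requires faithfulness of the \emph{full} group $W$ on $V$ --- which does hold, being part of the standing reflection faithfulness hypothesis on $V$ from \S\ref{sec:hecke sbim} --- and is not a consequence of \eqref{eq:RFAssumption}. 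The assumption \eqref{eq:RFAssumption} is what you correctly need in the earlier step, to ensure distinct $w,w'\in W_r$ give non-isomorphic $\hat Q_w,\hat Q_{w'}$, so that the generic decomposition passes to direct summands.
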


\begin{proof}
  We may assume without loss of generality that $\hat{B}$ and $\hat{B}'$ are
  obtained via extension of scalars (i.e. $(-)\otimes_R \hat{R}$) from bimodules $B, B' \in
  \HC_{W_r,V}$. Now, $B$ and $B'$ are Soergel bimodules, and hence admit ``standard''
  filtrations
  \begin{gather*}
    0 \subset \Gamma_{x_0} B \subset     \Gamma_{x_0,x_1} B \subset \dots
    \subset B,  \\
    0 \subset \Gamma_{x_0} B' \subset     \Gamma_{x_0,x_1} B' \subset \dots
    \subset B' 
  \end{gather*}
with $i^{th}$ subquotient isomorphic to direct sums of shifts of
$R_{x_i}$, for some enumeration $\id = x_0, x_1, x_2, \dots$ of $W_r$
compatible with the Bruhat order. Now the result follows easily from the vanishing
\[
\Hom_{\hat{R}\bmod R}(\hat{R}_{x_i} \otimes_{\hat{R}} \hat{R}_z,
\hat{R}_{x_j} \otimes_{\hat{R}} \hat{R}_{z'}) = 0
\]
which holds because $\hat{R}_{x_i} \otimes_{\hat{R}} \hat{R}_z \cong
\hat{R}_{x_iz}$ and $\hat{R}_{x_j} \otimes_{\hat{R}} \hat{R}_{z'}
\cong \hat{R}_{x_jz'}$, and $x_iz$ and $x_jz'$ never coincide, as our
assumptions mean that they lie in different $W_r$-cosets.
\end{proof}

We can now prove:

\begin{thm} \label{thm:hyperbolic}
The additive category $\CC$ is Krull-Schmidt. Moreover, we have a
bijection:
\begin{align*}
W_r \times {}^rW
&\simto 
  \left \{ \begin{array}{c}
    \text{indecomposable } \\
\text{objects in $\CC$}
  \end{array} \right \}_{/ \cong} \\
(x,z) & \mapsto \hat{B}_x \otimes_{\hat{R}} \hat{R}_z.
\end{align*}
\end{thm}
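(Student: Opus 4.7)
My plan is to decompose $\CC$ into blocks indexed by ${}^r W$, with each block equivalent to $\hat{\HC}_{W_r,V}$, and then invoke the structure theory of $\hat{\HC}_{W_r,V}$ that has been developed earlier in the section.

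First I would show that every object of $\CC$ decomposes as a finite direct sum of objects of the form $\hat{B} \otimes_{\hat{R}} \hat{R}_z$ with $\hat{B} \in \hat{\HC}_{W_r,V}$ and $z \in {}^r W$. Starting from the generator $\hat{R} = \hat{R}_e \cdot \hat{R}$, one argues by induction on the length of a Bott-Samelson expression $\un{w}$ in $\HC_{W,V}$ acting on the right: each application of Lemma \ref{lem:hyp} replaces $\hat{B} \otimes_{\hat{R}} \hat{R}_z$ either by $\hat{B} \otimes_{\hat{R}} \hat{R}_z \oplus \hat{B} \otimes_{\hat{R}} \hat{R}_{zs}$ or by $(\hat{B} \otimes_{\hat{R}} \hat{B}^r_t) \otimes_{\hat{R}} \hat{R}_z$, both of the desired form. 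Left multiplication by objects of $\hat{\HC}_{W_r,V}$ trivially preserves the form, and closure under summands (which is a feature of $\CC$ by construction) then gives the description for all of $\CC$.

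Next, Lemma \ref{lem:block} says that Hom-spaces between $\hat{B} \otimes_{\hat{R}} \hat{R}_z$ and $\hat{B}' \otimes_{\hat{R}} \hat{R}_{z'}$ vanish for $z \ne z'$. Combining this with the first step yields a block decomposition $\CC = \bigoplus_{z \in {}^r W} \CC_z$, where $\CC_z$ is the strictly full additive subcategory generated by objects of the form $\hat{B} \otimes_{\hat{R}} \hat{R}_z$. I then define $\Phi_z : \hat{\HC}_{W_r,V} \to \CC_z$ by $\hat{B} \mapsto \hat{B} \otimes_{\hat{R}} \hat{R}_z$, which is essentially surjective by construction. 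The main obstacle is the full faithfulness of $\Phi_z$. For Soergel bimodules $B, B' \in \HC_{W_r,V}$ with $\hat{B} = B \otimes_R \hat{R}$ and similarly $\hat{B}'$, I would use the identification
\[
\hat{B} \otimes_{\hat{R}} \hat{R}_z \cong \hat{R} \otimes_R (B \otimes_R R_z)
\]
of $(\hat{R}, R)$-bimodules, together with the observation that right $R$-linearity for the $z$-twisted right action on $B \otimes_R R_z$ coincides with ordinary $(R,R)$-bilinearity on $B$ (since $z$ acts as a bijection on $R$), and then argue as in Lemma \ref{lem:lemExt} (using finite generation over $R \otimes R$ and flatness of $R \to \hat{R}$) to obtain
\[
\Hom_{\hat{R}\bmod R}(\hat{B} \otimes_{\hat{R}} \hat{R}_z, \hat{B}' \otimes_{\hat{R}} \hat{R}_z) \cong \Hom_{R\bmod R}(B, B') \otimes_R \hat{R}.
\]
Comparison with the formula for $\Hom_{\hat{\HC}_{W_r,V}}(\hat{B}, \hat{B}')$ furnished by Lemma \ref{lem:lemExt} delivers the required isomorphism.

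Finally, combining this equivalence with Proposition \ref{prop:locKS} shows that each $\CC_z$, and hence $\CC$, is Krull-Schmidt. Proposition \ref{prop:locindec} classifies the indecomposables of $\hat{\HC}_{W_r,V}$ as the objects $\hat{B}_x := B^r_x \otimes_R \hat{R}$ for $x \in W_r$, which are pairwise non-isomorphic by Theorem \ref{thm:BstaysIndecomposable}. Transporting through $\Phi_z$ and summing over $z \in {}^r W$ then yields the advertised bijection $(x,z) \mapsto \hat{B}_x \otimes_{\hat{R}} \hat{R}_z$, completing the proof.
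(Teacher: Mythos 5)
Your proposal is correct and follows essentially the same route as the paper: both use Lemma \ref{lem:hyp} to reduce every object of $\CC$ to a direct sum of objects $\hat{B}\otimes_{\hat{R}}\hat{R}_z$, Lemma \ref{lem:block} for orthogonality across distinct $z\in{}^rW$, the Hom-identification (what the paper records in display \eqref{eq:indec}) to reduce to $\hat{\HC}_{W_r,V}$, Proposition \ref{prop:locKS} for Krull-Schmidt, and Proposition \ref{prop:locindec} for the classification of indecomposables. The only cosmetic difference is that you package the reduction as a block decomposition $\CC = \bigoplus_z \CC_z$ with explicit equivalences $\Phi_z$, whereas the paper works directly with endomorphism rings; the underlying calculations are identical.
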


\begin{proof} We first establish that $\CC$ is Krull-Schmidt.
 As in \S~\ref{sec:locKS}, we only need to check that the
 endomorphism ring of any object in $\CC$ is semi-perfect. By Lemma
 \ref{lem:hyp}, any object in $\CC$ is isomorphic to a finite direct sum of
 objects of the form $\hat{B}^r \otimes_{\hat{R}} R_z$, for $\hat{B}^r
 \in \hat{\HC}_{W_r,V}$. By Lemma \ref{lem:block} it is enough to
 check that the endomorphisms of such objects are semi-perfect. We have
 \begin{equation}
   \label{eq:indec}
\End_\CC(\hat{B}^r \otimes_{\hat{R}} R_z) = \End_{\hat{R}\bmod R}
(\hat{B}^r) = \End_{\hat{\HC}_{W_r,V}}(\hat{B}^r)   
 \end{equation}
and the result follows from Proposition \ref{prop:locKS}.

We now turn to the classification of indecomposable objects. By
\eqref{eq:indec} we see that $\hat{B}^r \otimes_{\hat{R}} R_z$ is
indecomposable if and only if $\hat{B}^r$ is. By Proposition
\ref{prop:locindec} (the classification of indecomposable objects in $\hat{\HC}_{W_r,V}$) this is the case if and
only if $\hat{B}^r \cong \hat{B}^r_x$ for some $x \in W_r$. The fact
that $\hat{B}_x \otimes_{\hat{R}} \hat{R}_z$ and $\hat{B}_{x'}
\otimes_{\hat{R}} \hat{R}_{z'}$ are isomorphic if and only if $x = x'$
and $z = z'$ follows from Lemma \ref{lem:block} and Proposition
\ref{prop:locindec}.
\end{proof}




The following is the bimodule analogue of Theorems \ref{thm:hypbim}
and \ref{thm:sm}, and shows that $\CC$ deserves to be called the
categorified hyperbolic bimodule:

\begin{thm} \label{thm:cathypbim}
We have a
commutative diagram
\begin{equation*}
  \begin{tikzpicture}[scale=0.8]
    \node (ul) at (-3.7,0) {$\HC^{\Soe}_{W_r, V}$};
    \node at (-2.2,0) {$\acts$};
    \node (ml) at (0,0) {$\CC$};
     \node at (2.2,0) {$\racts$};
     \node (ur) at (3.7,0) {$\HC^{\Soe}_{W, V}$};
     \node (ll) at (-3.7,-2) {$H_{W_r}$};
         \node at (-2.2,-2) {$\acts$};
         \node (l) at (0,-2) {$\ZM[W]$};
                  \node at (2.2,-2) {$\racts$};
                  \node (lr) at (3.7,-2) {$H_{W}$};
               \draw[dashed,->] (ur) to node[right] {\small $\ch$} (lr);
               \draw[dashed,->] (ul) to node[left] {\small $\ch$} (ll);
                              \draw[dashed,->] (ml) to node[left]
                              {\small $\hat{\ch}$} (l);               
\node[rotate=90] at (0,-2.5) {$=$};
\node at (0,-3) {$\bigoplus_{x \in W} \ZM \hat{\delta}_x$};
             \end{tikzpicture}
\end{equation*}
where $\ch$ are the isomorphisms of Theorem \ref{thm:Soe_cat}, and
$\hat{ch}$ is the unique isomorphism making the right hand square
commute.
\end{thm}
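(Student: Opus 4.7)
The strategy is to construct $\hat{\ch}$ via standard filtrations available in all three categories. Any $B \in \HC^{\Soe}_{W,V}$ admits a filtration with subquotients shifts of the standard bimodules $R_y$ ($y \in W$), whose graded multiplicities compute $\ch(B)$; analogously every $\hat{B} \in \hat{\HC}_{W_r,V}$ admits an (ungraded) filtration with subquotients $\hat{R}_x$ ($x \in W_r$), whose multiplicities compute $\ch(\hat{B})|_{v=1}$. (The filtration in $\hat{\HC}_{W_r,V}$ is transported from $\HC^{\Soe}_{W_r,V}$ via Proposition~\ref{prop:Rhat}; the classical theory applies in $\HC^{\Soe}_{W_r,V}$ because $V_r$ is reflection faithful for $W_r$ by~\eqref{eq:RFAssumption}.) I first claim that every $M \in \CC$ admits a filtration with subquotients $\hat{R}_w$ ($w \in W$), with well-defined multiplicities $m_w(M) \in \ZM_{\geq 0}$. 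This is proved by induction on generators: starting from $\hat{R}$, right multiplication by $B_s$ via Lemma~\ref{lem:hyp} either splits off a new summand $\hat{R}_{zs}$ (first case) or produces $\hat{B}_t^r \cdot \hat{R}_z$, which inherits a filtration from that of $\hat{B}_t^r$ using $\hat{R}_x \otimes_{\hat{R}} \hat{R}_z = \hat{R}_{xz}$. One then sets
\[
\hat{\ch}([M]) := \sum_{w \in W} m_w(M)\, \hat{\delta}_w,
\]
so $\hat{\ch}([\hat{R}]) = \hat{\delta}_{\id}$ by construction.

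Both squares now commute by the same computation. Given $M \in \CC$ and $B \in \HC_{W,V}$ with standard multiplicities $m_y(M)$ and $n_w(B)$, the product $M \otimes_R B$ inherits a filtration whose subquotients arise by tensoring those of $M$ with those of $B$; using $\hat{R}_y \otimes_R R_w = \hat{R}_{yw}$ one obtains
\[
\hat{\ch}(M \otimes_R B) = \hat{\ch}(M) \cdot \ch(B)\big|_{v=1},
\]
which is the right square. The left square is proved identically with left/right reversed, using the identification $\hat{R}_x \otimes_{\hat{R}} \hat{R}_z = \hat{R}_{xz}$ for $x \in W_r$, $z \in W$.

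The isomorphism claim follows from Theorem~\ref{thm:hyperbolic}, which exhibits $[\CC]_\oplus$ as free abelian on $[\hat{B}_x \otimes_{\hat{R}} \hat{R}_z]$ for $(x,z) \in W_r \times {}^rW$. Writing $\ch(B^r_x) = \sum_y h^r_{y,x}\,\delta_y$ in the Hecke algebra of $W_r$, the character computation of the previous paragraph yields
\[
\hat{\ch}([\hat{B}_x \otimes_{\hat{R}} \hat{R}_z]) = \sum_{y \in W_r} h^r_{y,x}(1)\, \hat{\delta}_{yz}.
\]
Under the bijection $W_r \times {}^rW \to W$, $(x,z) \mapsto xz$, this matrix is block-diagonal in $z$ and unitriangular in $x$ (since $h^r_{x,x}=1$ and $h^r_{y,x} \ne 0$ only for $y \le x$), hence invertible over $\ZM$. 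Uniqueness holds because $[\CC]_\oplus$ is cyclic as a right $[\HC_{W,V}]_\oplus$-module generated by $[\hat{R}]$ -- by iterating Lemma~\ref{lem:hyp} together with Krull--Schmidt one reaches every indecomposable summand -- so any homomorphism making the right square commute is determined by its value on $[\hat{R}]$, which must equal $\hat{\delta}_{\id}$ for the map to be an isomorphism.

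The main obstacle is showing that the multiplicities $m_w(M)$ are well-defined invariants of the isomorphism class of $M$, independent of the chosen filtration. The cleanest approach is to reduce to the classical setting: via Theorem~\ref{thm:ref_equiv} and Proposition~\ref{prop:Rhat}, one can realize any $M \in \CC$ (after appropriate tensorings with $\hat{R}$ and with $R_z$ for $z \in {}^rW$) as inherited from a Soergel bimodule over the reflection-faithful representation $V_r$ of $W_r$, where Soergel's hom formula~\eqref{eq:shom} ensures that the multiplicities are well-defined. The identifications above then transport the invariance back to $\CC$.
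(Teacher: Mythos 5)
The paper states Theorem \ref{thm:cathypbim} without proof (unlike its geometric cousins, Theorems \ref{thm:hypbim} and \ref{thm:sm}, which are proved via Euler-characteristic considerations), so there is no explicit argument to compare against. Your proposed route -- define $\hat{\ch}$ via ungraded multiplicities in a standard filtration by the objects $\hat{R}_w$, then verify commutativity and invertibility -- is the natural algebraic analogue of the Euler-characteristic argument the paper uses for the geometric versions, and it is sound. Your computation of $\hat{\ch}$ on the basis $\{[\hat{B}^r_x \otimes_{\hat R}\hat R_z]\}$ coming from Theorem \ref{thm:hyperbolic}, the unitriangularity argument for invertibility, and the cyclicity argument for uniqueness are all correct.

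The one place you should be more careful is exactly the point you flag as "the main obstacle": that the multiplicities $m_w(M)$ are well-defined invariants of the isomorphism class of $M\in\CC$. Your induction "starting from $\hat R$ and applying $B_s$ via Lemma~\ref{lem:hyp}" only exhibits \emph{some} filtration on objects of the generating form $\hat R\cdot B_{\un w}$; it does not by itself show that direct summands inherit filtrations, nor that the multiplicities are independent of the chosen filtration. The cleanest repair, which you gesture at but don't quite land, is to bypass the induction and work directly with the basis supplied by Theorem~\ref{thm:hyperbolic}: write $M\cong\bigoplus(\hat B^r_x\otimes_{\hat R}\hat R_z)^{\oplus a(x,z)}$ (unique by Krull--Schmidt), transport the standard filtration of $B^r_x\in\HC_{W_r,V}$ (which exists and has well-defined multiplicities because $V$ is reflection faithful for $W_r\subset W$) along the exact functor $(-)\otimes_R\hat R$ of Proposition~\ref{prop:Rhat}, and use $\hat R_y\otimes_{\hat R}\hat R_z=\hat R_{yz}$ together with the block decomposition of Lemma~\ref{lem:block} to keep the $z$-components separate. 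This makes $m_w(M)$ manifestly well-defined and additive, and then the rest of your argument goes through as written. With that amendment the proof is complete; without it, the "by induction on generators" step and the appeal to Theorem~\ref{thm:ref_equiv} leave a genuine (though fillable) gap.

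One small remark on the uniqueness claim: the right square alone pins down $\hat{\ch}$ once its value on $[\hat R]$ is fixed (by cyclicity, as you say), but "isomorphism" alone does not force $\hat{\ch}([\hat R])=\hat\delta_{\id}$ -- any unit of $\ZM[W]$ would do. The implicit normalisation $\hat{\ch}([\hat R])=\hat\delta_{\id}$ (equivalently, requiring the left square to commute as well, or requiring $\hat{\ch}$ to be positive on classes of objects) is what makes it unique. You do adopt this normalisation, so the proof is fine; just be aware the theorem's phrasing is slightly informal on this point.
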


\begin{remark}
  We leave it to the reader to combine the above theorem with the
  classification of the indecomposable objects in $\CC$ (Theorem
  \ref{thm:hyperbolic}) to deduce the positivity statements in Theorem
  \ref{thm:pos3}.
\end{remark}

\subsection{Localisation and the anti-spherical module}
\label{sec:locas}

We keep $(W,S)$ and $V$ as above. We fix the following:
\begin{enumerate}
\item A standard parabolic subgroup $W_f \subset W$ with simple
  reflections $S_f \subset S$;
\item A $V$-good (in the sense of \S\ref{sec:good}) reflection subgroup
  $W_r \subset W$ with canonical  Coxeter generators $S_r$.
\end{enumerate}
The following assumption is fundamental:
\begin{equation}
  \label{eq:ASassump}
  W_f \subset W_r \quad \text{and hence $S_f \subset S_r$ and ${}^fW
    \supset {}^rW$.}
\end{equation}
(We have the alcove picture from \S \ref{sec:affine} in mind.)

\begin{remark}
  The example we have in mind is $W = W_{\fin} \ltimes \ZM \Phi^\vee$
  an affine Weyl group, $W_f = W_\fin$ and $W_r = W_\fin \ltimes 
  p\ZM\Phi^\vee$, which is good in characteristic $p$.
\end{remark}

We briefly recall how (following \cite{AB,RWpcan}) to $W_f \subset W$ we may
associate its \emph{anti-spherical module}.  We set
\[
\AS := \HC_{W,V} / \IC \quad \text{where} \quad \IC := \langle
B_x \; | \; x\notin {}^f W \rangle_{\oplus}.
\]
It turns out that $\IC$ is a right ideal in $\HC_{W,V}$ and hence
$\AS$ is a right $\HC_{W,V}$-module category (see \cite[Lemma
4.2.3]{RWpcan}).

Consider the quotient (of additive categories)
\[
\hat{\AS} := \CC / \IC \quad \text{where} \quad \IC := \langle B^r_u \otimes_{\hat{R}} \hat{R}_z \; | \; z
\in {}^r W; u \notin {}^f W \rangle_{\oplus}.
\]
One can check (by an analogue of \cite[Lemma
4.2.3]{RWpcan} again) that $\IC$ is stable
under the right action of $\HC_{W,V}$. Hence $\hat{\AS}$ has the
structure of a right $\HC_{W,V}$-module category. Our next goal is to
explain that $\hat{\AS}$ admits functor (of right $\HC_{W,V}$-module
categories) from the anti-spherical category, and that $\hat{\AS}$ is a
rather simple localisation of the anti-spherical category. For this we
need the following lemma:

\begin{lem}
  Suppose that $x \notin {}^f W$. Let us write
  \begin{equation}
    \label{eq:xLoc}
    \hat{R}\otimes B_x \cong \bigoplus_{u \in W_r; z \in {}^r W} (B^r_u \otimes_{\hat{R}}
    \hat{R}_z)^{\oplus a(u,z,x)}
  \end{equation}
for $a(u,z,x) \in \ZM_{\ge 0}$. Then
\[
a(u,z,x) = 0 \quad \text{if $u \in {}^f W$.}
\]
\end{lem}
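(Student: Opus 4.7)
The plan is to prove a slightly stronger statement: that the full additive subcategory
\[
\IC_{\hat{\AS}} := \langle B^r_u \otimes_{\hat{R}} \hat{R}_z \mid u \notin {}^f W,\ z \in {}^r W \rangle_{\oplus}
\]
of $\CC$ is closed under right tensor product with every object of $\HC_{W,V}$. Granting this, the lemma follows at once: for $x \notin {}^f W$, pick $s \in S_f$ with $sx < x$, so that $B_x$ is a direct summand of $B_s \otimes_R B_{sx}$, and hence $\hat{R} \otimes_R B_x$ is a summand of $(\hat{R} \otimes_R B_s) \otimes_R B_{sx}$. Applying Lemma \ref{lem:hyp} with $z = e$ puts us in its second case, since $s \in S_f \subset S_r$, so that $\hat{R} \otimes_R B_s \cong B^r_s \otimes_{\hat{R}} \hat{R}_e$; and since $s \in W_f \setminus \{e\}$ forces $s \notin {}^f W$, this object lies in $\IC_{\hat{\AS}}$. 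The right-ideal property then places $\hat{R} \otimes_R B_x$ in $\IC_{\hat{\AS}}$, which is precisely the vanishing of $a(u,z,x)$ for all $u \in {}^f W$.

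To verify the right-ideal property, it suffices (by the generation of $\HC_{W,V}$ by $\{B_s\}_{s \in S}$ under tensor product and direct summands) to check stability under $(-) \otimes_R B_s$ for $s \in S$. Fixing a generator $B^r_u \otimes_{\hat{R}} \hat{R}_z$ with $u \notin {}^f W$, one computes
\[
(B^r_u \otimes_{\hat{R}} \hat{R}_z) \otimes_R B_s \;=\; B^r_u \otimes_{\hat{R}} (\hat{R}_z \otimes_R B_s),
\]
and Lemma \ref{lem:hyp} yields two cases. If $zs \in {}^r W$, the output is $B^r_u \otimes \hat{R}_z \oplus B^r_u \otimes \hat{R}_{zs}$, which preserves the $u$-label and so stays in $\IC_{\hat{\AS}}$. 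If $zs = tz$ with $t \in S_r$, the output is $(B^r_u \otimes_{\hat{R}} \hat{B}^r_t) \otimes_{\hat{R}} \hat{R}_z$, and one must verify that every indecomposable summand $B^r_{u'}$ of $B^r_u \otimes \hat{B}^r_t$ still satisfies $u' \notin {}^f W$.

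The main obstacle is this last step, which is an internal statement about the Hecke category of $(W_r, S_r)$. By Proposition \ref{prop:Rhat}, right tensor product with $\hat{B}^r_t$ on objects of the form $B \otimes_R \hat{R}$ agrees with right tensor product by $B^r_t$ in $\HC_{W_r, V}$ followed by completion, so Theorem \ref{thm:BstaysIndecomposable} and Proposition \ref{prop:locindec} reduce the claim to the uncompleted statement in $\HC_{W_r, V}$. Since $S_f \subset S_r$ makes $W_f$ a standard parabolic of $(W_r, S_r)$, and since for $u \in W_r$ the condition $u \notin {}^f W$ is equivalent to $u$ failing to be a minimal coset representative for $W_f \setminus W_r$ as computed in $W_r$ (both reduce to $u^{-1}(\alpha_s) < 0$ for some $s \in S_f$ via \eqref{eq:minl}, using that $\Phi_f^+$ sits identically inside the positive roots of $W$ and of $W_r$), this reduction is exactly the statement that the anti-spherical right ideal for the pair $(W_r, W_f)$ is well-defined. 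This is the analogue of \cite[Lemma 4.2.3]{RWpcan} applied inside $(W_r, V)$, and goes through thanks to reflection-faithfulness \eqref{eq:RFAssumption} for $W_r$ on $V_r$ combined with Theorem \ref{thm:ref_equiv} to pass between $V_r$ and $V$.
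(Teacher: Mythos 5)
Your proof is correct, and it takes a genuinely different route from the paper's. The paper argues numerically: for $s \in S_f$ with $sx < x$, it uses the recursion ${}^p h_{y,x} = v\,{}^p h_{sy,x}$, compares the coefficients of $\hat\delta_y$ and $\hat\delta_{sy}$ in $\hat{\ch}(\hat R \otimes B_x)$ coming from the two sides of \eqref{eq:xLoc}, and then closes the argument with the unimodality inequality ${}^p h_{v,u}(1) \ge {}^p h_{sv,u}(1)$ in the Hecke category for $W_r$. You argue categorically: you first establish that $\langle B^r_u \otimes_{\hat R} \hat R_z \mid u \notin {}^f W,\ z \in {}^r W\rangle_\oplus$ is a right $\HC_{W,V}$-submodule of $\CC$, and then absorb $\hat R \otimes_R B_x$ into it by factoring $B_x$ through $B_s \otimes B_{sx}$. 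Your route has the nice feature of delivering the lemma and the right-ideal stability of $\IC$ (which the paper asserts separately, without proof, immediately after the lemma) in one stroke, and it entirely bypasses the unimodality input, which is perhaps the least transparent step of the paper's proof. The cost is the reduction in your Case 2 to the anti-spherical ideal property of $(W_r, W_f)$ computed in $\HC_{W_r,V}$, which requires passing through Theorem \ref{thm:ref_equiv}. That theorem is only an additive (not monoidal) equivalence, so on its own it does not a priori transport tensor-decomposition data; what saves you is Proposition \ref{prop:BSslide}, which makes the left $\HC_{W_r,V_r}$-action and the right $\HC_{W_r,V}$-action on $\QC$ associate, so that the indecomposable decompositions of $B_{r,u}\otimes_{R_r} B_{r,t}$ and $B^r_u \otimes_R B^r_t$ correspond under the equivalence. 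You should make that dependence explicit. Two small notational slips: the object you call $B^r_s \otimes_{\hat R}\hat R_e$ should be $\hat B^r_s \otimes_{\hat R}\hat R_e$, and the indecomposable summands you produce in Case 2 are $\hat B^r_{u'}$, not $B^r_{u'}$; these are what Proposition \ref{prop:locindec} supplies.
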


\begin{proof}
  Because $x \notin {}^f W$ there exists $s \in S_f$ such that $sx <
  x$. Hence ${}^p h_{y,x} = v \cdot {}^ph_{sy,x}$ for all $ y\in W$
  with $sy > y$. (This echoes a well-known property of the
  Kazhdan-Lusztig basis. It can be deduced from this by
  \cite[Proposition 4.2(5)]{JW-pcan}). In particular:
  \begin{equation}
    \label{eq:xLoc1}
 {}^p h_{y,x}(1) = {}^ph_{sy,x}(1) \quad\text{for all $y\in {}^s
  W.$}
  \end{equation}
Let us fix such a $y$, and write $y = vz$ with $v \in W_r$ and $z \in
{}^r W$. Note that $y \in {}^s W$ is equivalent to $v \in {}^s W_r$.

The left-hand side (resp. right-hand side) of
\eqref{eq:xLoc1} is the coefficient of $\hat{\delta}_y$
(resp. $\hat{\delta}_{sy}$) in the character of
$\hat{R}\otimes B_x$ in the hyperbolic bimodule (see Theorem
\ref{thm:cathypbim} for the definition of $\hat{ch}$ and
$\hat{\delta_x}$). On the other hand, 
using the right hand side of \eqref{eq:xLoc}, the coefficient of
$\hat{\delta}_y$ (resp. $\hat{\delta}_{sy}$) is given by
\begin{equation} \label{eq:xLocOtherSide}
\sum_{v \in W_r} a(u,z,x) \cdot {}^p h_{v,u}(1)
\quad
\text{(resp. $\sum_{v \in W_r} a(u,z,x) \cdot {}^p h_{sv,u}(1)$)}
\end{equation}
By unimodality (in the Hecke category for $W_r$) we have
\[
{}^ph_{v,u}(1) \ge {}^ph_{sv,u}(1).
\]
By \eqref{eq:xLoc1} both terms in \eqref{eq:xLocOtherSide} are
equal. Taking $u = v$ (and using that ${}^p h_{v,v}(1) = 1$ and ${}^p
h_{sv,v} = 0$) we see that
\[
a(v,z,x) = 0
\]
which is what is claimed in the lemma.
\end{proof}

Let us set
\[
\mathbb{1}_{\hat{\AS}} := \text{image of $\hat{R}$ in $\hat{\AS}$}.
\]
We conclude from the lemma that
\begin{equation}
  \label{eq:7}
  \mathbb{1}_{\hat{\AS}}\otimes B_x = 0 \quad \text{if $x \notin {}^f W$.}
\end{equation}
Hence the functor $B \mapsto \mathbb{1}_{\hat{\AS}}\otimes B$ descends
to a functor of right $\HC_{W,V}$-module categories
\begin{align*}
\phi : \AS \to \hat{\AS}.
\end{align*}
Because $S_f \subset S_r$ we have $V_f \subset V_r$. Hence we have a
morphism
\[
R/(V_r) \to \hat{R}/(\mathfrak{m}).
\]

\begin{thm} \label{thm:asloc}
$\phi$ induces an equivalence of additive categories
\[
(\hat{R} \otimes_{(R/(V_r))} \AS)^{e} \simto \hat{\AS}.
\]
(here the superscript $e$ denotes idempotent completion).
\end{thm}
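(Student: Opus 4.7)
The plan is to mimic the three-step paradigm developed in \S \ref{sec:hypbim} for the categorified hyperbolic bimodule $\CC$: first make the base change well-defined, then classify indecomposables in $\hat{\AS}$, then compute hom-spaces.

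First I would verify that $\phi$ descends to a functor from $(\hat{R} \otimes_{R/(V_r)} \AS)^e$. The right $R$-action on $\AS$ inherited from $\End(\mathbb{1}_{\HC_{W,V}}) = R$ factors through $R/(V_r)$: for $\alpha \in V_r$ the associated reflection lies in $S_r \supset S_f$, and a mild variation of the factorisation argument reviewed in \S \ref{sec:locas} (which uses $W_f \subset W_r$) forces $\alpha$ to act as zero modulo the ideal cutting out $\AS$. The functor $\phi$ is then $R/(V_r)$-linear on hom-spaces, so base-changing along the canonical map $R/(V_r) \to \hat R$ gives a well-defined additive functor to $\hat{\AS}$, which extends to the Karoubi envelope.

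For essential surjectivity I would combine Theorem \ref{thm:hyperbolic} with Lemma \ref{lem:hyp}. The indecomposables of $\CC$ are $\hat B_u \otimes_{\hat R}\hat R_z$ for $(u,z) \in W_r \times {}^r W$, and after quotienting to $\hat{\AS}$ only those with $u \in {}^f W \cap W_r =: {}^{f}W_r$ survive. Assumption \eqref{eq:ASassump} gives a bijection ${}^{f}W_r \times {}^r W \simto {}^f W$ via $(u,z) \mapsto uz$. Applying Lemma \ref{lem:hyp} iteratively to a reduced expression for $x = uz \in {}^f W$, together with a unimodality-in-Bruhat-order argument in the same spirit as the lemma used to prove \eqref{eq:7}, shows that $\phi(B_x) = \hat R \otimes_R B_x$ contains $\hat B_u \otimes_{\hat R} \hat R_z$ with multiplicity one and that the other summands have strictly smaller $W_r$-coordinate; a downward induction on Bruhat order then exhibits every surviving indecomposable of $\hat{\AS}$ as a summand of some $\phi(B_x)$.

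The main obstacle is full faithfulness. The strategy is to lift everything into an ambient $(\hat R, R)$-bimodule category as in \S \ref{subsec:reflection} and to apply scalar-extension arguments in the spirit of Lemma \ref{lem:lemExt}. This requires the anti-spherical analogue of Soergel's hom formula, asserting that $\Hom_{\AS}(B_x, B_y)$ is a graded-free $R/(V_r)$-module of the expected graded rank (the anti-spherical pairing). Granted this, Krull-Schmidt for $\hat{\AS}$ (established verbatim as in Proposition \ref{prop:locKS} from the completeness of $\hat R$) combined with the classification of indecomposables from the previous step lets one read off the graded rank of $\Hom_{\hat{\AS}}(\phi(B_x),\phi(B_y))$ over $\hat R$, and the two ranks agree after base change. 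Comparing the natural map yields
\[
\hat R \otimes_{R/(V_r)} \Hom_{\AS}(B_x, B_y) \simto \Hom_{\hat{\AS}}(\phi(B_x), \phi(B_y)).
\]
The cleanest route to the requisite hom formula in our generality is via Abe's realisation \cite{Abe}, which avoids any reflection-faithfulness hypothesis. Full faithfulness on Bott-Samelsons extends to all of $\AS$ by Karoubi closure, which combined with essential surjectivity yields the equivalence.
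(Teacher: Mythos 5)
The paper does not actually include a proof of Theorem \ref{thm:asloc}, so I am assessing your proposal on its own terms. The overall architecture (descent of $\phi$ to the base-changed category, essential surjectivity from the classification of indecomposables in $\CC$ plus the lemma preceding \eqref{eq:7}, and full faithfulness from a hom formula and Krull--Schmidt) is the right shape. But your Step 1 contains a genuine error.

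You assert that the $R$-action on $\AS$ factors through $R/(V_r)$, arguing that for $\alpha \in V_r$ ``the associated reflection lies in $S_r \supset S_f$'' and hence $\alpha$ is killed by a factorisation argument. This is wrong for two independent reasons. First, the factorisation argument of \S 2.4 uses the dot maps $R(-1) \to B_s \to R(1)$ with composite $\alpha_s$, and requires $s$ to be a \emph{simple} reflection of $W$ (so that $B_s$ is a generating object of $\HC_{W,V}$); for $t \in S_r \setminus S$ there is no Bott--Samelson generator $B_t$ in $\HC_{W,V}$ to factor through, so the argument does not extend. Second, and more fundamentally, the conclusion is false: $V_r$ does not act by zero on $\AS$. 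In the motivating setting of \S 2.4, $\End^\bullet(\mathbb{1}_\AS) = \ZM_p[\delta]$ and $\delta$ (or $p\delta$) lies in $V_r$ (the span of the roots of the scaling-by-$p$ reflection subgroup contains $p\delta - \theta$, hence $p\delta$) but acts as a non-trivial degree-$2$ endomorphism. What dies is $V_f$, not $V_r$. Consequently the meaning of the base change $\hat R \otimes_{R/(V_r)} \AS$ is subtler than you suggest, and the correct observation is simply that $\phi$ is $R$-linear (via $R \to R/(V_f) = \End^\bullet(\mathbb{1}_\AS)$ and $R \to \hat R$) and hence extends to the appropriate $\hat R$-linearisation by the universal property; the role of $V_r$ is in the localisation/completion producing $\hat R$, not in a quotient of the coefficient ring of $\AS$. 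You also misattribute the factorisation argument to \S \ref{sec:locas}; it appears in \S 2.4. Steps 2 and 3 are reasonable in outline, but Step 1 as written would fail.
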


\begin{remark}
  This theorem can be unpacked to give the upper left square of
  Figure \ref{fig:locspec}.
\end{remark}

 

\section{Acknowledgements}
Initially I had titled these notes ``Variations on a theme of Hazi''. Later I changed the title as the notes
broadened in scope; I remain very grateful for the impetus Hazi's
paper  \cite{Hazi} gave me. I would like to thank my collaborators, as well as N.~Abe, 
D.~Treumann, I.~Grojnowski, A.~Hazi and G.~Lonergan for useful discussions
on the subject of this  paper. I am particularly grateful for
discussions with T.~Jensen, which were very motivating when working on this
project. Thanks to N.~Abe, D.~Juteau, G.~Lusztig, U.~Thiel,
D.~Treumann, L.~Patimo and O.~Yacobi for useful feedback on a first draft.
I am forever grateful to W.~Soergel and P.~Fiebig, for
teaching me everything I know about the philosophy of deformations.

\def\cprime{$'$} \def\cprime{$'$} \def\cprime{$'$}



%
%

\end{document}